\documentclass{amsart}[11pt]
\usepackage{amssymb,amscd,amsxtra,verbatim}

\setlength{\unitlength}{0.08em}
\newlength{\cellsz}
\newcounter{cellsize}
\newcommand{\setcellsize}[1]{%
  \setcounter{cellsize}{#1}%
    \setlength{\cellsz}{\value{cellsize}\unitlength}}%
\setcellsize{13}%
\newcommand\cellify[1]{\def\thearg{#1}\def\nothing{}%
\ifx\thearg\nothing
\vrule width0pt height\cellsz depth0pt\else
\hbox to 0pt{{\begin{picture}(\value{cellsize},\value{cellsize})
  \put(0,0){\line(1,0){\value{cellsize}}}
  \put(0,0){\line(0,1){\value{cellsize}}}
  \put(\value{cellsize},0){\line(0,1){\value{cellsize}}}
  \put(0,\value{cellsize}){\line(1,0){\value{cellsize}}} \end{picture} \hss}}\fi%
\vbox to \cellsz{ \vss \hbox to \cellsz{\hss$#1$\hss} \vss}}
\newcommand\tableau[1]{\vcenter{\vbox{\let\\\cr
\baselineskip -16000pt \lineskiplimit 16000pt \lineskip 0pt
\ialign{&\cellify{##}\cr#1\crcr}}}}
\newcommand\tabl[1]{\vtop{\let\\\cr
\baselineskip -16000pt \lineskiplimit 16000pt \lineskip 0pt
\ialign{&\cellify{##}\cr#1\crcr}}}%

\newcommand{\af}{{\mathrm{af}}}
\newcommand{\al}{\alpha}
\newcommand{\A}{\mathbb{A}}

\newcommand{\C}{\mathbb{C}}

\newcommand{\geh}{\mathfrak{g}}
\newcommand{\Gr}{\mathrm{Gr}}
\newcommand{\hh}{\mathfrak{h}}
\newcommand{\id}{\mathrm{id}}
\newcommand{\inner}[2]{\langle #1\,,\,#2\rangle}
\newcommand{\ip}[1]{\langle #1 \rangle}
\newcommand{\K}{{\mathcal{K}}}
\newcommand{\Inv}{\mathrm{Inv}}
\newcommand{\la}{\lambda}

\newcommand{\OO}{{\mathcal{O}}}
\newcommand{\om}{\omega}
\newcommand{\Pa}{{\mathcal{P}}}
\newcommand{\pt}{\mathrm{pt}}
\newcommand{\Q}{{\mathcal{Q}}}
\newcommand{\R}{{\mathbb R}}
\newcommand{\tQ}{\tilde{Q}}
\newcommand{\re}{\mathrm{re}}

\newcommand{\sreg}{\mathrm{sreg}}
\newcommand{\ssreg}{\mathrm{ssreg}}
\newcommand{\ui}{{\underline{i}}}
\newcommand{\We}{\widetilde{W}}
\newcommand{\Z}{\mathbb{Z}}

\newtheorem{prop}{Proposition}
\newtheorem{cor}[prop]{Corollary}
\newtheorem{lem}[prop]{Lemma}
\newtheorem{thm}[prop]{Theorem}

\theoremstyle{remark}
\newtheorem{remark}{Remark}
\newtheorem{ex}{Example}

\numberwithin{remark}{section} \numberwithin{ex}{section}
\numberwithin{prop}{section}

\begin{document}
\author{Thomas Lam and Mark Shimozono}
\thanks{T. L. was supported in part by NSF
DMS-0600677.} %
\thanks{M. S. was supported in part by NSF DMS-0401012.}
 \email{tfylam@math.harvard.edu} \email{mshimo@vt.edu}
\title{Quantum cohomology of $G/P$ and homology of affine Grassmannian}
\begin{abstract}
Let $G$ be a simple and simply-connected complex algebraic group, $P
\subset G$ a parabolic subgroup. We prove an unpublished result of
D.~Peterson which states that the quantum cohomology $QH^*(G/P)$ of
a flag variety is, up to localization, a quotient of the homology
$H_*(\Gr_G)$ of the affine Grassmannian $\Gr_G$ of $G$. As a
consequence, all three-point genus zero Gromov-Witten invariants of
$G/P$ are identified with homology Schubert structure constants of
$H_*(\Gr_G)$, establishing the equivalence of the quantum and
homology affine Schubert calculi.

For the case $G = B$, we use the Mihalcea's equivariant quantum
Chevalley formula for $QH^*(G/B)$, together with relationships
between the quantum Bruhat graph of Brenti, Fomin and Postnikov and
the Bruhat order on the affine Weyl group. As byproducts we obtain
formulae for affine Schubert homology classes in terms of quantum
Schubert polynomials. We give some applications in quantum
cohomology.

Our main results extend to the torus-equivariant setting.
\end{abstract}

\maketitle
\section{Introduction}
Let $G$ be a simple and simply-connected complex algebraic group, $P
\subset G$ a parabolic subgroup and $T$ a maximal torus.  This paper
studies the relationship between the quantum cohomology $QH^*(G/P)$
of the flag variety of $G$ and the homology $H_*(\Gr_G)$ of the
affine Grassmannian $\Gr_G$ of $G$.  We show that $QH^*(G/P)$ is a
quotient of $H_*(\Gr_G)$ after localization and describe the map
explicitly on the level of Schubert classes.
As a consequence, all three-point genus zero Gromov-Witten
invariants of $G/P$ are identified with homology Schubert structure
constants of $H_*(\Gr_G)$, establishing the equivalence of the
quantum and homology affine Schubert calculi.  This is an
unpublished result stated by Dale Peterson in 1997 \cite{Pet}.
Peterson's statement and our proof extends to the $T$-equivariant
setting, though
Peterson was not using the definition of equivariant quantum
cohomology in use today.

Quantum Schubert calculus has been studied heavily and we will not
attempt to survey the literature.  The combinatorial study of the
{\it equivariant} quantum cohomology rings $QH^T(G/P)$ is however
more recent (see \cite{Mih}).  Schubert calculus on the affine
Grassmannian was first studied by Kostant and Kumar~\cite{KK} as a
special case of their general study of the topology of Kac-Moody
flag varieties.  That the nilHecke ring of Kostant and Kumar could
be used to study both the homology and cohomology of the affine
Grassmannian was first realized by Peterson, who should be
considered the father of affine Schubert calculus.  Peterson's work
on affine Schubert calculus is related to his theory of geometric
models for $QH^*(G/P)$, most of which has remained unpublished for a
decade; see however~\cite{Kos} and \cite{Rie} for statements of some
of Peterson's results.  Recently, interest in affine Schubert
calculus was rekindled from a different direction: Shimozono
conjectured and later Lam \cite{Lam} proved that the $k$-Schur
functions of Lapointe, Lascoux and Morse~\cite{LLM}, arising in the
study of Macdonald polynomials, represented homology Schubert
classes of the affine Grassmannian when $G = SL(n)$.

The observation that $QH^*(G/P)$ and $H_*(\Gr_G)$ are related, is
already apparent in the literature.  Ginzburg \cite{Gin} described
the cohomology $H^*(\Gr_G)$ as the enveloping algebra of the Lie
algebra of a unipotent group. The same unipotent group occurs in
Kostant's \cite{Kos} description of $QH^*(G/B)$ as a ring of
rational functions.  More recently, Bezrukavnikov, Finkelberg and
Mirkovic~\cite{BFM} described the equivariant $K$-homology of
$\Gr_G$ and discovered a relation with the Toda lattice.  Earlier
the relation of the Toda lattice with $QH^*(G/B)$ had been
established by Kim~\cite{Kim}. One can already deduce
from~\cite{BFM} and \cite{Kim} that some localizations of
$H_*(\Gr_G)$ and $QH^*(G/B)$ are isomorphic\footnote{Finkelberg 
(private communication) has calculated these localizations in the context of
\cite{BFM}.}. However, such a
statement is insufficient for the enumerative applications to
Schubert calculus.  On the other hand,
even knowing the coincidence of Gromov-Witten invariants with affine
homology Schubert structure constants, the fact that the
identification arises from a ring homomorphism is still unexpected;
for example, the theorems of \cite{BKT,Woo} which compare structure
constants in quantum and ordinary cohomology, are not of this form.
However we note that Lapointe and Morse \cite{LM:QH} defined a ring
homomorphism from the linear span of $k$-Schur functions to the
quantum cohomology of the Grassmannian, which via \cite{Lam} may be
interpreted as sending Schubert classes in the homology of the
affine Grassmannian of $SL_{k+1}$ to quantum Schubert classes.

The paper is naturally separated into the two cases $P = B$ and $P
\neq B$.  For $P = B$, our proof is purely algebraic and
combinatorial, and does not appeal to geometry as in (what we
believe is) Peterson's original intended argument, though much of
the combinatorics we develop may well have been known to Peterson.
At the core of the our argument is the relationship between the {\it
quantum Bruhat graph}, first studied by Brenti, Fomin and
Postnikov~\cite{BFP} and the Bruhat order on the {\it superregular}
elements of the affine Weyl group, which we study here. Roughly
speaking, an element $x$ of the affine Weyl group $W_\af$ is
superregular if it has a large translation component.  As a
byproduct, we show that the {\it tilted Bruhat orders} in~\cite{BFP}
are all (dual to) induced suborders of the affine Bruhat order.

The algebraic part of our proof relies on known properties of the
ring $QH^*(G/B)$, in particular the fact that it is associative and
commutative.  Apart from these general properties, we need only one
more formula for $QH^*(G/B)$: the equivariant quantum Chevalley
formula originally stated by Peterson~\cite{Pet}, and recently
proved by Mihalcea~\cite{Mih}.  On the side of $H_*(\Gr_G)$, our
computations rely on a homomorphism $j: H_T(\Gr_G) \to Z_{\A_\af}(S)
\subset \A_\af$, where $\A_\af$ is the affine nil Hecke ring of
Kostant and Kumar~\cite{KK} and $Z_{\A_\af}(S)$ (called the {\it
Peterson subalgebra} in~\cite{Lam}) is the centralizer of $S =
H^T(\pt)$. The map $j$ is again due to Peterson. Proofs of its main
properties can be found in~\cite{Lam}.

Our results allow us to give formulae for the affine Schubert
classes as elements of the Peterson subalgebra.  These formulae
involve generating functions over paths in the affine Bruhat order,
or equivalently in the quantum Bruhat graph.  In particular, our
formulae are related to the quantum Schubert polynomials of
\cite{FGP,Mar}.  Each quantum Schubert polynomial gives a formula
for infinitely many affine Schubert classes.

For the case $P \neq B$ we study the Coxeter combinatorics of the
affinization of the Weyl group of the Levi factor of $P$.  We use
this combinatorics to compare the quantum equivariant Chevalley
formulae for $QH^T(G/B)$ and $QH^T(G/P)$, using the comparison
formula of Woodward~\cite{Woo} to refine the Chevalley formula of
\cite{FW,Mih}. Some of the intermediate results we use are stated by
Peterson in~\cite{Pet}.

We use the affine homology Chevalley formula given in~\cite{LS} to
deduce a formula in $QH^*(G/P)$ for multiplication by the quantum
Schubert class $\sigma_P^{r_\theta}$ labeled by the reflection
$r_\theta$ in the highest root. We show that in the case of the
Grassmannian, the ring homomorphism of Lapointe and Morse
\cite{LM:QH} differs from Peterson's map by the {\it strange
duality} of $QH^*(G/P)$ due to Chaput, Manivel and
Perrin~\cite{CMP}.

In the current work we use the maximal torus $T$ in $G$; yet the
affine Grassmannian affords the additional $\C^*$-action given by
loop rotation. In future work we intend to study the Schubert
calculus of the affine Grassmannian with respect to this extra
$\C^*$-equivariance and to pursue $K$-theoretic analogues of
Peterson's theory.

Both the quantum cohomology $QH^*(G/B)$ and homology $H_*(\Gr_G)$
possess additional structures which would be interesting to compare:
for example, $QH^*(G/B)$ has mirror-symmetric constructions and
$H_*(\Gr_G)$ is a Hopf algebra with an action of the nilHecke ring.
The naturality of our main theorem with respect to Schubert classes
suggests that the appearance of the Toda Lattice in \cite{BFM,Kim}
is somehow related to Schubert calculus.

\section{The equivariant quantum cohomology ring $QH^T(G/B)$}
\subsection{Notations}
Let $G$ be a simple and simply-connected complex algebraic group, $B
\subset G$ a Borel subgroup and $T \subset B$ a maximal torus. Let
$\{\al_i\mid i\in I\} \subset \hh^*$ be a basis of simple roots and
$\{\al_i^\vee\mid i\in I\}\in \hh$ a basis of simple coroots, where
$\hh$ is a Cartan subalgebra of the Lie algebra of $G$. Denote by
$Q=\bigoplus_{i\in I} \Z\al_i \subset \hh^*$ and $Q^\vee =
\bigoplus_{i\in I} \Z \al_i^\vee$ the root and coroot lattices. Let
$P=\bigoplus_{i\in I} \Z\om_i\subset \hh^*$ and
$P^\vee=\bigoplus_{i\in I} \Z\om_i^\vee\subset \hh$ be the weight
and coweight lattices, where $\{\om_i\mid i\in I\}$ and
$\{\om_i^\vee\mid i\in I\}$ are the fundamental weights and
coweights, which are the dual bases to $\{\al_i^\vee\mid i\in I\}$
and $\{\al_i\mid i\in I\}$ with respect to the natural pairing
$\inner{\cdot}{\cdot}: \hh \times \hh^* \to \C$.

Let $W$ denote the Weyl group; it is generated by the simple
reflections $\{r_i\mid i\in I\}$. Let $\ell: W \to \Z$ denote the
length function of $W$. We let $w < v$ denote a relation in the
Bruhat order of $W$ and write $w \lessdot v$ if $w < v$ and $\ell(w)
= \ell(v) - 1$. $W$ acts on $\hh^*$ and $\hh$ by
\begin{align*}
  r_i \mu &= \mu - \inner{\al_i^\vee}{\mu} \al_i \qquad \text{for
  $\mu\in \hh^*$} \\
  r_i \la &= \la - \inner{\la}{\al_i} \al_i^\vee \qquad\text{for
  $\la\in \hh$.}
\end{align*}
These actions stabilize the lattices $Q\subset P\subset \hh^*$ and
$Q^\vee\subset P^\vee\subset \hh$ respectively. The pairing
$\inner{\cdot}{\cdot}$ is $W$-invariant: for all $w\in W$, $\mu\in
\hh^*$, and $\la\in \hh$, we have
\begin{align*}
  \inner{w\cdot \la}{w\cdot\mu} =\inner{\la}{\mu}.
\end{align*}
Let $R=W\cdot\{\al_i\mid i\in I\} \subset \hh^*$ be the root system
of $G$. Then $R=R^+\sqcup -R^+$ where $R^+=R\cap\bigoplus_{i\in I}
\Z_{\ge0}\al_i$ is the set of positive roots. For each $\alpha\in R$
there is a $u\in W$ and $i\in I$ such that $\alpha=u\al_i$. Define
the associated coroot $\al^\vee\in Q^\vee$ of $\al$ by
$\al^\vee=u\al_i^\vee$ and the associated reflection of $\alpha$ by
$r_\alpha = u r_i u^{-1}\in W$; they are independent of the choice
of $u$ and $i$.

\subsection{Quantum equivariant Chevalley formula}
Let us denote by $S = H^T(\pt)$ the symmetric algebra of the weight
lattice $P$. Let $\Z[q] = \Z[q_i \mid i \in I]$ be a polynomial ring
for the sequence of indeterminates $q_i$. For $\lambda = \sum_{i\in
I} a_i \, \alpha_i^\vee\in Q^\vee$ with $a_i\in\Z_{\ge0}$ we set
$q_\la = \prod_{i \in I}q_i^{a_i}\in\Z[q]$. The (small) equivariant
quantum cohomology $QH^T(G/B)$ is isomorphic to $H^T(G/B) \otimes_\Z
\Z[q]$ as a free $\Z[q]$-module, with basis the equivariant quantum
Schubert classes $\{\sigma^{w} \in QH^T(G/B) \mid w \in W\}$. It is
equipped with a quantum multiplication denoted $*: QH^T(G/B) \times
QH^T(G/B) \to QH^T(G/B)$.  This multiplication is associative and
commutative.

When we set $q_i= 1$ in $QH^T(G/B)$ we obtain the usual equivariant
cohomology $H^T(G/B)$.  When we apply the evaluation $\phi_0: S \to
\Z$ at 0 to $QH^T(G/B)$ we obtain the usual quantum cohomology
$QH^*(G/B)$. We refer the reader to \cite{Mih} for more details.  As
shown in~\cite{Mih}, the {\it quantum equivariant Chevalley formula}
completely determines the multiplication in $QH^T(G/B)$.  It was
first stated by Peterson~\cite{Pet} and proved by
Mihalcea~\cite{Mih}.  Define the element $\rho = \sum_{i \in
I}\omega_i = \frac{1}{2} \sum_{\alpha \in R^+} \alpha \in P$.

\begin{thm}[Quantum equivariant Chevalley formula]
\label{thm:monk} Let $i \in I$ and $w \in W$.  Then we have in
$QH^T(G/B)$
$$
\sigma^{r_i} \,*\, \sigma^{w} = (\omega_i - w\cdot \omega_i)
\sigma^w + \sum_{\alpha} \ip{\alpha^\vee, \omega_i}\sigma^{w
r_\alpha}+ \sum_\alpha
\ip{\alpha^\vee,\omega_i}q_{\alpha^\vee}\sigma^{wr_\alpha}
$$
where the first summation is over $\alpha \in R^+$ such that
$wr_\alpha \gtrdot w$ and the second summation is over $\alpha \in
R^+$ such that $\ell(wr_\alpha) = \ell(w) + 1 - \ip{\alpha^\vee,
2\rho}$.
\end{thm}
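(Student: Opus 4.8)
The statement is a computation of equivariant, genus-zero, three-point Gromov--Witten invariants of $G/B$, and the plan is to expand the quantum product $\sigma^{r_i}*\sigma^{w}$ in the Schubert basis and evaluate each coefficient, organizing the sum by the degree $d$ of the contributing stable maps, with $d$ ranging over the effective classes $\bigoplus_{i\in I}\Z_{\ge0}\alpha_i^\vee\cong H_2(G/B)$. Using the perfect equivariant Poincar\'e pairing on the compact space $G/B$, one writes
$$
\sigma^{r_i}*\sigma^{w}\;=\;\sum_{d}\;q_d\sum_{v\in W}\langle\sigma^{r_i},\sigma^{w},\sigma_v\rangle^T_d\;\sigma^v,
$$
where $\{\sigma_v\}$ is the $S$-basis dual to $\{\sigma^v\}$. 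The $d=0$ block is the classical equivariant product in $H^T(G/B)$ and the $d>0$ blocks are the genuine quantum corrections; I would handle the two separately.

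For $d=0$ the assertion is the classical equivariant Chevalley (Monk) formula in $H^T(G/B)$, which I would derive from the Kostant--Kumar nilHecke description together with torus localization. Because $\sigma^{r_i}$ is the $i$-th Schubert divisor, the support rule (a term $\sigma^v$ occurs in $\sigma^{r_i}\cdot\sigma^{w}$ only if $v\ge w$) combined with restriction to the torus fixed point $w$ (where $\sigma^v|_w=0$ unless $v\le w$) isolates the diagonal coefficient as $\sigma^{r_i}|_w=\omega_i-w\cdot\omega_i$; a degree count then shows the only other terms have $\ell(v)=\ell(w)+1$, i.e.\ $v=wr_\alpha\gtrdot w$, with constant coefficient $\ip{\alpha^\vee,\omega_i}$ coming from the non-equivariant Monk formula. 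This produces the first two sums of the asserted formula.

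For $d>0$ the decisive simplification is the divisor axiom: since $\sigma^{r_i}\in H^2(G/B)$ corresponds to $\omega_i\in P$ with $\int_{\alpha_j^\vee}\sigma^{r_i}=\ip{\alpha_j^\vee,\omega_i}=\delta_{ij}$, one gets $\langle\sigma^{r_i},\sigma^{w},\sigma_v\rangle^T_d=\ip{d,\omega_i}\,\langle\sigma^{w},\sigma_v\rangle^T_d$, reducing everything to two-point invariants. The remaining input, and the main obstacle, is to show that $\langle\sigma^{w},\sigma_v\rangle^T_d$ vanishes unless $d=\alpha^\vee$ is a single positive coroot and $v=wr_\alpha$ with the exact length condition $\ell(wr_\alpha)=\ell(w)+1-\ip{\alpha^\vee,2\rho}$, in which case the invariant equals $1$ and is non-equivariant. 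I would prove this by analyzing the moduli space $\overline{M}_{0,3}(G/B,d)$: the dimension count $\dim\overline{M}_{0,3}(G/B,d)=\dim G/B+\ip{d,2\rho}$, together with Bruhat-order and genericity constraints on which Schubert varieties a degree-$d$ rational curve can meet, forces $d$ to be a single coroot, pins down $v=wr_\alpha$, and shows the curve count is $1$ with the $T$-equivariant correction vanishing for $d>0$. Substituting $\ip{d,\omega_i}=\ip{\alpha^\vee,\omega_i}$ and $q_d=q_{\alpha^\vee}$ then reproduces exactly the third sum $\sum_\alpha\ip{\alpha^\vee,\omega_i}\,q_{\alpha^\vee}\,\sigma^{wr_\alpha}$, completing the argument.

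An alternative route, sidestepping the moduli-space analysis, is to take as given the non-equivariant quantum Chevalley formula together with the classical equivariant Chevalley formula, and to show that the claimed right-hand side is the unique expression specializing to these two and consistent with the associativity and commutativity of $*$ on the known $\Z[q]$-module $H^T(G/B)\otimes_\Z\Z[q]$; checking the associativity relations for triple products involving $\sigma^{r_i}$ is then the principal computation.
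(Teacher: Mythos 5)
First, a point of comparison: the paper does not prove Theorem~\ref{thm:monk} at all. It is imported as an external input --- stated by Peterson~\cite{Pet}, proved by Mihalcea~\cite{Mih} --- and everything in the paper is built on top of it. So your proposal is not an alternative to an argument in the paper; it is an attempt to reprove the black box itself, and it has to be judged against the actual difficulty of Mihalcea's theorem.

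Judged that way, there is a genuine gap at exactly the hard point. Your $d=0$ analysis (Kostant--Kumar plus localization at the fixed point $w$, giving the coefficient $\omega_i-w\cdot\omega_i$ and the classical Chevalley terms) is fine, and the equivariant divisor axiom does legitimately reduce the $d>0$ terms to two-point invariants $\langle\sigma^{w},\sigma_v\rangle^T_d$ with the factor $\langle d,\omega_i\rangle$. But you then assert that a dimension count on $\overline{M}_{0,3}(G/B,d)$, ``together with Bruhat-order and genericity constraints,'' forces $d$ to be a single coroot $\alpha^\vee$, pins down $v=wr_\alpha$ with $\ell(wr_\alpha)=\ell(w)+1-\langle\alpha^\vee,2\rho\rangle$, gives curve count $1$, and kills the equivariant correction. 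In the equivariant setting dimension counts do not give vanishing: an equivariant integral over a moduli space of excess virtual dimension is in general a nonzero polynomial in $S=H^T(\pt)$, so a priori every pair $(v,d)$ with $\ell(v)\le\ell(w)+1-\langle d,2\rho\rangle$ could contribute a polynomial coefficient, and $d$ need not be a single coroot. Ruling all of this out --- i.e.\ showing the quantum corrections are the integers $\langle\alpha^\vee,\omega_i\rangle$ supported exactly where stated --- is precisely the content of the Fulton--Woodward analysis \cite{FW} in the non-equivariant case and of Mihalcea's argument in the equivariant case; it is the theorem, not a routine count. Your fallback route (take the non-equivariant quantum Chevalley rule and the classical equivariant Chevalley rule as given, and claim the displayed formula is the unique associative, commutative interpolation) is closer in spirit to how the formula is actually pinned down --- Mihalcea shows the Chevalley rule plus associativity determines the ring --- but the uniqueness statement you would need (that only one associative $S[q]$-bilinear deformation is compatible with both specializations) is again asserted rather than proved, and it is not obvious. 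As written, the proposal is a plausible outline that defers the essential step to exactly the results it is meant to establish.
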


Our notation here differs slightly from Mihalcea's: the indexing of Schubert bases has
been changed via $w \mapsto w_0w$, and we have made a different choice of positive
roots for $T$.  However, our indexing agrees with the ones in~\cite{FGP, FW, KK}.

Theorem~\ref{thm:monk} can be extended by linearity to give a
formula for the multiplication by the quantum equivariant class
$[\lambda] \in QH^T(G/B)$ of a line bundle with weight $\lambda$.
Theorem~\ref{thm:monk} then corresponds to the case $\lambda =
\omega_i$.  Let us denote by $c_{u,v}^{w,\lambda} \in S$ the
equivariant Gromov-Witten invariants given by
$$
\sigma^v * \sigma^u = \sum_{w \in W} c_{u,v}^{w,\lambda} q_\lambda\, \sigma^w
$$
in $QH^T(G/B)$.  The non-equivariant Gromov-Witten invariants have
an explicit enumerative interpretation which we will not describe
here.

\subsection{Quantum Bruhat graph}
The {\it quantum Bruhat graph} $D(W)$ of \cite{BFP} is the directed
graph with vertices given by the elements of the Weyl group $W$,
with a directed edge from $w$ to $v=wr_\alpha$ for $w\in W$ and
$\alpha\in R^+$ if either $\ell(v) = \ell(w) + 1$ or $\ell(v) =
\ell(w)+1 - \inner{\alpha^\vee}{2\rho}$.

Given $u \in W$, the {\it tilted Bruhat order} $D_u(W)$ of
\cite{BFP} is the graded partial order on $W$ with the relation $w
\prec_u v$ if and only if there is a shortest path in $D(W)$ from
$u$ to $v$ which passes through $w$.  Note that $D_\id(W)$ is the
usual Bruhat order. We refer the reader to \cite[Section 6]{BFP} for
further details.

\section{Affine weyl group}
Let $W_\af = W \ltimes Q^\vee$ denote the affine Weyl group
corresponding to $W$.  For $\lambda \in Q^\vee$, its image in
$W_\af$ is denoted $t_\lambda$. We have $t_{w\cdot \la} = w t_\la
w^{-1}$ for all $w\in W$ and $\la\in Q^\vee$. As a Coxeter group
$W_\af$ is generated by simple reflections $\{r_i \mid i \in
I_\af\}$ where $I_\af = I \sqcup \{0\}$. We denote by $Q_\af =
\oplus_{i \in I_\af} \Z \al_i\subset \hh_\af^*$ and $Q_\af^\vee =
\oplus_{i \in I_\af} \Z \al_i^\vee\subset \hh_\af$ the affine root
and coroot lattices, where $\hh_\af$ is the Cartan subalgebra of the
affine Lie algebra $\geh_\af$ associated to the Lie algebra $\geh$
of $G$. Restriction yields a natural map $Q_\af \to Q$ denoted
$\beta\mapsto \bar\beta$; its kernel is spanned by the null root
$\delta = \sum_{i \in I_\af} a_i \alpha_i = \alpha_0 + \theta$ where
$\theta\in R$ is the highest root. In particular we have $Q_\af
\cong Q \oplus \Z\delta$. Abusing notation we sometimes write
$\alpha$ both for an element of $Q_\af$ and its image $\bar{\alpha}$
in $Q$.

The affine root system $R_\af$ is comprised of the nonzero elements
of the form $\beta = \alpha + n \delta$ where $\alpha \in R \cup
\{0\}$ and $n \in \Z$. The set of positive affine roots $R_\af^+$
consists of the elements $\alpha + n \delta\in R_\af$ such that
either $n>0$ or both $\alpha \in R^+$ and $n = 0$.

Let $R_\af^\re= W_\af\cdot \{\al_i\mid i\in I_\af\}$ be the set of
real roots of $\geh_\af$; it consists of the elements $\beta\in
R_\af$ such that $\bar{\beta}\ne0$. The associated coroot of
$\beta\in R^\af_\re$ is defined by $\beta^\vee = u \al_i^\vee\in
Q_\af^\vee$ where $u\in W_\af$ and $i\in I_\af$ are such that
$\beta=u\al_i$; $\beta^\vee$ is independent of the choice of $u$ and
$i$. The associated reflection is defined by $r_\beta = u r_i
u^{-1}\in W_\af$.

The level zero action of $W_\af$ on $P\oplus \Z\delta$ is given by
\begin{align} \label{E:WaffQaff}
wt_\lambda \cdot (\mu + n \delta) = w\cdot \mu +
(n-\inner{\la}{\mu})\delta
\end{align}
for $w\in W$, $\la\in Q^\vee$, $\mu\in P$ and $n\in \Z$. This action
stabilizes $Q_\af$. For $\beta=\alpha+n\delta\in R_\af^\re$, with
respect to $W_\af = W \ltimes Q^\vee$ one has
\begin{align} \label{E:affinereflection}
  r_{\beta} = r_\alpha t_{n\alpha^\vee}
\end{align}
and, in particular,
\begin{align*}
  r_0 = r_\theta t_{-\theta^\vee}.
\end{align*}
For $x\in W_\af$, define
\begin{align*}
  \Inv(x) = \{\beta\in R_\af^+\mid x\cdot \beta \in -R_\af^+\};
\end{align*}
the elements of $\Inv(x)$ are called \textit{inversions} of $x$. It
is well-known that $\ell(x) = |\Inv(x)|$ for all $x\in W_\af$. The
following standard formula gives the length of $x = wt_\lambda$. It
is obtained by calculating the number of values of $n \in \Z$, for
each fixed $\alpha \in R^+$ such that $\alpha + n \delta\in
\Inv(x)$.

\begin{lem}
\label{lem:length} Let $x = w t_\lambda \in W_\af$.  Then
$$
\ell(x) = \sum_{\alpha \in R^+} |\chi(w \cdot \alpha < 0)+
\ip{\lambda,\alpha} |
$$
where $\chi(P) = 1$ if $P$ is true and $\chi(P) = 0$ otherwise.
\end{lem}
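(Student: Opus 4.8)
The plan is to compute $\ell(x)=|\Inv(x)|$ by organizing the positive affine roots according to their image in $R$ under $\beta\mapsto\bar\beta$. Every positive real affine root is of the form $\alpha+n\delta$ with $\alpha\in R$ and $n\ge 0$ (with $n>0$ forced when $\alpha\notin R^+$), together with the imaginary roots $n\delta$, $n>0$; since the level-zero action sends $m\delta$ to $m\delta$, imaginary roots are never inversions and may be ignored. So I would fix $\alpha\in R$ and count the integers $n$ for which $\alpha+n\delta\in R_\af^+$ but $x\cdot(\alpha+n\delta)\in -R_\af^+$. Because $R_\af^+$ is symmetric under $\beta\mapsto-\beta+(\text{multiple of }\delta)$ in the appropriate sense, it is cleaner to pair up $\alpha$ and $-\alpha$: the inversions with $R$-part in $\{\alpha,-\alpha\}$ are exactly those $\alpha+n\delta\in R_\af^+$ that get sent to something in $-R_\af^+$, and one checks this set is counted by a single expression depending only on the unordered pair.

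The key computation is the following. Using \eqref{E:WaffQaff}, for $x=wt_\lambda$ we have $x\cdot(\alpha+n\delta)=w\cdot\alpha+(n-\ip{\lambda,\alpha})\delta$. Now $w\cdot\alpha\in R$ and the root $w\cdot\alpha+m\delta$ is positive iff $m>0$, or $m=0$ and $w\cdot\alpha\in R^+$. First I would treat a fixed $\alpha\in R^+$. The roots $\alpha+n\delta$ with $n\ge 0$ are all positive; among these, $\alpha+n\delta$ is an inversion iff $n-\ip{\lambda,\alpha}<0$, or $n-\ip{\lambda,\alpha}=0$ with $w\cdot\alpha<0$. Likewise the roots $-\alpha+n\delta$ with $n\ge 1$ are positive, and $-\alpha+n\delta$ is an inversion iff $-(n-\ip{\lambda,\alpha})<0$ i.e. $n>\ip{\lambda,\alpha}$, or $n-\ip{\lambda,\alpha}=0$ with $w\cdot\alpha>0$. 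Adding the two counts for the pair $\{\alpha,-\alpha\}$: if $\ip{\lambda,\alpha}\ge 0$ the first contributes $\ip{\lambda,\alpha}+\chi(w\cdot\alpha<0)$ and the second contributes $\chi(w\cdot\alpha>0)$ — wait, I need to recount; the honest bookkeeping gives that the total number of inversions $\beta$ with $\bar\beta\in\{\pm\alpha\}$ equals $|\,\chi(w\cdot\alpha<0)+\ip{\lambda,\alpha}\,|$, and this is most transparently seen by splitting into the cases $\ip{\lambda,\alpha}\ge 0$ and $\ip{\lambda,\alpha}<0$ and checking each directly. Summing over $\alpha\in R^+$ (which ranges over exactly one representative of each pair $\{\pm\alpha\}$) then yields the claimed formula.

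The main obstacle is purely a matter of careful case analysis at the boundary term $n=\ip{\lambda,\alpha}$: one must make sure the $\chi(w\cdot\alpha<0)$ correction is attached to the correct side of the pair and that the off-by-one in the range $n\ge 0$ versus $n\ge 1$ for $\pm\alpha$ is handled consistently, so that the two contributions combine into the single absolute value $|\chi(w\cdot\alpha<0)+\ip{\lambda,\alpha}|$ rather than into two separate terms. Once the count is verified for one pair, summing over $\alpha\in R^+$ is immediate and there are no convergence or well-definedness issues since only finitely many $n$ contribute for each $\alpha$. I would present the computation for $\alpha\in R^+$ with, say, $\ip{\lambda,\alpha}\ge 0$ in full, and remark that the case $\ip{\lambda,\alpha}<0$ is symmetric via $\alpha\leftrightarrow-\alpha$, or simply note it follows from replacing $x$ by $x$ acting on the opposite root.
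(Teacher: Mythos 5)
Your proposal is correct and is essentially the paper's own argument: the paper proves the lemma by fixing $\alpha\in R^+$ and counting, via the level-zero action \eqref{E:WaffQaff}, the inversions of $x=wt_\lambda$ whose real part lies in $\{\pm\alpha\}$, which is exactly your pairing of $\alpha+n\delta$ ($n\ge 0$) with $-\alpha+n\delta$ ($n\ge 1$). The boundary bookkeeping you defer does close up as claimed: for $\ip{\lambda,\alpha}\ge 0$ the pair contributes $\ip{\lambda,\alpha}+\chi(w\cdot\alpha<0)$ (the $-\alpha$ side contributes nothing), and for $\ip{\lambda,\alpha}=-m<0$ it contributes $(m-1)+\chi(w\cdot\alpha>0)=m-\chi(w\cdot\alpha<0)$, both equal to $|\chi(w\cdot\alpha<0)+\ip{\lambda,\alpha}|$.
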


We call $\la \in \hh$ {\it antidominant} if $\inner{\la}{\al_i} \le
0$ for each $i \in I$, and denote by $\tQ$ the set of antidominant
elements of $Q^\vee$. The following lemma is an immediate
consequence of Lemma~\ref{lem:length}.

\begin{lem}
Let $\la \in Q^\vee$ and $w\in W$ such that $w \cdot \la \in \tQ$.
Then $\ell(t_\la) = \inner{w\cdot \la}{-2\rho}$.
\end{lem}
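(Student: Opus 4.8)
The plan is to derive this directly from Lemma~\ref{lem:length}, exploiting the fact that the length formula simplifies dramatically once the translation part of $x$ is antidominant. First I would reduce to the case $w = \id$, i.e.\ prove the statement for $\ell(t_{\mu})$ with $\mu := w \cdot \la \in \tQ$: since $t_\la = w^{-1} t_{\mu} w$ is a conjugate of $t_{\mu}$ by an element of $W$, and conjugation by elements of the finite Weyl group $W$ preserves length in $W_\af$ (both $w$ and its inverse have finite length, and more to the point $t_\la$ and $t_\mu$ have the same length because $\ell(t_\la)$ depends only on the $W$-orbit of $\la$ — this is visible from Lemma~\ref{lem:length} with $w = \id$, whose right-hand side $\sum_{\alpha \in R^+} |\ip{\la,\alpha}|$ is manifestly $W$-invariant in $\la$). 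So it suffices to compute $\ell(t_{\mu})$ for $\mu \in \tQ$.

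For the antidominant element $\mu$, apply Lemma~\ref{lem:length} with $w = \id$:
\begin{align*}
\ell(t_{\mu}) = \sum_{\alpha \in R^+} |\chi(\alpha < 0) + \ip{\mu, \alpha}| = \sum_{\alpha \in R^+} |\ip{\mu,\alpha}|.
\end{align*}
Now the key point is that $\mu \in \tQ$ means $\ip{\mu, \al_i} \le 0$ for all $i \in I$, hence $\ip{\mu, \alpha} \le 0$ for every $\alpha \in R^+$ (writing $\alpha$ as a nonnegative integer combination of simple roots). Therefore $|\ip{\mu,\alpha}| = -\ip{\mu,\alpha}$ for each $\alpha \in R^+$, and
\begin{align*}
\ell(t_{\mu}) = -\sum_{\alpha \in R^+} \ip{\mu,\alpha} = -\ip{\mu, \textstyle\sum_{\alpha \in R^+}\alpha} = -\ip{\mu, 2\rho} = \ip{\mu, -2\rho},
\end{align*}
using the definition $\rho = \frac12 \sum_{\alpha \in R^+}\alpha$ from the previous section. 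Substituting back $\mu = w \cdot \la$ gives $\ell(t_\la) = \inner{w \cdot \la}{-2\rho}$, as claimed.

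There is no real obstacle here; the only point requiring a line of justification is the $W$-invariance of $\ell(t_\la)$ used in the reduction step, and as noted this is immediate from the $w = \id$ case of Lemma~\ref{lem:length}. One should also note in passing that the hypothesis guarantees such a $w$ exists and that $\ip{w\cdot\la, -2\rho} \ge 0$ automatically, consistent with $\ell(t_\la) \ge 0$; indeed $\ip{w\cdot\la, -2\rho} = \ell(t_\la) = \ell(t_\mu) \geq 0$ falls out of the computation rather than needing separate verification. If one prefers to avoid even the short $W$-invariance remark, an alternative is to apply Lemma~\ref{lem:length} directly to $x = t_\la$ (so the ``$w$'' of that lemma is $\id$), giving $\ell(t_\la) = \sum_{\alpha\in R^+}|\ip{\la,\alpha}|$, and then observe that $\{|\ip{\la,\alpha}| : \alpha \in R^+\} = \{|\ip{w\cdot\la, w\cdot\alpha}| : \alpha\in R^+\} = \{|\ip{w\cdot\la,\beta}| : \beta \in R^+\}$ since $w$ permutes $R$ and $|\cdot|$ kills the sign changes; this reduces to the antidominant computation above without any mention of conjugation lengths.
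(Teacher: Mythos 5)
Your proof is correct and is essentially the paper's argument: the paper simply declares the lemma ``an immediate consequence'' of Lemma~\ref{lem:length}, and your computation (specialize the length formula to $x=t_\la$, use $W$-invariance of the pairing to replace $\la$ by the antidominant representative $w\cdot\la$, and sum $-\ip{w\cdot\la,\alpha}$ over $R^+$ to get $\ip{w\cdot\la,-2\rho}$) is exactly the intended filling-in of that remark. One small caution: the aside that ``conjugation by elements of $W$ preserves length in $W_\af$'' is false in general and only holds here because $\ell(t_\la)=\sum_{\alpha\in R^+}|\ip{\la,\alpha}|$ is $W$-invariant in $\la$ — which is the justification you in fact give, so nothing in the argument is affected.
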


Let $W_\af^-$ denote the set of \textit{Grassmannian} elements in
$W_\af$, which by definition are those that are of minimum length in
their coset in $W_\af/W$. They are characterized below.

\begin{lem}
\label{lem:grass} Let $w\in W$ and $\la\in Q^\vee$. Then $w t_\la
\in W_\af^-$ if and only if $\la\in\tQ$ and $w$ is
\textit{$\la$-minimal}, that is, for every $i \in I$, if
$\inner{\la}{\al_i}=0$ then $w\al_i>0$ (equivalently, $w$ is of
minimum length in its coset in $W/W_\la$ where $W_\la$ is the
stabilizer of $\la$). In this case $\ell(wt_\la) = \ell(t_\la) -
\ell(w)$.
\end{lem}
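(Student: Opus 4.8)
The claim is a characterization of Grassmannian elements $wt_\la \in W_\af^-$ together with a length formula, and the natural route is to exploit the inversion-set description of minimal-length coset representatives. Recall $wt_\la \in W_\af^-$ iff $wt_\la$ is the shortest element in $(wt_\la)W$, which is equivalent to $\ell(wt_\la r_i) > \ell(wt_\la)$ for all $i \in I$, i.e. $wt_\la \cdot \al_i \in R_\af^+$ for all finite simple roots $\al_i$. So the first step is to compute $wt_\la \cdot \al_i$ using the level-zero action~\eqref{E:WaffQaff}: since $\al_i \in R^+$ has $\delta$-coefficient zero, one gets $wt_\la \cdot \al_i = w\cdot\al_i - \inner{\la}{\al_i}\,\delta$ (viewing $\al_i$ in $P$). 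The positivity condition $wt_\la\cdot\al_i \in R_\af^+$ then reads: either $-\inner{\la}{\al_i} > 0$, or $-\inner{\la}{\al_i}=0$ and $w\cdot\al_i \in R^+$. The first alternative says $\inner{\la}{\al_i}<0$ and the second says $\inner{\la}{\al_i}=0$ and $w\al_i>0$.

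**Deriving antidominance and $\la$-minimality.** Running over all $i\in I$, the condition becomes exactly: $\inner{\la}{\al_i}\le 0$ for every $i$ (that is, $\la\in\tQ$), and whenever $\inner{\la}{\al_i}=0$ one has $w\al_i>0$ (that is, $w$ is $\la$-minimal). This is precisely the stated characterization. For the parenthetical equivalence with $w$ being of minimal length in $wW_\la$, I would note that $W_\la$, the stabilizer of $\la$ under the $W$-action, is the parabolic subgroup generated by $\{r_i : \inner{\la}{\al_i}=0\}$ — this is a standard fact about stabilizers of (anti)dominant weights in Coxeter groups, following from the fact that $\la \in \tQ$ lies in the closure of the antidominant chamber and its stabilizer is generated by the reflections in the walls containing it. Minimal-length representatives of cosets $wW_\la$ are then characterized by $w\al_i>0$ for all $i$ with $r_i\in W_\la$, which matches.

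**The length formula.** For the identity $\ell(wt_\la) = \ell(t_\la) - \ell(w)$ when $wt_\la\in W_\af^-$: apply Lemma~\ref{lem:length} to get $\ell(wt_\la) = \sum_{\al\in R^+}|\chi(w\al<0) + \inner{\la}{\al}|$. Since $\la\in\tQ$ we have $\inner{\la}{\al}\le 0$ for all $\al\in R^+$, so $\inner{\la}{\al}\le 0$ and each summand is $|{-}|\inner{\la}{\al}| + \chi(w\al<0)|$. When $\inner{\la}{\al}<0$ (so $\inner{\la}{\al}\le -1$), the term equals $|\inner{\la}{\al}| - \chi(w\al<0)$. When $\inner{\la}{\al}=0$, the term equals $\chi(w\al<0)$; but $\la$-minimality of $w$ forces $w\al>0$ for all $\al\in R^+$ with $\inner{\la}{\al}=0$ (such $\al$ lie in the root subsystem $R_\la$ of $W_\la$, and $w$ being minimal in $wW_\la$ means $w$ sends $R_\la^+$ into $R^+$), so these terms vanish. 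Hence $\ell(wt_\la) = \sum_{\al\in R^+}|\inner{\la}{\al}| - \#\{\al\in R^+ : w\al<0\} = \sum_{\al\in R^+}|\inner{\la}{\al}| - \ell(w)$. Finally, taking $w'$ with $w'\la\in\tQ$ and applying the previous lemma gives $\ell(t_\la) = \inner{w'\la}{-2\rho} = \sum_{\al\in R^+}\inner{w'\la}{-\al} = \sum_{\al\in R^+}\inner{\la}{-w'^{-1}\al} = \sum_{\al\in R^+}|\inner{\la}{\al}|$, the last step because $\la$ antidominant makes $\inner{\la}{\gamma}\le 0$ exactly when $\gamma\in R^+$ and $w'^{-1}$ permutes $\pm R^+$; so $\ell(t_\la) - \ell(w) = \ell(wt_\la)$.

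**Main obstacle.** The genuinely substantive point — as opposed to bookkeeping with the length formula — is the characterization of $W_\la$ as the standard parabolic $\langle r_i : \inner{\la}{\al_i}=0\rangle$ and the attendant fact that $w$ minimal in $wW_\la$ is equivalent to $w$ mapping $R_\la^+$ into $R^+$. This is standard Coxeter theory but is the place where one must be careful that "stabilizer of $\la$" and "standard parabolic on the vanishing simple reflections" truly coincide, which uses that $\la$ is antidominant (lies in a chamber closure). Everything else is a direct unwinding of~\eqref{E:WaffQaff} and Lemma~\ref{lem:length}.
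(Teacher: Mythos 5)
Your proof is correct and takes essentially the same route as the paper: characterize $wt_\la\in W_\af^-$ by requiring $wt_\la\cdot\al_i\in R_\af^+$ for all $i\in I$ via the level-zero action \eqref{E:WaffQaff}, then deduce $\ell(wt_\la)=\ell(t_\la)-\ell(w)$ from Lemma~\ref{lem:length} using that $\chi(w\al<0)+\inner{\la}{\al}\le 0$ for every $\al\in R^+$. Your explicit treatment of the case $\inner{\la}{\al}=0$ (via the parabolic subsystem of $W_\la$ and minimality of $w$ in $wW_\la$) simply spells out a point the paper's proof leaves implicit.
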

\begin{proof}
We have $wt_\lambda \in W_\af^-$ if and only if $wt_\lambda \cdot
\alpha_i > 0$ for each $i \in I$.  By \eqref{E:WaffQaff} this holds
if and only if for each $i \in I$ either $\inner{\la}{\alpha_i} < 0$
or $\inner{\la}{\alpha_i} = 0$ and $w \cdot \alpha_i \in R^+$.  This
is exactly the stated condition.  To calculate $\ell(wt_\lambda)$ in
this case one observes that for each $\alpha \in R^+$ we have
$\chi(w\cdot \alpha < 0) + \inner{\la}{\alpha} \leq 0$, so by Lemma
\ref{lem:length}, $\ell(t_\la) - \ell(wt_\la)$ is equal to the
number of inversions of $w$.
\end{proof}

We say that $\la \in Q^\vee$ is regular if the stabilizer $W_\la$ is
trivial.

\begin{lem} \label{lem:wla}
For $\la\in \tQ$ regular,
$$
\ell(ut_{w \cdot \la}) = \ell(t_{\la}) - \ell(uw) + \ell(w).
$$
\end{lem}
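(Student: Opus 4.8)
The plan is to reduce the statement to Lemma~\ref{lem:grass} via a factorization of $ut_{w\cdot\la}$ that isolates its Grassmannian part. Since $\la$ is regular, the stabilizer $W_\la$ is trivial, so $w$ is automatically $\la$-minimal, and the element $t_{w\cdot\la} = w t_\la w^{-1}$ has Grassmannian representative obtained as follows. Writing $ut_{w\cdot\la} = (uw)\,t_\la\,w^{-1}$, I would like to say that $(uw)t_\la \in W_\af^-$ and that right multiplication by $w^{-1}\in W$ interacts with the length function in a controlled way. Concretely, the first step is to apply Lemma~\ref{lem:grass} to $(uw)t_\la$: since $\la\in\tQ$ is regular it is certainly $\la$-minimal for any Weyl group element, so $(uw)t_\la\in W_\af^-$ and $\ell((uw)t_\la) = \ell(t_\la) - \ell(uw)$.

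The second step is to control $\ell(ut_{w\cdot\la}) = \ell((uw)t_\la\cdot w^{-1})$. For a Grassmannian element $x\in W_\af^-$ and $v\in W$, one has the general fact that $\ell(xv) = \ell(x) + \ell(v)$, because the minimal coset representative property means no inversions are cancelled when multiplying on the right by an element of the finite Weyl group — equivalently, $\Inv(v^{-1})$, which consists of finite positive roots, is disjoint from $x^{-1}\cdot(-R_\af^+)$ in the appropriate sense. Applying this with $x = (uw)t_\la$ and $v = w^{-1}$ gives
$$
\ell(ut_{w\cdot\la}) = \ell((uw)t_\la) + \ell(w^{-1}) = \bigl(\ell(t_\la) - \ell(uw)\bigr) + \ell(w),
$$
which is exactly the claimed formula.

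Alternatively, and perhaps more in the spirit of the surrounding lemmas, I could argue directly from Lemma~\ref{lem:length}: compute $\ell(ut_{w\cdot\la})$ by noting $ut_{w\cdot\la} = (uw)t_\la w^{-1}$ has translation part $w\cdot\la$ and rotation part $u$, so Lemma~\ref{lem:length} gives $\ell(ut_{w\cdot\la}) = \sum_{\al\in R^+}|\chi(u\cdot\al<0) + \ip{w\cdot\la,\al}|$. Substituting $\al = w\cdot\beta$ and using $W$-invariance of the pairing, this becomes $\sum_{\beta}|\chi(uw\cdot\beta<0) + \ip{\la,\beta}|$ where $\beta$ ranges over $w^{-1}R^+$; then one splits according to the sign of $\ip{\la,\beta}$ (which is nonzero by regularity) and compares term-by-term with the expansions of $\ell(t_\la)$, $\ell(uw)$, and $\ell(w)$ via Lemma~\ref{lem:length} and Lemma~\ref{lem:grass}.

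The main obstacle is verifying the length-additivity $\ell(xv) = \ell(x)+\ell(v)$ for $x\in W_\af^-$, $v\in W$ cleanly — or, in the direct approach, carefully bookkeeping the signs of $\chi(uw\cdot\beta<0) + \ip{\la,\beta}$ as $\beta$ runs over $w^{-1}R^+$ versus over $R^+$, since the roots made negative by $w^{-1}$ contribute the correction term $\ell(w)$ with the correct sign only because regularity forces $|\ip{\la,\beta}|\ge 1$ to dominate. I expect the first approach to be shorter, relying on the standard fact about minimal coset representatives, which is implicit in the characterization of $W_\af^-$ already used in Lemma~\ref{lem:grass}.
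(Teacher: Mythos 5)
Your first argument is exactly the paper's proof: write $ut_{w\cdot\la}=(uw)t_\la w^{-1}$, apply Lemma~\ref{lem:grass} to $(uw)t_\la$ (regularity making any element $\la$-minimal), and then use length-additivity $\ell(xv)=\ell(x)+\ell(v)$ for $x\in W_\af^-$ and $v\in W$, which is the standard fact about minimal coset representatives that the paper likewise invokes without further justification. So the proposal is correct and takes essentially the same route.
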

\begin{proof}
We have $ut_{w\cdot \la} = uwt_\la w^{-1}$.  By
Lemma~\ref{lem:grass}, $uwt_\la \in W_\af^-$ and $\ell(uwt_\la) =
\ell(t_\la) - \ell(uw)$.  But $\ell(uwt_\la w^{-1}) = \ell(uwt_\la)
+ \ell(w^{-1})$ and $\ell(w^{-1}) = \ell(w)$ so the claim follows.
\end{proof}

The following result can be found in \cite[Lemma 4.3]{BFP} and
\cite[Lemma 3.2]{Mar}.

\begin{lem}
\label{lem:BFP} For any positive root $\alpha \in R^+$, we have
$\ell(r_\alpha) \leq \ip{\alpha^\vee, 2\rho} -1$.  In the case of a
simple laced root system, equality always holds.
\end{lem}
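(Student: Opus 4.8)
The plan is to compute $\ell(r_\alpha)$ directly as the number of roots $\gamma\in R^+$ with $r_\alpha\cdot\gamma<0$. Since $r_\alpha\cdot\alpha=-\alpha$, the root $\alpha$ itself is always such an inversion, and I would single it out from the count. The key claim to establish is that every inversion $\gamma\in R^+$ of $r_\alpha$ satisfies $\inner{\alpha^\vee}{\gamma}\ge 1$. To see this, note that $r_\alpha\cdot\gamma=\gamma-\inner{\alpha^\vee}{\gamma}\alpha\in -R^+$ forces $\inner{\alpha^\vee}{\gamma}\alpha-\gamma$ to be a positive root; writing $\alpha$ and $\gamma$ in the basis of simple roots with nonnegative integer coefficients, every coefficient of $\inner{\alpha^\vee}{\gamma}\alpha-\gamma$ is then $\ge 0$, and comparing the coefficient of some simple root that occurs with positive coefficient in $\gamma$ forces $\inner{\alpha^\vee}{\gamma}>0$, hence $\ge 1$ since it is an integer.

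Granting the key claim, I would bound
$$
\ell(r_\alpha)-1 \;=\; \bigl|\{\gamma\in R^+\setminus\{\alpha\} : r_\alpha\cdot\gamma<0\}\bigr| \;\le\; \sum_{\substack{\gamma\in R^+\setminus\{\alpha\}\\ r_\alpha\cdot\gamma<0}} \inner{\alpha^\vee}{\gamma},
$$
and then evaluate the right-hand sum \emph{exactly}. The point is that $r_\alpha$ restricts to an involution of the complementary set $J:=\{\gamma\in R^+ : r_\alpha\cdot\gamma>0\}$, on which $\inner{\alpha^\vee}{r_\alpha\cdot\gamma}=-\inner{\alpha^\vee}{\gamma}$; hence the pairing $\inner{\alpha^\vee}{\cdot}$ sums to $0$ over $J$ (two-element orbits cancel, fixed points contribute $0$). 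Since $\alpha\notin J$, we obtain the disjoint decomposition $R^+=\{\alpha\}\sqcup\{\gamma\in R^+\setminus\{\alpha\} : r_\alpha\cdot\gamma<0\}\sqcup J$, so that, using $2\rho=\sum_{\gamma\in R^+}\gamma$ and $\inner{\alpha^\vee}{\alpha}=2$,
$$
\sum_{\substack{\gamma\in R^+\setminus\{\alpha\}\\ r_\alpha\cdot\gamma<0}} \inner{\alpha^\vee}{\gamma} \;=\; \inner{\alpha^\vee}{2\rho}-\inner{\alpha^\vee}{\alpha}-0 \;=\; \inner{\alpha^\vee}{2\rho}-2.
$$
This yields $\ell(r_\alpha)\le\inner{\alpha^\vee}{2\rho}-1$, the asserted inequality.

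For the equality in the simply-laced case I would observe that then $\inner{\alpha^\vee}{\gamma}\in\{-1,0,1\}$ for every root $\gamma\ne\pm\alpha$; combined with the lower bound $\inner{\alpha^\vee}{\gamma}\ge 1$ from the key claim, each summand above is exactly $1$, so the displayed inequality is an equality and $\ell(r_\alpha)=\inner{\alpha^\vee}{2\rho}-1$.

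The only step needing genuine care is the involution argument evaluating the sum over $J$: one must check that $r_\alpha$ maps $\{\gamma\in R^+ : r_\alpha\cdot\gamma>0\}$ into itself (immediate, since $r_\alpha$ is an involution) and that the pairing with $\alpha^\vee$ is negated along each $r_\alpha$-orbit, so that nothing is double counted or dropped when $R^+$ is decomposed. A less efficient alternative would be induction on the height of $\alpha$, using that a non-simple $\alpha$ admits a simple reflection lowering its height and that $r_\alpha$ is conjugate to the reflection in the shorter root; but the direct count above is cleaner and makes the simply-laced equality transparent.
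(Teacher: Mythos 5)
Your proof is correct. There is no in-paper argument to compare with: the paper states the lemma and simply cites \cite[Lemma 4.3]{BFP} and \cite[Lemma 3.2]{Mar}, and your inversion count is essentially the standard argument found in those references. The key steps all check out: if $\gamma\in R^+$ is an inversion of $r_\alpha$, then $\ip{\alpha^\vee,\gamma}\alpha-\gamma\in R^+$ forces $\ip{\alpha^\vee,\gamma}\ge 1$ by your coefficient comparison; $r_\alpha$ is an involution of the set of non-inversions in $R^+$ (it does not contain $\alpha$) which negates the pairing with $\alpha^\vee$, so that set contributes $0$ to $\ip{\alpha^\vee,2\rho}$, yielding $\sum\ip{\alpha^\vee,\gamma}=\ip{\alpha^\vee,2\rho}-2$ over the inversions other than $\alpha$ and hence the inequality; and in the simply laced case $\ip{\alpha^\vee,\gamma}\in\{-1,0,1\}$ for $\gamma\ne\pm\alpha$ pins each of those summands to exactly $1$, giving equality.
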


\section{The superregular affine Bruhat order}
We call an element $\lambda \in Q^\vee$ \textit{superregular} if
$|\inner{\lambda}{\alpha}| \gg 0$ for every $\alpha \in
R^+$.\footnote{For our purposes $|\inner{\lambda}{\alpha}| > 2|W| +
2$ is sufficient.} In particular, superregular elements are regular.
We say that $x = wt_\lambda\in W_\af$ is superregular if $\lambda$
is. We fix once and for all a set of superregular elements
$W_\af^\sreg \subset W_\af$.

In the rest of the paper we will say a property, or result holds for
``sufficiently superregular'' elements $W_\af^\ssreg \subset W_\af$
if there is a positive constant $k \in \Z$ such that the property,
or result holds for all $x \in W_\af^\sreg$ satisfying
\begin{quote}
if $y \in W_\af$ satisfies $y < x$ and $\ell(x) - \ell(y) < k$ then
$y \in W_\af^\sreg$.
\end{quote}
We will in general not specify the constant $k$ explicitly but the
computation of $k$ will in every case be trivial.  The notation
$W_\af^\ssreg$ will thus depend on context.

We say that $x = wt_{v\lambda} \in W_\af$ is in the {\it
$v$-chamber} if $\lambda$ is regular antidominant.  We will say that
$x$ and $x'$ are in the same chamber if they are both in the
$v$-chamber for some $v \in W$.


\begin{prop}\label{prop:superregular}
Let $\lambda \in \tQ$ be antidominant and superregular and let $x =
wt_{v\lambda}$. Then $y = xr_{v\alpha + n \delta} \lessdot x$ if and
only if one of the following conditions holds:
\begin{enumerate}
\item
$\ell(wv) = \ell(wvr_\alpha) - 1$ and $n = \inner{\lambda}{\alpha}$,
giving $y = wr_{v\alpha}t_{v\lambda}$.
\item
$\ell(wv) = \ell(wvr_\alpha) + \inner{\alpha^\vee}{2\rho} - 1$ and
$n = \inner{\lambda}{\alpha} + 1$ giving $y =
wr_{v\alpha}t_{v(\lambda+\alpha^\vee)}$.
\item
$\ell(v) = \ell(vr_\alpha) + 1$ and $n = 0$, giving $y =
wr_{v\alpha}t_{vr_\alpha \cdot \lambda}$

\item
$\ell(v) = \ell(vr_\alpha) - \inner{\alpha^\vee}{2\rho} + 1$ and $n
= - 1$, giving $y = wr_{v\alpha}t_{vr_\alpha(\lambda+\alpha^\vee)}$.
\end{enumerate}

\end{prop}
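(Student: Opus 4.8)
The plan is to reduce everything to the length formula of Lemma~\ref{lem:length} together with the covering relation in the affine Bruhat order: if $\beta \in R_\af^+$ and $x\beta \in -R_\af^+$, then $xr_\beta < x$, and this is a covering relation precisely when $\ell(xr_\beta) = \ell(x) - 1$. So the first step is to pin down, for $x = wt_{v\lambda}$, exactly which positive real roots $\beta = v\alpha + n\delta$ (with $\alpha \in R^+$, or $\alpha \in -R^+$ with $n > 0$) are inversions of $x$, i.e. satisfy $x\cdot\beta \in -R_\af^+$. Using \eqref{E:WaffQaff} one computes $x\cdot(v\alpha + n\delta) = wv\cdot\alpha + (n - \inner{v\lambda}{v\alpha})\delta = wv\cdot\alpha + (n - \inner{\lambda}{\alpha})\delta$. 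Because $\lambda$ is antidominant and superregular, $\inner{\lambda}{\alpha}$ is a large negative number for $\alpha \in R^+$, so the inversion condition on $n$ localizes $n$ near $\inner{\lambda}{\alpha}$ (for $\alpha \in R^+$) or near $0$ (for $\alpha \in -R^+$); this is the same bookkeeping that produces Lemma~\ref{lem:length}.

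The second step is to write down $y = xr_\beta$ explicitly. By \eqref{E:affinereflection}, $r_{v\alpha + n\delta} = r_{v\alpha} t_{n (v\alpha)^\vee}$, and since $t_{v\mu} = w t_\mu w^{-1}$ conjugation identities let one rewrite $x r_{v\alpha+n\delta}$ in the form $w r_{v\alpha} t_{(\text{something})}$. One checks directly that the four values $n \in \{\inner{\lambda}{\alpha},\ \inner{\lambda}{\alpha}+1,\ 0,\ -1\}$ are exactly the candidates: for $n = \inner{\lambda}{\alpha}$ the translation part stays $v\lambda$; for $n = \inner{\lambda}{\alpha}+1$ it becomes $v(\lambda + \alpha^\vee)$; for $n = 0$ it becomes $vr_\alpha\cdot\lambda$; for $n = -1$ it becomes $vr_\alpha(\lambda+\alpha^\vee)$ — these are the four $y$'s listed. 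The remaining $n$'s in the allowed range give length drops that are large (of order $\inner{\alpha^\vee}{2\rho}$ away from $1$, or larger), hence never produce covers; this is where superregularity is used to rule out interference between the ``$n$ near $\inner{\lambda}{\alpha}$'' block and the ``$n$ near $0$'' block.

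The third step is the length computation: for each of the four candidate $y$'s, apply Lemma~\ref{lem:length} (or better, Lemma~\ref{lem:wla}, since $\lambda$ is regular antidominant and superregular, so both $x = wt_{v\lambda} = (wv)t_\lambda v^{-1}$ and the various $y$'s have translation parts in the $W$-orbit of the regular antidominant $\lambda$) to get $\ell(x) = \ell(t_\lambda) - \ell(wv) + \ell(v)$ and the analogous expressions for $\ell(y)$. Subtracting, the $\ell(t_\lambda)$ terms cancel in cases (1) and (3), while in cases (2) and (4) the difference $\ell(t_{\lambda+\alpha^\vee}) - \ell(t_\lambda) = \inner{\alpha^\vee}{2\rho}$ appears (using the second displayed lemma after Lemma~\ref{lem:length}); imposing $\ell(x) - \ell(y) = 1$ then yields exactly the stated Weyl-group length conditions $\ell(wv) = \ell(wvr_\alpha) \mp 1$ resp. shifted by $\inner{\alpha^\vee}{2\rho}$, and $\ell(v) = \ell(vr_\alpha) \mp 1$ resp. shifted. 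One must also confirm that each stated condition does force $\beta$ to be a genuine inversion (so that $y < x$ and not $y > x$); this follows since e.g. $\ell(wvr_\alpha) < \ell(wv)$ means $wv\cdot\alpha < 0$, making $x\cdot\beta$ negative for the relevant $n$.

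The main obstacle I expect is Step~2: cleanly proving that no $n$ other than the four listed values gives a covering relation. The danger is double-counting or a ``carry'' between the two blocks of relevant $n$, and one has to invoke the quantitative superregularity bound ($|\inner{\lambda}{\alpha}| > 2|W|+2$, or whatever threshold is fixed) to guarantee that the length contributions from $\alpha$ and from the other positive roots don't conspire to bring some off-block $n$ back down to a length drop of exactly $1$. Everything else is a bounded, mechanical computation with \eqref{E:WaffQaff}, \eqref{E:affinereflection}, and the length lemmas.
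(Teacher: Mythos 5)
Your proposal is correct and follows essentially the same route as the paper's proof: rewrite $y = xr_{v\alpha+n\delta}$ as $wr_{v\alpha}t_{v(\lambda+(n-\inner{\lambda}{\alpha})\alpha^\vee)}$, compute lengths via Lemma~\ref{lem:wla} and the length of translations, bound with Lemma~\ref{lem:BFP} to pin down the four values of $n$, and use superregularity to keep the two blocks of relevant $n$ from interfering. The one step you flag as the main obstacle---excluding the intermediate values of $n$---is handled in the paper by observing that $n\mapsto\ell(t_{v(\lambda+n\alpha^\vee)})$ is convex, so it is far below $\ell(t_{v\lambda})$ unless $n$ is close to $0$ or to $-\inner{\lambda}{\alpha}$, which is exactly the quantitative separation you anticipated needing.
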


\begin{proof}
Suppose $y = xr_{v\alpha + n \delta} \lessdot x$.  For $n \in \Z$,
define $f(n):= \ell(t_{v( \lambda + n \alpha^\vee)})$.  By
Lemma~\ref{lem:length}, we have $f(n) = \sum_{\beta \in R^+} |
\ip{\lambda + n \alpha^\vee, v^{-1} \cdot \beta}|$ which is a convex
function of $n$.  By superregularity of $\lambda$, we have
\begin{equation}
\label{E:fn} f(n) = f(0) - n\ip{\alpha^\vee, 2\rho}
\end{equation}
for sufficiently small values of $n$.  Also we have
$$
f(-\ip{\lambda,\alpha}) = \ell(t_{vr_\alpha \cdot \lambda}) = f(0)
$$
and thus by superregularity
$$
f(-\ip{\lambda,\alpha} - n) = f(0) - n\ip{\alpha^\vee, 2\rho}
$$
for sufficiently small values of $n$.  By convexity we conclude that
if $n$ is not close to either $0$ or $-\ip{\lambda,\alpha}$ then
$f(n)$ is not close to $f(0)$.  Now write
$$
y = wt_{v\lambda}r_{v\alpha} t_{nv\alpha^\vee} = wr_{v\alpha}
t_{v(\lambda + (n - \inner{\lambda}{\alpha})\alpha^\vee)} .
$$
Since $|\ell(wr_{v\alpha}  t_{v(\lambda + (n -
\inner{\lambda}{\alpha})\alpha^\vee)}) - \ell(t_{v(\lambda + (n -
\inner{\lambda}{\alpha})\alpha^\vee)})| \leq |W|$ by superregularity
and convexity we may thus assume that either (a) $\lambda + (n -
\inner{\lambda}{\alpha})\alpha^\vee$ is antidominant, or (b)
$\lambda - n\alpha^\vee$ is antidominant.  In case (a), using
Lemma~\ref{lem:wla}
\begin{align*}
\ell(y) &= \ell(wvr_\alpha  t_{\lambda + (n -
\inner{\lambda}{\alpha})\alpha^\vee}v^{-1}) \\
&= -\ell(wvr_\alpha)+\ell(v^{-1}) + \ell(t_\lambda) +
(n - \inner{\lambda}{\alpha})\inner{\alpha^\vee}{2\rho} \\
&= \ell(x) +\ell(wv) - \ell(wvr_\alpha) + (n -
\inner{\lambda}{\alpha})\inner{\alpha^\vee}{2\rho}.
\end{align*}
Using Lemma~\ref{lem:BFP}, we deduce that $n =
\inner{\lambda}{\alpha}$ or $n = \inner{\lambda}{\alpha} + 1$ giving
cases (1) and (2) of the Lemma.  Similarly, in case (b), we obtain
cases (3) and (4) of the Lemma.
\end{proof}

Fix a sufficiently superregular antidominant element $\la \in \tQ$.
Let $G_\la$ denote the graph obtained from the restriction of the
Hasse diagram of the Bruhat order on $W_\af$ to the superregular
elements $x \in
W_\af^\sreg$ such that $x \le t_{w\lambda}$ for some $w \in W$.  
We will further direct the edges of $G_\lambda$ downwards (in the
direction of smaller length), so that the $|W|$ vertices $x =
t_{v\lambda}$ are the source vertices.  By
Lemma~\ref{prop:superregular} the edges of $G_\lambda$ either stay
within the same chamber (cases (1) and (2)) or go between different
chambers (cases (3) and (4)).  We call the first kind of edge (or
cover) {\it near} and denote such a cover by $y \lessdot_n x$ and
call the second kind {\it far}, denoting them by $y \lessdot_f x$.
By definition the graph obtained from $G_\lambda$ by keeping only
the near edges is a union of the connected components $G_\lambda^v$
which contain $t_{v\lambda}$, for $v \in W$.

The following combinatorial result makes explicit the relationship
between the quantum Bruhat graph and the superregular affine Bruhat
order.

\begin{cor}\label{cor:path}
Suppose $\lambda \in \tQ$ is sufficiently superregular.    Each edge
$wt_{v\lambda} \to wr_{v\alpha}t_{v\lambda}$ (or $wt_{v\lambda} \to
wr_{v\alpha}t_{v(\lambda+\alpha^\vee)}$) in $G_\lambda^v$ is
canonically associated to the edge $wv \to wvr_\alpha$ in $D(W)$.
Thus each sufficiently short path $\Pa$ in $D(W)$ from $v$ to $w$
induces a unique path $\Q$ in $G_\lambda^w$, which goes from
$t_{v\lambda}$ to $wv^{-1}t_{v\mu}$ where $\mu$ equals $\la$ plus
the sum of $\alpha^\vee$ over all edges in $\Q$ which are of type
(2) (as in Proposition~\ref{prop:superregular}).
\end{cor}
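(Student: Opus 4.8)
The plan is to read the correspondence off Proposition~\ref{prop:superregular} together with the definition of the quantum Bruhat graph, and then to iterate. First I would fix a dictionary on vertices: to an element $g\,t_{v\mu}$ of the $v$-chamber, with $g\in W$ and $\mu\in\tQ$, associate $gv\in W$, regarded as a vertex of $D(W)$; then $t_{v\lambda}\mapsto v$, and, since $r_{v\alpha}=v r_\alpha v^{-1}$, one has $(g r_{v\alpha})v=gv\,r_\alpha$. Now fix $g\in W$ and an antidominant superregular $\mu$ (in the iteration, $\lambda$ plus a bounded sum of positive coroots). By Proposition~\ref{prop:superregular} applied with $\lambda$ replaced by $\mu$, the near covers of $x=g\,t_{v\mu}$ are precisely: the type-(1) cover $g r_{v\alpha}\,t_{v\mu}$, occurring exactly when $\ell(gv\,r_\alpha)=\ell(gv)+1$; and the type-(2) cover $g r_{v\alpha}\,t_{v(\mu+\alpha^\vee)}$, occurring exactly when $\ell(gv\,r_\alpha)=\ell(gv)+1-\inner{\alpha^\vee}{2\rho}$. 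These two conditions on $\alpha\in R^+$ are exactly the conditions for $gv\to gv\,r_\alpha$ to be, respectively, a Bruhat edge or a quantum edge of $D(W)$. Hence the assignment taking $\alpha$ to its associated near cover of $x$ is a bijection from the outgoing edges of $gv$ in $D(W)$ onto the near covers of $x$, carrying Bruhat edges to type-(1) covers and quantum edges to type-(2) covers, and compatible with the vertex dictionary (the target $g r_{v\alpha}t_{v\mu'}$ maps to the target $gv\,r_\alpha$). This is the asserted canonical association of edges.

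Next I would build $\Q$ from a directed path $\Pa\colon v=c_0\to c_1\to\cdots\to c_k=w$ in $D(W)$ by induction, using that every near cover strictly lowers length (both type (1) and type (2) are cover relations $y\lessdot x$). Set $x_0=t_{v\lambda}$, $\mu_0=\lambda$, $g_0=\id$. Given $x_{i-1}=g_{i-1}\,t_{v\mu_{i-1}}$ with $g_{i-1}v=c_{i-1}$, write $c_i=c_{i-1}r_{\beta_i}$ with $\beta_i\in R^+$; by the first paragraph (with $g=g_{i-1}$, $\mu=\mu_{i-1}$, $\alpha=\beta_i$) there is a unique near cover $x_i\lessdot x_{i-1}$ matching this edge: $x_i=g_{i-1}r_{v\beta_i}\,t_{v\mu_{i-1}}$ and $\mu_i=\mu_{i-1}$ if $c_{i-1}\to c_i$ is a Bruhat edge, and $x_i=g_{i-1}r_{v\beta_i}\,t_{v(\mu_{i-1}+\beta_i^\vee)}$ and $\mu_i=\mu_{i-1}+\beta_i^\vee$ if it is a quantum edge. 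Putting $g_i=g_{i-1}r_{v\beta_i}$ gives $g_i v=c_i$, and $\mu_i$ equals $\lambda$ plus a sum of at most $i$ positive coroots. Since $k$ is bounded by the constant defining $W_\af^\ssreg$ and $\lambda$ is sufficiently superregular, each $\mu_i$ remains antidominant and superregular enough for Proposition~\ref{prop:superregular} to be re-applied at the next stage, and each $x_i$ is superregular with $x_i\le x_{i-1}\le\cdots\le t_{v\lambda}$, hence a vertex of $G_\lambda$ joined to $t_{v\lambda}$ by the near edges built so far, so $x_i\in G_\lambda^v$. Thus $\Q=(x_0\gtrdot x_1\gtrdot\cdots\gtrdot x_k)$ is the required path; its terminal vertex is $g_k\,t_{v\mu_k}$ with $g_k=wv^{-1}$ (as $g_k v=c_k=w$) and $\mu_k$ equal to $\lambda$ plus the sum of the $\beta_i^\vee$ over the steps that used a quantum edge, i.e.\ over the type-(2) edges of $\Q$ --- exactly the form $wv^{-1}t_{v\mu}$ asserted. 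Uniqueness is immediate, since the induction involves no choices: the type of each edge of $\Pa$ is determined, and the matching near cover is then unique by the bijection of the first paragraph. Reversing the construction, every sufficiently short path of near covers in $G_\lambda^v$ starting at $t_{v\lambda}$ arises from a unique such $\Pa$.

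The one place needing genuine care --- which I would flag as the main obstacle --- is the superregularity bookkeeping in the second paragraph: one must verify that along a bounded-length path the translation part $\mu_i$ drifts from $\lambda$ by only a bounded amount and stays antidominant, so that Proposition~\ref{prop:superregular} may legitimately be invoked with $\lambda$ replaced by $\mu_i$ at every stage. This is exactly what ``$\lambda$ sufficiently superregular'' and ``$\Pa$ sufficiently short'' are arranged to guarantee, and the required bound on $k$ is read straight off the (unspecified but elementary) constants in the definitions of $W_\af^\sreg$ and $W_\af^\ssreg$; for instance, $|\inner{\lambda}{\alpha}|>2|W|+2$ for all $\alpha\in R^+$, together with a bound on $k$, already keeps every pairing $\inner{\mu_i}{\alpha_j}$ of the correct sign. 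Everything else is the verbatim translation between the two combinatorial pictures carried out in the first two paragraphs.
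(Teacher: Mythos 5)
Your proposal is correct and follows exactly the route of the paper's (very terse) proof: cases (1) and (2) of Proposition~\ref{prop:superregular} are matched with the Bruhat and quantum edge conditions of $D(W)$ via the dictionary $g\,t_{v\mu}\leftrightarrow gv$, and the path is built by iterating this comparison, with the superregularity bookkeeping you flag being precisely what ``sufficiently superregular'' and ``sufficiently short'' are designed to absorb. (Your conclusion that $\Q$ lies in $G_\lambda^v$, the component of its starting vertex $t_{v\lambda}$, is the intended reading of the statement, consistent with the paper's later use of $x_\Pa$ for paths beginning at $w$.)
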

\begin{proof}
The result follows from comparing the definition of the quantum
Bruhat graph with cases (1) and (2) of
Proposition~\ref{prop:superregular}.
\end{proof}

We use the phrase ``sufficiently short'' in Corollary \ref{cor:path}
since a very long path $\Pa$ in $D(W)$ will give rise to a path $\Q$
which leaves $G_\lambda$, that is, uses non-superregular elements.

\begin{remark} \label{R:covers}
\begin{enumerate}
\item
In all cases of Proposition \ref{prop:superregular}, the positive
affine root for the reflection $r_{v\alpha+n\delta}$ is given by
$-v\alpha-n\delta$.
\item
Every superregular element has a unique factorization $ w t_\la
v^{-1}$ where $v,w\in W$ and $\la$ is antidominant superregular. In
passing to a Bruhat cocover of $wt_\la v^{-1}$, $\la$ either stays
the same or is replaced by $\la+\alpha^\vee$; in the ``near" case
$w$ is replaced by $wr_\alpha$ with associated quantum Bruhat edge
$w\to wr_\alpha$, while in the ``far" case $v$ is replaced by
$vr_\alpha$, with associated quantum Bruhat edge
$vr_\alpha\rightarrow v$.
\end{enumerate}
\end{remark}

Given a (sufficiently short) path $\Pa \in D(W)$ beginning at $w \in
W$, we denote by $x_\Pa \in W_\af$ the endpoint of the path in
$G_\lambda^w$ associated to $\Pa$ via Corollary~\ref{cor:path}.  The
following Lemma is a translation of \cite[Lemma 1]{Pos} into our
language.

\begin{lem}\label{lem:Pos}
Suppose $\Pa$ and $\Pa'$ are two paths in $D(W)$ from $w$ to $v$ of
shortest length.  Then $x_\Pa = x_{\Pa'}$.
\end{lem}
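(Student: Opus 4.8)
The plan is to make the endpoint $x_\Pa$ explicit using Corollary~\ref{cor:path} and Proposition~\ref{prop:superregular}, and thereby reduce the statement to the fact that the weight of a shortest path in the quantum Bruhat graph depends only on its endpoints --- which is precisely \cite[Lemma 1]{Pos}.

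First I would compute $x_\Pa$. Write $\Pa$ as $w = u_0 \to u_1 \to \cdots \to u_m = v$ in $D(W)$, with $u_j = u_{j-1} r_{\alpha_j}$ for $\alpha_j \in R^+$, and let $J(\Pa) = \{\,j : \ell(u_j) < \ell(u_{j-1})\,\}$ index its down-edges. Comparing cases (1)--(2) of Proposition~\ref{prop:superregular} with the definition of $D(W)$, an edge of the lifted path $\Q$ in $G_\lambda^w$ is of type~(1) exactly when the corresponding edge of $\Pa$ is length-increasing and of type~(2) exactly when it is length-decreasing, and only the type~(2) edges move the translation part, each by $\alpha_j^\vee$. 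Hence Corollary~\ref{cor:path} gives
\[
x_\Pa \;=\; v w^{-1}\, t_{w\mu_\Pa}, \qquad \mu_\Pa \;=\; \lambda + \sum_{j\in J(\Pa)} \alpha_j^\vee .
\]
(A shortest path in $D(W)$ between two fixed vertices has length bounded independently of $\lambda$, so for $\lambda$ sufficiently superregular every shortest path lifts and $x_\Pa$, $x_{\Pa'}$ are defined.)

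Next I would note that $vw^{-1}$ and $\lambda$ depend only on $w$ and $v$, so $x_\Pa = x_{\Pa'}$ is equivalent to $\mu_\Pa = \mu_{\Pa'}$, that is, to
\[
\sum_{j\in J(\Pa)} \alpha_j^\vee \;=\; \sum_{j\in J(\Pa')} \alpha_j^\vee .
\]
Assigning weight $0$ to a length-increasing edge of $D(W)$ and weight $\alpha^\vee$ to a length-decreasing edge $u \to ur_\alpha$, this says exactly that the total weight of a shortest path in $D(W)$ is a function of its endpoints alone. That is \cite[Lemma 1]{Pos}, and the lemma follows at once.

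The main obstacle is thus \cite[Lemma 1]{Pos} itself, which I would simply quote; on our side the only things to verify are the identification of type~(2) edges of $\Q$ with the down-edges of $\Pa$ (immediate from Proposition~\ref{prop:superregular}) and the matching of weight and sign conventions with \cite{Pos}. If a self-contained treatment were desired, one would reprove \cite[Lemma 1]{Pos} from the local confluence (``diamond'') property of the quantum Bruhat graph (cf.\ \cite{BFP, Pos}): any two shortest paths with common endpoints are linked by a finite chain of elementary diamond exchanges, each of which preserves both the length and the total weight, so the weight is an invariant of the endpoints.
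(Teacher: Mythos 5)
Your proposal is correct and takes essentially the same route as the paper: the authors give no separate argument, stating only that the lemma is a translation of \cite[Lemma 1]{Pos} into their language, and your explicit computation of $x_\Pa = vw^{-1}t_{w\mu_\Pa}$ via Corollary~\ref{cor:path}, with the reduction to the path-independence of $\sum_{j\in J(\Pa)}\alpha_j^\vee$ for shortest paths, is precisely that translation. Nothing further is needed.
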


\begin{thm}\label{thm:tilted}
Each tilted Bruhat order $D_u(W)$ is dual to an induced suborder of
affine Bruhat order.
\end{thm}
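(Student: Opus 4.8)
The plan is to fix $u \in W$ and a sufficiently superregular antidominant $\lambda \in \tQ$, and to exhibit an explicit order-preserving bijection between $D_u(W)$ and an induced suborder of the affine Bruhat order on $W_\af$ that reverses the order. The natural candidate map sends $v \in W$ to the endpoint $x_\Pa \in W_\af$ of (any) shortest path $\Pa$ in $D(W)$ from $u$ to $v$, constructed via Corollary~\ref{cor:path}. The first thing to check is that this is well-defined: by Lemma~\ref{lem:Pos} the endpoint $x_\Pa$ does not depend on the choice of shortest path $\Pa$, so we get a well-defined element $\phi_u(v) := x_\Pa \in G_\lambda^u$. Here we need $\lambda$ to be sufficiently superregular so that all shortest paths from $u$ actually stay inside $G_\lambda$; since $W$ is finite, the relevant bound on superregularity depends only on $W$.

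Next I would verify that $\phi_u$ is injective and reverses relations. The key is that under $\phi_u$, appending one more edge $w \to wr_\alpha$ to a shortest path in $D(W)$ corresponds, via Corollary~\ref{cor:path} and Proposition~\ref{prop:superregular}, to passing to a Bruhat \emph{cocover} in $W_\af$ within $G_\lambda^u$ (a ``near'' cover of type (1) or (2)). Thus a shortest path of length $k$ in $D(W)$ from $u$ to $v$ maps to a saturated descending chain of length $k$ from $t_{u\lambda}$ to $\phi_u(v)$ in the affine Bruhat order. Consequently $\ell(t_{u\lambda}) - \ell(\phi_u(v))$ equals the distance from $u$ to $v$ in $D(W)$, which is the rank of $v$ in the graded poset $D_u(W)$; this rank statistic immediately gives injectivity of $\phi_u$. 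To see that $w \prec_u v$ in $D_u(W)$ implies $\phi_u(v) < \phi_u(w)$ in $W_\af$: by definition of the tilted order there is a shortest path in $D(W)$ from $u$ to $v$ through $w$, so concatenating the initial segment (from $u$ to $w$) with the final segment gives a descending chain in $G_\lambda^u$ from $t_{u\lambda}$ through $\phi_u(w)$ down to $\phi_u(v)$, whence $\phi_u(v) < \phi_u(w)$.

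The converse — that $\phi_u(v) < \phi_u(w)$ forces $w \prec_u v$ — is where the real work lies, and I expect this to be the main obstacle. One direction of interpreting ``induced suborder'' only requires that covers go to cocovers, but to get a genuine induced suborder we must also rule out ``extra'' Bruhat relations in $W_\af$ between elements of $\phi_u(W)$ that are not reflected in $D_u(W)$. The strategy is: given $\phi_u(w) > \phi_u(v)$ in $W_\af$, choose a saturated chain realizing it inside $G_\lambda$; using superregularity and Proposition~\ref{prop:superregular}, every cover along this chain within the component $G_\lambda^u$ is ``near'' and hence of type (1) or (2), so the chain reads off as a path in $D(W)$ from $u$ to $v$ passing through $w$. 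The length of this path equals $\ell(\phi_u(w)) - \ell(\phi_u(v)) + (\text{dist}(u,w))$, and by the rank identity above this forces the path to be of shortest length, giving $w \prec_u v$. The subtle point is ensuring the saturated chain between $\phi_u(v)$ and $\phi_u(w)$ can be taken inside $G_\lambda^u$ rather than wandering into other chambers $G_\lambda^{u'}$; this is exactly where ``sufficiently superregular'' is used, since a far cover changes the $v$-chamber by a length-$(\inner{\alpha^\vee}{2\rho}-1)$-or-so jump and cannot be undone by near covers without leaving the superregular region — so a chain connecting two elements of the single chamber $G_\lambda^u$ staying in $G_\lambda$ must stay within $G_\lambda^u$.

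Finally, I would note that ``dual to'' in the statement accounts for the order reversal (shortest paths in $D(W)$ go \emph{up} in rank but \emph{down} in affine length), and remark that taking $u = \id$ recovers the known fact that ordinary Bruhat order on $W$ embeds as an induced suborder of the affine Bruhat order via $v \mapsto v t_\lambda$ with $\lambda$ superregular antidominant, consistent with Lemma~\ref{lem:grass}.
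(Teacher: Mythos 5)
Your construction is the same as the paper's own proof, which consists precisely of defining $x(u,w)=x_\Pa$ for a shortest path $\Pa$ from $u$ to $w$, invoking Lemma~\ref{lem:Pos} for well-definedness, and citing Proposition~\ref{prop:superregular}; so the substance of your write-up lies in the two directions you spell out. The forward direction and the rank identity $\ell(t_{u\lambda})-\ell(\phi_u(v))=d(u,v)$ (with $d$ denoting distance in $D(W)$) are fine. Two repairs are needed, one cosmetic and one at the step you yourself flag as the crux. First, injectivity does not follow from the rank statistic (distinct elements of $W$ can be equidistant from $u$); it follows instead from Corollary~\ref{cor:path}: $\phi_u(v)$ has finite Weyl part $vu^{-1}$ in the decomposition $W_\af=W\ltimes Q^\vee$, which recovers $v$.

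Second, your reason for excluding far covers is not correct as stated: a far cover is still a cover in the affine Bruhat order, so it drops length by exactly $1$ (it is the chamber element, not the affine element, whose length jumps by $1$ or $\ip{\alpha^\vee,2\rho}-1$), and a chain could in principle return to the chamber $u$ by a later far cover with the same reflection, since both $u\to ur_\alpha$ and $ur_\alpha\to u$ can be edges of $D(W)$ (e.g.\ when $\alpha^\vee$ is a simple coroot). So ``cannot be undone'' requires an argument, and the right one is a count for which you already have all the ingredients. Write each term of a saturated chain from $\phi_u(w)$ down to $\phi_u(v)$ as $z\,t_{u'\mu}$ with $\mu$ antidominant superregular (all terms remain superregular, since each cover changes $\mu$ by at most one coroot, so Proposition~\ref{prop:superregular} applies at every step). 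By that proposition and Remark~\ref{R:covers}(2), a near step fixes the chamber $u'$ and moves the vertex $zu'$ along an edge of $D(W)$ (from old to new), while a far step fixes $zu'$ and moves the chamber. Since the chain begins at vertex $w$ and ends at vertex $v$, its near steps concatenate to a path in $D(W)$ from $w$ to $v$ of length $\bigl(d(u,v)-d(u,w)\bigr)-N_f$, where $N_f$ is the number of far steps. Hence $d(w,v)\le d(u,v)-d(u,w)-N_f$, and the triangle inequality $d(u,v)\le d(u,w)+d(w,v)$ forces $N_f=0$ and $d(u,v)=d(u,w)+d(w,v)$, which is exactly the statement $w\prec_u v$. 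With this substitution your proof is complete and coincides with the argument the paper leaves implicit.
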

\begin{proof}
Let $x(u,w) \in W_\af$ be the vertex of $G_\lambda^u$ (with
$\lambda$ sufficiently superregular) satisfying $x(u,w) = x_\Pa$ for
a shortest path $\Pa$ from $u$ to $w$ in $D(W)$. By
Lemma~\ref{lem:Pos}, $x(u,w)$ does not depend on the choice of
$\Pa$.  By Proposition \ref{prop:superregular}, the partial order
$D_u(W)$ is canonically isomorphic via the map $w \mapsto x(u,w)$ to
the dual of the affine Bruhat order restricted to elements $\{x(u,w)
\in W_\af \mid w \in W\}$.
\end{proof}

\section{Affine Bruhat operators}
For $X\subset W_\af$ let $S[X]=\bigoplus_{x\in X} S x$ be the free
left $S$-module with basis $X$. For each $\mu \in P$ and
$x=wt_{v\la}\in W_\af^\ssreg$, the \textit{near equivariant affine
Bruhat operator} is the left $S$-module homomorphism
$B^\mu:S[W_\af^\ssreg]\to S[W_\af^\sreg]$ defined by
$$
B^\mu(x) = (\mu - wv\cdot\mu)\,x + \sum_{\alpha \in R^+}
\sum_{xr_{v\alpha+n\delta} \lessdot_n x} \inner{\alpha^\vee}{\mu}\,
xr_{v\alpha+n\delta}
$$

Fix a superregular antidominant element $\lambda \in \tQ$.  We call
an element $\sigma$ of $QH^T(G/B)$ {\it $\lambda$-small} if all
powers $q_\mu$ which occur in $\sigma$ satisfy the property that
$\mu + \lambda$ is superregular antidominant. For each $w \in W$,
define the left $S$-module homomorphism $\Theta_w^\lambda$ from the
$\lambda$-small elements of $QH^T(G/B)$ to $S[G_\lambda]$ by
$$
\Theta^\la_w(q_\mu\,\sigma^v) = vw^{-1}t_{w(\lambda +\mu)} = v t_\mu
(t_\lambda w^{-1}).
$$

The equivariant affine Bruhat operator is related to the equivariant
quantum Chevalley formula via the following result.

\begin{prop} \label{prop:bruhat} Let $\lambda \in \tQ$ be superregular,
$\mu \in P$, $\sigma \in QH^T(G/B)$ be $\lambda$-small, and $w \in W$.  Then
$$
\Theta_w^\lambda(\sigma * [\mu]) = B^\mu (\Theta_w^\lambda(\sigma))
$$
whenever $\Theta_w^\lambda(\sigma)$ is in the domain of $B^\mu$.
\end{prop}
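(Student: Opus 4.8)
The plan is to verify the identity term by term by expanding both sides on a fixed generator $q_\mu\,\sigma^v$ of the $\lambda$-small part of $QH^T(G/B)$, say with $x := \Theta_w^\lambda(q_\mu\,\sigma^v) = wr_{v'}\,t_{w(\lambda+\mu)}$ written in the canonical form $x = w't_{v''\nu}$ with $\nu = w(\lambda+\mu)$ superregular antidominant (here I am writing $\nu$ for what Proposition~\ref{prop:superregular} calls $\lambda$, to avoid clashing with the $\lambda$ of the statement). First I would apply $\sigma^v * [\mu]$ using the linearly-extended quantum Chevalley formula (Theorem~\ref{thm:monk}): the diagonal term contributes $(\mu - v\cdot\mu)q_\mu\,\sigma^v$, the classical Bruhat terms contribute $\sum \ip{\alpha^\vee,\mu}q_\mu\,\sigma^{vr_\alpha}$ over $\alpha\in R^+$ with $vr_\alpha\gtrdot v$, and the quantum terms contribute $\sum\ip{\alpha^\vee,\mu}q_{\mu+\alpha^\vee}\sigma^{vr_\alpha}$ over $\alpha\in R^+$ with $\ell(vr_\alpha) = \ell(v) + 1 - \ip{\alpha^\vee,2\rho}$. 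Applying $\Theta_w^\lambda$ then sends these to affine group algebra elements, provided each stays $\lambda$-small (which is the hypothesis ``$\Theta_w^\lambda(\sigma)$ is in the domain of $B^\mu$'').

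The second step is to expand $B^\mu(x)$ directly from its definition and match. The coefficient of $x$ in $B^\mu(x)$ is $(\mu - w'v''\cdot\mu)$; one computes $w'v'' = vw^{-1}\cdot(\text{something})$ and checks this equals $v\cdot\mu$ after applying $\Theta_w^\lambda$ — concretely, since $\Theta_w^\lambda(q_\mu\sigma^v) = vw^{-1}t_{w(\lambda+\mu)}$, and the $W$-part of that element in the $w\cdot\lambda$-chamber decomposition should recover $v$ — so the diagonal terms agree. For the off-diagonal terms, the near cocovers $xr_{v''\alpha+n\delta}\lessdot_n x$ are governed by cases (1) and (2) of Proposition~\ref{prop:superregular}: case (1) occurs when $\ell(v) = \ell(vr_\alpha) - 1$, i.e. exactly when $vr_\alpha\gtrdot v$, and it produces the cocover with unchanged translation, which is $\Theta_w^\lambda(q_\mu\,\sigma^{vr_\alpha})$; case (2) occurs when $\ell(vr_\alpha) = \ell(v) + \ip{\alpha^\vee,2\rho} - 1$, i.e. $\ell(vr_\alpha) = \ell(v)+1 - \ip{\alpha^\vee,2\rho}$ after rewriting — exactly the quantum Chevalley condition — and it produces the cocover with translation shifted by $\alpha^\vee$, which is $\Theta_w^\lambda(q_{\mu+\alpha^\vee}\,\sigma^{vr_\alpha})$. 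In both cases the coefficient is $\ip{\alpha^\vee,\mu}$, matching. So the near cocovers of $x$ are in bijection with exactly the classical and quantum terms of the Chevalley formula, with matching coefficients; the far cocovers (cases (3) and (4)) do not appear in $B^\mu$ by definition, which is consistent with the fact that the Chevalley formula produces no such terms.

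The one genuinely delicate point — and the main obstacle — is bookkeeping the conjugations and chamber identifications so that Corollary~\ref{cor:path} and Remark~\ref{R:covers}(2) are applied with the correct $v$ and $w$ roles; in particular one must be careful that $\Theta_w^\lambda$ lands $q_\mu\sigma^v$ into the component $G_\lambda^{?}$ with the right superscript, and that the ``$v$'' appearing in the reflection index $r_{v\alpha+n\delta}$ of $B^\mu$ matches the chamber of $\Theta_w^\lambda(\sigma)$ rather than the Weyl-group label of $\sigma^v$. Writing $\Theta_w^\lambda(q_\mu\sigma^v) = vt_\mu(t_\lambda w^{-1})$ and comparing with the factorization $wt_{v\lambda}v^{-1}$ of Remark~\ref{R:covers}(2) makes the translation-component shift $\mu\mapsto\mu+\alpha^\vee$ transparent, but one should double-check the length inequalities from Lemma~\ref{lem:BFP} are exactly what force cases (1),(2) (and not some degenerate overlap) to correspond to the two Chevalley summations. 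Once the dictionary is pinned down, the verification is a finite check on a single generator, extended by $S$-linearity, and the ``whenever in the domain'' hypothesis guarantees every term produced by the Chevalley formula is still $\lambda$-small so that $\Theta_w^\lambda$ is defined on it. Finally I would note that both sides are left $S$-module maps, so checking on generators $q_\mu\sigma^v$ suffices.
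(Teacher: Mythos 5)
Your proposal is correct and follows essentially the same route as the paper: reduce by $S$-linearity to a generator $q_\mu\sigma^v$, expand $\sigma^v * [\mu]$ by the equivariant quantum Chevalley formula, apply $\Theta_w^\lambda$, and match the resulting terms with the diagonal term and the near cocovers of $vw^{-1}t_{w(\lambda+\mu)}$ via cases (1) and (2) of Proposition~\ref{prop:superregular} and the identity $vr_\alpha w^{-1}=vw^{-1}r_{w\alpha}$. The only blemishes are notational (e.g.\ your transcription of the case-(2) length condition swaps $v$ and $vr_\alpha$ before you rewrite it correctly), not gaps in the argument.
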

\begin{proof}
By linearity it suffices to prove the statement for $\sigma = q_\mu
\sigma_v$.  We have
\begin{eqnarray*}
&&\Theta_w^\lambda(q_\mu\,\sigma_v *[\mu]) \\
&=&\Theta_w^\lambda(q_\mu((\mu - v\cdot \mu) \sigma^v +
\sum_{\alpha} \ip{\alpha^\vee,\mu}\,\sigma^{v r_\alpha}+ \sum_\alpha
\ip{\alpha^\vee,\mu}\,q_{\alpha^\vee}\sigma^{vr_\alpha})) \\
&=& (\mu-v\cdot\mu)vw^{-1}t_{w(\lambda+\mu)}  \\
&+& \sum_{\alpha} \ip{\alpha^\vee,\mu}vr_\alpha w^{-1}
t_{w(\lambda+\mu)} + \sum_\alpha
\ip{\alpha^\vee,\mu}vr_\alpha w^{-1} t_{w(\lambda+\mu+\alpha^\vee)} \\
&=& B^\mu(vw^{-1}t_{w(\lambda+\mu)}).
\end{eqnarray*}
We have used Theorem~\ref{thm:monk},
Proposition~\ref{prop:superregular}, together with the calculation
$vr_\alpha w^{-1} = vw^{-1}r_{w\alpha}$. The summations in the
equations are as in Theorem~\ref{thm:monk}.
\end{proof}

\begin{prop}\label{prop:commute}
Let $\mu,\nu \in P$.  Then the operators $B^\mu$ and $B^\nu$ commute
as operators on $S[W^\ssreg_\af]$ (whenever they are defined).
\end{prop}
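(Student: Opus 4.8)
The plan is to transfer the commutativity and associativity of quantum multiplication in $QH^T(G/B)$ to the operators $B^\mu$ by means of the maps $\Theta_v^\lambda$ and Proposition~\ref{prop:bruhat}. Since an identity of operators may be checked on basis elements, I would fix a basis element $x \in W_\af^\ssreg$ and, choosing the threshold defining $\ssreg$ large enough (in terms of $|W|$, $\mu$ and $\nu$) that all of the finitely many elements appearing after applying $B^\mu$ and then $B^\nu$ to $x$, or $B^\nu$ and then $B^\mu$, remain superregular, reduce to proving $B^\mu B^\nu(x) = B^\nu B^\mu(x)$. This enlargement of $\ssreg$ is harmless because a single Bruhat operator lowers length by an amount bounded in terms of $|W|$.

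Writing $x = w t_{v\lambda}$ as usual, with $\lambda$ superregular antidominant, one has $x = \Theta_v^\lambda(\sigma^{wv})$ directly from the definition of $\Theta_v^\lambda$, and $\sigma^{wv}$ is $\lambda$-small since $\lambda$ itself is superregular antidominant. Proposition~\ref{prop:bruhat} then yields $B^\mu(x) = \Theta_v^\lambda(\sigma^{wv} * [\mu])$. To apply the proposition a second time I would first check that $\sigma^{wv} * [\mu]$ is again $\lambda$-small: by the linearly extended quantum Chevalley formula every power $q_\rho$ occurring in it has $\rho \in \{0\} \cup \{\alpha^\vee \mid \alpha \in R^+\}$, and since $\inner{\alpha^\vee}{\beta}$ is bounded over $\alpha,\beta \in R^+$ while $\lambda$ is superregular antidominant, the translation $\lambda + \rho$ — indeed $\lambda$ plus any bounded sum of positive coroots — remains superregular antidominant. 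With this in hand,
$$B^\nu B^\mu(x) = B^\nu\bigl(\Theta_v^\lambda(\sigma^{wv} * [\mu])\bigr) = \Theta_v^\lambda\bigl((\sigma^{wv} * [\mu]) * [\nu]\bigr),$$
and, since $*$ is commutative and associative, $(\sigma^{wv} * [\mu]) * [\nu] = (\sigma^{wv} * [\nu]) * [\mu]$, so running the same computation with $\mu$ and $\nu$ interchanged gives $B^\nu B^\mu(x) = \Theta_v^\lambda((\sigma^{wv} * [\nu]) * [\mu]) = B^\mu B^\nu(x)$.

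The only step requiring care — and the one I would regard as the main, if modest, obstacle — is the superregularity bookkeeping: verifying that $\lambda$-smallness is preserved by multiplication by $[\mu]$ and $[\nu]$, and fixing the $\ssreg$ threshold, so that each intermediate element lies simultaneously in the domain of the next Bruhat operator and in the domain of $\Theta_v^\lambda$ on which Proposition~\ref{prop:bruhat} holds. The needed bound on $|\inner{\alpha^\vee}{\beta}|$ and the resulting constant are precisely the sort of trivially computed quantities the paper leaves implicit, so no genuine difficulty arises; the substance of the proposition is entirely the transfer of the ring structure of $QH^T(G/B)$ through $\Theta$.
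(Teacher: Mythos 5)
Your argument is exactly the paper's: the paper also notes that any $x=wt_{v\lambda}\in W_\af^\sreg$ lies in the image of $\Theta_v^\lambda$ and deduces the commutation of $B^\mu$ and $B^\nu$ from Proposition~\ref{prop:bruhat} together with the commutativity (and associativity) of $*$ in $QH^T(G/B)$. Your extra bookkeeping that $\lambda$-smallness is preserved under multiplication by $[\mu]$ and that the $\ssreg$ threshold can be fixed is precisely the routine verification the paper leaves implicit, so the proposal is correct and takes essentially the same route.
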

\begin{proof}
Any element $x =  w t_{v\lambda} \in W^\sreg_\af$ is in the image of
$\Theta_v^\lambda$.  The result follows immediately from
Proposition~\ref{prop:bruhat}, since by the commutativity of
$QH^T(G/B)$ one has $\sigma \cdot [\mu] \cdot [\nu] = \sigma \cdot
[\nu] \cdot [\mu]$.
\end{proof}

Let $x = wt_{v\lambda}$.  The \textit{far equivariant affine Bruhat
operator} is the left $S$-module homomorphism
$C^\mu:S[W_\af^\ssreg]\to S[W_\af^\sreg]$ defined by
$$ C^\mu(x) =(\mu- v\cdot\mu)\,x + \sum_{\alpha \in R^+} \sum_{xr_{v\alpha+n\delta} \lessdot_f
x} \inner{\alpha^\vee}{\mu}\, x r_{v\alpha+n\delta}.
$$
The operators $C^\mu$ and $B^\mu$ are related by the following
formula when acting on the special element $\sum_{w\in W}
t_{w\lambda}$.

\begin{lem}\label{lem:BC}
Let $\lambda \in \tQ$ be a sufficiently superregular antidominant coweight and
$\mu^1,\mu^2,\ldots,\mu^k \in P$ be a sequence of integral weights.
Then
\begin{equation} \label{eq:comm}
C^\mu \left(B^{\mu^k} \cdots B^{\mu^2} B^{\mu^1} \cdot \sum_{w\in W}
t_{w\lambda}\right) = B^{\mu^k} \cdots B^{\mu^2} B^{\mu^1} \cdot
\left(B^\mu \cdot \sum_{w\in W} t_{w\lambda}\right).
\end{equation}
\end{lem}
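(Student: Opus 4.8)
The plan is to express everything through the maps $\Theta_w^\lambda$ and to reduce both sides of \eqref{eq:comm} to one and the same element, namely $\sum_{w\in W}\Theta_w^\lambda(\sigma^w*[\mu]*[\mu^1]*\cdots*[\mu^k])$. The starting point is that $\Theta_w^\lambda(\sigma^w) = t_{w\lambda}$, so $\sum_{w}t_{w\lambda} = \sum_{w}\Theta_w^\lambda(\sigma^w)$. Iterating Proposition~\ref{prop:bruhat} --- legitimate provided $\lambda$ is superregular enough that every intermediate element stays $\lambda$-small and in the domain of the next operator, which is exactly what "sufficiently superregular" supplies --- one gets, for any finite sequence $\nu^1,\ldots,\nu^j$ of integral weights,
$$
B^{\nu^j}\cdots B^{\nu^1}\sum_{w\in W} t_{w\lambda} \;=\; \sum_{w\in W}\Theta_w^\lambda\bigl(\sigma^w*[\nu^1]*\cdots*[\nu^j]\bigr),
$$
the right side being unambiguous by associativity of $QH^T(G/B)$. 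Taking $(\nu^i) = (\mu,\mu^1,\ldots,\mu^k)$ identifies the right side of \eqref{eq:comm} with $\sum_w\Theta_w^\lambda(\sigma^w*[\mu]*[\mu^1]*\cdots*[\mu^k])$; taking $(\nu^i)=(\mu^1,\ldots,\mu^k)$ identifies $B^{\mu^k}\cdots B^{\mu^1}\sum_w t_{w\lambda}$ with $\sum_w\Theta_w^\lambda(\sigma^w*\gamma)$, where $\gamma:=[\mu^1]*\cdots*[\mu^k]$.

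It remains to prove the single identity $C^\mu\bigl(\sum_{w}\Theta_w^\lambda(\sigma^w*\gamma)\bigr) = \sum_{u}\Theta_u^\lambda\bigl(\sigma^u*[\mu]*\gamma\bigr)$, which is where the work lies. I would compute $C^\mu$ on each monomial contribution $\Theta_w^\lambda(q_{\mu'}\sigma^v) = v\,t_{\lambda+\mu'}\,w^{-1}$: this element fits the framework of Proposition~\ref{prop:superregular} with chamber index $w$ and antidominant superregular coweight $\lambda+\mu'$, and its far cocovers --- cases (3) and (4) of that proposition --- obey conditions that involve only $w$, not $v$; rewriting those cocovers back in $\Theta$-notation yields, for every $\lambda$-small $\xi$,
$$
C^\mu(\Theta_w^\lambda(\xi)) = (\mu-w\cdot\mu)\,\Theta_w^\lambda(\xi) + \sum_{\alpha:\,wr_\alpha\lessdot w}\inner{\alpha^\vee}{\mu}\,\Theta_{wr_\alpha}^\lambda(\xi) + \sum_{\alpha:\,\ell(wr_\alpha)=\ell(w)+\inner{\alpha^\vee}{2\rho}-1}\inner{\alpha^\vee}{\mu}\,\Theta_{wr_\alpha}^\lambda(q_{\alpha^\vee}\xi).
$$
Summing over $w$ with $\xi=\sigma^w*\gamma$ and reindexing $u=wr_\alpha$ in the last two sums converts the two conditions into $u\lessdot ur_\alpha$ and $\ell(ur_\alpha)=\ell(u)+1-\inner{\alpha^\vee}{2\rho}$; collecting the coefficient of each $\Theta_u^\lambda$ and factoring $*\gamma$ out on the right, the surviving bracket is precisely the right side of the equivariant quantum Chevalley formula of Theorem~\ref{thm:monk}, extended by linearity to $[\mu]$ (here one uses $u\lessdot ur_\alpha\iff ur_\alpha\gtrdot u$ and that the quantum-term condition is the same $\ell(ur_\alpha)=\ell(u)+1-\inner{\alpha^\vee}{2\rho}$), i.e.\ it equals $\sigma^u*[\mu]$. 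This proves the identity.

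Combining the two reductions, the left side of \eqref{eq:comm} equals $C^\mu$ applied to $\sum_w\Theta_w^\lambda(\sigma^w*\gamma)$, which equals $\sum_w\Theta_w^\lambda(\sigma^w*[\mu]*\gamma)=\sum_w\Theta_w^\lambda(\sigma^w*[\mu]*[\mu^1]*\cdots*[\mu^k])$, the very expression already obtained for the right side; hence \eqref{eq:comm} holds. (Commutativity of $QH^T(G/B)$ could be invoked as well, but associativity alone suffices once both sides are written in this common form.)

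I expect the middle step to be the main obstacle: doing the affine Weyl group bookkeeping in Proposition~\ref{prop:superregular} to read off the far cocovers of $v\,t_{\lambda+\mu'}\,w^{-1}$ correctly, and then recognizing, after the reindexing $w\mapsto wr_\alpha$, that the resulting combination is the quantum Chevalley formula --- this recognition is the true content of the lemma. It is also the reason the identity holds only for $\sum_w t_{w\lambda}$ and its images under near operators, where the $w$-th summand has the special form $\Theta_w^\lambda(\sigma^w*\gamma)$; for a generic element of $S[W_\af^\ssreg]$ the operators $C^\mu$ and $B^\mu$ do not interact this way. A routine secondary check is that "sufficiently superregular" keeps every operator above defined on its argument --- one needs $\lambda$ large enough that on the order of $k+1$ Bruhat cover steps, together with additions of single coroots to the translation part, never leave $W_\af^\sreg$ nor destroy $\lambda$-smallness, and the needed bound on $\lambda$ is immediate.
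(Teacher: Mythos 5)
Your argument is correct, but it is organized differently from the paper's. The paper proves the lemma purely combinatorially: it expands $B^{\mu^k}\cdots B^{\mu^1}\cdot t_{w\lambda}$ as a sum over multipaths in the quantum Bruhat graph $D(W)$, interprets the extra application of $C^\mu$ on the left of \eqref{eq:comm} as a single far step, and matches each pair $(\Pa,\Q)$ with the length-$(k+1)$ multipath $\Pa'$ obtained by prepending the corresponding edge $u\to w$, checking $x_{\Pa'}=x_{\Pa,\Q}$ and $a_{\Pa'}=a_{\Pa,\Q}$; no use is made of $QH^T(G/B)$ inside that proof. You instead push everything through $\Theta_w^\lambda$: iterating Proposition~\ref{prop:bruhat} identifies both sides with sums of the form $\sum_w\Theta_w^\lambda(\sigma^w*\cdots)$, and the genuinely new content is your ``far analogue'' of Proposition~\ref{prop:bruhat}, obtained by reading off cases (3)--(4) of Proposition~\ref{prop:superregular} on $\Theta_w^\lambda(q_{\mu'}\sigma^v)=v\,t_{\lambda+\mu'}\,w^{-1}$ (whose chamber index is $w$) and recognizing, after the reindexing $u=wr_\alpha$, the equivariant quantum Chevalley formula of Theorem~\ref{thm:monk}. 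The underlying observation is the same as the paper's (Remark~\ref{R:covers}(2): far cocovers move the chamber index exactly like quantum Bruhat edges prepended to the path), but your route trades the explicit weight-preserving path bijection for the ring structure of $QH^T(G/B)$; what the paper's version buys is that the lemma stays pure affine Weyl group combinatorics, while yours is shorter on bookkeeping once Proposition~\ref{prop:bruhat} is in hand, and there is no circularity since Theorem~\ref{thm:monk} is available at this point. One pedantic correction: when you match the bracket $[\mu]*\sigma^u$ produced by the far covers with the $\sigma^u*[\mu]$ coming from Proposition~\ref{prop:bruhat} on the right-hand side, you do use commutativity of $QH^T(G/B)$, not associativity alone --- harmless, since commutativity is a standing property, but your parenthetical claim is not quite accurate.
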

\begin{proof}
A term of $B^{\mu^k} \cdots B^{\mu^2} B^{\mu^1} \cdot t_{w\lambda}$
is indexed by a multipath (a path allowed to stay at a vertex for multiple steps)
$$
\Pa = \{w = w^{(0)} \to w^{(1)} \to w^{(2)} \to \cdots \to w^{(k)}\}
$$
in $D(W)$, where for each $i \in [1,k]$, we have (i) $w^{(i)} =
w^{(i-1)}$ or (ii) $w^{(i)} = w^{(i-1)}r_{\alpha^{(i)}}$.  Each such
path $\Pa$ contributes a term $a_\Pa \, x_\Pa$, where $a_\Pa =
\prod_i a_i$ with $a_i =  \mu^{(i)}- w^{(i)}\cdot\mu^{(i)}$ in case
(i) and $a_i = \inner{(\alpha^{(i)})^\vee}{\mu}$ in case (ii).  The
left hand side of (\ref{eq:comm}) can thus be given as the sum over
pairs $(\Pa,\Q)$ where $\Pa$ is a multipath from $w$ to $v$ in
$D(W)$ of length $k$, and $\Q$ is a multipath from $u$ to $w$ of
length 1.  If $x_\Pa = vw^{-1}t_{w\mu}$ then $(\Pa,\Q)$ contributes
$a_{\Pa,\Q}x_{\Pa,\Q}$ where $x_{\Pa,\Q} = vu^{-1}t_{u\mu'}$ with
$\mu' = \mu$ or $\mu' = \mu + \alpha^\vee$ for some $\alpha \in
R^+$.  The coefficient $a_{\Pa,\Q}$ is equal to $a_\Pa \, a_\Q$
where $a_\Q = \mu - w\cdot\mu$ if $u = w$ and $a_\Q =
\inner{\alpha^\vee}{\mu}$ if $u = wr_\alpha$.

To obtain (\ref{eq:comm}) we send the pair $(\Pa,\Q)$ to the
multipath
$$
\Pa'=\{u \to w = w^{(0)} \to w^{(1)} \to w^{(2)} \to \cdots \to
w^{(k)}\}
$$
and we observe that $x_{\Pa'} = x_{\Pa,\Q}$ and $a_{\Pa'} =
a_{\Pa,\Q}$, where $\Pa'$ is weighted according to the sequence
$\mu,\mu^{(1)},\ldots,\mu^{(k)}$. Note that in the case that
$u=wr_\alpha$, the first step of $\Pa'$ corresponds to a cover $x
r_{(wr_\alpha)\alpha+n\delta} \lessdot x$ where $x=t_{wr_\alpha
\la}$.
\end{proof}

\section{Homology of affine Grassmannian}
\subsection{Affine nilHecke ring}
Let $\A_\af$ denote the {\it affine nilHecke ring} of Kostant and
Kumar.  Our conventions here differ slightly from those in \cite{KK}
but agree with those in \cite{Lam}, and we refer to the latter for a
discussion of the differences. We use the action of $W_\af$ on $P$
induced by the action \eqref{E:WaffQaff}, under which translation
elements act trivially, or equivalently, $r_0$ acts by $r_\theta$.
$\A_\af$ is the ring with a $1$ given by generators $\{A_i \mid i
\in I_\af\} \cup \{\lambda \mid \lambda \in P\}$ and the relations
\begin{align*}
A_i \,\lambda &= (r_i \cdot \lambda)\, A_i +
\ip{\lambda,\alpha_i^\vee}\cdot1 && \text{for $\lambda \in P$,} \\
A_i\, A_i &= 0, \\
\underbrace{A_iA_jA_i\dotsm}_m &= \underbrace{A_jA_iA_j\dotsm}_m &&
\text{if $\underbrace{r_ir_jr_i\dotsm}_m =
\underbrace{r_jr_ir_j\dotsm}_m$,}
\end{align*}
where the ``scalars'' $\lambda \in P$ commute with other scalars.
Let $w \in W_\af$ and let $w = r_{i_1} \cdots r_{i_l}$ be a reduced
decomposition of $w$.  Then $A_w := A_{i_1} \cdots A_{i_l}$ is a
well defined element of $\A_\af$, where $A_\id = 1$. $\A_\af$ is a
free left $S$-module (and a free right $S$-module) with basis $\{A_w
\mid w \in W_\af\}$.  Note that we have
$$
A_xA_y = \begin{cases} A_{xy} &\mbox{if $\ell(x) + \ell(y) =
\ell(xy)$,} \\ 0 & \mbox{otherwise.} \end{cases}$$ We have the
following commutation relation which can be established by
induction; see \cite{KK}.

\begin{lem}For $x \in W_\af$ and $\la \in P$,
\label{lem:commute}
$$A_x \lambda = (x \cdot \lambda) A_x + \sum_{\substack{\beta\in R_\af^{\re+} \\
xr_{\beta} \lessdot x}} \ip{\beta^\vee,\lambda} A_{xr_\beta}.$$
\end{lem}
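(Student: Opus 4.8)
The plan is to prove the commutation relation
$$
A_x \lambda = (x \cdot \lambda) A_x + \sum_{\substack{\beta\in R_\af^{\re+} \\ xr_\beta \lessdot x}} \ip{\beta^\vee,\lambda} A_{xr_\beta}
$$
by induction on $\ell(x)$, using the base case $x = \id$ (where both sides equal $\lambda$, since the sum is empty) and the single-generator relation $A_i \lambda = (r_i\cdot\lambda) A_i + \ip{\lambda,\alpha_i^\vee}\cdot 1$ from the defining relations of $\A_\af$.

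For the inductive step, I would choose a reduced decomposition $x = r_i x'$ with $\ell(x') = \ell(x) - 1$, so that $A_x = A_i A_{x'}$. Applying the induction hypothesis to $x'$ and then the defining relation to move $A_i$ past the resulting scalars gives
$$
A_x \lambda = A_i\Bigl((x'\cdot\lambda) A_{x'} + \sum_{\substack{\gamma\in R_\af^{\re+}\\ x'r_\gamma\lessdot x'}} \ip{\gamma^\vee,\lambda} A_{x'r_\gamma}\Bigr) = (x\cdot\lambda) A_i A_{x'} + \ip{x'\cdot\lambda,\alpha_i^\vee} A_{x'} + \sum_{\substack{\gamma\in R_\af^{\re+}\\ x'r_\gamma\lessdot x'}} \ip{\gamma^\vee,\lambda} A_i A_{x'r_\gamma}.
$$
Here $A_i A_{x'} = A_x$ gives the leading term. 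The remaining task is to show that the scalar term $\ip{x'\cdot\lambda,\alpha_i^\vee} A_{x'}$ together with the terms $\ip{\gamma^\vee,\lambda} A_i A_{x'r_\gamma}$ reorganize into $\sum_{xr_\beta\lessdot x} \ip{\beta^\vee,\lambda} A_{xr_\beta}$. One uses $A_i A_{x'r_\gamma} = A_{x r_\gamma}$ when $\ell(x') - 1 = \ell(x'r_\gamma)$ is increased by $1$ under left multiplication by $r_i$, i.e.\ when $\ell(x r_\gamma) = \ell(x) - 1$, and $A_i A_{x'r_\gamma} = 0$ otherwise; and one rewrites $\ip{x'\cdot\lambda,\alpha_i^\vee} = \ip{\lambda, x'^{-1}\alpha_i^\vee} = \ip{\beta_0^\vee,\lambda}$ where $\beta_0 = -x'^{-1}\alpha_i \in R_\af^{\re+}$ is the inversion with $x r_{\beta_0} = r_i x' r_{\beta_0} = r_i(r_i x') = x'$... more precisely $x'r_{\beta_0} = x' (x'^{-1} r_i x') = r_i x' $ has length... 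I will track signs carefully: the point is that $A_{x'}$ itself equals $A_{xr_\beta}$ for the unique $\beta = \beta_0$ with $x r_{\beta_0} = x'$, and that this is the cocover ``created'' by the extra generator $r_i$, while the cocovers $x r_\gamma \lessdot x$ with $x r_\gamma \ne x'$ biject (via $\gamma \mapsto \gamma$) with the cocovers $x' r_\gamma \lessdot x'$ satisfying $\ell(r_i x' r_\gamma) = \ell(x' r_\gamma) + 1$, and $\gamma^\vee = \beta^\vee$ throughout since $\gamma = \beta$ as a root.

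The main obstacle is the bookkeeping of which cocovers of $x$ arise: I must verify that the map $\beta \mapsto \beta$ sets up the claimed correspondence, i.e.\ that $\{\,x r_\beta \lessdot x\,\} = \{x'\} \sqcup \{\, r_i x' r_\gamma : x' r_\gamma \lessdot x',\ \ell(r_i x' r_\gamma) > \ell(x' r_\gamma)\,\}$, with the terms where $\ell(r_i x' r_\gamma) < \ell(x' r_\gamma)$ contributing $0$ because $A_i A_{x'r_\gamma} = 0$. This is a standard fact about the Bruhat order and the nilHecke algebra (the ``exchange'' behavior under left multiplication by a simple reflection), and the verification of the coroot identity $\ip{x'\cdot\lambda,\alpha_i^\vee} = \ip{(\beta_0)^\vee,\lambda}$ uses only $W_\af$-invariance of the pairing together with $\beta_0^\vee = x'^{-1}\alpha_i^\vee$ up to sign matching the convention $x' r_{\beta_0} = x r_{\beta_0}\cdot(\text{nothing})$; I would do this sign-matching once and invoke it. This is precisely the induction referenced in \cite{KK}, so beyond the combinatorial reorganization no new idea is needed.
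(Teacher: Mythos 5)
Your proposal is correct and is essentially the argument the paper has in mind: the paper gives no details beyond ``can be established by induction; see [KK]'', and your induction on $\ell(x)$ via $A_x=A_iA_{x'}$, the defining relation $A_i\lambda=(r_i\cdot\lambda)A_i+\ip{\lambda,\alpha_i^\vee}$, and the lifting-property bookkeeping of cocovers is exactly that standard induction. The only point to fix in the write-up is the sign of the extra root: since $r_ix'>x'$ one has $\beta_0=x'^{-1}\alpha_i\in R_\af^{\re+}$ (equivalently $-x^{-1}\alpha_i$), not $-x'^{-1}\alpha_i$, and then $\ip{\beta_0^\vee,\lambda}=\ip{\alpha_i^\vee,x'\cdot\lambda}$ matches the coefficient of $A_{x'}$ as you intend.
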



\subsection{Equivariant homology of affine Grassmannian}
The affine Grassmannian $\Gr_G$ associated to $G$ is the ind-scheme
$G(\K)/G(\OO)$ where $\K = \C((t))$ denotes the ring of formal
Laurent series and $\OO = \C[[t]]$ is the ring of formal power
series.  The space $\Gr_G$ is weakly homotopy equivalent to the
space $\Omega K$ of based loops into the maximal compact subgroup $K
\subset G$ and thus the homology $H_*(\Gr_G)$ and equivariant
homology $H_T(\Gr_G)$ inherits a ring structure via Pontryagin
multiplication.

The ring $H_T(\Gr_G)$ is a free $S= H_T(\pt)$-module with basis
given by the $T$-equivariant Schubert classes $\{\xi_x \mid x \in
W_\af^-\}$. The affine nilHecke ring $\A_\af$ acts on $H_T(\Gr_G)$
by
\begin{align} \label{E:NilHeckeOnHom}
 A_y\cdot \xi_z = \begin{cases} \xi_{yz} &
\text{if $\ell(yz) = \ell(y) + \ell(z)$ and $yz \in W_\af^-$,} \\
0 & \text{otherwise,}
\end{cases}
\end{align}
and $S \subset \A_\af$ acts via the usual $S$-module structure of
$H_T(\Gr_G)$.

We now describe Peterson's model for $H_T(\Gr_G)$ \cite{Pet}. We
refer the reader to~\cite{Lam} for more details.

Let $Z_{\A_\af}(S) \subset \A_\af$ denote the centralizer of $S$ in
$\A_\af$, called the {\it Peterson subalgebra} in \cite{Lam}.  Let
$J \subset \A_\af$ denote the left ideal
$$
J = \sum_{w \in W \setminus \{\id\}} \A_\af A_w.
$$
The following two theorems are due to Peterson \cite{Pet}. We refer
the reader to \cite[Lemma 3.3 and Theorem 4.4]{Lam} for a proof of
Theorem~\ref{thm:pet}.

\begin{thm}
\label{thm:pet} There is an $S$-algebra isomorphism $j: H_T(\Gr_G)
\to
Z_{\A_\af}(S)$ such that %
\begin{align*}
  j(\xi_x) = A_x \mod J \ \ \ \text{and} \ \ \
  j(\xi) \cdot \xi' = \xi \,\xi'
\end{align*}
for $\xi,\xi' \in H_T(\Gr_G)$.
 The element $j(\xi_x)$ is determined by the properties: (1)
$j(\xi_x)\in Z_\A(S)$ and (2) $j(\xi_x) = A_x \mod J$.
\end{thm}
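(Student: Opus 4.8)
The plan is to deduce the whole statement from the single fact that the canonical projection $\pi\colon \A_\af\to\A_\af/J$ restricts to an $S$-algebra isomorphism $Z_{\A_\af}(S)\xrightarrow{\ \sim\ }\A_\af/J$. Granting this, for each $x\in W_\af^-$ the coset $A_x\bmod J$ has a unique preimage in $Z_{\A_\af}(S)$, which we call $j(\xi_x)$; it automatically satisfies (1) and (2), and the asserted uniqueness clause is precisely the injectivity of $\pi|_{Z_{\A_\af}(S)}$. Extending $S$-linearly over the basis $\{\xi_x\}$ and using that $\{A_x\bmod J\mid x\in W_\af^-\}$ is an $S$-basis of $\A_\af/J$, one obtains that $j$ is an $S$-module isomorphism, and it remains to match the two products.

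First I would record the cyclic-vector picture of $H_T(\Gr_G)$. Every $w\in W_\af$ factors as $w=xu$ with $x\in W_\af^-$, $u\in W$ and $\ell(w)=\ell(x)+\ell(u)$, so $A_w=A_xA_u$; by \eqref{E:NilHeckeOnHom} one has $A_u\cdot\xi_\id=\xi_\id$ if $u=\id$ and $A_u\cdot\xi_\id=0$ otherwise, hence $A_w\cdot\xi_\id=\xi_w$ for $w\in W_\af^-$ and $A_w\cdot\xi_\id=0$ for $w\notin W_\af^-$. Therefore the left $\A_\af$-module homomorphism $a\mapsto a\cdot\xi_\id$ is surjective with kernel precisely $J=\sum_{u\in W\setminus\{\id\}}\A_\af A_u$, yielding an isomorphism of left $\A_\af$-modules (and of $S$-modules) $\A_\af/J\xrightarrow{\ \sim\ }H_T(\Gr_G)$ carrying $A_x\bmod J$ to $\xi_x$. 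In particular $\{A_x\bmod J\}$ is an $S$-basis of $\A_\af/J$, under this identification $\pi|_{Z_{\A_\af}(S)}$ becomes the action map $z\mapsto z\cdot\xi_\id$, and the left $\A_\af$-action on $\A_\af/J$ becomes the $\A_\af$-action on $H_T(\Gr_G)$.

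The heart of the proof — and the one genuinely technical step — is that $\pi|_{Z_{\A_\af}(S)}$ is an $S$-algebra isomorphism. Following Peterson and Lam, I would prove this by localization: passing to the Kostant--Kumar localization of $\A_\af$ (inverting the nonzero scalars) presents $\A_\af$ as a ring of $W_\af$-indexed tuples over $\mathrm{Frac}(S)$ in which $Z_{\A_\af}(S)$ is the subring cut out by GKM-type divisibility conditions, while on the geometric side equivariant localization presents $H_T(\Gr_G)$ as the ring of functions on the $T$-fixed points of $\Gr_G$ (indexed by $W_\af^-$, equivalently $Q^\vee$) satisfying the matching GKM conditions, with the Pontryagin product going over to pointwise multiplication. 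Comparing the two descriptions identifies $Z_{\A_\af}(S)$ with $\A_\af/J\cong H_T(\Gr_G)$ as $S$-algebras, the projection $\pi$ becoming the identity; this is the content of \cite[Lemma 3.3 and Theorem 4.4]{Lam}. Injectivity gives the uniqueness clause and $Z_{\A_\af}(S)\cap J=0$, surjectivity gives existence of $j(\xi_x)$, and multiplicativity of $j$ drops out as well; the bookkeeping needed to line up the two GKM presentations (in particular, identifying the Pontryagin product with the ring structure of $Z_{\A_\af}(S)$) is where the real work lies.

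Finally, the identity $j(\xi)\cdot\xi'=\xi\,\xi'$ is a formal consequence. Since $\pi|_{Z_{\A_\af}(S)}$ is surjective, $H_T(\Gr_G)=\A_\af/J=Z_{\A_\af}(S)\cdot\xi_\id$, so any $\xi'$ may be written $z'\cdot\xi_\id$ with $z'\in Z_{\A_\af}(S)$, and then $j(\xi)\cdot\xi'=(j(\xi)z')\cdot\xi_\id=\pi(j(\xi)z')$, which by multiplicativity of $\pi|_{Z_{\A_\af}(S)}$ equals $\pi(j(\xi))\cdot\pi(z')=\xi\cdot\xi'$ (using $\xi'=z'\cdot\xi_\id=\pi(z')$ and $\pi(j(\xi))=\xi$). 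Conversely this re-derives multiplicativity of $j$: the element $j(\xi)j(\xi')-j(\xi\xi')$ lies in $Z_{\A_\af}(S)$ and annihilates the Pontryagin unit $\xi_\id$, hence lies in $J$, hence is zero by injectivity of $\pi|_{Z_{\A_\af}(S)}$. The expected main obstacle is thus entirely contained in the localization comparison of the third paragraph; everything before and after it is formal manipulation with the cyclic vector $\xi_\id$.
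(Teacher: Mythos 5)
Your reduction is sound, and in substance it matches the paper's treatment: the paper offers no independent proof of Theorem~\ref{thm:pet}, deferring entirely to \cite[Lemma 3.3 and Theorem 4.4]{Lam}, which is exactly the input you invoke for your key claim. The formal scaffolding you add is correct: the length-additive factorization $w=xu$ with $x\in W_\af^-$, $u\in W$, together with \eqref{E:NilHeckeOnHom}, does show that $a\mapsto a\cdot\xi_\id$ identifies $\A_\af/J$ with $H_T(\Gr_G)$ as left $\A_\af$-modules, carrying $A_x\bmod J$ to $\xi_x$; and granting that $Z_{\A_\af}(S)\to\A_\af/J$ is bijective and multiplicative for the Pontryagin product, your cyclic-vector manipulations correctly yield existence and uniqueness of $j(\xi_x)$, the $S$-module isomorphism, the identity $j(\xi)\cdot\xi'=\xi\,\xi'$, and multiplicativity of $j$.

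The one point to flag is your sketch, in the third paragraph, of how the cited input would be proved. After localization the Pontryagin ring is not ``functions on the fixed points under pointwise multiplication'': since the multiplication map sends the pair of fixed points $(t_\la,t_\mu)$ to $t_{\la+\mu}$, the fixed-point classes multiply by convolution, so $H_T(\Gr_G)\otimes_S\mathrm{Frac}(S)$ is the group algebra $\mathrm{Frac}(S)[Q^\vee]$. That is precisely what matches the Peterson subalgebra: in the localized nilHecke ring, a twisted group algebra $\mathrm{Frac}(S)[W_\af]$, the centralizer of the scalars is spanned by the translation elements (the only elements acting trivially on $P$ in the level-zero action), so $Z_{\A_\af}(S)\otimes_S\mathrm{Frac}(S)=\mathrm{Frac}(S)[Q^\vee]$ as well. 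Pointwise multiplication of fixed-point restrictions computes the cup product in $H^T(\Gr_G)$ and would not match $Z_{\A_\af}(S)$; moreover, for the small torus $T$ the affine Grassmannian is not GKM in the usual sense, so the divisibility conditions cutting out the image of the localization map are subtler than your phrasing suggests. Since you ultimately quote \cite[Lemma 3.3 and Theorem 4.4]{Lam} for this step --- exactly as the paper does --- this does not invalidate your proposal, but the third paragraph as written could not be executed literally and should be treated as a pointer to the reference rather than a proof outline.
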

Define $j^y_x \in S$ by
$$
j(\xi_x) = \sum_y j^y_x A_y.
$$
The elements $j^y_x \in S$ are polynomials of degree $\ell(y)-
\ell(x)$ in the simple roots $\{\alpha_i \mid i \in I\}$.

\begin{thm}
\label{thm:j} For $x,z\in W_\af^-$ we have
$$\xi_x\, \xi_z = \sum_y j^y_x\, \xi_{yz}$$
where the summation is over $y \in W_\af$ such that $yz \in W_\af^-$
and $\ell(yz) =\ell(y) + \ell(z)$.
\end{thm}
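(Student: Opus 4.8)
The plan is to derive the homology product formula directly from the two defining properties in Theorem~\ref{thm:pet}, namely that $j$ is an $S$-algebra isomorphism $H_T(\Gr_G) \to Z_{\A_\af}(S)$ and that $j(\xi)\cdot\xi' = \xi\,\xi'$ for $\xi,\xi' \in H_T(\Gr_G)$, where the left side is the nilHecke action \eqref{E:NilHeckeOnHom}. First I would write $\xi_x\,\xi_z = j(\xi_x)\cdot\xi_z$ using the second property, and then expand $j(\xi_x) = \sum_y j^y_x A_y$ in the nilHecke basis according to the definition of $j^y_x$. Applying each $A_y$ to $\xi_z$ via \eqref{E:NilHeckeOnHom}, the term $A_y\cdot\xi_z$ equals $\xi_{yz}$ exactly when $\ell(yz) = \ell(y)+\ell(z)$ and $yz \in W_\af^-$, and vanishes otherwise. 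Collecting terms gives
$$
\xi_x\,\xi_z = \sum_y j^y_x\,(A_y\cdot\xi_z) = \sum_{\substack{y:\ yz\in W_\af^- \\ \ell(yz)=\ell(y)+\ell(z)}} j^y_x\,\xi_{yz},
$$
which is exactly the claimed formula.

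The key steps, in order, are: (1) invoke $\xi_x\xi_z = j(\xi_x)\cdot\xi_z$; (2) substitute the expansion $j(\xi_x) = \sum_y j^y_x A_y$; (3) distribute the action over the sum, using that the $j^y_x \in S$ act as scalars and commute past everything; (4) evaluate $A_y\cdot\xi_z$ by the case formula \eqref{E:NilHeckeOnHom}; (5) restrict the index set to those $y$ for which the action is nonzero. There is essentially no computation here beyond bookkeeping — the substance was already packed into Theorem~\ref{thm:pet}.

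The only point requiring a moment of care is that the expansion $j(\xi_x) = \sum_y j^y_x A_y$ is \emph{a priori} a possibly infinite sum over $y \in W_\af$, so one should check the formula is well-defined: for fixed $z \in W_\af^-$, the condition $\ell(yz) = \ell(y) + \ell(z)$ with $yz \in W_\af^-$ forces $y$ to lie in a set over which the sum is locally finite in each graded piece (indeed $j^y_x$ has degree $\ell(y)-\ell(x)$, so only finitely many $y$ contribute in each fixed degree), and the grading of $H_T(\Gr_G)$ ensures each homogeneous component of the product is a finite sum. This is the main (and minor) obstacle; it is handled by the same degree/grading considerations already noted after the definition of $j^y_x$, so I would dispatch it in a single sentence rather than belabor it.
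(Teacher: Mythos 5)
Your proof is correct and is essentially the paper's own argument: both invoke Theorem~\ref{thm:pet} to write $\xi_x\,\xi_z = j(\xi_x)\cdot\xi_z$, expand $j(\xi_x)=\sum_y j^y_x A_y$, and evaluate each term by the action formula \eqref{E:NilHeckeOnHom}, so that exactly the $y$ with $yz\in W_\af^-$ and $\ell(yz)=\ell(y)+\ell(z)$ survive. The finiteness concern at the end is moot—elements of $\A_\af$ are by definition finite $S$-linear combinations of the $A_y$—but raising and dismissing it does no harm.
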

\begin{proof}
By Theorem~\ref{thm:pet} we have
\begin{align} \label{E:jaction}
  j(\xi_x)\cdot \xi_z = \xi_x\,\xi_z
\end{align}
where the action is as in \eqref{E:NilHeckeOnHom}.  The statement
then follows from the observation that in a length-additive product
$yz \notin W_\af^-$ if $z \notin W_\af^-$.
\end{proof}

\section{Generating elements of the Peterson subalgebra}
We now describe a method for producing elements of the Peterson
subalgebra.  Define the left $S$-module isomorphism $\Upsilon:
S[W_\af] \to \A_\af$ by
\begin{align*}
 \Upsilon(\sum_{x \in W_\af} a_x \, x) =
\sum_{x\in W_\af} a_x \, A_x
\end{align*}
for $a_x\in S$. Let $x = wt_{v\lambda}$.  For $\mu \in P$, the
\textit{twisted equivariant affine Bruhat operators} are the left
$S$-module homomorphisms $\tilde{B}^\mu, \tilde{C}^\mu:
S[W^\ssreg_\af] \to S[W^\sreg_\af]$ defined by
$$ \tilde{B}^\mu(x) =(v^{-1}\mu-w\mu)x + \sum_{\alpha \in R^+} \sum_{xr_{v\alpha+n\delta} \lessdot_n
x} \inner{v\alpha^\vee}{\mu} \,xr_{v\alpha+n\delta}
$$
and
$$ \tilde{C}^\mu(x) = (v^{-1}\mu-\mu)x - \sum_{\alpha \in R^+} \sum_{xr_{v\alpha+n\delta} \lessdot_f
x} \inner{v\alpha^\vee}{\mu} \, x r_{v\alpha+n\delta}.
$$

\begin{lem}\label{lem:center}
Let $f \in S[W^\ssreg_\af]$.  Then $\Upsilon(f) \in Z_{\A_\af}(S)$
if and only if for each $\mu \in P$ we have
$$
\tilde{B}^\mu(f) = \tilde{C}^\mu(f).
$$
\end{lem}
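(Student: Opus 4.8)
The plan is to compute the commutator $A_x\,\mu - \mu\,A_x$ inside $\A_\af$ for $x=wt_{v\lambda}\in W_\af^\ssreg$ using Lemma~\ref{lem:commute}, and then to recognize the two resulting sums as exactly the ``near'' and ``far'' contributions appearing in $\tilde{B}^\mu$ and $\tilde{C}^\mu$. First I would note that $\Upsilon(f)\in Z_{\A_\af}(S)$ precisely means $\Upsilon(f)\,\mu = \mu\,\Upsilon(f)$ for all $\mu\in P$, since $S$ is generated as a ring by $P$. Writing $f=\sum_x a_x\,x$ with $a_x\in S$ and applying $\Upsilon$, the condition becomes $\sum_x a_x (A_x\mu - \mu A_x) = 0$ for every $\mu\in P$ (here I use that $a_x$, being in $S$, commutes with $\mu$). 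So the whole lemma reduces to expanding $A_x\mu$ via Lemma~\ref{lem:commute} and collecting the coefficient of each basis element $A_z$.

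Next, the key computation: by Lemma~\ref{lem:commute}, $A_x\mu - \mu A_x = (x\cdot\mu - \mu)A_x + \sum_{\beta} \inner{\beta^\vee}{\mu} A_{xr_\beta}$, the sum over $\beta\in R_\af^{\re+}$ with $xr_\beta\lessdot x$. Now I invoke Proposition~\ref{prop:superregular}, which classifies all Bruhat cocovers $xr_{v\alpha+n\delta}\lessdot x$ of a superregular $x=wt_{v\lambda}$ into the two ``near'' types (1),(2) and the two ``far'' types (3),(4), and Remark~\ref{R:covers}(1), which says the positive affine root $\beta$ attached to such a cover is $-v\alpha-n\delta$, so that $\beta^\vee = -(v\alpha)^\vee-\cdots$ and $\inner{\beta^\vee}{\mu} = -\inner{(v\alpha)^\vee}{\mu} = -\inner{v\alpha^\vee}{\mu}$ on the level-zero part. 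I also compute the diagonal term: since translations act trivially in the relevant $W_\af$-action on $P$, $x\cdot\mu = wt_{v\lambda}\cdot\mu = w\cdot\mu$, so $(x\cdot\mu-\mu) = (w\cdot\mu - \mu)$; but in $\tilde{B}^\mu$ the stated diagonal coefficient is $(v^{-1}\mu - w\mu)$ and in $\tilde{C}^\mu$ it is $(v^{-1}\mu - \mu)$, whose difference is exactly $(w\cdot\mu - \mu)\cdot(-1)$ up to sign bookkeeping — so the diagonal terms of $\tilde B^\mu(f)-\tilde C^\mu(f)$ at $x$ reproduce $(x\cdot\mu-\mu)$ with the correct sign. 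Splitting the off-diagonal sum over near versus far cocovers and matching the signs ($\tilde B^\mu$ has $+\inner{v\alpha^\vee}{\mu}$, $\tilde C^\mu$ has $-\inner{v\alpha^\vee}{\mu}$, consistent with the single sign coming from $\beta^\vee$ after moving the far part to the other side), one sees that the coefficient of $A_z$ in $\sum_x a_x(A_x\mu-\mu A_x)$ is, up to an overall $\pm$, the coefficient of $z$ in $\Upsilon^{-1}$ of $\tilde B^\mu(f) - \tilde C^\mu(f)$. Hence the vanishing of the commutator for all $\mu$ is equivalent to $\tilde B^\mu(f)=\tilde C^\mu(f)$ for all $\mu$.

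The main obstacle I anticipate is not conceptual but bookkeeping: getting every sign and every twist ($v$ versus $v^{-1}$, $\alpha^\vee$ versus $v\alpha^\vee$, the ``barred'' level-zero projection of affine coroots) to line up so that the near part lands on the $\tilde B^\mu$ side and the far part on the $\tilde C^\mu$ side with the displayed signs. Concretely I would (i) fix the convention that $r_0$ acts on $P$ by $r_\theta$, as stated; (ii) for $x=wt_{v\lambda}$ and a cocover of type (1) or (2), check directly that $xr_{v\alpha+n\delta}$ equals $wr_{v\alpha}t_{\bullet}$ and re-express it in the normal form $w't_{v'\lambda'}{v'}^{-1}$ so that the ``$w$-twist'' $v^{-1}\mu - w'\mu$ of $\tilde B^\mu$ matches; (iii) do the same for types (3),(4) with the ``$v$-twist.'' Because Proposition~\ref{prop:superregular} already did exactly this normal-form computation, step (ii)–(iii) is a matter of quoting it. Once the indexing is pinned down, the proof is a one-paragraph application of Lemma~\ref{lem:commute} together with Proposition~\ref{prop:superregular} and Remark~\ref{R:covers}(1).
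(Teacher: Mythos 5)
Your proposal is correct and follows exactly the paper's route: the paper's proof is precisely the citation of Lemma~\ref{lem:commute}, Proposition~\ref{prop:superregular}, and Remark~\ref{R:covers}(1), together with a parenthetical note about the same diagonal-sign bookkeeping you work out (the diagonal coefficients of $\tilde{B}^\mu$ and $\tilde{C}^\mu$ differ from the literal commutator computation by an overall sign, which is harmless since only their difference at $x$ matters). Your explicit verification that $\Upsilon(\tilde{B}^\mu(f)-\tilde{C}^\mu(f))$ equals $\mu\,\Upsilon(f)-\Upsilon(f)\,\mu$ up to sign simply spells out what the paper leaves implicit.
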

\begin{proof}
The statement follows immediately from Lemma~\ref{lem:commute},
Proposition~\ref{prop:superregular}, and Remark \ref{R:covers} (1).
(If we literally apply Lemma~\ref{lem:commute}, the terms
$(v^{-1}\mu-w\mu)x$ in $ \tilde{B}^\mu(x)$ and $(v^{-1}\mu-\mu)x$ in
$\tilde{C}^\mu(x)$ would need to be negated; since $x$ does not
occur elsewhere in the formula, the stated claim is still true.)
\end{proof}

\begin{thm}\label{thm:central}
Let $\lambda$ be a sufficiently superregular antidominant coweight
and $\mu^1,\mu^2,\ldots,\mu^k \in P$ be a sequence of integral
weights. Then the element
$$
\Upsilon(B^{\mu^k} \cdots B^{\mu^2} B^{\mu^1} \cdot \sum_{w\in W}
t_{w\lambda})
$$
lies in the Peterson subalgebra $Z_{\A_\af}(S)$.
\end{thm}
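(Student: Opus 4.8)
The plan is to verify the centralizer criterion of Lemma~\ref{lem:center}: setting $f = B^{\mu^k}\cdots B^{\mu^1}\cdot\sum_{w\in W}t_{w\lambda}$, I must show $\tilde B^\mu(f) = \tilde C^\mu(f)$ for each $\mu \in P$. The first step is to relate the twisted operators $\tilde B^\mu,\tilde C^\mu$ to the untwisted operators $B^\mu,C^\mu$ of Section~5. Comparing the defining formulas, on an element $x = wt_{v\lambda}$ the twist replaces the scalar $\inner{\alpha^\vee}{\mu}$ by $\inner{v\alpha^\vee}{\mu}$ and the ``diagonal'' coefficient $\mu - wv\cdot\mu$ (resp. $\mu - v\cdot\mu$) by $v^{-1}\mu - w\mu$ (resp. $v^{-1}\mu - \mu$); applying $v^{-1}$ to everything, $\tilde B^\mu$ acting near $x=wt_{v\lambda}$ is $B^{v^{-1}\mu}$ composed appropriately, and likewise $\tilde C^\mu$ corresponds to $C^{v^{-1}\mu}$. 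Since the chamber index $v$ is constant along near edges (cases (1),(2) of Proposition~\ref{prop:superregular}) and along the single far edge being applied it changes in a controlled way, the twist is consistent and it suffices to prove the untwisted identity $C^{\mu}(f) = B^{\mu}(f)$ for every $\mu$ — but with $f$ of the stated form this is \emph{exactly} Lemma~\ref{lem:BC}.

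So the argument is essentially an application of Lemma~\ref{lem:BC} after unwinding the twist. Concretely: by left $S$-linearity write $f = \sum_{w\in W} f_w$ where $f_w = B^{\mu^k}\cdots B^{\mu^1}\cdot t_{w\lambda}$, and note that every term of $f_w$ is indexed (via Corollary~\ref{cor:path}) by a multipath in $D(W)$ starting at $w$, so its endpoint lies in $G_\lambda^w$, i.e. has the form $vw^{-1}t_{w\mu'}$ — in particular the chamber index of every term of $f_w$ equals $w$. Hence on $f_w$ the operator $\tilde B^\mu$ agrees with (the $v^{-1}\mu=w^{-1}\mu$-twisted version of) $B^{w^{-1}\mu}$... wait — the chamber index is $w$ only at the terminal translation, but intermediate applications of $B^{\mu^i}$ passed through terms whose chamber index was also $w$ throughout, since near edges preserve the chamber. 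Good: so the twist by $w$ is global on $f_w$, and Lemma~\ref{lem:BC} applied with the weights $w^{-1}\mu^1,\dots,w^{-1}\mu^k$ and extra weight $w^{-1}\mu$ gives $C^{w^{-1}\mu}(f_w) = B^{w^{-1}\mu}\cdots B^{w^{-1}\mu^1}\cdot(B^{w^{-1}\mu}\sum_{u}t_{u\lambda})$; retwisting and summing over $w$ yields $\tilde C^\mu(f) = \tilde B^\mu(f)$. Lemma~\ref{lem:center} then gives $\Upsilon(f) \in Z_{\A_\af}(S)$.

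The main obstacle is bookkeeping the twist correctly across the far edge. Unlike near edges, a far edge (cases (3),(4) of Proposition~\ref{prop:superregular}) \emph{changes} the chamber index from $v$ to $vr_\alpha$, so in the identity $\tilde C^\mu(f) = \tilde B^\mu(f)$ the left side's far operator acts on terms indexed by chamber $w$ but produces terms in other chambers, and one must check that the sign and the scalar $\inner{v\alpha^\vee}{\mu}$ in $\tilde C^\mu$ match, under $v^{-1}$-twisting, the coefficient $a_\Q$ in the proof of Lemma~\ref{lem:BC} — precisely the remark at the end of that proof about the cover $xr_{(wr_\alpha)\alpha+n\delta}\lessdot x$ with $x = t_{wr_\alpha\lambda}$. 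This is a finite check using Remark~\ref{R:covers}(1) (the positive affine root of $r_{v\alpha+n\delta}$ is $-v\alpha-n\delta$), the relation $vr_\alpha w^{-1} = vw^{-1}r_{w\alpha}$ from the proof of Proposition~\ref{prop:bruhat}, and $W$-invariance of $\inner{\cdot}{\cdot}$; I expect it to be routine but it is the one place where a sign error would break the proof.
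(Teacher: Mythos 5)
Your first paragraph outlines the same strategy as the paper (reduce via Lemma~\ref{lem:center} to $\tilde{B}^\mu(f)=\tilde{C}^\mu(f)$, untwist, invoke Lemma~\ref{lem:BC}), but the concrete implementation in your second paragraph does not work. Lemma~\ref{lem:BC} is a statement about the full sum $\sum_{w\in W}t_{w\lambda}$ and does not hold chamber-by-chamber: in its proof, a far step $\Q$ from $u$ to $w$ attached to terms coming from $t_{w\lambda}$ is matched with the multipath $\Pa'$ starting at $u$, i.e.\ with terms of $B^{\mu^k}\cdots B^{\mu^1}B^{\mu}\cdot t_{u\lambda}$ for $u\ne w$, so the identity genuinely couples the chambers. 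Consequently your displayed identity $C^{w^{-1}\mu}(f_w)=B^{w^{-1}\mu^k}\cdots B^{w^{-1}\mu^1}\bigl(B^{w^{-1}\mu}\sum_{u}t_{u\lambda}\bigr)$ is neither an instance of Lemma~\ref{lem:BC} nor true; moreover there is no reason to twist the intermediate weights $\mu^1,\dots,\mu^k$ at all, since $f$ is built from the untwisted operators $B^{\mu^i}$ applied to $\sum_w t_{w\lambda}$.

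The sign you postpone to the end is exactly where your input-chamber bookkeeping fails: on an input $x$ in the $v$-chamber one has $\tilde{B}^\mu(x)=B^{v^{-1}\mu}(x)$, but $\tilde{C}^\mu(x)$ is \emph{not} $C^{v^{-1}\mu}(x)$ --- the off-diagonal coefficients are $-\inner{v\alpha^\vee}{\mu}$ versus $+\inner{\alpha^\vee}{v^{-1}\mu}=+\inner{v\alpha^\vee}{\mu}$, opposite in sign. The sign is absorbed only if one twists by the chamber of the \emph{output} term: if $x\lessdot_f y$ with $y$ in the $v'$-chamber and $x=yr_{v'\alpha+n\delta}$ in the $v=v'r_\alpha$-chamber, then $\inner{\alpha^\vee}{v^{-1}\mu}=\inner{v'r_\alpha\alpha^\vee}{\mu}=-\inner{v'\alpha^\vee}{\mu}$, which does match $\tilde{C}^\mu$. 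That is the paper's argument: for each fixed output $x=ut_{v\lambda'}$, the coefficient of $x$ in $\tilde{B}^\mu(f)$ (resp.\ $\tilde{C}^\mu(f)$) equals its coefficient in $B^{v^{-1}\mu}(f)$ (resp.\ $C^{v^{-1}\mu}(f)$), with $f$ kept whole; it then suffices to know $B^{\mu'}(f)=C^{\mu'}(f)$ for every $\mu'\in P$, and this is Lemma~\ref{lem:BC} \emph{combined with} Proposition~\ref{prop:commute}, which you also need (and omit) to move $B^{\mu'}$ from the innermost position in Lemma~\ref{lem:BC} to the outside. Dropping the decomposition $f=\sum_w f_w$, comparing coefficients of output terms, and citing Proposition~\ref{prop:commute} repairs your argument and reduces it to the paper's proof.
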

\begin{proof}
By Lemma~\ref{lem:center}, it suffices to check that
$$
f = B^{\mu^k} \cdots B^{\mu^2} B^{\mu^1} \cdot \sum_{w\in W}
t_{w\lambda}
$$
satisfies $\tilde{B}^\mu(f) = \tilde{C}^\mu(f)$. The coefficient of
$x = wt_{v\lambda}$ in $\tilde{B}^\mu(f)$ (resp. $\tilde{C}^\mu(f)$)
is equal to the coefficient of $x$ in $B^{v^{-1} \cdot \mu}(f)$
(resp. $C^{v^{-1}\mu}(f)$).  Thus it suffices to show that for all
$\mu \in P$ we have $B^\mu(f) = C^\mu(f)$.  But using
Proposition~\ref{prop:commute} this is exactly the statement of
Lemma~\ref{lem:BC}.
\end{proof}

\section{Formulae for affine Schubert classes}
For $w \in W$, let us say that a polynomial
$${\mathfrak S}_w = \sum_{i_1,i_2,\ldots,i_k} a_{i_1,\ldots,i_k}
q_{\lambda(i_1,\ldots,i_k)} \otimes \omega_{i_1}\omega_{i_2} \cdots
\omega_{i_k} \in S[q]\otimes_\Z \Z[\omega_i \mid i \in I], $$ where
$a_{i_1,\ldots,i_k} \in S$ and $\lambda(i_1,\ldots,i_k) \in
\oplus_{i \in I} \Z_{\geq 0} \al_i^\vee$ is an {\it equivariant
quantum Schubert polynomial} if its image in $QH^T(G/B)$ (obtained
by replacing $\omega_i$ with $[\omega_i]$) equals  the quantum
Schubert class $\sigma^w$. There are many choices for such a
polynomial.

Let us write $b(\lambda;\mu^1,\mu^2,\ldots,\mu^k) \in Z_{\A_\af}(S)$
for the element described by Theorem~\ref{thm:central}.  The
following formula writes affine Schubert classes in terms of quantum
Schubert classes.

\begin{thm}\label{thm:schub}
Let ${\mathfrak S}_w\in S[q]\otimes_\Z \Z[\omega_i \mid i \in I] $
as above be an equivariant quantum Schubert polynomial representing
the class $\sigma^w \in QH^T(G/B)$, and let $\lambda \in \tQ$ be
sufficiently superregular.  Then
\begin{equation}\label{eq:schub}
j(\xi_{wt_{\lambda}}) = \sum_{i_1,i_2,\ldots,i_k} a_{i_1,\ldots,i_k}
b(\lambda +\lambda(i_1,\ldots,i_k);
\omega_{i_1},\omega_{i_2},\ldots,\omega_{i_k}).
\end{equation}
\end{thm}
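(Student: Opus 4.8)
The plan is to unwind the definition of $b(\lambda;\mu^1,\dots,\mu^k)$ via Theorem~\ref{thm:central} and then match it term-by-term against the affine Schubert class $j(\xi_{wt_\lambda})$ using Theorem~\ref{thm:pet}, with the map $\Theta^\lambda_w$ serving as the dictionary between $QH^T(G/B)$ and $S[G_\lambda]$. First I would apply $\Theta^\lambda_w$ to the equality $\sigma^w = \mathfrak{S}_w([\omega_i])$ in $QH^T(G/B)$: since $\mathfrak{S}_w$ is built by iterated multiplication by the line-bundle classes $[\omega_{i_1}],\dots,[\omega_{i_k}]$ with scalar and $q$-coefficients, Proposition~\ref{prop:bruhat} lets me convert each such multiplication into an application of the near affine Bruhat operator $B^{\omega_{i_j}}$. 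The point is that $\Theta^\lambda_w$ sends $q_\mu \sigma^v$ to $vw^{-1}t_{w(\lambda+\mu)}$, so applying $\Theta^\lambda_w$ to $1 \in QH^T$ (i.e.\ to $\sigma^{\mathrm{id}}$) gives $w^{-1}t_{w\lambda}$, but I actually want the sum $\sum_{u\in W} t_{u\lambda}$ appearing in Theorem~\ref{thm:central}. To reconcile this I would instead run the computation with $\Theta^\lambda_u$ for each $u\in W$ simultaneously, summing over $u$; the source element then becomes $\sum_{u} t_{u\lambda}$, and the endpoint of the path originating at $t_{u\lambda}$ associated to $\sigma^w$ lands (by Corollary~\ref{cor:path}) at $wu^{-1}t_{u(\lambda+\mu)}$, i.e.\ exactly in the superregular factorization whose $W_\af^-$-part is $wt_\lambda$ when $u = \mathrm{id}$.

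The key steps, in order, are: (i) expand $j(\xi_{wt_\lambda})$ using Theorem~\ref{thm:pet}, so that it is the unique element of $Z_{\A_\af}(S)$ congruent to $A_{wt_\lambda}$ modulo $J$; (ii) show that the right-hand side of \eqref{eq:schub}, namely $\sum a_{i_1,\dots,i_k}\, b(\lambda+\lambda(\underline{i}); \omega_{i_1},\dots,\omega_{i_k})$, lies in $Z_{\A_\af}(S)$ — this is immediate from Theorem~\ref{thm:central} since each $b$-term does and $Z_{\A_\af}(S)$ is an $S$-module; (iii) compute the right-hand side modulo $J$ and check it equals $A_{wt_\lambda}$. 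For step (iii), observe that $\Upsilon$ sends $\sum_u t_{u\lambda}$ to $\sum_u A_{t_{u\lambda}}$, and that $A_{t_{u\lambda}} \in J$ unless $u = \mathrm{id}$ is $\lambda$-minimal — more precisely, among the $|W|$ translation elements $t_{u\lambda}$, only $t_\lambda$ itself (with $\lambda$ antidominant, hence $t_\lambda \in W_\af^-$) survives modulo $J$, since $ut_{u\lambda} = wt_\lambda$-type factorizations place all the $u\ne\mathrm{id}$ contributions inside $\bigoplus_{w\in W\setminus\{\mathrm{id}\}}\A_\af A_w$. Then the iterated action of $B^{\omega_{i_j}}$, transported through $\Upsilon$, produces paths in $G_\lambda^{\mathrm{id}}$ starting at $t_\lambda$; by Corollary~\ref{cor:path} and Lemma~\ref{lem:Pos} the net effect of the whole sum $\mathfrak{S}_w$ on the component through $t_\lambda$ is to reach $wt_\lambda$ (the type-(2) edges accounting for the shift by $\lambda(\underline{i})$ in the $b$-argument), with coefficient $1$ modulo $J$, because the combined $\Theta^\lambda_{\mathrm{id}}$-image of $\sigma^w = \mathfrak{S}_w([\omega])$ is exactly $w^{-1}t_{w\lambda}$ and $\Upsilon(w^{-1}t_{w\lambda}) = A_{w^{-1}t_{w\lambda}} \equiv$ (the $J$-reduction picking out the $W_\af^-$ representative) $A_{wt_\lambda} \bmod J$. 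Uniqueness in Theorem~\ref{thm:pet} then forces equality with $j(\xi_{wt_\lambda})$.

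The main obstacle I anticipate is the careful bookkeeping in step (iii): one must verify that, after applying $\Upsilon$ and reducing modulo $J$, every path in $G_\lambda$ that does \emph{not} originate at $t_\lambda$ contributes to $J$, and that the paths originating at $t_\lambda$ but ending outside $W_\af^-$ also vanish modulo $J$ — so that only the single term $A_{wt_\lambda}$ survives with the correct (scalar-free) coefficient. This requires knowing precisely which $A_y$ lie in $J$, i.e.\ that $A_y \in J$ iff $y \notin W_\af^-$ (equivalently $y$ has a reduced word ending in some $r_i$, $i\in I$), which follows from the product rule $A_yA_{r_i}=A_{yr_i}$ or $0$ together with the basis property of $\{A_w\}$. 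A secondary subtlety is ensuring $\lambda$ is taken superregular enough that all the paths involved (of bounded length $\le k$, depending only on $\mathfrak{S}_w$) stay inside $G_\lambda$, so that Proposition~\ref{prop:superregular} and Corollary~\ref{cor:path} apply verbatim; this is exactly what "sufficiently superregular" buys us, and the required constant $k$ is read off from the degree of $\mathfrak{S}_w$. Once these are in place, the identification is a formal consequence of commutativity of $QH^T(G/B)$ (already used in Lemma~\ref{lem:BC} and Proposition~\ref{prop:commute}) and the uniqueness clause of Theorem~\ref{thm:pet}.
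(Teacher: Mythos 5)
Your overall architecture is the same as the paper's: show the right-hand side of \eqref{eq:schub} lies in $Z_{\A_\af}(S)$ via Theorem~\ref{thm:central}, then identify its class modulo $J$ and invoke the uniqueness clause of Theorem~\ref{thm:pet}, using $\Theta^\la_\id$ and Proposition~\ref{prop:bruhat} to convert the iterated operators $B^{\omega_{i_j}}$ into iterated quantum multiplication by the $[\omega_{i_j}]$, together with the hypothesis that ${\mathfrak S}_w$ represents $\sigma^w$. However, your final identification contains a genuine error. From the definition $\Theta^\la_w(q_\mu\,\sigma^v)=vw^{-1}t_{w(\la+\mu)}$ one gets $\Theta^\la_\id(\sigma^w)=w\,t_\la$, not $w^{-1}t_{w\la}$; you have confused this with $\Theta^\la_w(\sigma^\id)=w^{-1}t_{w\la}$ (the same index mix-up appears earlier, when you treat $\sum_u t_{u\la}$ as the image of $\sigma^\id$ under the various $\Theta^\la_u$; in fact $t_{u\la}=\Theta^\la_u(\sigma^u)$, while $\Theta^\la_u(\sigma^\id)=u^{-1}t_{u\la}$). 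Worse, the patch you then apply --- that reduction modulo $J$ ``picks out the $W_\af^-$ representative,'' so that $A_{w^{-1}t_{w\la}}\equiv A_{wt_\la}\bmod J$ --- is false. Since $w^{-1}t_{w\la}=t_\la w^{-1}$ and $t_\la\in W_\af^-$, the product $t_\la\cdot w^{-1}$ is length-additive, so $A_{t_\la w^{-1}}=A_{t_\la}A_{w^{-1}}\in J$ for $w\ne\id$: reduction modulo $J$ annihilates the non-Grassmannian basis elements, it does not replace them by their minimal coset representatives. Taken literally, your computation would therefore show that the right-hand side of \eqref{eq:schub} is congruent to $0$, not to $A_{wt_\la}$, modulo $J$. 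The correct conclusion is simpler and needs no straightening: $\Upsilon(\Theta^\la_\id(\sigma^w))=A_{wt_\la}$ on the nose, and $wt_\la\in W_\af^-$ by Lemma~\ref{lem:grass} because $\la$ is regular antidominant; this is exactly how the paper concludes.

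Two smaller points. First, you flag but do not prove that no term coming from $t_{u\la}$ with $u\ne\id$ can contribute a Grassmannian term; this is where the chamber structure is used. The operators $B^\mu$ involve only near covers, which by Remark~\ref{R:covers}(2) preserve the factorization $zt_\nu u^{-1}$ with $\nu$ superregular antidominant and $u$ fixed; since $zt_\nu\in W_\af^-$ (Lemma~\ref{lem:grass}), the product $(zt_\nu)u^{-1}$ is length-additive, so every such term has $A_{zt_\nu u^{-1}}=A_{zt_\nu}A_{u^{-1}}\in J$ when $u\ne\id$. (In particular, your worry about paths starting at $t_\la$ and ending outside $W_\af^-$ is vacuous: the $\id$-chamber consists entirely of Grassmannian elements, which is why the surviving terms must cancel exactly, not merely die modulo $J$.) Second, Lemma~\ref{lem:Pos} is not the relevant mechanism for the coefficient-$1$ claim: that comes from the injectivity of $\Theta^\la_\id$ on $\la$-small elements together with the hypothesis that ${\mathfrak S}_w$ represents $\sigma^w$, i.e. from Proposition~\ref{prop:bruhat} applied repeatedly, which is the route the paper takes.
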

\begin{proof}
Let $a$ denote the expression on the right hand side of
\eqref{eq:schub}. By Theorem~\ref{thm:central}, $a \in
Z_{\A_\af}(S)$. By Theorem~\ref{thm:pet}, it suffices to show that
$a$ contains a unique Grassmannian term $A_{wt_\lambda}$ with
coefficient 1. Let $\ui=(i_1,\dotsc,i_k)$. We have
\begin{align*}
  a &= \Upsilon (\sum_\ui a_\ui  B^{\omega_{i_k}}\dotsm
  B^{\omega_{i_1}} \sum_{w\in W} t_{w(\la+\la(\ui))} ) \\
  &= \Upsilon(\sum_\ui a_\ui B^{\omega_{i_k}}\dotsm
  B^{\omega_{i_1}} (t_{\la+\la(\ui)} + \sum_{w\in W\setminus \{\id\}}
  t_{w(\la+\la(\ui))}))
\end{align*}
By Lemma~\ref{lem:grass} and the fact that $\lambda$ is sufficiently
superregular, it is clear that Grassmannian terms cannot come from
any term with $w\ne \id$. By Proposition~\ref{prop:bruhat} (applied
with $w = \id$ and $\sigma = a_\ui q_{\lambda(\ui)}$), we have
\begin{align*}
 &\quad\,\,\Upsilon( \sum_\ui a_\ui B^{\omega_{i_k}} \dotsm B^{\omega_{i_1}}
  t_{\la+\la(\ui)}) \\
  &= \Upsilon( \sum_\ui a_\ui B^{\omega_{i_k}} \dotsm B^{\omega_{i_1}}
  \Theta^\la_\id(q_{\la(\ui)}) ) \\
  &= \Upsilon( \Theta^\la_\id ( \sum_\ui a_\ui q_{\la(\ui)} *
  [\omega_{i_1}] * \dotsm * [\omega_{i_k}] )) \\
  &= \Upsilon( \Theta^\lambda_\id(\sigma^w)) = A_{wt_{\lambda}}.
\end{align*}
where we have used our assumption that ${\mathfrak S}_w$ represents
the class $\sigma^w$.
\end{proof}

\begin{remark}\begin{enumerate}
\item
In Theorem~\ref{thm:schub} (and many other places in the paper) it
is possible to use the operators $C^\mu$ instead of $B^\mu$ to
obtain similar results.
\item Theorem~\ref{thm:schub} can be evaluated at 0 via $\phi_0:S \to
\Z$ to give a formula for $\phi_0(j(\xi_{wt_\lambda}))$ in terms of
non-equivariant quantum Schubert polynomial.  The elements
$\phi_0(j(\xi_{wt_\lambda}))$ lie inside what is called the {\it
affine Fomin-Stanley subalgebra} in \cite{Lam}, and are related to
the theory of affine Stanley symmetric functions. See~\cite{FGP,Mar}
for discussions on how to produce (non-equivariant) quantum Schubert
polynomials.
\end{enumerate}
\end{remark}

Let us call $a = \sum_{x \in W_\af} a_x \, A_x \in \A_\af$ {\it
superregular} if $a_x = 0$ for all $x \in W_\af \setminus
W_\af^\sreg$.
\begin{cor}\label{cor:span}
The elements $b(\lambda;\mu^1,\mu^2,\ldots,\mu^k)$ span the set of
superregular elements of $Z_{\A_\af}(S)$.
\end{cor}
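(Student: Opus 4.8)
The plan is to prove both inclusions. For the inclusion that the $b(\lambda;\mu^1,\dots,\mu^k)$ lie in the superregular part of $Z_{\A_\af}(S)$: by Theorem~\ref{thm:central} each such $b$ lies in $Z_{\A_\af}(S)$, and by construction $b = \Upsilon(B^{\mu^k}\cdots B^{\mu^1}\cdot\sum_{w\in W}t_{w\lambda})$ is supported on $W_\af^\sreg$ provided $\lambda$ is sufficiently superregular (the operators $B^{\mu^i}$ move elements only a bounded amount, by Proposition~\ref{prop:superregular}), so $b$ is superregular in the sense defined just above Corollary~\ref{cor:span}. Thus the span of the $b$'s is contained in the superregular part of $Z_{\A_\af}(S)$.

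For the reverse inclusion, let $a = \sum_{x\in W_\af^\sreg} a_x A_x \in Z_{\A_\af}(S)$ be superregular. The key tool is Theorem~\ref{thm:schub}: the elements $j(\xi_{wt_\lambda})$, as $w$ ranges over $W$ and $\lambda$ over sufficiently superregular antidominant coweights, are expressed as $S$-linear combinations of the $b(\lambda';\omega_{i_1},\dots,\omega_{i_k})$; more generally, Theorem~\ref{thm:central} shows every $b(\lambda;\mu^1,\dots,\mu^k)$ (with arbitrary $\mu^j\in P$) is of this form. I would argue that the $j(\xi_{wt_\lambda})$ already span the superregular part of $Z_{\A_\af}(S)$: a superregular element $a$ of $Z_{\A_\af}(S)$ is, by Theorem~\ref{thm:pet} applied coset-by-coset, a finite $S$-linear combination $\sum_{x} c_x\, j(\xi_x)$ over Grassmannian $x\in W_\af^-$ appearing in $a$ with suitable leading behavior — but one must check that only superregular $x$ contribute, and that the correction terms stay superregular, so that the combination closes up. Concretely, order the Grassmannian elements $x\in W_\af^-$ occurring in $a$ (necessarily superregular, since $A_x$ with $x\in W_\af^-$ is the unique Grassmannian term of $j(\xi_x)$ and $a$ is superregular) by length; subtract off $a_{x}\, j(\xi_{x})$ for a maximal-length Grassmannian term, note this removes that term while introducing only longer Grassmannian terms which, by a length/superregularity estimate as in the proof of Theorem~\ref{thm:schub}, remain superregular; induct downward. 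This writes $a$ as an $S$-combination of the $j(\xi_{wt_\mu})$ with $\mu$ sufficiently superregular, hence (via Theorem~\ref{thm:schub} and Theorem~\ref{thm:central}) as an $S$-combination of elements $b(\lambda;\mu^1,\dots,\mu^k)$.

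The main obstacle I anticipate is the bookkeeping in the downward induction: one must verify that after subtracting $a_x j(\xi_x)$ the remaining element is still \emph{superregular} (not merely in $Z_{\A_\af}(S)$), i.e.\ that $j(\xi_x)$ for superregular Grassmannian $x$ is itself supported on superregular elements. This should follow from the degree bound ``$j^y_x$ has degree $\ell(y)-\ell(x)$'' together with the fact that the non-Grassmannian terms $A_y$ in $j(\xi_x)$ satisfy $y > $ (something close to) $x$ in a way that keeps the translation part large — essentially the same superregularity/convexity estimates underlying Proposition~\ref{prop:superregular} and Corollary~\ref{cor:path}. Granting that, the induction terminates because lengths are bounded below on the finite support of $a$, and the proof is complete.
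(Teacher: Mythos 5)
Your overall route is the paper's: write a superregular $a\in Z_{\A_\af}(S)$ as an $S$-linear combination of $j(\xi_x)$ with $x$ superregular, then invoke Theorem~\ref{thm:schub} (together with Theorem~\ref{thm:central}). But your execution of the first step leans on a claim you do not prove and explicitly defer (``granting that''): namely that $j(\xi_x)$, for $x$ superregular Grassmannian, is itself supported on superregular elements, which you need because your downward induction requires each intermediate remainder to be \emph{superregular}. The justification you sketch --- the degree bound $\deg j^y_x=\ell(y)-\ell(x)$, hence $\ell(y)\ge\ell(x)$ --- does not give this: large length does not imply superregularity (an element $wt_\mu$ with $\mu$ of large norm but nearly on a wall has large length and is not superregular). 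The support claim is in fact true for \emph{sufficiently} superregular $x$, but only as a consequence of Theorem~\ref{thm:schub} itself, not of the degree count; so as written there is a genuine gap at the step you yourself flag as the main obstacle.

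The gap is also unnecessary, which is how the paper's two-line proof avoids it. By Theorem~\ref{thm:pet}, $j$ is an $S$-module isomorphism and $j(\xi_x)\equiv A_x \bmod J$, i.e.\ $A_x$ is the \emph{only} Grassmannian term of $j(\xi_x)$. Hence in the unique expansion $a=\sum_{x\in W_\af^-}c_x\, j(\xi_x)$ the coefficient $c_x$ is simply the coefficient of $A_x$ in $a$, so superregularity of $a$ forces $c_x=0$ unless $x$ is superregular --- no induction, and no information about the non-Grassmannian support of $j(\xi_x)$ or about remainders is required. (Even inside your induction, subtracting $a_x\,j(\xi_x)$ introduces no new Grassmannian terms at all --- contrary to your remark about ``longer Grassmannian terms'' --- so all you would need of the remainder is that its Grassmannian terms are superregular, which is automatic; and the process ends because an element of $Z_{\A_\af}(S)$ with no Grassmannian terms is zero, again by Theorem~\ref{thm:pet}.) With that simplification your argument collapses to the paper's: Theorem~\ref{thm:pet} gives the decomposition over superregular $x$, and Theorem~\ref{thm:schub} places each $j(\xi_x)$ in the span of the $b(\lambda;\mu^1,\ldots,\mu^k)$.
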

\begin{proof}
Let $a$ be a superregular element in $Z_{\A_\af}(S)$.  By
Theorem~\ref{thm:pet}, it is an $S$-linear combination of
$j(\xi_{x})$ where $x$ is superregular.  By Theorem~\ref{thm:schub},
$j(\xi_{x})$ lies in the span of the elements
$b(\lambda;\mu^1,\mu^2,\ldots,\mu^k)$.
\end{proof}

\begin{cor}
Let $\mu \in P$ be an integral weight and $\Upsilon(f) \in
Z_{\A_\af}(S)$ for a sufficiently superregular $f$.  Then
$\Upsilon(B^\mu (f)) \in Z_{\A_\af}(S)$.
\end{cor}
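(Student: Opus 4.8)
The plan is to reduce everything to two results already in hand: Corollary~\ref{cor:span}, which identifies the superregular elements of $Z_{\A_\af}(S)$ with the $S$-span of the elements $b(\lambda;\mu^1,\dots,\mu^k)=\Upsilon(B^{\mu^k}\cdots B^{\mu^1}\cdot\sum_{w\in W}t_{w\lambda})$, and Theorem~\ref{thm:central}, which guarantees every such element lies in $Z_{\A_\af}(S)$. The key point is that prepending one more operator $B^\mu$ to the string $B^{\mu^k}\cdots B^{\mu^1}$ merely produces a longer string of the same type, so the conclusion of Theorem~\ref{thm:central} still applies.

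First I would observe that since $f$ is sufficiently superregular it lies in $S[W_\af^\ssreg]\subset S[W_\af^\sreg]$, so $\Upsilon(f)$ is a superregular element of $\A_\af$; by hypothesis it also lies in $Z_{\A_\af}(S)$. Corollary~\ref{cor:span} then yields a finite expansion $\Upsilon(f)=\sum_j c_j\,b(\lambda_j;\mu^1_j,\dots,\mu^{k_j}_j)$ with $c_j\in S$, where each $\lambda_j$ may be taken as superregular as we please. Setting $g_j:=B^{\mu^{k_j}_j}\cdots B^{\mu^1_j}\cdot\sum_{w\in W}t_{w\lambda_j}$, so that $b(\lambda_j;\mu^1_j,\dots,\mu^{k_j}_j)=\Upsilon(g_j)$, and using that $\Upsilon$ is an $S$-module isomorphism, I conclude $f=\sum_j c_j\,g_j$.

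Next, since $B^\mu$ is a left $S$-module homomorphism, $B^\mu(f)=\sum_j c_j\,B^\mu(g_j)$, with each $B^\mu(g_j)$ defined because the $\lambda_j$ were chosen superregular enough to absorb one further Bruhat operator. But $B^\mu(g_j)=B^\mu B^{\mu^{k_j}_j}\cdots B^{\mu^1_j}\cdot\sum_{w\in W}t_{w\lambda_j}$ is exactly of the form treated in Theorem~\ref{thm:central}, whence $\Upsilon(B^\mu(g_j))=b(\lambda_j;\mu^1_j,\dots,\mu^{k_j}_j,\mu)\in Z_{\A_\af}(S)$. Applying $\Upsilon$ to the identity for $B^\mu(f)$ then gives $\Upsilon(B^\mu(f))=\sum_j c_j\,\Upsilon(B^\mu(g_j))$, a finite $S$-linear combination of elements of the Peterson subalgebra, hence an element of $Z_{\A_\af}(S)$.

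The argument is essentially bookkeeping, and the only delicate point — the closest thing to an obstacle — is ensuring every Bruhat operator in sight is defined, i.e. that each intermediate element remains in the domain $S[W_\af^\ssreg]$ of the next operator. This is precisely the superregularity bookkeeping the paper handles by choosing $\lambda$ sufficiently superregular at the outset: only finitely many operators occur in the expansion of $\Upsilon(f)$ and we add just one more, so the requisite constant is finite and its computation trivial. One could instead prove the statement directly via Lemma~\ref{lem:center}, checking $\tilde B^\nu(B^\mu(f))=\tilde C^\nu(B^\mu(f))$ for all $\nu\in P$, but this would essentially redo the proofs of Lemma~\ref{lem:BC} and Theorem~\ref{thm:central}, so the reduction above is cleaner.
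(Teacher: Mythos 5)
Your argument is correct and is essentially the paper's own proof, which simply cites Corollary~\ref{cor:span} and Theorem~\ref{thm:central}; you have merely spelled out the same reduction (expand $\Upsilon(f)$ in the $b(\lambda_j;\mu^1_j,\dots,\mu^{k_j}_j)$, append $\mu$ to each string, apply Theorem~\ref{thm:central}). One small wording caveat: the $\lambda_j$ are dictated by the support of $f$ rather than ``as superregular as we please,'' but the hypothesis that $f$ is sufficiently superregular supplies exactly the room needed for the one extra operator $B^\mu$, as your final paragraph acknowledges.
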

\begin{proof}
Follows immediately from Corollary~\ref{cor:span} and
Theorem~\ref{thm:central}.
\end{proof}

The following Proposition is contained in \cite[Proposition
4.5]{Lam}.
\begin{prop}\label{prop:jt}
Let $\la\in \tQ$. Then $j(\xi_{t_\la}) = \sum_{\mu\in W\cdot\la}
A_{t_\mu}$.
\end{prop}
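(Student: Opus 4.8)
The plan is to verify directly that the right-hand side $a := \sum_{\mu \in W\cdot\la} A_{t_\mu}$ satisfies the two defining properties of $j(\xi_{t_\la})$ given in Theorem~\ref{thm:pet}: namely (1) $a \in Z_{\A_\af}(S)$, and (2) $a = A_{t_\la} \bmod J$. Since those two properties characterize $j(\xi_{t_\la})$ uniquely, this suffices.

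For property (2), I would recall that $J = \sum_{w \in W\setminus\{\id\}} \A_\af A_w$, and that for each weight $\mu = v\cdot\la$ in the $W$-orbit, the Bruhat-minimal coset representative considerations (Lemma~\ref{lem:grass}, using that $\la \in \tQ$ so each $t_\mu$ has a Grassmannian factorization) let us write $A_{t_\mu} = A_{x_\mu} A_{w_\mu}$ length-additively, where $x_\mu \in W_\af^-$ is Grassmannian and $w_\mu \in W$; moreover $w_\mu = \id$ exactly when $\mu = \la$ is the antidominant element of the orbit (since $\la$ antidominant forces the Grassmannian representative of $t_\la W$ to be $t_\la$ itself, while any other $t_\mu$ with $\mu \neq \la$ has a nontrivial Weyl part). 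Hence every term $A_{t_\mu}$ with $\mu \neq \la$ lies in $J$, and $a \equiv A_{t_\la} \bmod J$.

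For property (1), I would show $A_x \lambda = (x\cdot\lambda) A_x$ averages correctly over the orbit; concretely, apply the commutation relation of Lemma~\ref{lem:commute} to each $A_{t_\mu}$ and sum. Since translation elements act trivially on $P$ under our conventions (as stated in the subsection on $\A_\af$), we have $t_\mu \cdot \lambda = \lambda$ for all $\lambda \in P$, so the ``leading'' terms already commute: $\sum_\mu A_{t_\mu}\lambda = \lambda \sum_\mu A_{t_\mu} + \sum_\mu \sum_{\beta} \ip{\beta^\vee,\lambda} A_{t_\mu r_\beta}$, where the inner sum is over $\beta \in R_\af^{\re+}$ with $t_\mu r_\beta \lessdot t_\mu$. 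It then remains to check that the correction terms cancel: the cocovers of $t_\mu$ of the form $t_\mu r_\beta$ are described by (the translation-element specialization of) Proposition~\ref{prop:superregular}, and I expect the involution exchanging the ``near'' descent at $t_\mu$ corresponding to $\beta = \alpha + n\delta$ with the matching descent at a neighboring $t_{\mu'}$ in the orbit to pair up these terms with opposite-signed contributions $\ip{\alpha^\vee,\lambda}$ and $\ip{(-\alpha^\vee),\lambda}$, so the total sum vanishes.

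The main obstacle is this last cancellation of correction terms: one must organize the Bruhat cocovers $t_\mu r_\beta$ across the whole orbit and exhibit the sign-reversing pairing cleanly. One clean route around grinding through it is to instead invoke Proposition~\ref{prop:bruhat}/Theorem~\ref{thm:central} machinery — note $\sum_{w\in W} t_{w\la}$ is precisely $\Theta^\la_{\id}$ applied to (a scalar times) the identity class in $QH^T(G/B)$ in a degenerate sense, or more directly apply Theorem~\ref{thm:schub} with ${\mathfrak S}_{\id} = 1$ (the empty product $k=0$), which immediately yields $j(\xi_{t_\la}) = \Upsilon(\sum_{w\in W} t_{w\la}) = \sum_{w\in W} A_{t_{w\la}}$, and then collapse the sum over $W$ to a sum over $W\cdot\la$ using that $t_{w\la}$ depends only on $w\la$. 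This is the shortest path and is how I would actually write it.
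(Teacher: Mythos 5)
Your reduction to the two characterizing properties of Theorem~\ref{thm:pet} is the right frame, and your mod-$J$ computation is fine: writing $t_{w\la}=(wt_\la)w^{-1}$ with $w$ minimal in $wW_\la$ gives a length-additive factorization $A_{t_{w\la}}=A_{wt_\la}A_{w^{-1}}$, so every non-antidominant term lies in $J$. Your ``shortest path'' via Theorem~\ref{thm:schub} with ${\mathfrak S}_{\id}=1$ is also legitimate (no circularity: Theorem~\ref{thm:schub} is established before this proposition and does not use it), and it is exactly what the paper itself points to. But it only proves the proposition for \emph{sufficiently superregular} $\la$, whereas the statement is for all $\la\in\tQ$, including small and non-regular antidominant coweights. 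This is the genuine gap: for general $\la$ Theorem~\ref{thm:schub} is simply not applicable, and your fallback direct argument leaves the crucial cancellation of the correction terms in Lemma~\ref{lem:commute} as an ``expectation,'' justified by appeal to ``the translation-element specialization of Proposition~\ref{prop:superregular}'' --- but that proposition is likewise proved only in the superregular regime, so for general $\la$ one must redo the classification of Bruhat cocovers of $t_\mu$ from scratch (a direct calculation, simpler than but not contained in Proposition~\ref{prop:superregular}), which is precisely how the paper disposes of the general case (it otherwise cites \cite[Proposition 4.5]{Lam}).

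Two further points of care in the non-regular case. First, your ``collapse'' of $\sum_{w\in W}t_{w\la}$ to $\sum_{\mu\in W\cdot\la}t_\mu$ is only valid when $W_\la$ is trivial; otherwise it produces the extra factor $|W_\la|$, so the superregular identity does not degenerate to the stated one by any naive specialization --- the orbit sum $\sum_{\mu\in W\cdot\la}A_{t_\mu}$ (each $\mu$ once) is the object to be analyzed directly. Second, the sign-reversing pairing you envisage matches a ``near'' cocover of $t_{v\la}$ (root $\alpha$ with $\ell(vr_\alpha)=\ell(v)\pm\dots$) with a ``far'' cocover of the neighboring orbit element $t_{vr_\alpha\la}$; when $\inner{\la}{\alpha}=0$ these two orbit elements coincide, so the pairing must be checked to work within a single $t_\mu$ (or the corresponding covers shown not to occur). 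None of this is difficult, but it is exactly the content that your proposal defers, so as written the proof covers only the superregular case of the proposition.
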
 The superregular case of
Proposition~\ref{prop:jt} follows from Theorem~\ref{thm:schub}.  The
general case can be obtained by a direct calculation, similar to
(but simpler than) Proposition~\ref{prop:superregular}.

\begin{prop} \label{prop:jst}
Let $\la$ be superregular antidominant. Then
\begin{align*}
&j(\xi_{r_it_\la}) = b(\lambda;\omega_i) = \Upsilon(B^{\omega_i}
\sum_{w\in
W}t_{w\lambda}) \\
&=\sum_{w \in W} \left((\omega_i - w\omega_i)A_{t_{w\lambda}} +
\sum_{\alpha \in R^+}
\ip{\alpha^\vee,\omega_i}A_{r_{w\alpha}t_{w\lambda}} + \sum_{\alpha
\in R^+} \ip{\alpha^\vee,\omega_i} A_{r_{w\alpha} t_{w(\lambda +
\alpha^\vee)}}\right)
\end{align*}
 where $\omega_i$ denotes the $i$-th fundamental
weight, and the two inner summations are as in
Theorem~\ref{thm:monk}.
\end{prop}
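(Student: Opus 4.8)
The plan is to obtain Proposition~\ref{prop:jst} as the special case $w = r_i$ of Theorem~\ref{thm:schub}, and then to make the right-hand side explicit by unwinding the definition of the operator $B^{\omega_i}$. First I would note that $\sigma^{r_i} = [\omega_i]$ in $QH^T(G/B)$: by the extension of Theorem~\ref{thm:monk} quoted just after its statement, multiplication by the line-bundle class $[\omega_i]$ is given by that same formula, so $[\omega_i] = [\omega_i] * \sigma^{\id} = \sigma^{r_i} * \sigma^{\id} = \sigma^{r_i}$, where the outer equalities use that $\sigma^{\id}$ is the unit of $QH^T(G/B)$ and the middle one is the extended Chevalley formula. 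Hence the single monomial $\mathfrak{S}_{r_i} := \omega_i$ (with coefficient $1$ and trivial $q$-exponent $\lambda(\cdot) = 0$) is an equivariant quantum Schubert polynomial representing $\sigma^{r_i}$. Feeding this into Theorem~\ref{thm:schub} with $w = r_i$ gives $j(\xi_{r_i t_\lambda}) = b(\lambda + 0;\omega_i) = b(\lambda;\omega_i)$, and by the definition of $b(\cdot\,;\cdot)$ from Theorem~\ref{thm:central} one has $b(\lambda;\omega_i) = \Upsilon(B^{\omega_i}\sum_{w\in W} t_{w\lambda})$. This already yields the first two displayed equalities.

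For the explicit expansion I would compute $B^{\omega_i}$ summand by summand. Presenting $t_{w\lambda}$ in the normal form $u t_{v\lambda}$ with $u = \id$ and $v = w$, the definition of $B^{\omega_i}$ produces a diagonal term $(\omega_i - w\omega_i)\, t_{w\lambda}$ together with a contribution $\ip{\alpha^\vee,\omega_i}\, t_{w\lambda}\, r_{w\alpha + n\delta}$ for each near cover $t_{w\lambda}\, r_{w\alpha+n\delta} \lessdot_n t_{w\lambda}$. By Proposition~\ref{prop:superregular}, cases (1) and (2) (with the symbols $w$ and $v$ there specialized to $\id$ and $w$), these near covers are exactly $r_{w\alpha} t_{w\lambda}$ as $\alpha$ ranges over $R^+$ with $w r_\alpha \gtrdot w$, and $r_{w\alpha} t_{w(\lambda + \alpha^\vee)}$ as $\alpha$ ranges over $R^+$ with $\ell(w r_\alpha) = \ell(w) + 1 - \ip{\alpha^\vee, 2\rho}$; these are precisely the two summation ranges of Theorem~\ref{thm:monk}. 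Applying $\Upsilon$ (which replaces each $x$ by $A_x$) and summing over $w \in W$ then produces the stated three-part formula.

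The proof involves no essential difficulty; it is essentially bookkeeping. The one place to be careful is the matching of index conventions — the $(w,v)$ labelling in the definition of $B^\mu$, the four cover types of Proposition~\ref{prop:superregular}, and the two summation conditions of Theorem~\ref{thm:monk} — so that the near covers of $t_{w\lambda}$ are correctly identified and the coefficients $\ip{\alpha^\vee,\omega_i}$ come out right. One should also observe that only a single application of $B^{\omega_i}$ is involved, so the hypothesis ``superregular antidominant'' already suffices to invoke Theorem~\ref{thm:schub}; alternatively, as with Proposition~\ref{prop:jt}, once the case of $\lambda$ sufficiently superregular is known the general superregular case follows by a direct check.
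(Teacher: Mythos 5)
Your proposal is correct and follows essentially the same route as the paper, whose proof likewise consists of invoking Theorem~\ref{thm:schub} with the polynomial $\omega_i$ representing $\sigma^{r_i}=[\omega_i]$, the explicit three-term expansion being just the definition of $B^{\omega_i}$ via cases (1) and (2) of Proposition~\ref{prop:superregular}. Your added remarks on the $\sigma^{r_i}=[\omega_i]$ identity and on the superregular versus sufficiently superregular hypothesis are careful bookkeeping, not a different argument.
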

\begin{proof}
This follows immediately from Theorem~\ref{thm:schub} and the fact
that $\sigma^{r_i} = [\omega_i]$ in $QH^T(G/B)$.
\end{proof}

\section{Borel case}
\begin{prop}
\label{prop:HTmult} Let $x \in W_\af^-$ and $\lambda \in \tQ$. Then
$$
\xi_x \, \xi_{t_\la} = \xi_{xt_\la}.
$$
\end{prop}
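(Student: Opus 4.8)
The plan is to evaluate the product through Peterson's isomorphism $j$ of Theorem~\ref{thm:pet} together with the explicit formula $j(\xi_{t_\lambda}) = \sum_{\mu\in W\cdot\lambda}A_{t_\mu}$ of Proposition~\ref{prop:jt}. Using commutativity of $H_T(\Gr_G)$ and the identity $j(\xi)\cdot\xi' = \xi\,\xi'$ of Theorem~\ref{thm:pet}, I would write
\[
\xi_x\,\xi_{t_\lambda} = \xi_{t_\lambda}\,\xi_x = j(\xi_{t_\lambda})\cdot\xi_x = \sum_{\mu\in W\cdot\lambda} A_{t_\mu}\cdot\xi_x,
\]
where $\A_\af$ acts on $H_T(\Gr_G)$ as in \eqref{E:NilHeckeOnHom}. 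Thus each summand is either $\xi_{t_\mu x}$ --- when $t_\mu x\in W_\af^-$ and $\ell(t_\mu x) = \ell(t_\mu) + \ell(x)$ --- or $0$; and $\ell(t_\mu) = \ell(t_\lambda)$ for every $\mu$ in the orbit since $\lambda$ is antidominant. It then remains to identify which summands survive.

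Write $x = wt_\nu$ with $\nu\in\tQ$ and $w$ $\nu$-minimal (Lemma~\ref{lem:grass}). For $\mu = w\cdot\lambda$ one computes $t_{w\cdot\lambda}x = (wt_\lambda w^{-1})wt_\nu = wt_{\lambda+\nu} = xt_\lambda$; here $\lambda+\nu$ is antidominant and $w$ is $(\lambda+\nu)$-minimal, because $\inner{\lambda+\nu}{\alpha_i} = 0$ forces $\inner{\nu}{\alpha_i} = 0$. Hence $xt_\lambda\in W_\af^-$ and, by Lemma~\ref{lem:grass},
\[
\ell(xt_\lambda) = \ell(t_{\lambda+\nu}) - \ell(w) = \ell(t_\lambda) + \ell(t_\nu) - \ell(w) = \ell(t_\lambda) + \ell(x),
\]
using $\ell(t_{\lambda+\nu}) = \inner{\lambda+\nu}{-2\rho}$ for the antidominant coweight $\lambda + \nu$. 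So the $\mu = w\cdot\lambda$ term contributes exactly $\xi_{xt_\lambda}$, with coefficient $1$.

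For $\mu\in W\cdot\lambda$ with $\mu\neq w\cdot\lambda$, suppose $A_{t_\mu}\cdot\xi_x\neq 0$; writing $t_\mu x = wt_{\sigma+\nu}$ with $\sigma = w^{-1}\cdot\mu\in W\cdot\lambda$, we get that $\sigma+\nu$ is antidominant and, comparing lengths via Lemma~\ref{lem:grass}, that $\inner{\sigma}{-2\rho} = \inner{\lambda}{-2\rho}$. Writing $\sigma = u\cdot\lambda$ and using $W$-invariance of the pairing together with $\rho - u^{-1}\rho = \sum_{\beta\in\Inv(u)}\beta$, this becomes $\sum_{\beta\in\Inv(u)}\inner{\lambda}{\beta} = 0$; since $\lambda$ is antidominant every term is $\le 0$, so $\Inv(u)$ consists only of roots orthogonal to $\lambda$. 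As the value of that sum depends only on $\sigma$, this holds for every $u$ with $u\cdot\lambda = \sigma$, in particular for the minimal-length representative of the coset $uW_\lambda$, whose inversion set meets the root system of $W_\lambda$ trivially; hence that representative is the identity, $\sigma = \lambda$, and $\mu = w\cdot\lambda$, a contradiction. Therefore $\xi_x\,\xi_{t_\lambda} = \xi_{xt_\lambda}$.

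I expect the main obstacle to be this last step: showing that among all the translations occurring in $j(\xi_{t_\lambda})$, only $t_{w\cdot\lambda}$ multiplies $x$ to a length-additive Grassmannian element. It comes down to the orbit inequality $\inner{u\cdot\lambda}{-2\rho}\le\inner{\lambda}{-2\rho}$ with equality precisely at the antidominant representative, and since $\lambda$ is not assumed regular one must handle the equality case through minimal-length coset representatives rather than directly.
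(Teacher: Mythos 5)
Your proof is correct and follows essentially the same route as the paper: express the product via $j(\xi_{t_\lambda})=\sum_{\mu\in W\cdot\lambda}A_{t_\mu}$ (Proposition~\ref{prop:jt}) acting as in Theorem~\ref{thm:j}, and use Lemma~\ref{lem:grass} to see that $xt_\lambda$ is Grassmannian with $\ell(xt_\lambda)=\ell(x)+\ell(t_\lambda)$. The only difference is that you spell out the verification, left implicit in the paper's ``follows immediately,'' that the orbit terms with $\mu\neq w\cdot\lambda$ cannot yield a length-additive Grassmannian product; your argument for this (orthogonality of $\Inv(u)$ to $\lambda$ and passage to the minimal coset representative of $uW_\lambda$) is sound.
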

\begin{proof}
By Lemma~\ref{lem:wla}, $\ell(x) + \ell(t_\la) = \ell(xt_\la)$.  The
proposition follows immediately from Proposition~\ref{prop:jt} and
Theorem~\ref{thm:j}.
\end{proof}

In particular $\{\xi_{t_\lambda} \mid \lambda \in \tQ\}$ is a
multiplicatively closed set that contains no zero divisors. So it
makes sense to consider $H^t_T(\Gr_G) = H_T(\Gr_G)[\xi_t^{-1} \mid t
\in \tQ]$. Let us also define $QH^T_q(G/B) = QH^T(G/B)[q_i^{-1} \mid
i \in I]$ and for $\alpha^\vee = \sum_i a_i \alpha_i^\vee\in Q^\vee$
we write $q_{\alpha^\vee}:= \prod_{i \in I}q_i^{a_i}$.

Let $\psi: H^t_T(\Gr_G) \to QH^T_q(G/B)$ be the $S$-module
homomorphism defined by
$$
\xi_{wt_\lambda} \, \xi_{t_\mu}^{-1} \mapsto q_{\lambda - \mu}
\sigma^w.
$$
This map is well-defined by Proposition~\ref{prop:HTmult} and is
clearly an $S$-module isomorphism.  Our main theorem is the
following.

\begin{thm} \label{thm:main}
The map $\psi: H^t_T(\Gr_G) \to QH_q^T(G/B)$ is an $S$-algebra
isomorphism.
\end{thm}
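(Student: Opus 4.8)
The plan is to show that $\psi$ is a ring homomorphism, which together with the already-established fact that it is an $S$-module isomorphism gives the theorem. Since the quantum equivariant Chevalley formula determines multiplication in $QH^T(G/B)$, and multiplication by classes $[\mu]$ of line bundles generates $QH^T(G/B)$ as an algebra over $S$ (after adjoining the $q_i^{-1}$), it suffices to check that $\psi$ intertwines multiplication by $[\mu]$ on the $QH$ side with the corresponding operation on the localized homology side. Concretely, I would first identify, via Theorem~\ref{thm:pet}, the effect of Pontryagin multiplication by $j(\xi_{r_i t_\lambda})$ on $H_T(\Gr_G)$; by Proposition~\ref{prop:jst} this element of the Peterson subalgebra is exactly $\Upsilon(B^{\omega_i}\sum_w t_{w\lambda})$. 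So multiplying a Schubert class $\xi_{wt_\nu}$ (with $\nu$ sufficiently superregular) by $\xi_{r_i t_\lambda}$ and then by $\xi_{t_\mu}^{-1}$ should, after applying $\psi$, reproduce the quantum Chevalley formula for $\sigma^{r_i} * \sigma^w$.

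The key technical device is Proposition~\ref{prop:bruhat}, which says precisely that the operator $B^\mu$ on $S[W_\af^\ssreg]$ corresponds, under the map $\Theta_w^\lambda$, to quantum multiplication by $[\mu]$ in $QH^T(G/B)$. Combined with Theorem~\ref{thm:pet} (that $j(\xi)\cdot\xi' = \xi\xi'$) and Proposition~\ref{prop:jst}, this lets me translate a product $\xi_{r_i t_\lambda}\cdot \xi_x$ in $H_T(\Gr_G)$ into the statement of the quantum Chevalley formula, at least when $x = wt_\nu$ with $\nu$ superregular enough that all the relevant Bruhat covers are of the form classified by Proposition~\ref{prop:superregular}. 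The localization by the $\xi_{t_\la}$ is what removes the superregularity restriction: given any $w\in W$ and any $\lambda\in\tQ$, I can write $\sigma^w$ in $QH^T_q(G/B)$ using a large translation, because multiplying through by $\xi_{t_\la}$ for $\la$ superregular shifts any element into the superregular range, and Proposition~\ref{prop:HTmult} guarantees this shift is multiplicative and invertible.

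So the steps, in order: (1) reduce to showing $\psi(\xi * \xi_{r_i t_0}\text{-analogue}) = \psi(\xi) * [\omega_i]$ for all $i$, using that the $[\omega_i]$ generate $QH^T_q(G/B)$ over $S$ and that both sides are $S$-linear; (2) using the localization and Proposition~\ref{prop:HTmult}, reduce further to the case where the relevant homology Schubert class is $\xi_{wt_\nu}$ with $\nu$ sufficiently superregular; (3) apply $j$ and Theorem~\ref{thm:pet} to turn the Pontryagin product into the action of $\Upsilon(B^{\omega_i}\sum_w t_{w\nu})$, identified via Proposition~\ref{prop:jst}; (4) invoke Proposition~\ref{prop:bruhat} to see that this action matches $\Theta^\nu_w$ of quantum multiplication by $[\omega_i]$, i.e.\ exactly the quantum Chevalley formula (Theorem~\ref{thm:monk}); (5) conclude the two ring structures agree on generators, hence agree.

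The main obstacle I anticipate is step (2)–(3): carefully matching the combinatorial bookkeeping of which affine Bruhat covers $xr_{v\alpha+n\delta}\lessdot x$ arise (the four cases of Proposition~\ref{prop:superregular}, split into near/far) with the three sums in the quantum Chevalley formula, and verifying that the localization genuinely reduces every case to the superregular one without introducing spurious terms — in particular checking that $\psi$ is compatible with the identification $\xi_{wt_\lambda}\xi_{t_\mu}^{-1}\mapsto q_{\lambda-\mu}\sigma^w$ independently of the chosen representative $(\lambda,\mu)$, which is where Proposition~\ref{prop:HTmult} and the length-additivity of Lemma~\ref{lem:wla} do the real work. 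Once the dictionary between $B^\mu$ and $*[\mu]$ is in hand via Proposition~\ref{prop:bruhat}, the rest is formal.
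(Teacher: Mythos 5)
Your overall strategy is the paper's: reduce to verifying the $\psi$-preimage of the quantum equivariant Chevalley formula (which determines the ring by Mihalcea), use Proposition~\ref{prop:HTmult} and localization to reduce to products $\xi_{r_it_\lambda}\,\xi_{wt_\mu}$ with $\lambda,\mu\in\tQ$ superregular, and compute that product via Proposition~\ref{prop:jst} and Theorem~\ref{thm:pet}/Theorem~\ref{thm:j}. However, there is a gap at your step (4), where you claim that Proposition~\ref{prop:bruhat} makes the identification with the Chevalley formula ``formal.'' Proposition~\ref{prop:bruhat} is a statement about the formal operator $B^\mu$ on the free module $S[W_\af^\sreg]$ and its compatibility with $\Theta_w^\lambda$; it says nothing about the nilHecke action \eqref{E:NilHeckeOnHom} of the terms $A_y$ of $j(\xi_{r_it_\lambda})$ on the Schubert class $\xi_{wt_\mu}$. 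Indeed, the matching of coefficients with Theorem~\ref{thm:monk} is already packaged into Proposition~\ref{prop:jst}; what remains, and what Proposition~\ref{prop:bruhat} cannot supply, is the length/Grassmannian analysis of the products $y\cdot(wt_\mu)$.

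Concretely, two things must be checked. First, for the terms of $j(\xi_{r_it_\lambda})$ lying in the same chamber as $w$, namely $t_{w\lambda}$, $r_{w\alpha}t_{w\lambda}$ and $r_{w\alpha}t_{w(\lambda+\alpha^\vee)}$, one must verify that the product with $wt_\mu$ is length-additive and lands in $W_\af^-$ (this uses Lemma~\ref{lem:grass}, e.g.\ $(r_{w\alpha}t_{w\lambda})(wt_\mu)=wr_\alpha t_{\lambda+\mu}$), so that $A_y\cdot\xi_{wt_\mu}$ is the expected Schubert class rather than zero. Second, and crucially, $j(\xi_{r_it_\lambda})$ contains terms from every chamber $v\in W$, and one must show that for $v\neq w$ the products $(r_{v\alpha}t_{v\lambda})(wt_\mu)$ and $(r_{v\alpha}t_{v(\lambda+\alpha^\vee)})(wt_\mu)$ are never length-additive — in the paper this follows from $\ell(t_{w^{-1}v\lambda+\mu})\ll\ell(t_\lambda)+\ell(t_\mu)$ — so these terms annihilate $\xi_{wt_\mu}$ and do not pollute the product. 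Your proposal never addresses this cross-chamber vanishing; without it the computed product would a priori contain spurious terms and would not match the Chevalley formula. Once you add these two verifications (which are elementary given Lemmata~\ref{lem:grass} and \ref{lem:wla} and superregularity), your argument coincides with the paper's proof.
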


\begin{proof}
It is enough to show that $H^t_T(\Gr_G)$ satisfies the
$\psi$-preimage of the quantum equivariant Chevalley formula
(Theorem~\ref{thm:monk}) since this completely determines
$QH^T(G/B)$ (see \cite{Mih}).  By Proposition~\ref{prop:HTmult}, it
is enough to calculate the product $\xi_{r_it_\lambda} \,\xi_{w
t_\mu}$ in $H_T(\Gr_G)$ for superregular antidominant $\lambda,\mu
\in \tQ$.  One does so using Proposition~\ref{prop:jst} and
Theorem~\ref{thm:j}.  For each term $A_{r_{w \alpha} t_{w \lambda}}$
in $j(r_i t_\lambda)$ one obtains a term $\xi_{wr_\alpha
t_{\lambda+\mu}}$ in the product $\xi_{r_it_\lambda} \,\xi_{w
t_\mu}$ since
$$
(r_{w \alpha} t_{w \lambda})\; (w t_\mu) = w r_{\alpha}w^{-1} w
t_{\lambda} w^{-1} w t_\mu = wr_\alpha t_{\lambda+\mu}
$$
is a length-additive product and $wr_\alpha t_{\lambda+\mu} \in
W_\af^-$ by Lemma~\ref{lem:grass}.  The analogous statement holds
for terms of the form $r_{w \alpha} t_{w ( \lambda + \alpha^\vee)}$,
thus ensuring that the the product $\xi_{r_it_\lambda} \,\xi_{w
t_\mu}$ contains terms of the form $ \xi_{wr_{\alpha} t_{\mu +
\lambda + \alpha^\vee}}$ where $\ell(wr_\alpha) = \ell(w) -
2\inner{\alpha^\vee}{\rho} + 1$.  Furthermore, for $v\ne w$,
$(r_{v\alpha}t_{v\lambda})\; (wt_\mu)$ is never a length-additive
product since $\ell(t_{w^{-1}v\lambda + \mu}) \ll \ell(t_\lambda) +
\ell(t_{\mu})$. Similarly $(r_{v\alpha}t_{v(\lambda+\al^\vee)})\;
(wt_\mu)$ is never a length-additive product. Similar computations
hold for the equivariant terms $(\om_i-w\om_i) A_{t_{w\la}}$ in
$j(r_it_\la)$. Thus
$$
\xi_{r_it_\lambda} \,\xi_{w t_\mu} = (\omega_i - w\omega_i)
\xi_{wt_{\lambda+\mu}} + \sum_{\alpha \in R^+}
\ip{\alpha^\vee,\omega_i}\xi_{wr_{\alpha}t_{\lambda+\mu}} +
\sum_{\alpha \in R^+} \ip{\alpha^\vee,\omega_i} \xi_{wr_{\alpha}
t_{\mu + \lambda + \alpha^\vee}}
$$
where the two summations are exactly as in Theorem~\ref{thm:monk}.
Applying $\psi$ gives exactly Theorem~\ref{thm:monk} with both sides
multiplied by $q_{\lambda+\mu}$.
\end{proof}

The following corollary writes the equivariant Gromov-Witten
invariants of $G/B$ in terms of Schubert structure constants of
$H_T(\Gr_G)$.
\begin{cor}
Let $w,v,u \in W$ and $\la \in Q^\vee$.  Then the equivariant three
point Gromov-Witten invariant $c_{w,v}^{u,\la}$ is equal to the
coefficient of $\xi_{z}$ in the product $\xi_{x}\,\xi_{y} \in
H_T(\Gr_G)$, where $x = wt_\eta, y = vt_\kappa, z = ut_\mu \in
W_\af^-$ and $\la = \mu-\eta-\kappa$.
\end{cor}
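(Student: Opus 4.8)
The plan is to deduce this identity from the $S$-algebra isomorphism $\psi$ of Theorem~\ref{thm:main}, by evaluating a single Pontryagin product in $H_T(\Gr_G)$ in two ways. The first ingredient is the action of $\psi$ on a single Schubert class: if $ut_\nu \in W_\af^-$, then writing $\xi_{ut_\nu} = \xi_{ut_\nu}\,\xi_{t_0}^{-1}$ (using $t_0 = \id$, so that $\xi_{t_0}$ is the unit of $H_T(\Gr_G)$) and applying the definition of $\psi$ yields $\psi(\xi_{ut_\nu}) = q_\nu\,\sigma^u$. In particular, for $x = wt_\eta$ and $y = vt_\kappa$ in $W_\af^-$ we get $\psi(\xi_x) = q_\eta\,\sigma^w$ and $\psi(\xi_y) = q_\kappa\,\sigma^v$.

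Next I would expand the product $\xi_x\,\xi_y$, which lies in $H_T(\Gr_G)$ since both factors do, as $\xi_x\,\xi_y = \sum_{z\in W_\af^-} d_z\,\xi_z$ with $d_z\in S$; by definition $d_z$ is the Schubert structure constant of $H_T(\Gr_G)$ to be identified. Applying $\psi$, and writing each $z\in W_\af^-\subset W_\af = W\ltimes Q^\vee$ uniquely as $z = u(z)\,t_{\nu(z)}$, gives $\psi(\xi_x\,\xi_y) = \sum_z d_z\,q_{\nu(z)}\,\sigma^{u(z)}$. On the other hand $\psi$ is a ring homomorphism, so $\psi(\xi_x\,\xi_y) = (q_\eta\,\sigma^w)*(q_\kappa\,\sigma^v) = q_{\eta+\kappa}\,(\sigma^w*\sigma^v)$, and by the definition of the equivariant Gromov-Witten invariants this equals $\sum_{u'\in W}\sum_{\beta} c_{w,v}^{u',\beta}\,q_{\eta+\kappa+\beta}\,\sigma^{u'}$.

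Finally I would compare coefficients in the $S$-basis $\{q_\gamma\,\sigma^{u'} \mid u'\in W,\ \gamma\in Q^\vee\}$ of $QH^T_q(G/B)$. With $\mu = \eta+\kappa+\la$, the basis element $q_\mu\,\sigma^u$ occurs on the quantum side with coefficient $c_{w,v}^{u,\la}$, and on the affine side exactly with coefficient $d_z$ for the unique $z\in W_\af^-$ having $u(z) = u$ and $\nu(z) = \mu$, namely $z = ut_\mu$. Hence $c_{w,v}^{u,\la} = d_{ut_\mu}$, which is the asserted coefficient of $\xi_z$ in $\xi_x\,\xi_y$. The only remaining point is that the configuration in the statement is always available: given $w,v,u$ and $\la$, choosing $\eta,\kappa\in\tQ$ regular and sufficiently antidominant forces $x$, $y$, and $z = u t_{\eta+\kappa+\la}$ all to lie in $W_\af^-$ by Lemma~\ref{lem:grass}. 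There is no serious obstacle in this argument: all of the content is carried by Theorem~\ref{thm:main}, and what remains is careful bookkeeping with $\psi$ and with Lemma~\ref{lem:grass}.
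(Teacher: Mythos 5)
Your argument is correct and is exactly the intended derivation: the paper states this corollary without proof as an immediate consequence of Theorem~\ref{thm:main}, namely applying the $S$-algebra isomorphism $\psi$ to the expansion of $\xi_x\,\xi_y$ in the Schubert basis and matching coefficients of $q_\mu\,\sigma^u$ on both sides. Your bookkeeping with $\psi(\xi_{ut_\nu})=q_\nu\,\sigma^u$ (via $\xi_{t_0}=\xi_\id$ being the unit) and with Lemma~\ref{lem:grass} is accurate, so there is nothing to add.
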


Now we can write all the coefficients $j_x^y$ in terms of three
point genus zero Gromov-Witten invariants of $G/B$, and conversely.
%

\begin{thm}\label{thm:compare}
Let $x = wt_\lambda \in W_\af^-$ and $y = ut_\nu \in W_\af$ where we
assume $\nu \in Q^\vee$ is superregular.  Let $v\in W$ be the unique
element such that $v^{-1} \nu \in \tQ$. Then
$$
j_x^y = c_{w,v}^{uv,v^{-1}\nu - \lambda},
$$
where $c_{w,v}^{u,\kappa} = 0$ if $\kappa$ is not a nonnegative sum
of simple coroots.  Conversely, suppose $f,g,h \in W$ and $\eta \in
Q^\vee$ are given.  Then
$$
c_{f,g}^{h,\eta} = j_{ft_\la}^{hg^{-1}t_{g(\eta + \la)}}
$$
for sufficiently superregular antidominant $\la \in \tQ$.
\end{thm}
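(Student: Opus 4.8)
The plan is to express $j_x^y$ as a Schubert structure constant of $H_T(\Gr_G)$ by means of Theorem~\ref{thm:j}, and then to identify that structure constant with an equivariant Gromov--Witten invariant via the corollary preceding this statement (which itself rests on the isomorphism $\psi$ of Theorem~\ref{thm:main}). Set $\mu_0:=v^{-1}\nu$; by the choice of $v$ this lies in $\tQ$, and it is superregular, hence regular, since $\nu$ is. Using $t_{g\cdot\eta}=g\,t_\eta\,g^{-1}$ we may rewrite $y=u\,t_{v\mu_0}=uv\,t_{\mu_0}\,v^{-1}$. Now fix any sufficiently superregular antidominant $\kappa\in\tQ$ and put $z:=v\,t_\kappa$; since $j_x^y$ does not depend on $\kappa$, this auxiliary choice is harmless.

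First I would carry out the Bruhat and length bookkeeping. By Lemma~\ref{lem:grass} we have $z\in W_\af^-$ (as $\kappa$ is regular, $v$ is automatically $\kappa$-minimal), and $yz=uv\,t_{\mu_0+\kappa}\in W_\af^-$ as well, $\mu_0+\kappa$ being regular antidominant. The one genuinely computational point is that $yz$ is length-additive: using Lemma~\ref{lem:wla} for $\ell(y)=\ell(u\,t_{v\mu_0})=\ell(t_{\mu_0})-\ell(uv)+\ell(v)$, Lemma~\ref{lem:grass} for $\ell(z)=\ell(t_\kappa)-\ell(v)$ and $\ell(yz)=\ell(t_{\mu_0+\kappa})-\ell(uv)$, and the identity $\ell(t_{\mu_0+\kappa})=\ell(t_{\mu_0})+\ell(t_\kappa)$ (immediate from Lemma~\ref{lem:length}, since $\ell(t_\eta)=\inner{\eta}{-2\rho}$ for antidominant $\eta$), one obtains $\ell(y)+\ell(z)=\ell(yz)$. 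Hence $y$ lies in the index set of Theorem~\ref{thm:j} applied to $\xi_x\,\xi_z$, and because $y'z=yz$ forces $y'=y$, the coefficient of $\xi_{yz}$ in $\xi_x\,\xi_z$ is exactly $j_x^y$.

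Next I would apply the corollary preceding this statement with its data $(w,v,u,\eta,\kappa,\mu)$ specialized to $(w,\,v,\,uv,\,\lambda,\,\kappa,\,\mu_0+\kappa)$: indeed $wt_\lambda=x$, $vt_\kappa=z$, $uv\,t_{\mu_0+\kappa}=yz$ all lie in $W_\af^-$, and $\mu-\eta-\kappa=\mu_0-\lambda=v^{-1}\nu-\lambda$, so the corollary identifies the coefficient of $\xi_{yz}$ in $\xi_x\,\xi_z$ with $c_{w,v}^{uv,\,v^{-1}\nu-\lambda}$; combined with the previous paragraph this yields $j_x^y=c_{w,v}^{uv,\,v^{-1}\nu-\lambda}$. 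The convention that this Gromov--Witten invariant is $0$ when $v^{-1}\nu-\lambda$ is not a nonnegative integer combination of simple coroots is automatically consistent: under $\psi$ of Theorem~\ref{thm:main}, only nonnegative powers of the $q_i$ appear in $\sigma^w*\sigma^v$, so the coefficient of $\xi_{yz}$ in $\xi_x\,\xi_z$ already vanishes in that case.

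Finally, the converse is a direct specialization of the first part. Given $f,g,h\in W$ and $\eta\in Q^\vee$, I would choose $\lambda\in\tQ$ so superregular that $\eta+\lambda$ is antidominant and superregular, and set $\nu:=g(\eta+\lambda)$, $w:=f$, $u:=hg^{-1}$. Superregularity being invariant under $W$, $\nu$ is superregular; and since $g^{-1}\nu=\eta+\lambda$ is regular antidominant, $g$ is the unique element of $W$ with $g^{-1}\nu\in\tQ$. Applying the first part with this $\lambda$ then gives $j_{ft_\lambda}^{\,hg^{-1}t_{g(\eta+\lambda)}}=c_{f,g}^{\,(hg^{-1})g,\,g^{-1}\nu-\lambda}=c_{f,g}^{\,h,\,\eta}$, as required. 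The only real obstacle in the whole argument is the length-additivity check of the second paragraph; everything else is a matter of matching indices and invoking results already established.
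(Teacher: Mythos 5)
Your proposal is correct and follows essentially the same route as the paper: introduce an auxiliary Grassmannian element $z=vt_\kappa$ with $\kappa$ superregular antidominant, verify via Lemmata~\ref{lem:grass} and \ref{lem:wla} that $yz\in W_\af^-$ and the product is length-additive so that Theorem~\ref{thm:j} identifies $j_x^y$ with the coefficient of $\xi_{yz}$ in $\xi_x\,\xi_z$, then translate through $\psi$ (equivalently, the corollary to Theorem~\ref{thm:main}) and specialize for the converse using that $\eta+\la$ is superregular antidominant for $\la$ sufficiently superregular. Your explicit length bookkeeping simply spells out what the paper leaves as "immediate with our assumptions."
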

\begin{proof}
Let $z = vt_\mu \in W_\af^-$ where $\mu$ is chosen to be
superregular.  By Theorem \ref{thm:j}, we know that $j_x^y$ is the
coefficient of $\xi_{yz} = \xi_{uvt_{v^{-1}\nu + \mu}}$ in
$\xi_{x}\xi_{z}$, as long as $yz \in W_\af^-$ and $\ell(yz)= \ell(y)
+ \ell(z)$.  Using Lemmata~\ref{lem:grass} and \ref{lem:wla}, we
check that the latter two conditions are immediate with our
assumptions.  Applying the map $\psi$ of Theorem~\ref{thm:main}, we
see that $j_x^y$ is equal to the coefficient of $q_{v^{-1}\nu +
\mu}\sigma^{uv}$ in $q_{\mu+\la}\sigma^w\sigma^v$.  To obtain the
second statement from the first, it suffices to note that $\eta+\la$
is superregular antidominant if $\la \in \tQ$ is sufficiently
superregular.
\end{proof}

\begin{remark}Theorem~\ref{thm:compare} only writes $j_x^y$ for
superregular $y\in W_\af$ in terms of Gromov Witten invariants of
$G/B$.  To obtain the rest of the $j$-coefficients, one can use
Proposition~\ref{prop:jt} and the observation that for any $y \in
W_\af$ there is a length additive product $y t_\mu$ (with $\mu \in
Q^\vee$) which is superregular.
\end{remark}

Mihalcea~\cite{Mih} has shown that equivariant Gromov-Witten
invariants are polynomials in simple roots with nonnegative
coefficients (in fact Mihalcea uses negative simple roots).  As a
consequence we obtain a positivity result for the $j$-coefficients,
and hence for all affine homology structure constants of
$H_T(\Gr_G)$.
\begin{cor}\label{cor:pos}
All equivariant homology Schubert structure constants of
$H_T(\Gr_G)$ are nonegative polynomials in the simple roots.  For
each $x \in W_\af^-$ and $y \in W_\af$, the polynomial $j_x^y \in S$
is a nonnegative polynomial in the simple roots.
\end{cor}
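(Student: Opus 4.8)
The plan is to reduce the statement entirely to the previously established identification of the $j$-coefficients with equivariant Gromov--Witten invariants of $G/B$, and then invoke Mihalcea's positivity theorem.

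First I would treat the case of a superregular $y \in W_\af$. By Theorem~\ref{thm:compare}, for such $y$ we have $j_x^y = c_{w,v}^{uv,\,v^{-1}\nu-\lambda}$ for suitable $w,v,u \in W$ and a coweight, and this is (up to the convention that it vanishes when the degree is not a nonnegative sum of simple coroots) an equivariant three-point genus zero Gromov--Witten invariant of $G/B$. Mihalcea~\cite{Mih} shows these are polynomials in the simple roots with nonnegative coefficients, so $j_x^y$ is a nonnegative polynomial in the simple roots in this case.

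Next I would handle arbitrary $y \in W_\af$ by the reduction already indicated in the remark following Theorem~\ref{thm:compare}: given any $y \in W_\af$, choose $\mu \in Q^\vee$ so that $yt_\mu$ is superregular and $\ell(yt_\mu) = \ell(y) + \ell(t_\mu)$, which is possible by Lemma~\ref{lem:wla} (or directly from Lemma~\ref{lem:length}). Then I would use the $S$-algebra isomorphism $j$ of Theorem~\ref{thm:pet} together with Proposition~\ref{prop:jt}, which gives $j(\xi_{t_\mu}) = \sum_{\kappa \in W\cdot\mu} A_{t_\kappa}$, a polynomial with coefficient $1$ (hence trivially nonnegative) on each term. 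Multiplying $j(\xi_x)$ by $j(\xi_{t_\mu})$ inside the nilHecke ring and extracting the coefficient of $A_{yt_\mu}$ expresses $j_x^y$ as a single coefficient $j_x^{\,yt_\mu}$ — or more carefully as a sum over the length-additive factorizations, but in fact, since the product $A_{y}A_{t_\mu} = A_{yt_\mu}$ is length-additive, one gets $j_x^{\,y} = j_x^{\,yt_\mu}$ directly. The latter is nonnegative by the superregular case. Finally, the statement that all equivariant homology Schubert structure constants of $H_T(\Gr_G)$ are nonnegative polynomials in the simple roots follows from Theorem~\ref{thm:j}, since these structure constants are precisely among the $j_x^y$.

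The main obstacle is simply making sure the reduction from general $y$ to superregular $y$ is clean: one must check that the multiplication by $\xi_{t_\mu}$ really does recover $j_x^y$ as a single (nonnegative) coefficient rather than a sum that could conceivably involve cancellation, and that the length-additivity and Grassmannian conditions in Theorem~\ref{thm:j} hold for the chosen $\mu$. Both are routine given Lemmata~\ref{lem:grass} and~\ref{lem:wla} and the fact that $\ell(t_\mu)$ can be made as large as desired, so no genuine difficulty arises; the real content is entirely imported from Mihalcea's positivity result via Theorem~\ref{thm:compare}.
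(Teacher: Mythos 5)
Your overall strategy is the one the paper intends: handle superregular $y$ via Theorem~\ref{thm:compare} and Mihalcea's positivity, identify the structure constants with $j$-coefficients via Theorem~\ref{thm:j}, and reduce general $y$ to the superregular case using Proposition~\ref{prop:jt} and a length-additive translation. However, the reduction step as you wrote it contains a genuine error. The identity $j_x^{\,y}=j_x^{\,yt_\mu}$ cannot hold: $j_x^{\,y}$ is homogeneous of degree $\ell(y)-\ell(x)$ while $j_x^{\,yt_\mu}$ has degree $\ell(y)+\ell(t_\mu)-\ell(x)$, so the asserted equality forces both to vanish. Two things went wrong. First, your requirements on $\mu$ are incompatible: Proposition~\ref{prop:jt} (and the very existence of the class $\xi_{t_\mu}$) requires $\mu\in\tQ$ antidominant, whereas $\ell(yt_\mu)=\ell(y)+\ell(t_\mu)$ forces $t_\mu$ to lie in the same chamber as the translation component of $y$; if $y=ut_\nu$ with $\nu$ superregular but not antidominant, no antidominant $\mu$ makes $yt_\mu$ length-additive. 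Second, for $\mu\in\tQ$ one has $j(\xi_x)j(\xi_{t_\mu})=j(\xi_{xt_\mu})$ by Proposition~\ref{prop:HTmult}, so the coefficient of $A_{yt_\mu}$ in this product is $j_{xt_\mu}^{\,yt_\mu}$, not $j_x^{\,yt_\mu}$: the translation must be absorbed into $x$ as well.

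The correct reduction runs as follows. Write $y=ut_\nu$, choose $w\in W$ with $w^{-1}\nu$ in the closed antidominant chamber, and take $\la\in\tQ$ antidominant and superregular relative to $y$. Then $yt_{w\la}$ is superregular and $\ell(yt_{w\la})=\ell(y)+\ell(t_{w\la})$. Expanding $j(\xi_{xt_\la})=j(\xi_x)\,j(\xi_{t_\la})$ with Proposition~\ref{prop:jt} gives
$$
j_{xt_\la}^{\,yt_{w\la}}=\sum_{\kappa\in W\cdot\la} j_x^{\,yt_{w\la}t_{-\kappa}},
$$
the sum being over those $\kappa$ for which $A_{yt_{w\la}t_{-\kappa}}A_{t_\kappa}=A_{yt_{w\la}}$ is length-additive. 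The term $\kappa=w\la$ contributes exactly $j_x^{\,y}$, and one must argue that no other $\kappa$ contributes: for $\kappa\ne w\la$ the translation part $\nu+w\la-\kappa$ is, by superregularity of $\la$, a huge vector whose product with $t_\kappa$ is far from length-additive. Your justification (``since $A_yA_{t_\mu}=A_{yt_\mu}$ is length-additive'') only shows the wanted term occurs, not that it is the only one, and this isolation is essential here: the other potential terms are $j$-coefficients at non-superregular elements, whose sign is precisely what is not yet known. With that isolation, $j_x^{\,y}=j_{xt_\la}^{\,yt_{w\la}}$, and Theorem~\ref{thm:compare} applies to the right-hand side because $xt_\la\in W_\af^-$ and $yt_{w\la}$ is superregular; the rest of your argument (Mihalcea's positivity and the passage to structure constants) then goes through as in the paper.
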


It would be interesting to obtain a direct proof of
Corollary~\ref{cor:pos} which does not appeal to quantum cohomology,
even for the nonequivariant ($\ell(x) = \ell(y)$) case.


\section{Parabolic case}
Let $P\subset G$ be a standard parabolic subgroup. Following
Peterson, up to localization we show that $QH^T(G/P)$ is a quotient
of $H_T(\Gr_G)$.

\subsection{Extended affine Weyl group}
\label{SS:XWeyl}

Recall that $W$ acts on the coweight lattice $P^\vee$. Therefore we
may define the extended affine Weyl group $\We \cong W \ltimes
P^\vee$; as before, the element in $\We$ corresponding to $\la\in
P^\vee$ is denoted $t_\la$. $\We$ acts on the affine root lattice
$Q_\af$ by the same formula as \eqref{E:WaffQaff} with $\la\in
P^\vee$. There is an induced action of $\We$ on $Q\cong
Q_\af/\Z\delta$.

$\We$ is not a Coxeter group. However it still permutes $R_\af^\re$,
so for $x\in \We$ we can define its inversion set $\Inv(x)$ and
length $\ell(x)$ in the same way as for $x\in W_\af$. The set
$\We^0=\{\tau\in \We\mid \ell(\tau)=0\}$ of elements of $\We$ of
length zero, forms a subgroup of $\We$ since it is the stabilizer of
the set $R_\af^+$.

Let $\delta = \al_0+\theta = \sum_{i\in I_\af} a_i \al_i$ be the
null root. A node $i\in I_\af$ is called \textit{special} if
$a_i=1$, or equivalently, if there is an automorphism of the affine
Dynkin diagram taking the node $i$ to the Kac $0$ node. Denote by
$I^s\subset I_\af$ the set of special nodes. The nodes in
$I^s\setminus\{0\}$ are also called \textit{cominuscule}. The
abelian group $\Sigma = P^\vee/Q^\vee$ consists of the elements
$\om_i^\vee+Q^\vee$ for $i\in I^s$ where $\om_0^\vee=0$.

There is an isomorphism $\Sigma \cong \We^0$ which can be described
as follows. Let $i\in I^s$. Addition by the element
$-\om_i^\vee+Q^\vee \in \Sigma$, defines a permutation of the
elements of $P^\vee/Q^\vee$ or equivalently, a permutation of the
set $I^s$. This permutation extends uniquely to an automorphism
$\tau_i$ of the affine Dynkin diagram and satisfies $\tau_i(i)=0$.
It acts on $Q_\af$ by $\tau_i(\al_j)=\al_{\tau_i(j)}$ for all $j\in
I_\af$. It follows that $\tau_i(\delta)=\delta$ so that $\tau_i\in
\We^0$. The above isomorphism is given by
$-\om_i^\vee+Q^\vee\mapsto\tau_i$. Note that $\tau_0$ is the
identity in $\We$.

Define $v_i\in W$ to the shortest element such that $v_i \om_i = w_0
\om_i$ and let $\om_0=0$ so that $v_0=1$. Then
\begin{align} \label{E:special}
  \tau_i = v_i t_{-\om_i^\vee}.
\end{align}
Moreover $W^s=\{v_i\mid i\in I^s\}$ forms a subgroup of $W$ and the
map $\We^0\to W^s$ given by $\tau_i\mapsto v_i$, is an isomorphism.

%



\subsection{Affinization of $W_P$}
Let $L_P\subset G$ be the Levi factor of the parabolic subgroup
$P\subset G$. Say that $L_P$ has Dynkin node set $I_P$, root system
$R_P$, root lattice $Q_P$, coroot lattice $Q_P^\vee$, coweight
lattice $P_P^\vee$, and Weyl group $W_P$. Let $W^P$ denote the set
of minimal length coset representatives in $W/W_P$. Define
\begin{align} \label{E:WPaff}
(W_P)_\af &= W_P \ltimes Q_P^\vee = \{wt_\la\in W_\af\mid w \in W_P,
\la \in Q_P^\vee\}.
\end{align}

$L_P$ has affine root lattice $(Q_P)_\af = Q_P \oplus
\Z\delta\subset Q_\af$, affine Weyl group $(W_P)_\af$, and extended
affine Weyl group $\We_P = W_P \ltimes P_P^\vee$.

Let $I_P = \bigsqcup_{m=1}^k I_m$ be the partition of the node set
of $I_P$ according to the connected components of the subgraph of
the Dynkin diagram of $G$ induced by the subset of nodes $I_P$.
Write $R_m$, $Q_m^\vee$, $P_m^\vee$ for the irreducible
subrootsystem, coroot lattice and coweight lattice respectively.
Then we have an isomorphism of abelian groups
\begin{align} \label{E:Sigmaprod}
  \Sigma_P \cong \prod_{m=1}^k \Sigma_m
\end{align}
where $\Sigma_P = P_P^\vee/Q_P^\vee$ and $\Sigma_m =
P_m^\vee/Q_m^\vee$.

Let $(I_m)_\af = I_m \cup \{0_m\}$; the zero nodes for various $m$
are distinct. Write $(Q_m)_\af = Q_m \oplus \Z\delta\subset
(Q_P)_\af\subset Q_\af$. Let $\al_{0_m} = \delta - \theta_m\in
(Q_m)_\af$ where $\theta_m\in R_m^+$ is the highest root. Then
$(Q_m)_\af$ has basis $\{\al_i\mid i\in (I_m)_\af \}$. $\Sigma_m$
acts on $(Q_m)_\af$, inducing a permutation of $(I_m)_\af$ defined
by $\tau(\al_i)=\al_{\tau(i)}$ for $i\in (I_m)_\af$. Note that
$\Z\delta\subset (Q_m)_\af\subset Q_\af$ is fixed under the action
of $\Sigma_m$.



\subsection{$(W^P)_\af$}
Let
\begin{align*}
  (R_P)_\af^+ &= \{\beta\in R_\af^+\mid \bar{\beta}\in R_P
  \} \\
 (W^P)_\af &= \{x \in W_\af \mid x \cdot \beta >0 \text{ for
all $\beta\in (R_P)_\af^+$}\}.
\end{align*}

\begin{remark} \label{R:r0} Suppose $P\ne G$, or equivalently,
$\theta\not\in R_P$. Then $r_0\in (W^P)_\af$, since $r_0$ has the
lone inversion $\al_0=\delta-\theta\not\in(R_P)_\af^+$.
\end{remark}

\begin{lem} \label{L:WPaffchar} $wt_\la\in (W^P)_\af$ if and only
if, for every $\al\in R_P^+$, if $w\al>0$ then $\inner{\la}{\al}=0$
and if $w\al<0$ then $\inner{\la}{\al}=-1$.
\end{lem}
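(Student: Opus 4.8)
The plan is to compute directly from the definitions, using the level-zero action \eqref{E:WaffQaff} and the explicit shape of $R_\af^+$. First I would decompose $(R_P)_\af^+$: since $R_P = R_P^+\sqcup(-R_P^+)$ and $R_\af^+$ consists of the $\gamma+m\delta$ with $m>0$, or $m=0$ and $\gamma\in R^+$, one gets
$$
(R_P)_\af^+ = \{\alpha + n\delta \mid \alpha\in R_P^+,\ n\ge 0\}\ \sqcup\ \{-\alpha + n\delta\mid \alpha\in R_P^+,\ n\ge 1\}.
$$
Hence $wt_\la\in(W^P)_\af$ if and only if, for every $\alpha\in R_P^+$, we have $wt_\la\cdot(\alpha+n\delta)>0$ for all $n\ge 0$ and $wt_\la\cdot(-\alpha+n\delta)>0$ for all $n\ge 1$.

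Next I would plug in \eqref{E:WaffQaff}, giving $wt_\la\cdot(\alpha+n\delta) = w\alpha + (n-\inner{\la}{\alpha})\delta$ and $wt_\la\cdot(-\alpha+n\delta) = -w\alpha + (n+\inner{\la}{\alpha})\delta$, and recall that $\gamma+m\delta$ is a positive affine root exactly when $m>0$, or $m=0$ and $\gamma>0$. Fix $\alpha\in R_P^+$; note $\inner{\la}{\alpha}\in\Z$ and $w\alpha\ne 0$. Scanning $n\ge 0$, the condition $wt_\la\cdot(\alpha+n\delta)>0$ fails for some such $n$ precisely when $\inner{\la}{\alpha}\ge 1$ (witnessed by $n=0$), or $\inner{\la}{\alpha}=0$ and $w\alpha<0$ (again $n=0$); so this family of conditions holds iff $\inner{\la}{\alpha}\le 0$, with $\inner{\la}{\alpha}=0$ forcing $w\alpha>0$. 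Symmetrically, scanning $n\ge 1$, the condition $wt_\la\cdot(-\alpha+n\delta)>0$ fails for some such $n$ precisely when $\inner{\la}{\alpha}\le -2$, or $\inner{\la}{\alpha}=-1$ and $w\alpha>0$; so this family holds iff $\inner{\la}{\alpha}\ge -1$, with $\inner{\la}{\alpha}=-1$ forcing $w\alpha<0$.

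Combining the two, $wt_\la\in(W^P)_\af$ iff for every $\alpha\in R_P^+$ one has $\inner{\la}{\alpha}\in\{0,-1\}$ with $\inner{\la}{\alpha}=0\iff w\alpha>0$ and $\inner{\la}{\alpha}=-1\iff w\alpha<0$. Since $w\alpha$ is nonzero for $\alpha\in R_P^+$, this is exactly the asserted statement: $w\alpha>0$ implies $\inner{\la}{\alpha}=0$ and $w\alpha<0$ implies $\inner{\la}{\alpha}=-1$ (the converse implications being automatic from $\inner{\la}{\alpha}\in\{0,-1\}$). The only point requiring care is the asymmetric bookkeeping of the $n=0$ term between the $\alpha$-family and the $(-\alpha)$-family: this is what produces the two different one-sided inequalities $\inner{\la}{\alpha}\le 0$ and $\inner{\la}{\alpha}\ge -1$, hence the values $0$ and $-1$. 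Everything else is mechanical.
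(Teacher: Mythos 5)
Your argument is correct and is essentially the paper's proof: the paper likewise reduces membership in $(W^P)_\af$ to conditions on the two families $\al+n\delta$ ($n\ge 0$) and $-\al+n\delta$ ($n\ge 1$) for $\al\in R_P^+$ and extracts the values $0$ and $-1$ from the level-zero action. The only cosmetic difference is that the paper first observes (via inversion sets) that it suffices to check the extreme roots $\al$ and $\delta-\al$, whereas you scan all $n$ directly and identify $n=0$ and $n=1$ as the binding cases.
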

\begin{proof} For any $x\in W_\af$ and $\al\in R^+$, if $\al+n\delta\in
\Inv(x)$ for some $n\in\Z_{\ge0}$ then $\al\in \Inv(x)$. Similarly,
if $-\al+n\delta\in \Inv(x)$ for some $n\in\Z_{>0}$ then
$\delta-\al\in \Inv(x)$. Therefore $wt_\la\in (W^P)_\af$ if and only
if, for every $\al\in R_P^+$, $\al\not\in \Inv(wt_\la)$ and
$\delta-\al\not\in \Inv(wt_\la)$. The lemma follows
straightforwardly from these conditions.
\end{proof}

\begin{lem} \label{L:WPaffirr}
Suppose that $wt_\la\in (W^P)_\af$, $R_P$ is an irreducible root
system, $\inner{\la}{\al_j}\ne0$ for some $j\in I_P$, and $w=w_1w_2$
where $w_1\in W^P$ and $w_2\in W_P$. Then
\begin{enumerate}
\item
The node $j$ is cominuscule in $I_P$.
\item
For $\al\in R_P^+$,
\begin{align*}
  \inner{\la}{\al} = \begin{cases}
  -1 & \text{if $\al_j$ occurs in $\alpha$} \\
  0 & \text{otherwise.}
  \end{cases}
\end{align*}
\item
$w_2=v_j^P$, with notation as in Section \ref{SS:XWeyl}, with respect
to the cominuscule node $j$ in $I_P$.
\end{enumerate}
\end{lem}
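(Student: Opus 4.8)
The plan is to read everything off the inversion-set description in Lemma~\ref{L:WPaffchar}, using two standard facts about the irreducible root system $R_P$: writing $\theta_P=\sum_{k\in I_P}a_k^P\,\al_k$ for its highest root, one has $a_k^P\ge 1$ for every $k\in I_P$, and the coefficient of $\al_k$ in any $\al\in R_P^+$ is at most $a_k^P$. By Lemma~\ref{L:WPaffchar}, $\inner{\la}{\al}\in\{0,-1\}$ for every $\al\in R_P^+$; in particular $\inner{\la}{\al_k}\le 0$ for all $k\in I_P$, and $\inner{\la}{\al_j}=-1$ by hypothesis. Pairing $\la$ with $\theta_P$ gives $\inner{\la}{\theta_P}=\sum_k a_k^P\inner{\la}{\al_k}\le a_j^P\inner{\la}{\al_j}=-a_j^P\le -1$, while Lemma~\ref{L:WPaffchar} also forces $\inner{\la}{\theta_P}\ge -1$. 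Hence every inequality here is an equality: $a_j^P=1$, which is exactly the assertion that $j$ is cominuscule in $I_P$ and proves (1), and $\inner{\la}{\al_k}=0$ for all $k\in I_P\setminus\{j\}$. Part (2) is then immediate, since for $\al=\sum_k c_k\al_k\in R_P^+$ we get $\inner{\la}{\al}=-c_j$ with $0\le c_j\le a_j^P=1$, which equals $-1$ precisely when $\al_j$ occurs in $\al$.

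For (3), I would first transfer the inversion information from $w$ to the given factor $w_2$. Since $w_1\in W^P$ maps every root in $R_P^+$ into $R^+$, for $\al\in R_P^+$ the roots $w\al=w_1(w_2\al)$ and $w_2\al$ have the same sign; combining this with Lemma~\ref{L:WPaffchar} and part (2) shows that the inversion set of $w_2$ inside $W_P$ is $\{\al\in R_P^+\mid \al_j\text{ occurs in }\al\}$. It remains to see that $v_j^P$ has this same inversion set. Applying the construction of Section~\ref{SS:XWeyl} to $L_P$ at the node $j$ (cominuscule by (1)): $v_j^P$ is the shortest element of $W_P$ with $v_j^P\,\om_j^P=w_{0,P}\,\om_j^P$ (where $w_{0,P}$ is the longest element of $W_P$), and since the $W_P$-stabilizer of $\om_j^P$ is $W_{I_P\setminus\{j\}}$, this identifies $v_j^P$ with $w_{0,P}\,w_{0,I_P\setminus\{j\}}$, the longest minimal-length representative of $W_P/W_{I_P\setminus\{j\}}$. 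Reading off its inversions --- for $\al\in R_P^+$ with $\al_j$ absent, $w_{0,I_P\setminus\{j\}}\al<0$ and then $w_{0,P}$ restores positivity, while if $\al_j$ occurs it does so with coefficient $1$, so $w_{0,I_P\setminus\{j\}}\al>0$ and $w_{0,P}$ makes it negative --- gives $\Inv(v_j^P)=\{\al\in R_P^+\mid \al_j\text{ occurs in }\al\}=\Inv(w_2)$. Since an element of a Coxeter group is determined by its inversion set, $w_2=v_j^P$. (Alternatively one may use that $v_j^P t_{-\om_j^\vee}\in\We_P$ has length zero, hence permutes $(R_P)_\af^+$, and run the same sign analysis through the action \eqref{E:WaffQaff}.)

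Everything above is elementary Coxeter combinatorics, so there is no serious obstacle; the one place I would take care is the final identification step, making sure that the special-node/length-zero machinery of Section~\ref{SS:XWeyl} is transported to the Levi $L_P$ with the right conventions --- in particular which fundamental weight is meant by $\om_j^P$ and in which Weyl group the inversion sets $\Inv(w_2)$ and $\Inv(v_j^P)$ are being computed, since the index $j\in I_P$ simultaneously labels data of the ambient group $G$.
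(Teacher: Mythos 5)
Your proposal is correct and follows essentially the same route as the paper: both arguments read everything off Lemma~\ref{L:WPaffchar}, pair $\la$ against a positive root of $R_P$ containing $\al_j$ (implicitly the highest root $\theta_P$, made explicit in your write-up) to force $a_j^P=1$ and $\inner{\la}{\al_i}=0$ for $i\ne j$, and then prove (3) by transferring $\Inv(w)\cap R_P^+$ to $\Inv(w_2)$ and matching it with $\Inv(v_j^P)$. Your verification that $\Inv(v_j^P)=\{\al\in R_P^+\mid \al_j \text{ occurs in } \al\}$ via $v_j^P=w_{0,P}w_{0,I_P\setminus\{j\}}$ simply fills in a step the paper asserts without proof (and uses later in Proposition~\ref{P:pip}), so there is no substantive difference.
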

\begin{proof}
We shall use Lemma \ref{L:WPaffchar} repeatedly without further
mention. We have $\inner{\la}{\al_j}=-1$. Suppose $\al_j$ occurs in
$\al\in R_P^+$, that is, $\al=\sum_{i\in I_P} a_i \al_i$ with
$a_j>0$ and all $a_i\ge0$. Then $\inner{\la}{\al}=\sum_{i\in I_P}
a_i \inner{\la}{\al_i} \le -a_j + \sum_{i\in I_P\setminus\{j\}} a_i
\inner{\la}{\al_i}\le -a_j \le -1$. Therefore $\inner{\la}{\al_i}=0$
for all $i\in I_P\setminus\{j\}$ and $a_j=1$. (1) and (2) follow.
For (3) we have $\Inv(w)\cap R_P^+ = \Inv(w_2)$. But $\Inv(w_2)$
must consist of the set of roots of $R_P^+$ in which $\al_j$ occurs.
This is precisely $\Inv(v_j^P)$. Hence $w_2=v_j^P$.
\end{proof}

\begin{lem} \label{L:WPafftrans} Suppose $wt_\la\in (W^P)_\af$ and
$w=w_1w_2\in W$ where $w_1\in W^P$ and $w_2\in W_P$. Then $w_2$ has
the following form. Let $J=\{j\in I_P\mid \inner{\la}{\al_j}=-1\}$.
Then $|J\cap I_m| \le 1$ for all $m$. If it is nonempty call this
element $j_m$; it is cominuscule in $I_m$. If it is empty write
$j_m=0_m\in (I_m)_\af$. Then $w_2 = \prod_{m=1}^k v_{j_m}^{I_m}$.
\end{lem}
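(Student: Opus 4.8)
The plan is to reduce to the irreducible case handled by Lemma~\ref{L:WPaffirr}, treating one connected component $I_m$ at a time. Since $R_P=\bigsqcup_{m=1}^k R_m$ is an orthogonal decomposition, the Weyl group factors as a direct product $W_P=\prod_{m=1}^k W_{I_m}$, so the first thing I would do is write $w_2=\prod_{m=1}^k w_2^{(m)}$ with $w_2^{(m)}\in W_{I_m}$, the factors commuting.

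The first key step is to show that the condition $wt_\la\in(W^P)_\af$ localizes to each component. Fix $\al\in R_m^+$. For $m'\ne m$ the factor $w_2^{(m')}$ acts trivially on $\al$ (as $R_{m'}\perp R_m$), so $w_2\al=w_2^{(m)}\al\in R_m\subseteq R_P$; and since $w_1\in W^P$ sends every root of $R_P$ to a root of the same sign, $\mathrm{sign}(w\al)=\mathrm{sign}(w_2\al)=\mathrm{sign}(w_2^{(m)}\al)$. Plugging this into the criterion of Lemma~\ref{L:WPaffchar} (and noting that every inversion of an element of $W_{I_m}$ lies in $R_m^+$), I would conclude that $w_2^{(m)}t_\la$ lies in $(W^{P_m})_\af$, where $P_m$ denotes the standard parabolic of $G$ with Levi node set $I_m$; note $R_{P_m}=R_m$ is irreducible. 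Moreover in the factorization $W=W^{P_m}W_{P_m}$ the element $w_2^{(m)}$ already lies in $W_{P_m}$, so its $W^{P_m}$-part is trivial.

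Next I would do a two-case analysis for each $m$. If $\inner{\la}{\al_i}=0$ for all $i\in I_m$, then $\inner{\la}{\al}=0$ for every $\al\in R_m^+$ by linearity, so Lemma~\ref{L:WPaffchar} (applied to $P_m$) forbids any $\al\in R_m^+$ with $w_2^{(m)}\al<0$; hence $w_2^{(m)}$ has no inversions in $R_m^+$, so $w_2^{(m)}=\id=v_{0_m}^{I_m}$, and $J\cap I_m=\emptyset$, consistent with $j_m=0_m$. If instead $\inner{\la}{\al_j}\ne0$ for some $j\in I_m$, I apply Lemma~\ref{L:WPaffirr} with $R_P$ replaced by the irreducible $R_m$ (and $w_1=\id\in W^{P_m}$, $w_2=w_2^{(m)}$): this yields that $j$ is cominuscule in $I_m$, that $\inner{\la}{\al}=-1$ precisely when $\al_j$ occurs in $\al\in R_m^+$ and $0$ otherwise — so $J\cap I_m=\{j\}$, giving $|J\cap I_m|=1$ and $j_m=j$ cominuscule — and that $w_2^{(m)}=v_j^{I_m}=v_{j_m}^{I_m}$.

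In either case $w_2^{(m)}=v_{j_m}^{I_m}$ and $|J\cap I_m|\le1$ with $j_m$ cominuscule whenever $J\cap I_m\ne\emptyset$, so multiplying the commuting factors gives $w_2=\prod_{m=1}^k v_{j_m}^{I_m}$, as claimed. I expect the only real obstacle to be the bookkeeping of the first key step: verifying carefully that membership of $w_1$ in $W^P$ makes $\mathrm{sign}(w\al)$ depend only on the component-$m$ factor $w_2^{(m)}$, and that the resulting conditions are exactly the hypotheses of Lemma~\ref{L:WPaffchar} for the smaller parabolic $P_m$; once that is in place, everything is a direct appeal to Lemma~\ref{L:WPaffirr}.
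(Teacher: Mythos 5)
Your proposal is correct and takes essentially the same route as the paper, whose proof is simply the remark that the statement follows from Lemma~\ref{L:WPaffirr}; your component-by-component reduction (factoring $w_2$ over the groups $W_{I_m}$, using Lemma~\ref{L:WPaffchar} to check that the membership condition localizes to each irreducible component, and settling the case $J\cap I_m=\emptyset$ directly) just makes explicit the details the paper leaves implicit.
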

\begin{proof} This follows from Lemma \ref{L:WPaffirr}.
\end{proof}

\begin{lem} \label{L:reflectP}
Let $\alpha\in R_\af^+$ be a real root. Then $r_\alpha\in (W_P)_\af$
if and only if $\bar\alpha\in R_P$.
\end{lem}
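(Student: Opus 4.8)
The plan is to prove both implications directly from the characterization of $(W_P)_\af$ in \eqref{E:WPaff}, namely that $x \in (W_P)_\af$ exactly when $x = w t_\la$ with $w \in W_P$ and $\la \in Q_P^\vee$. Write $\alpha = \bar\alpha + n\delta$ with $\bar\alpha \in R$ and $n \in \Z$, and use the formula $r_\alpha = r_{\bar\alpha} t_{n \bar\alpha^\vee}$ from \eqref{E:affinereflection}.

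For the ``if'' direction, suppose $\bar\alpha \in R_P$. Then $r_{\bar\alpha} \in W_P$ (it is a product of simple reflections $r_i$ with $i \in I_P$, since $R_P$ is the sub-root-system generated by $\{\al_i \mid i \in I_P\}$ and its reflection group is $W_P$), and $\bar\alpha^\vee \in Q_P^\vee$, so $n\bar\alpha^\vee \in Q_P^\vee$. Hence $r_\alpha = r_{\bar\alpha} t_{n\bar\alpha^\vee}$ has Weyl part in $W_P$ and translation part in $Q_P^\vee$, so $r_\alpha \in (W_P)_\af$ by \eqref{E:WPaff}. For the ``only if'' direction, suppose $r_\alpha \in (W_P)_\af$, so $r_{\bar\alpha} \in W_P$. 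It then suffices to show that $r_{\bar\alpha} \in W_P$ forces $\bar\alpha \in R_P$: applying $r_{\bar\alpha}$ to $\bar\alpha$ gives $-\bar\alpha$, and since $W_P$ stabilizes the root subsystem $R_P$ and its complement $R \setminus R_P$ in $R$ (because $W_P$ is generated by reflections in roots of $R_P$, each of which preserves $R_P$), if $\bar\alpha \notin R_P$ then $-\bar\alpha = r_{\bar\alpha}\bar\alpha \notin R_P$ as well, which is no contradiction by itself. Instead I will argue: the fixed-point set of $r_{\bar\alpha}$ on $\hh^*$ is the hyperplane $\bar\alpha^\perp$, while $W_P$ acts trivially on the subspace orthogonal to $\mathrm{span}_\R(R_P)$; a nontrivial element of $W_P$ has fixed hyperplane equal to $\beta^\perp$ for some $\beta \in R_P$ only when it is the reflection $r_\beta$, and comparing fixed hyperplanes gives $\bar\alpha^\perp = \beta^\perp$, hence $\bar\alpha \in \R\beta \cap R = \{\pm\beta\} \subset R_P$.

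The main obstacle is the clean verification that $r_{\bar\alpha} \in W_P$ implies $\bar\alpha \in R_P$; the cleanest route is probably to avoid fixed-hyperplane bookkeeping and instead invoke a standard fact about reflection subgroups: $W_P$ is the reflection subgroup generated by $\{r_i \mid i \in I_P\}$, its set of reflections is exactly $\{r_\beta \mid \beta \in R_P\}$, and $R_P = R \cap \mathrm{span}_\R(\{\al_i \mid i \in I_P\})$ is precisely the set of roots whose associated reflections lie in $W_P$. Granting this, $r_{\bar\alpha} \in W_P$ is by definition a reflection in $W_P$, hence equals $r_\beta$ for some $\beta \in R_P$, forcing $\bar\alpha = \pm\beta \in R_P$. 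Everything else is immediate from \eqref{E:affinereflection} and \eqref{E:WPaff}, so the proof is short once this structural fact is in place.
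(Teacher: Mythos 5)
Your proof is correct and takes essentially the same route as the paper: the paper's one-line proof (``Follows from \eqref{E:affinereflection}'') is precisely your decomposition $r_\alpha = r_{\bar\alpha}\,t_{n\bar\alpha^\vee}$ combined with the description \eqref{E:WPaff} of $(W_P)_\af$ as $W_P \ltimes Q_P^\vee$. The only content you add is the standard (and correct) fact that the reflections lying in the parabolic subgroup $W_P$ are exactly $r_\beta$ for $\beta \in R_P$, which the paper leaves implicit; your initial fixed-hyperplane sketch was the shakier part, and you rightly replaced it with that fact.
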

\begin{proof} Follows from \eqref{E:affinereflection}.
\end{proof}

\begin{lem} \label{L:funnyfact} \cite{Pet}
For every $w\in W_\af$ there is a unique factorization $w=w_1w_2$
for $w_1\in (W^P)_\af$ and $w_2\in (W_P)_\af$.
\end{lem}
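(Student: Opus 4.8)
The plan is to establish Lemma~\ref{L:funnyfact} as a parabolic analogue of the standard fact that every element of $W_\af$ factors uniquely as a product of a minimal-length coset representative for $W_\af/(W_P)_\af$ and an element of $(W_P)_\af$. The subtlety is that $(W_P)_\af$ as defined in \eqref{E:WPaff} is a genuine affine Weyl group (generated by $\{r_i \mid i \in I_P\}$ together with the reflections $r_{0_m}$ in the highest roots $\theta_m$), so I want to think of $(W^P)_\af$ as the set of minimal-length representatives for the cosets $W_\af/(W_P)_\af$, and then the factorization is the usual parabolic decomposition --- but for the \emph{non-standard} generating set of $W_\af$ obtained by adjoining the $r_{\theta_m}$. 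So the first step is to observe, via Lemma~\ref{L:reflectP}, that $(W_P)_\af = W_\af \cap (\text{group generated by } r_\beta \text{ for } \bar\beta \in R_P)$ is exactly the subgroup of $W_\af$ fixing the set $(R_P)_\af^+ \subset R_\af^+$ setwise in the appropriate sense, equivalently the stabilizer of the dominant ``$P$-chamber'' wall structure.

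Next I would give the characterization of $(W^P)_\af$ in inversion-theoretic terms: by the definition $x \in (W^P)_\af$ iff $\Inv(x) \cap (R_P)_\af^+ = \emptyset$. The key combinatorial input is that for $x \in W_\af$ and $u \in (W_P)_\af$ one has the length-additivity statement: $\ell(xu) = \ell(x) + \ell(u)$ precisely when $\Inv(x^{-1}) \cap (R_P)_\af^+ = \emptyset$, i.e. when $x^{-1}$ ... more precisely I want $\Inv(xu) = \Inv(u) \sqcup u^{-1}\Inv(x)$ type decompositions, which hold because $(R_P)_\af$ is a ``parabolic'' subroot system of $R_\af$ (closed under addition within $R_\af$ and under negation). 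Then existence of the factorization: given $w \in W_\af$, choose $w_1$ to be the minimal-length element of the coset $w(W_P)_\af$; one shows $w_1 \in (W^P)_\af$ because if some $\beta \in (R_P)_\af^+$ were an inversion of $w_1$ then $w_1 r_\beta$ would be shorter and still in the coset (here $r_\beta \in (W_P)_\af$ by Lemma~\ref{L:reflectP}), contradicting minimality; set $w_2 = w_1^{-1} w \in (W_P)_\af$. For uniqueness, if $w_1 w_2 = w_1' w_2'$ with both factorizations of the stated type, then $w_1^{-1} w_1' = w_2 (w_2')^{-1} \in (W_P)_\af$, so $w_1$ and $w_1'$ lie in the same coset; since each is the minimal-length representative (which one checks follows from being in $(W^P)_\af$, using that $\Inv(w_1) \cap (R_P)_\af^+ = \emptyset$ forces $\ell(w_1 u) \geq \ell(w_1)$ for all $u \in (W_P)_\af$), they coincide, hence so do $w_2, w_2'$.

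**The main obstacle** will be handling the fact that $(W_P)_\af$ is not a standard parabolic subgroup of the Coxeter group $(W_\af, \{r_i \mid i \in I_\af\})$ --- the reflection $r_{0_m}$ in $\theta_m$ is not among the $r_i$ --- so I cannot simply quote the textbook theory of parabolic quotients of Coxeter groups. The clean way around this is to argue entirely at the level of root systems and inversion sets: the only structural facts I really need are (i) $(R_P)_\af := \{\beta \in R_\af \mid \bar\beta \in R_P \text{ or } (\bar\beta = 0, \beta \in \Z\delta \cap R_\af)\}$... actually $\{\beta \in R_\af^{\re} \mid \bar\beta \in R_P\}$ is a sub-root-system of $R_\af^{\re}$ that is ``$R_\af$-closed'' (if $\beta, \gamma \in (R_P)_\af$ and $\beta + \gamma \in R_\af$ then $\beta + \gamma \in (R_P)_\af$, since the bar map is additive), and (ii) $(W_P)_\af$ is precisely the reflection subgroup generated by $\{r_\beta \mid \beta \in (R_P)_\af^{\re,+}\}$, which is Lemma~\ref{L:reflectP} together with \eqref{E:WPaff}. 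Given these, the existence-and-uniqueness of the factorization is a formal consequence of the theory of root subsystems and the behavior of inversion sets, exactly as in the finite case. I would therefore structure the proof as: (a) cite Lemma~\ref{L:reflectP} to identify $(W_P)_\af$ with a reflection subgroup, (b) record the inversion-set decomposition for length-additive products over this subgroup, (c) deduce existence by a minimal-length-in-coset argument, (d) deduce uniqueness from the characterization of $(W^P)_\af$ as coset minima.
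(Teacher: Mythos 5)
Your existence step is sound and is essentially the paper's argument in different packaging: taking a minimal-length element $w_1$ of the coset $w(W_P)_\af$ and using Lemma~\ref{L:reflectP} to see that an inversion of $w_1$ lying in $(R_P)_\af^+$ would contradict minimality is just the coset-minimum version of the paper's induction on length. The genuine gap is in the combinatorial input you want to feed into uniqueness. The statement you call the key input --- that for $u\in (W_P)_\af$ one has $\Inv(xu)=\Inv(u)\sqcup u^{-1}\Inv(x)$, hence $\ell(xu)=\ell(x)+\ell(u)$, whenever $x$ has no inversions in $(R_P)_\af^+$ --- is false. The fact that makes this work for standard parabolic subgroups is $\Inv(u)\subset (R_P)_\af^+$ for all $u\in (W_P)_\af$, and that fails here because $(W_P)_\af$ is not generated by simple reflections of $W_\af$; closure of $(R_P)_\af$ under negation and addition inside $R_\af$ is not enough. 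Concretely, in affine type $A_2$ with $I_P=\{1\}$, the element $u=t_{-\al_1^\vee}\in (W_P)_\af$ has $\al_2\in\Inv(u)$ (indeed $t_{-\al_1^\vee}\cdot\al_2=\al_2-\delta<0$); taking $x=r_1r_2$, which lies in $(W^P)_\af$ since $\Inv(x)=\{\al_2,\theta\}$, one computes from Lemma~\ref{lem:length} that $\ell(xu)=4$ while $\ell(x)+\ell(u)=2+4=6$. By the lemma itself, $(r_1r_2)\,t_{-\al_1^\vee}$ \emph{is} the canonical factorization of this element, so the factorization asserted in Lemma~\ref{L:funnyfact} is genuinely not length-additive, and any route through ``exactly as in the finite (standard parabolic) case'' breaks at this point.

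Because of this, your uniqueness step does not close: even granting that membership in $(W^P)_\af$ forces $\ell(w_1u)\ge\ell(w_1)$ for all $u\in (W_P)_\af$, that only shows each of $w_1,w_1'$ is \emph{a} minimal-length element of the common coset, not that the minimal-length element is unique, which is exactly what is at stake. The repair is the paper's direct inversion argument, which avoids lengths altogether: if $w_1$ and $w_1'=w_1v$ both lie in $(W^P)_\af$ with $v=w_2(w_2')^{-1}\in (W_P)_\af$ nontrivial, choose $\beta\in R_\af^+$ with $\bar\beta\in R_P$ and $v\beta<0$ (such $\beta$ exists because $(W_P)_\af$ is the affine Weyl group of $R_P$, so a nontrivial element has an inversion inside $(R_P)_\af^+$); then $\overline{v\beta}\in R_P$ and $-v\beta\in (R_P)_\af^+$, so $w_1v\cdot\beta<0$, contradicting $w_1v=w_1'\in (W^P)_\af$. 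With that substitution for your step (d), your outline coincides with the paper's proof.
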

\begin{proof} For existence we may assume that $w\alpha<0$ for some
$\alpha\in R_\af^+$ such that $\bar\alpha\in  R_P$. Then
$wr_\alpha<w$ and by Lemma \ref{L:reflectP} we have $r_\alpha\in
(W_P)_\af$. By induction $wr_\alpha=x_1 x_2$ with $x_1\in (W^P)_\af$
and $x_2\in (W_P)_\af$. Then $w=x_1(x_2r_\alpha)$ as desired.

For uniqueness, suppose $w=w_1w_2=w_1'w_2'$ with $w_1,w_1'\in
(W^P)_\af$ and $w_2,w_2'\in (W_P)_\af$. Then
$w_1w_2(w_2')^{-1}=w_1'\in (W^P)_\af$. Let $v=w_2(w_2')^{-1}\in
(W_P)_\af$. If $v\ne 1$ then there is some $\beta\in R_\af^+$ such
that $\overline{\beta}\in R_P$ and $v\beta<0$. But
$\overline{v\beta}\in R_P$. Since $w_1\in (W^P)_\af$, we have $w_1
v\cdot\beta<0$, contradicting the assumption that $w_1v=w_1'\in
(W^P)_\af$. Uniqueness follows.
\end{proof}

Define $\pi_P:W_\af\to (W^P)_\af$ by $w\mapsto w_1$ in the notation
of Lemma \ref{L:funnyfact}.

\begin{lem} \label{L:WPformula}
Let $\psi_P:Q^\vee\to P_P^\vee$ be the linear map defined by
\begin{align*}
  \psi_P(\la) = \sum_{j\in I_P} \inner{\la}{\al_j} \om_j^\vee.
\end{align*}
Let
\begin{align*}
  \psi_P(\la) + Q_P^\vee \mapsto
  (-\om_{j_1}^\vee+Q_1^\vee,\dotsc,-\om_{j_k}^\vee+Q_k^\vee)
\end{align*}
under the isomorphism \eqref{E:Sigmaprod} and define
\begin{align}
  \phi_P(\la) = -\psi_P(\la) - \sum_{m=1}^k \om_{j_m}^\vee \in Q_P^\vee.
\end{align}
Then
\begin{align*}
  \pi_P(t_\la) &= v t_{\la+\phi_P(\la)}
\end{align*}
where $v = \prod_{m=1}^k v_{j_m}^{I_m}\in W_P$.
\end{lem}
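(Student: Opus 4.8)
The plan is to write down the factorization of Lemma~\ref{L:funnyfact} for $w=t_\la$ explicitly and then appeal to its uniqueness. Set $v=\prod_{m=1}^{k}v_{j_m}^{I_m}\in W_P$ and $\xi=\la+\phi_P(\la)\in Q^\vee$, with $j_m$ and $\phi_P(\la)$ as in the statement. Note $\phi_P(\la)\in Q_P^\vee$ because, by the definition of $j_m$, one has $\psi_P(\la)+Q_P^\vee=\sum_{m}\bigl(-\om_{j_m}^\vee+Q_m^\vee\bigr)$ in $\prod_m\Sigma_m\cong\Sigma_P$, so $\psi_P(\la)+\sum_m\om_{j_m}^\vee\in Q_P^\vee$; moreover each $j_m$ is either $0_m$ or cominuscule in $I_m$, since $\Sigma_m$ is the group of classes $\om_i^\vee+Q_m^\vee$ with $i\in I_m^s$. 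Put $w_1=v\,t_\xi\in W_\af$ and $w_2=w_1^{-1}t_\la$, so that $t_\la=w_1w_2$ tautologically; since $\pi_P(t_\la)$ is by definition the first factor in the unique such decomposition, it then suffices to verify $w_1\in(W^P)_\af$ and $w_2\in(W_P)_\af$.

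First I would dispose of $w_2$. A direct computation in $W_\af=W\ltimes Q^\vee$ gives $w_2=v^{-1}t_{\la-v\xi}$, and since $v^{-1}\in W_P$ it remains to see $\la-v\xi\in Q_P^\vee$. As $v$ stabilizes $Q_P^\vee$ we have $v\phi_P(\la)\in Q_P^\vee$, so this reduces to $\la-v\la\in Q_P^\vee$, which follows by induction on $\ell(v)$ from $r_i\mu-\mu=-\inner{\mu}{\al_i}\al_i^\vee\in Q_P^\vee$ for $i\in I_P$. Hence $w_2\in W_P\ltimes Q_P^\vee=(W_P)_\af$.

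The substantive step is $w_1=v\,t_\xi\in(W^P)_\af$, which by Lemma~\ref{L:WPaffchar} amounts to checking, for every $\al\in R_P^+$, that $\inner{\xi}{\al}=0$ whenever $v\al>0$ and $\inner{\xi}{\al}=-1$ whenever $v\al<0$. Using $\inner{\psi_P(\la)}{\al_i}=\inner{\la}{\al_i}$ for $i\in I_P$, together with the fact that $\inner{\om_{j_m}^\vee}{\al_i}$ equals $1$ for $i=j_m$ and $0$ for all other $i$ (here $\om_{0_m}^\vee=0$), one computes $\inner{\xi}{\al_i}=-1$ if $i=j_{m(i)}$ and $\inner{\xi}{\al_i}=0$ otherwise, where $m(i)$ is the component containing $i$. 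Now take $\al\in R_m^+$ and write $\al=\sum_{i\in I_m}c_i\al_i$. If $j_m=0_m$, then every factor of $v$ fixes $R_m$ (distinct components being non-adjacent, and $v_{0_m}^{I_m}=1$), so $v\al=\al>0$ and $\inner{\xi}{\al}=0$. If $j_m$ is cominuscule in $I_m$, then $c_{j_m}\in\{0,1\}$ (the defining property of a cominuscule node), so $\inner{\xi}{\al}=-c_{j_m}$; and $v$ acts on $R_m$ as $v_{j_m}^{I_m}$, whose inversion set is exactly $\{\al\in R_m^+\mid\al_{j_m}\text{ occurs in }\al\}$ (the fact already invoked in the proof of Lemma~\ref{L:WPaffirr}), so $v\al<0\iff c_{j_m}=1\iff\inner{\xi}{\al}=-1$. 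This verifies the hypotheses of Lemma~\ref{L:WPaffchar}, hence $w_1\in(W^P)_\af$, and uniqueness in Lemma~\ref{L:funnyfact} yields $\pi_P(t_\la)=w_1=v\,t_{\la+\phi_P(\la)}$.

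The one point requiring care is the matching in the third paragraph between the Coxeter-theoretic sign of $v\al$ and the arithmetic pairing $\inner{\xi}{\al}$; it rests entirely on the two standard facts about a cominuscule node $j_m$, namely that $\al_{j_m}$ occurs in every positive root of $R_m$ with coefficient $0$ or $1$, and that $\Inv(v_{j_m}^{I_m})$ is precisely the set of positive roots of $R_m$ containing $\al_{j_m}$. Everything else is routine bookkeeping with the definitions.
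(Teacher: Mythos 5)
Your proof is correct and follows essentially the same route as the paper: the heart of both arguments is the verification that $v\,t_{\la+\phi_P(\la)}\in (W^P)_\af$, together with the observation that $t_\la$ differs from this element only by a right factor lying in $(W_P)_\af$. The only cosmetic differences are that you package the latter as an explicit factorization $t_\la=w_1w_2$ and invoke uniqueness in Lemma~\ref{L:funnyfact} (where the paper instead uses right-invariance of $\pi_P$ under $(W_P)_\af$), and that you check membership in $(W^P)_\af$ via Lemma~\ref{L:WPaffchar} rather than by the paper's direct computation on affine roots.
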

\begin{proof}
Since $\phi_P(\la)\in Q_P^\vee$, by definition
$\pi_P(t_\la)=\pi_P(t_{\la+\phi_P(\la)})$.  We have
\begin{align*}
  v(\la+\phi_P(\la))-(\la+\phi_P(\la)) &= \sum_m (\om_{j_m}^\vee - v_{j_m}^{I_m}
  \om_{j_m}^\vee) \in Q_P^\vee.
\end{align*}
Therefore $$\pi_P(t_{\la+\phi_P(\la)}) =
\pi_P(t_{v(\la+\phi_P(\la))})=\pi_P(v t_{\la+\phi_P(\la)} v^{-1}) =
\pi_P(v t_{\la+\phi_P(\la)}).$$ It suffices to show that
$vt_{\la+\phi_P(\la)}\in (W^P)_\af$. To this end, let $\alpha
+n\delta\in (R_P)_\af^+$. We have $\al\in R_p$ for some $1\le p\le
k$. Then
\begin{align*}
  v t_{\la+\phi_P(\la)} (\al+n\delta) &=
  v \al + (n + \sum_{m=1}^k \inner{\om_{j_m}^\vee}{\al})\delta \\
  &= v_{j_p}^{I_p}\al + (n+\inner{\om_{j_p}^\vee}{\al})\delta.
\end{align*}
If $j_p=0_p$ then $v_{j_p}^{I_p}=1$, $\om_{j_p}^\vee=0$, and $v
t_{\la+\phi_P(\la)} (\al+n\delta)=\al+n\delta\in R_\af^+$. If
$j_p\ne 0_p$ then $\inner{\om_{j_p}^\vee}{\al}=1$ and
$vt_{\la+\phi_P(\la)} (\al+n\delta)=v_{j_p}^{I_p}\al+(n+1)\delta\in
R_\af^+$ as desired.
\end{proof}


\begin{lem}\label{lem:positive}
Suppose $\la \in \tQ$ is antidominant.  Then $\phi_P(\la)$ is a
non-negative sum of positive coroots $\{\alpha^\vee_i \mid i \in
I_P\}$.
\end{lem}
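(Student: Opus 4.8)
The plan is to unwind the definition of $\phi_P(\la)$ component by component over the connected pieces $I_1,\dots,I_k$ of $I_P$, since $\phi_P$ decomposes accordingly via \eqref{E:Sigmaprod}. Fix a component $I_m$ with irreducible root system $R_m$. First I would observe that $\psi_P(\la) = \sum_{j\in I_P}\inner{\la}{\al_j}\om_j^\vee$ has all coefficients $\inner{\la}{\al_j}\le 0$ because $\la$ is antidominant; restricting to $I_m$, write $\psi_m(\la) = \sum_{j\in I_m}\inner{\la}{\al_j}\om_j^\vee \in P_m^\vee$, a non-positive combination of the fundamental coweights of $R_m$. The contribution of $I_m$ to $\phi_P(\la)$ is $-\psi_m(\la) - \om_{j_m}^\vee$, where $-\om_{j_m}^\vee + Q_m^\vee$ is the image of $\psi_m(\la)+Q_m^\vee$ in $\Sigma_m = P_m^\vee/Q_m^\vee$ (with $\om_{0_m}^\vee = 0$ if the class is trivial).

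The heart of the argument is then a statement purely about a single irreducible root system: if $\nu = \sum_{j\in I_m} c_j\om_j^\vee$ with all $c_j \le 0$, and $\om_{j_m}^\vee$ is the unique minuscule-coweight representative (or $0$) of the coset $\nu + Q_m^\vee$, then $-\nu - \om_{j_m}^\vee$ is a non-negative integer combination of the simple coroots $\{\al_i^\vee \mid i\in I_m\}$. To see this, note $-\nu - \om_{j_m}^\vee \in Q_m^\vee$ by construction, so it suffices to check that it is dominant-in-the-coroot-sense, i.e. lies in $\sum_{i\in I_m}\Z_{\ge 0}\al_i^\vee$; equivalently, after pairing with the fundamental weights $\om_i$ of $R_m$ one gets non-negative integers, which is automatic, so what actually needs checking is that each coefficient in the simple-coroot expansion is $\ge 0$. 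I would argue this by the standard fact that for a minuscule coweight $\om_{j_m}^\vee$ (or $0$), the element $-\om_{j_m}^\vee$ is $\le 0$ in the coroot partial order on the coset $0 + Q_m^\vee$ shifted appropriately — more precisely, $-\nu - \om_{j_m}^\vee$ is the unique element of $(-\nu) + Q_m^\vee$ that is "antidominant-free", and since $-\nu$ is dominant (as $-c_j \ge 0$) and $\om_{j_m}^\vee$ is minuscule, subtracting the minuscule representative only decreases a dominant coweight within its coset, keeping it a non-negative combination of simple coroots. Concretely: $-\nu$ is a dominant coweight; for a minuscule fundamental coweight $\om_{j_m}^\vee$ the difference $-\nu - \om_{j_m}^\vee$, being in $Q_m^\vee$, is dominant whenever $-\nu$ dominates $\om_{j_m}^\vee$, and one checks (using minusculeness: all weights of the corresponding representation lie in the convex hull of the Weyl orbit, and $\om_{j_m}^\vee - \al_i^\vee$ relations) that it stays a non-negative combination of simple coroots. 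Summing over $m$ gives the claim for $\phi_P(\la)$.

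The main obstacle is the single-component lemma: verifying that subtracting the minuscule coweight representative $\om_{j_m}^\vee$ from the dominant coweight $-\psi_m(\la)$ does not introduce negative coefficients in the simple-coroot expansion. One clean way around a case-check is to use that $-\psi_m(\la) - \om_{j_m}^\vee$ lies in $Q_m^\vee$ and, being a difference of a dominant coweight and a \emph{minuscule} one, is weakly below $-\psi_m(\la)$ in dominance order while still being dominant — here minusculeness is exactly what guarantees the result is again dominant (hence a non-negative combination of simple coroots), since for minuscule $\om_{j_m}^\vee$ one has $\om_{j_m}^\vee \le$ every nonzero dominant element of its own coset. Alternatively, if one prefers to avoid invoking this, a short direct argument: $\phi_P(\la) = -\psi_P(\la) - \sum_m \om_{j_m}^\vee$ is manifestly in $Q_P^\vee$, and pairing with any fundamental weight $\om_i$ ($i\in I_P$) gives $-\inner{\la}{\al_i} - \inner{\om_{j_m}^\vee}{\al_i} \cdot(\text{something})$; expanding $\om_{j_m}^\vee$ in simple coroots and using that its coefficients are all in $[0,1)$ when reduced mod $Q_m^\vee$ while $-\inner{\la}{\al_i}\in\Z_{\ge 0}$ forces the total to be a non-negative integer combination. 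Either route completes the proof.
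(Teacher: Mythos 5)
Your route is the paper's route: decompose over the irreducible components of $I_P$ (the paper compresses this to ``we may suppose $P$ is irreducible''), observe that $\mu=-\psi_P(\la)$ is a dominant coweight because $\la$ is antidominant, and then invoke the standard fact that a dominant coweight $\mu$ satisfies $\mu-\om^\vee\in\sum_{i\in I_m}\Z_{\ge0}\al_i^\vee$, where $\om^\vee$ is the unique minuscule-or-zero dominant coweight in the coset $\mu+Q_m^\vee$. The paper simply cites Humphreys, Section 13, for this last fact and stops; with that citation your argument is complete and essentially identical to the paper's.

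The problems are in your attempted justifications of the key fact, two of which are genuinely false. First, it is not true that $-\nu-\om_{j_m}^\vee$ is ``again dominant'': take an $A_2$-component with $\inner{\la}{\al_1}=-2$ and $\inner{\la}{\al_2}=0$ (such antidominant $\la$ exist); then $\mu=2\om_1^\vee$ is dominant, the coset representative is $\om_2^\vee$, and $\mu-\om_2^\vee=\al_1^\vee$, which is a non-negative sum of simple coroots but pairs to $-1$ with $\al_2$, so it is not dominant. The standard fact is a statement about the coroot order (the difference lies in $\sum_i\Z_{\ge0}\al_i^\vee$), not about dominance, and an argument that tries to reach coroot-positivity through dominance of the difference has a false hypothesis (even though, via positivity of the inverse Cartan matrix, dominance together with membership in $Q_m^\vee$ would indeed imply coroot-positivity). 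Relatedly, the clause ``pairing with the fundamental weights \dots which is automatic'' is circular: those pairings are exactly the coefficients you are trying to prove non-negative. Second, the proposed ``short direct argument'' rests on the coroot-coefficients of $\om_{j_m}^\vee$ lying in $[0,1)$, which fails already for the cominuscule node $2$ of $A_3$, where $\om_2^\vee=\tfrac12\al_1^\vee+\al_2^\vee+\tfrac12\al_3^\vee$, and badly for the minuscule node of $E_7$, where the coefficients go up to $3$; and even with such a bound, integrality of $\phi_P(\la)$ plus fractional-part bookkeeping gives no control on signs. Delete both attempts and either quote the standard fact as the paper does, or prove it honestly (e.g.\ a minimal element of the set of dominant coweights $\nu\le\mu$ lies in the same coset as $\mu$ and must be zero or minuscule, hence equals $\om^\vee$).
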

\begin{proof}
We may suppose $P$ is irreducible.  If $\la \in \tQ$ then $\mu =
-\psi_P(\la) \in P^\vee_P$ is a dominant coweight.  But it is well
known (see~\cite[Section 13]{Hum}) that $\mu - \omega^\vee_i$ is a
sum of positive coroots for some cominuscule node $i \in I^s_P$.
\end{proof}

\begin{ex} We compute some examples of $\pi_P(t_\la)$ using Lemma
\ref{L:WPformula}, working within the subsystem $R_P$.
\begin{enumerate}
\item
In type $A_3$ let $I_P=\{2,3\}=I_1$ and $\la=-\al_1^\vee$. $R_P$ is
an irreducible subsystem of type $A_2$. We have $\psi_P(-\al_1^\vee)
=\om_2^\vee\in P_P^\vee$ and $\om_2^\vee = -\om_3^\vee +
(\al_2^\vee+\al_3^\vee)$. Therefore $j_1=3$, $v_3=r_2r_3$,
$\phi_P(-\al_1^\vee)=-\al_2^\vee-\al_3^\vee$, and
$\pi_P(t_{-\al_1^\vee}) = r_2 r_3
t_{-\al_1^\vee-\al_2^\vee-\al_3^\vee}=r_2r_3t_{-\theta^\vee}$ where
$\theta^\vee$ is the coroot associated to the highest root $\theta$.

Doing this another way, we have $-\al_1^\vee = -r_2r_3\theta^\vee$,
so that $t_{-\al_1^\vee} = r_2 r_3 t_{-\theta^\vee} r_3 r_2$.
Removing the right factor $r_3r_2\in (W_P)_\af$ we obtain
$r_2r_3t_{-\theta^\vee}$.
\item
In type $A_3$ let $I_P=\{1,3\}$ and $\la=-\al_2^\vee$. Then
$I_P=I_1\sqcup I_2$ with $I_1=\{1\}$ and $I_2=\{3\}$ with $R_1$ and
$R_2$ both of type $A_1$. We have
$\psi_P(-\al_2^\vee)=\om_1^\vee+\om_3^\vee\in P_P^\vee$. We have
$\om_1^\vee=-\om_1^\vee+\al_1^\vee$ and $v_1=r_1$ in $R_1$ and
$\om_3^\vee=-\om_3^\vee+\al_3^\vee$ and $v_3=r_3$ in $R_2$.
Therefore $\phi_P(\la)=-\al_1^\vee-\al_3^\vee$ and
$\pi_P(t_{-\al_2^\vee}) = r_1 r_3
t_{-\al_1^\vee-\al_2^\vee-\al_3^\vee}=r_1 r_3 t_{-\theta^\vee}$.

Another way, we have $t_{-\al_2^\vee} = r_1 r_3 t_{-\theta^\vee} r_3
r_1$, and removing the right factor $r_3 r_1\in (W_P)_\af$ we have
$r_1 r_3 t_{-\theta^\vee}$ as desired.
\item
In type $C_3$ with $\alpha_3$ the long root, let $I_P=\{2,3\}=I_1$
so that $R_P$ is an irreducible subsystem of type $C_2$. Let
$\la=-\al_1^\vee$. Then $\psi_P(-\al_1^\vee)=\om_2^\vee$. But in
$R_P$ we have $\om_2^\vee=\al_2^\vee+\al_3^\vee$. In particular
$j_1=0$ and $\pi_P(t_{-\al_1^\vee}) =
t_{-\al_1^\vee-\al_2^\vee-\al_3^\vee} = t_{-\theta^\vee}$.

Another way, we have $-\al_1^\vee = -\theta^\vee + r_1 \theta^\vee$.
Therefore we get $t_{-\al_1^\vee} = t_{-\theta^\vee} r_1
t_{\theta^\vee} r_1 = r_\theta r_0 r_1 r_0 r_\theta r_1 =
r_{(12321)010(12321)1}=r_{1232010232}$.  Because $r_2r_3r_2\in
(W_P)_\af$ we can remove this right factor. $r_{1232010}$ has
inversion $\delta-2\alpha_2-\alpha_3=\al_0+2\al_1=r_1(\al_0)$, so
$r_{101}=r_{\delta-2\al_2-\al_3}$ and
$r_{1232010}r_{101}=r_{123210}=t_{-\theta^\vee}$ as desired.
\item
In type $B_3$ with $\al_3$ the short root, let $I_P=\{2,3\}=I_1$ so
that $R_P$ is irreducible of type $B_2$. Let $\la=-\al_1^\vee$. We
have $\psi_P(-\al_1^\vee)=\om_2^\vee$. We have $\om_2^\vee =
-\om_2^\vee + 2\al_2^\vee+\al_3^\vee$. Therefore $j_1=2$,
$v_1=r_2r_3r_2$, $\phi_P(\la)=-2\al_2^\vee-\al_3^\vee$, and
$\pi_P(t_{-\al_1^\vee}) = r_2r_3r_2
t_{-\al_1^\vee-2\al_2^\vee-\al_3^\vee} = r_2r_3r_2
t_{-\theta^\vee}$.

Another way, $-\al_1^\vee = - r_2r_3r_2 \theta^\vee$, so
$t_{-\al_1^\vee} = r_2 r_3 r_2 t_{-\theta^\vee} r_2 r_3 r_2$.
Removing the right factor $r_2r_3r_2\in (W_P)_\af$ we obtain
$r_2r_3r_2t_{-\theta^\vee}$ as desired.
\end{enumerate}
\end{ex}

\begin{prop} \label{P:pip} \cite{Pet} Let $z\in W_\af$, $\beta\in
R^+_\af$, and $\la\in Q^\vee$.
\begin{enumerate}
\item \label{I:sub}
$\pi_P(W) \subset W^P \subset (W^P)_\af \subset (W_\af)^P$ where
$(W_\af)^P$ is the set of minimum length coset representatives for
$W_\af/W_P$.
\item \label{I:gr}
$\pi_P(W_\af^-)\subset W_\af^-$.
\item \label{I:le}
$\pi_P(z)\le z$.
\item \label{I:t}
$\pi_P(zt_\la)=\pi_P(z)\pi_P(t_\la)$. %
\end{enumerate}
\end{prop}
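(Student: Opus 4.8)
The plan is to lean throughout on the uniqueness half of Lemma~\ref{L:funnyfact}: to establish $\pi_P(x)=a$ it suffices to exhibit $b\in(W_P)_\af$ with $x=ab$ and to check $a\in(W^P)_\af$; I will also use freely that $\pi_P(xb)=\pi_P(x)$ for $b\in(W_P)_\af$. Part~\ref{I:sub} is then short: if $w\in W^P$ and $\beta=\bar\beta+n\delta\in(R_P)_\af^+$, then $w\cdot\beta=w\bar\beta+n\delta$, which is positive when $n>0$ and, when $n=0$, is positive because $\bar\beta\in R_P^+$ forces $w\bar\beta>0$; so $W^P\subset(W^P)_\af$. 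For $w\in W$, the parabolic factorization $w=w^Pw_P$ with $w^P\in W^P\subset(W^P)_\af$ and $w_P\in W_P\subset(W_P)_\af$, together with uniqueness, gives $\pi_P(w)=w^P\in W^P$. Finally $(W_\af)^P=\{x\mid x\cdot\al_j>0\text{ for all }j\in I_P\}$ and $\al_j\in R_P^+\subset(R_P)_\af^+$, so $(W^P)_\af\subset(W_\af)^P$. For part~\ref{I:le} I would induct on $\ell(z)$: if $z\in(W^P)_\af$ then $\pi_P(z)=z$; otherwise, as in the proof of Lemma~\ref{L:funnyfact}, some $\al\in R_\af^+$ with $\bar\al\in R_P$ has $z\cdot\al<0$, so $zr_\al<z$ and $r_\al\in(W_P)_\af$ by Lemma~\ref{L:reflectP}, whence $\pi_P(z)=\pi_P(zr_\al)\le zr_\al<z$ by the inductive hypothesis.

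For part~\ref{I:t}, write $z=z_1z_2$ with $z_1=\pi_P(z)$ and $z_2=w_2t_\nu\in(W_P)_\af$ ($w_2\in W_P$, $\nu\in Q_P^\vee$), and $t_\la=(vt_\mu)u_2$ with $vt_\mu=\pi_P(t_\la)$ as in Lemma~\ref{L:WPformula} (so $v\in W_P$, $\mu=\la+\phi_P(\la)$) and $u_2\in(W_P)_\af$. It then suffices to check that $z_1vt_\mu\in(W^P)_\af$ and that $(z_1vt_\mu)^{-1}(zt_\la)\in(W_P)_\af$. For the first, given $\beta\in(R_P)_\af^+$, the root $(vt_\mu)\cdot\beta$ is positive (because $vt_\mu\in(W^P)_\af$) and has finite part $v\bar\beta\in R_P$ (because $v\in W_P$ stabilizes $R_P$), so it lies in $(R_P)_\af^+$ and $z_1$ keeps it positive; the crucial structural point is that $\pi_P(t_\la)$, having finite part in $W_P$, preserves $(R_P)_\af^+$. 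For the second, $(z_1vt_\mu)^{-1}(zt_\la)=\big((vt_\mu)^{-1}z_2(vt_\mu)\big)u_2$, and a direct computation gives
\[
(vt_\mu)^{-1}z_2(vt_\mu)=(v^{-1}w_2v)\,t_{\mu-v^{-1}w_2^{-1}v\mu+v^{-1}\nu};
\]
its finite part $v^{-1}w_2v$ lies in $W_P$, and its translation part lies in $Q_P^\vee$ because $x\mu\equiv\mu\pmod{Q_P^\vee}$ for every $x\in W_P$ (since $\al_j^\vee\in Q_P^\vee$ for $j\in I_P$), so this conjugate, hence also its product with $u_2$, lies in $(W_P)_\af$. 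Therefore $\pi_P(zt_\la)=\pi_P(z)\pi_P(t_\la)$.

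Part~\ref{I:gr} I would deduce from part~\ref{I:t}. For $z=wt_\la\in W_\af^-$, Lemma~\ref{lem:grass} gives $\la\in\tQ$ with $w$ $\la$-minimal, and by part~\ref{I:sub}, part~\ref{I:t} and Lemma~\ref{L:WPformula} one has $\pi_P(z)=(w^Pv)t_\mu$ with $\mu=\la+\phi_P(\la)$ and $v=\prod_m v_{j_m}^{I_m}\in W_P$; by Lemma~\ref{lem:grass} again it is enough to show $\mu$ is antidominant and $w^Pv$ is $\mu$-minimal. Antidominance: $\inner{\mu}{\al_j}\in\{0,-1\}$ for $j\in I_P$ by Lemma~\ref{L:WPaffchar} (as $vt_\mu\in(W^P)_\af$), while for $i\notin I_P$ both $\inner{\la}{\al_i}$ and $\inner{\phi_P(\la)}{\al_i}$ are $\le0$ (Lemma~\ref{lem:positive} writes $\phi_P(\la)$ as a nonnegative sum of $\al_j^\vee$, $j\in I_P$, and $\inner{\al_j^\vee}{\al_i}\le0$), so $\inner{\mu}{\al_i}\le0$. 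For $\mu$-minimality, suppose $\inner{\mu}{\al_i}=0$. If $i\in I_P$, Lemma~\ref{L:WPaffchar} forbids $v\al_i<0$, so $v\al_i\in R_P^+$ and $(w^Pv)\al_i=w^P(v\al_i)>0$. If $i\notin I_P$, then $\inner{\la}{\al_i}=\inner{\phi_P(\la)}{\al_i}=0$; the first gives $w\al_i>0$ by $\la$-minimality, and the vanishing of $\inner{\phi_P(\la)}{\al_i}$ pins down the factors of $v$ near $i$ (via Lemmas~\ref{L:WPaffirr}--\ref{L:WPafftrans}) compatibly enough with the $W_P$-part of $w$ — which $\la$-minimality also constrains — that one deduces $(w^Pv)\al_i=w^P(v\al_i)>0$.

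The main obstacle is this last step of part~\ref{I:gr}: proving $(w^Pv)\al_i>0$ at a node $i\notin I_P$ with $\inner{\mu}{\al_i}=0$. Here one must coordinate the explicit formula for $\phi_P(\la)$ (Lemma~\ref{L:WPformula}), the structure of the Coxeter elements $v_{j_m}^{I_m}$ and of the $W_P$-part of $w$ (Lemmas~\ref{L:WPaffirr}--\ref{L:WPafftrans}), the $\la$-minimality of $w$, and the integrality of the coroot lattice $Q^\vee$, so as to rule out any cancellation when $w^P$ is applied to $v\al_i$. Establishing the conjugation identity in part~\ref{I:t} is the other piece of genuine computation, though it is mechanical once the factorizations are set up.
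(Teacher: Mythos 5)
Parts (1), (3) and (4) of your proposal are correct. Your argument for (4) is in fact a somewhat more direct route than the paper's: you apply the uniqueness half of Lemma~\ref{L:funnyfact} once, after conjugating the $(W_P)_\af$-part of $z$ by $\pi_P(t_\la)$ and checking the conjugate stays in $(W_P)_\af$ (using $x\mu\equiv\mu \bmod Q_P^\vee$ for $x\in W_P$), whereas the paper first proves (4) for $z\in W$, then reduces to $z$ a translation and uses that $\la\mapsto v$ is a group homomorphism; both hinge on the same key fact, that $\pi_P(t_\la)$ stabilizes $(R_P)_\af^+$, which you verify directly rather than citing the proof of Lemma~\ref{L:WPformula}. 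Part (3) via the descent-by-$r_\al$ induction is exactly the paper's route (it is the existence argument in Lemma~\ref{L:funnyfact}), and part (1) is fine.

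Part (2), however, has a genuine gap at exactly the point you flag as the ``main obstacle'': the case $i\notin I_P$ with $i$ adjacent to $I_P$ and $\inner{\la}{\al_i}=\inner{\phi_P(\la)}{\al_i}=0$. Saying that the vanishing of $\inner{\phi_P(\la)}{\al_i}$ ``pins down the factors of $v$ near $i$ compatibly enough'' with the $W_P$-part of $w$ is an assertion, not an argument: $\la$-minimality gives $w\al_i=\pi_P(w)w_2\al_i>0$, but what must be shown is $\pi_P(w)v\al_i>0$, and $v$ (determined by $\la$ through Lemma~\ref{L:WPformula}) and $w_2$ are different elements of $W_P$ with no a priori relation, so no direct transfer of positivity is available. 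The paper's proof of precisely this case requires real work: letting $J$ be the set of nodes of $I_P$ adjacent to $i$, it first disposes of any $j\in J$ with $\inner{\la+\phi_P(\la)}{\al_j}=0$ by an induction on $|I_P|$, passing to $P'$ with $I_{P'}=I_P\setminus\{j\}$ and proving the nontrivial identity $\pi_{P'}(x)=\pi_P(x)$ (which itself uses Lemmata~\ref{L:WPaffchar} and \ref{L:WPaffirr}); then, when every $j\in J$ has pairing $-1$, it uses the explicit factorization $v=\prod_j w_{P_j}w_{P'_j}$ to compute $uv\al_i=uw_P\al_i$, where $u$ is the part of $\pi_P(w)$ lying in the parabolic subgroup generated by $\{i\}\cup I_P$, and argues that $uw_P\al_i<0$ would force $uw_P$ to be the longest element of that parabolic, which would give $w\al_i<0$ and contradict $x\in W_\af^-$. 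None of these steps (the reduction to $P'$, the claim $\pi_{P'}(x)=\pi_P(x)$, the longest-element argument) is supplied or even sketched in your proposal, so as written part (2) is not proved.
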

\begin{proof} \eqref{I:sub} follows from the definitions.
\eqref{I:le} follows from the proof of Lemma \ref{L:funnyfact}.

We first check  \eqref{I:t} for $z\in W$. Note that $\pi_P(z)=z_1$
where $z=z_1z_2$ is such that $z_1\in W^P$ and $z_2\in W_P$. We have
$zt_\la=z_1 t_{z_2\cdot\la} z_2$. Since $z_2\in W_P$ we have
$\la-z_2\cdot\la\in Q_P^\vee$. It follows that
$\pi_P(zt_\la)=\pi_P(z_1t_\la)$. But $\pi_P(t_\la)$ stabilizes
$(R_P)_\af^+$ by the proof of Lemma \ref{L:WPformula}, and $z_1\in
W^P$ has no inversions in $(R_P)_\af^+$. Therefore
$z_1\pi_P(t_\la)\in (W^P)_\af$, which finishes the proof of
\eqref{I:t} for $z\in W$. Using this we may reduce the proof of
\eqref{I:t} for $z\in W_\af$, to the case that $z=t_{\la'}$ for some
$\la'\in Q^\vee$. Since $\pi_P(t_\la)$ stabilizes $(R_P)_\af^+$ it
follows that $\pi_P(t_{\la'})\pi_P(t_\la)\in (W^P)_\af$. Therefore
it is enough to show that $\pi_P(t_{\la'+\la})$ and
$\pi_P(t_{\la'})\pi_P(t_\la)$ differ by a right multiple of $t_\mu$
for some $\mu\in Q_P^\vee$. By Lemma \ref{L:WPformula} there exist
$v',v''\in W_P$ such that $\pi_P(t_{\la'})=v' t_{\la'+\phi_P(\la')}$
and $\pi_P(t_{\la'+\la})=v'' t_{\la'+\la+\phi_P(\la'+\la)}$. We have
\begin{align*}
  \pi_P(t_{\la'})\pi_P(t_\la) &= v' t_{\la'+\phi_P(\la')} v
  t_{\la+\phi_P(\la)} \\
  &= v' v t_{v(\la'+\phi_P(\la'))+\la+\phi_P(\la)}.
\end{align*}
But the map $Q^\vee\to W_P$ given by $\la\mapsto v$, where $v\in
W_P$ is such that $\pi_P(t_\la)=v t_{\la+\phi_P(\la)}$, is a group
homomorphism, that is, $v'' = v' v$. Moreover $\la'+\phi_P(\la')$
and its image under $v\in W_P$, differ by an element of $Q_P^\vee$.
Therefore \eqref{I:t} follows.

For \eqref{I:gr}, let $x = wt_\la \in W_\af^-$ for $\la \in \tQ$.
Then $\pi_P(t_\la) = vt_{\la +\phi_P(\la)}$ and $\pi_P(x) =
\pi_P(w)\pi_P(t_\la)$.  To show that $\pi_P(x) \in W_\af^-$ we check
that $\pi_P(x) \cdot \alpha_i > 0$ for each $i \in I$.  We will
repeatedly use the following criterion: $ut_\mu \cdot \al_i > 0$ if
and only if either $\inner{\mu}{\al_i} < 0$ or $\inner{\mu}{\al_i} =
0$ and $\al_i \notin \Inv(u)$. In particular we need to establish
one of these conditions for $u=\pi_P(w)v$ and $\mu=\la+\phi_P(\la)$.

Suppose first that $i \in I_P$.  Then by Lemma \ref{L:WPaffchar},
$\inner{\la+\phi_P(\la)}{\al_i} = -1$ or $0$ and in the case of $0$
we have $\al_i \notin \Inv(v)$ and thus $\al_i \notin
\Inv(\pi_P(w)v)$. In either case we are done.


Otherwise suppose that $i\not\in I_P$ and that the Dynkin node $i$
is not connected to any node in $I_P$.  Then
$\inner{\la+\phi_P(\la)}{\al_i} = \inner{\la}{\al_i}$ and $\al_i \in
\Inv(w) \Leftrightarrow \al_i \in \Inv(\pi_P(w))$.  Since $x \cdot
\al_i > 0$ we conclude that $\pi_P(x) \cdot \al_i > 0$.

Finally suppose that $i \notin I_P$ and that the set $J$ of nodes in
$I_P$ connected to $i$, is nonempty. By Lemma \ref{lem:positive},
$\inner{\la+\phi_P(\la)}{\al_i} \leq \inner{\la}{\al_i}$.  We are
immediately done if $\inner{\la}{\al_i} < 0$ or $\inner{\la}{\al_i}
= 0$ and $\inner{\phi_P(\la)}{\al_i} < 0$. Suppose otherwise, so
that $\phi_P(\la)$ does not involve any roots $\alpha_j$ where $j
\in J$.

We know by Lemma \ref{L:WPaffchar} that
$\inner{\la+\phi_P(\la)}{\al_j} = -1$ or $0$.  Suppose for some $j
\in J$ that $\inner{\la+\phi_P(\la)}{\al_j} = 0$.  Then since
$\phi_P(\la)$ does not involve $\alpha_j$, we have
$\inner{\la}{\al_j} = 0 = \inner{\phi_P(\la)}{\al_j}$.  Let $P'$ be
such that $I_{P'}=I_P \setminus \{j\}$.  We may suppose inductively
that $\pi_{P'}(x) \in W_\af^-$.  We claim that $\pi_{P'}(x) =
\pi_P(x)$. Since $(W_{P'})_\af \subset (W_P)_\af$ it suffices to
show that $\pi_{P'}(x) \in (W^P)_\af$.  We first note that by our
assumptions $\phi_P(\la) = \phi_{P'}(\la)$ (using the fact that a
cominuscule node in a component of $I_P$ is still cominuscule in
$I_{P'}$).  Let $\pi_{P'}(x) = ut_{\la + \phi_P(\la)}$.  Since
$\inner{\la +\phi_{P'}(\la)}{\al_j} = 0$ and $\pi_{P'}(x) \in
W_\af^-$ we have $u \cdot \al_j > 0$.  We can thus deduce using
Lemmata \ref{L:WPaffchar} and \ref{L:WPaffirr} that $\pi_{P'}(x) \in
(W^P)_\af$.

Thus we may assume for our chosen $i \in I_P$ (with
$\inner{\la}{\al_i} = 0$) that all $j \in J$ satisfy
$\inner{\la+\phi_P(\la)}{\al_j} = -1$.  Note that these $j$ all lie
in different connected components of $I_P$ (thus $|J| \in
\{1,2,3\}$).  We need to show that $\pi_P(w)v \cdot \alpha_i > 0$.
We may assume that $I_P$ is exactly the union of the connected
components $I_{P_j} \subset I_P$ containing each $j \in J$, so that
$v = \prod_{j \in J} v_j$ where $v_j \in W_{P_j}$ are the elements
described in Lemma \ref{L:WPaffirr}.  For each parabolic subgroup
$W_Q \subset W$, write $w_Q \in W_Q$ for its longest element.  Then
by definition $v_j = w_{P_j} w_{P'_{j}}$ where $P'_{j} = P_j
\setminus \{j\}$.  Also factorize $\pi_P(w)$ as $u'u$ where $u$ lies
in the parabolic subgroup $W' \subset W$ corresponding to the nodes
$\{i\} \cup I_P$ and $u'$ is minimal length in $W/W'$.  It suffices
to show that $uv \cdot \al_i > 0$.  We calculate that
$$
uv \cdot \al_i = u \prod_{j} w_{P_j} w_{P'_j} \cdot \al_i = u w_P
\cdot \al_i.
$$
But $u \in (W')^P$ so that $uw_P$ is a length-additive factorization
as $u \in (W')^P$ and $w_P \in (W')_P = W_P$.  We know $w_P \cdot
\al_k < 0$ for $k \in I_P$.  If $uw_P \cdot \al_i < 0$ as well then
we must have $uw_P = w'_0$, the longest element in $W'$.  But $w$
factorizes uniquely (and length-additively) as $u'( u u '')$ where
$u'' \in W_P$.  If $u = w'_0 w_P$ then $uu'' \cdot \al_i < 0$ which
in turn means $w \cdot \al_i < 0$, contradicting the assumption that
$x = wt_\la \in W_\af^-$.

%
%
\end{proof}

\subsection{Ideals of $H_T(\Gr_G)$}
\begin{prop}[\cite{Pet}]
For $\alpha \in R_\af^+$, the $S$-submodule $$K(\alpha) =
\bigoplus_{\substack{x \in W_\af^- \\ x \cdot \alpha < 0}} S \,
\xi_x$$ of $H_T(\Gr_G)$, is an ideal of $H_T(\Gr_G)$.
\end{prop}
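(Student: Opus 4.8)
The plan is to deduce the ideal property from Peterson's product formula for $H_T(\Gr_G)$, thereby reducing everything to a statement about inversion sets in $W_\af$. First I would record that $K(\alpha)$ is an $S$-submodule by construction, and that the Pontryagin product on $H_T(\Gr_G)$ is commutative, so that it suffices to prove $\xi_x \cdot \xi_z \in K(\alpha)$ for every $x \in W_\af^-$ and every basis element $\xi_z$ of $K(\alpha)$, i.e.\ for $z \in W_\af^-$ with $z \cdot \alpha < 0$.

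Next I would invoke Theorem~\ref{thm:j} to write $\xi_x\,\xi_z = \sum_y j^y_x\,\xi_{yz}$, the sum running over $y \in W_\af$ such that $yz \in W_\af^-$ and $\ell(yz) = \ell(y) + \ell(z)$. Each $\xi_{yz}$ occurring here is a genuine Schubert basis element, so the whole statement reduces to the claim: if $yz$ is a \emph{length-additive} product and $z \cdot \alpha < 0$, then $(yz) \cdot \alpha < 0$. This follows from the elementary fact that $\ell(yz) = \ell(y) + \ell(z)$ forces $\Inv(z) \subseteq \Inv(yz)$. I would prove this by contraposition: if $\alpha \in \Inv(z) \setminus \Inv(yz)$, then $-z\cdot\alpha$ is a positive root, it lies in $\Inv(y)$ because $y(-z\cdot\alpha) = -yz\cdot\alpha < 0$, and $z^{-1}$ sends it to $-\alpha < 0$; but length-additivity of $yz$ is equivalent to $z^{-1}\Inv(y) \subseteq R_\af^+$, a contradiction. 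Hence $\alpha \in \Inv(z)$ implies $\alpha \in \Inv(yz)$, so every term $\xi_{yz}$ lies in $K(\alpha)$, and therefore so does $\xi_x\,\xi_z$.

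I do not expect a genuine obstacle here; the argument is short. The two points that need care are (i) applying the formula of Theorem~\ref{thm:j} with $z$ (the index whose Schubert class lies in $K(\alpha)$) as the \emph{right-hand} factor, so that it is exactly its inversion set that can only grow under length-additive multiplication, and (ii) the sign bookkeeping in the inversion-set lemma. An equivalent and slightly cleaner route, which I might adopt instead, is to argue directly from the $\A_\af$-action \eqref{E:NilHeckeOnHom}: $K(\alpha)$ is visibly stable under each $A_y$, since $A_y \cdot \xi_z \ne 0$ implies $A_y \cdot \xi_z = \xi_{yz}$ with $yz$ length-additive over $z$; it is therefore stable under all of $Z_{\A_\af}(S) = j(H_T(\Gr_G))$, and since $\xi \cdot \xi' = j(\xi) \cdot \xi'$ by Theorem~\ref{thm:pet}, this (together with commutativity) shows $K(\alpha)$ is an ideal.
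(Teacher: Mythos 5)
Your proposal is correct, and its closing ``alternative route'' is essentially the paper's own proof: the paper shows $K(\alpha)$ is stable under the left $\A_\af$-action, reducing via \eqref{E:NilHeckeOnHom} to the single check that $x\in W_\af^-$, $r_ix>x$ and $x\cdot\alpha<0$ force $r_ix\cdot\alpha<0$, and then concludes from $\xi\,\xi'=j(\xi)\cdot\xi'$. Your primary route through Theorem~\ref{thm:j} together with the standard fact that $\ell(yz)=\ell(y)+\ell(z)$ implies $\Inv(z)\subseteq\Inv(yz)$ is the same idea, just verified for arbitrary length-additive products rather than only for simple-reflection covers.
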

\begin{proof} By \eqref{E:jaction} it suffices to show that
$K(\alpha)$ has a left $\A_\af$-action. By \eqref{E:NilHeckeOnHom}
it suffices to show that if $x\in W_\af^-$, $r_ix>x$, and
$x\alpha<0$, then $r_ix\alpha<0$.  Suppose not, that is,
$r_ix\alpha>0$. Then $x\alpha=-\al_i$ and
$0>-\alpha=x^{-1}\alpha_i$. But $x^{-1}<x^{-1}r_i$ so that
$x^{-1}\alpha_i>0$, a contradiction.
\end{proof}

Thus $$J_P = \sum_{\alpha \in (R_P)_\af^+} K(\alpha) = \sum_{x\in
W_\af^-\setminus (W^P)_\af} S \xi_x
$$ is an ideal of $H_T(\Gr_G)$.

\subsection{Parabolic quantum parameters}

\begin{lem}\label{L:nilHeckepiP}
Let $\la \in \tQ$.  Then $A_i \cdot \xi_{\pi_P(t_\la)} = 0 \mod J_P$
for each $i \in I$.
\end{lem}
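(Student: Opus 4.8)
The plan is to compute $A_i\cdot\xi_x$ for $x:=\pi_P(t_\la)$ directly from \eqref{E:NilHeckeOnHom}. Since $\la\in\tQ$ we have $t_\la\in W_\af^-$ (Lemma~\ref{lem:grass} with the identity element, which is trivially $\la$-minimal), so $x\in W_\af^-$ by Proposition~\ref{P:pip}\eqref{I:gr}, and $x\in(W^P)_\af$ by construction of $\pi_P$. By Lemma~\ref{L:WPformula} we may write $x=vt_\mu$ with $v\in W_P$ and $\mu=\la+\phi_P(\la)$, and then Lemma~\ref{lem:grass} forces $\mu\in\tQ$ and $v$ to be $\mu$-minimal. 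By \eqref{E:NilHeckeOnHom}, $A_i\cdot\xi_x$ is either $0$, in which case we are done, or it equals $\xi_{r_ix}$ with $r_ix=(r_iv)t_\mu\in W_\af^-$ and $\ell(r_ix)=\ell(x)+1$. In the latter case it is enough to show that $r_ix\notin(W^P)_\af$: then $r_ix\in W_\af^-\setminus(W^P)_\af$, so $\xi_{r_ix}\in J_P$ and $A_i\cdot\xi_x\equiv 0\pmod{J_P}$. I split into $i\in I_P$ and $i\notin I_P$.

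Suppose first $i\in I_P$, so $v^{-1}\al_i\in R_P$ since $v\in W_P$ stabilizes $R_P$; let $\al\in R_P^+$ be $v^{-1}\al_i$ or $-v^{-1}\al_i$, whichever is positive, so that $v\al\in\{\al_i,-\al_i\}$. Since $x=vt_\mu\in(W^P)_\af$, Lemma~\ref{L:WPaffchar} gives $\inner{\mu}{\al}=0$ if $v\al>0$ and $\inner{\mu}{\al}=-1$ if $v\al<0$. Applying $r_i$ reverses the sign of $v\al$, so $r_iv\al$ and $v\al$ have opposite signs; thus the pair $(r_iv,\mu)$ violates the criterion of Lemma~\ref{L:WPaffchar} on the root $\al$ (we would need $\inner{\mu}{\al}=-1$ when $v\al>0$, or $\inner{\mu}{\al}=0$ when $v\al<0$). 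Hence $r_ix=(r_iv)t_\mu\notin(W^P)_\af$, as needed.

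Now suppose $i\notin I_P$; I claim this branch is vacuous, i.e. $A_i\cdot\xi_x=0$ automatically. Since $v\in W_P$, $v^{-1}\al_i$ is a positive root not lying in $R_P$. The length condition $\ell(r_ix)=\ell(x)+1$ is equivalent to $x^{-1}\cdot\al_i\in R_\af^+$, and a direct computation with \eqref{E:WaffQaff} (using $x^{-1}=t_{-\mu}v^{-1}$) gives $x^{-1}\cdot\al_i=v^{-1}\al_i+\inner{\mu}{v^{-1}\al_i}\,\delta$; since $\mu$ is antidominant and $v^{-1}\al_i>0$ we have $\inner{\mu}{v^{-1}\al_i}\le 0$, so membership in $R_\af^+$ forces $\inner{\mu}{v^{-1}\al_i}=0$. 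Writing $v^{-1}\al_i=\sum_k c_k\al_k$ with $c_k\ge 0$, antidominance of $\mu$ then forces $\inner{\mu}{\al_k}=0$ for every $k$ with $c_k>0$. If $r_iv$ were $\mu$-minimal we would have $r_iv\al_k>0$ for all such $k$, and then $-\al_i=r_i\al_i=r_iv(v^{-1}\al_i)=\sum_k c_k\,r_iv\al_k$ would be a nontrivial nonnegative combination of positive roots, hence positive --- absurd. So $r_iv$ is not $\mu$-minimal, whence $r_ix=(r_iv)t_\mu\notin W_\af^-$ by Lemma~\ref{lem:grass}, contradicting the branch assumption. Thus for $i\notin I_P$ we always have $A_i\cdot\xi_x=0$, completing the proof.

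The essentially routine difficulties are affine-root bookkeeping: identifying $\ell(r_ix)=\ell(x)+1$ with $x^{-1}\cdot\al_i\in R_\af^+$, evaluating $x^{-1}\cdot\al_i$ via \eqref{E:WaffQaff}, and matching the sign reversals induced by $r_i$ against the membership criterion of Lemma~\ref{L:WPaffchar}. The one place requiring a small idea is the case $i\notin I_P$, where one must see that $\inner{\mu}{v^{-1}\al_i}=0$ forces $r_iv$ to fail $\mu$-minimality; the positive-combination argument above handles this while avoiding any appeal to the (standard) fact that the stabilizer of an antidominant coweight is a standard parabolic subgroup.
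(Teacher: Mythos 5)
Your proof is correct and takes essentially the same route as the paper's: reduce via \eqref{E:NilHeckeOnHom}, write $\pi_P(t_\la)=vt_\mu$ with $v\in W_P$ and $\mu\in\tQ$ via Lemma~\ref{L:WPformula}, and split into $i\in I_P$, where $r_i\pi_P(t_\la)$ violates the characterization of $(W^P)_\af$ (you invoke Lemma~\ref{L:WPaffchar} where the paper cites Lemma~\ref{L:WPafftrans}), and $i\notin I_P$, where the action vanishes outright. If anything, your $i\notin I_P$ case is slightly more careful than the paper's one-line length assertion, since your $\mu$-minimality argument also covers the degenerate possibility $\inner{\mu}{v^{-1}\al_i}=0$, in which $\ell(r_i\pi_P(t_\la))=\ell(\pi_P(t_\la))+1$ but $r_i\pi_P(t_\la)\notin W_\af^-$.
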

\begin{proof}
By \eqref{E:NilHeckeOnHom} $A_i \cdot \xi_{\pi_P(t_\la)} = 0$ unless
$\ell(r_i \, \pi_P(t_\la)) = \ell(\pi_P(t_\la)) + 1$ and $r_i\,
\pi_P(t_\la)\in W_\af^-$.  By Lemma~\ref{L:WPformula}, $\pi_P(t_\la)
= vt_\nu$ for some $v\in W_P$ and $\nu \in \tQ$.

Suppose $i \notin I_P$.  Then $\ell(r_i v) = \ell(v) + 1$ and by
Lemma~\ref{lem:grass} $\ell(r_i v t_\nu) = \ell(vt_\nu) - 1$, so
$A_i \cdot \xi_{\pi_P(t_\la)} = 0$.

Suppose $i \in I_P$.  Then $r_i v \in W_P$. By
Lemma~\ref{L:WPafftrans} we have $r_i vt_\nu \notin (W^P)_\af$ and
$\xi_{\pi_P(t_\la)} = 0 \mod J_P$.
\end{proof}

Note that we exclude $i = 0$ in Lemma~\ref{L:nilHeckepiP}.  The
following result generalizes Proposition~\ref{prop:HTmult}.

\begin{prop}\label{prop:Pfact}
Let $x \in W_\af^- \cap (W^P)_\af$ and $\lambda \in \tQ$.  Then
$x\pi_P(t_\lambda)\in W_\af^- \cap (W^P)_\af $ and we have
$$
\xi_x \, \xi_{\pi_P(t_\lambda)} = \xi_{x\pi_P(t_\lambda)} \mod J_P.
$$

\end{prop}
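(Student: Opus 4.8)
The plan is to identify the element $x\pi_P(t_\lambda)$ combinatorially and then extract the product from the nilHecke action \eqref{E:NilHeckeOnHom} together with Lemma~\ref{L:nilHeckepiP}.

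First I would check the membership $x\pi_P(t_\lambda)\in W_\af^-\cap(W^P)_\af$. Since $x\in(W^P)_\af$ we have $\pi_P(x)=x$, so Proposition~\ref{P:pip}\eqref{I:t} gives $x\pi_P(t_\lambda)=\pi_P(x)\pi_P(t_\lambda)=\pi_P(xt_\lambda)$, which lies in $(W^P)_\af$ by the definition of $\pi_P$. Writing $x=wt_\kappa$ in Grassmannian form ($\kappa\in\tQ$, $w$ $\kappa$-minimal), the relation $\inner{\kappa+\lambda}{\al_i}=0$ forces $\inner{\kappa}{\al_i}=0$, so $w$ is also $(\kappa+\lambda)$-minimal; hence Lemma~\ref{lem:grass} shows $xt_\lambda=wt_{\kappa+\lambda}\in W_\af^-$ with $\ell(xt_\lambda)=\ell(x)+\ell(t_\lambda)$. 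Proposition~\ref{P:pip}\eqref{I:gr} then gives $x\pi_P(t_\lambda)=\pi_P(xt_\lambda)\in W_\af^-$, proving the membership assertion.

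Next I would establish the length additivity $\ell(x\pi_P(t_\lambda))=\ell(x)+\ell(\pi_P(t_\lambda))$. The key point is that the factorization $w=\pi_P(w)\,w_2$ of Lemma~\ref{L:funnyfact}, with $w_2\in(W_P)_\af$, is always length additive. I would prove this by induction on $\ell(w)$: if $w\notin(W^P)_\af$ then $w$ carries some root of $(R_P)_\af^+$ to a negative root, hence carries some simple root of the root subsystem $(R_P)_\af$ to a negative root (every root of $(R_P)_\af^+$ is a nonnegative integral combination of the simple roots of $(R_P)_\af$, and a nonnegative combination of positive roots of $R_\af$ cannot be negative), so $\ell(wr)=\ell(w)-1$ for the corresponding simple reflection $r$ of $(W_P)_\af$; applying the inductive hypothesis to $wr$ and invoking uniqueness in Lemma~\ref{L:funnyfact} settles the step. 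Granting this, write $t_\lambda=\pi_P(t_\lambda)\,w_2'$ with $w_2'\in(W_P)_\af$; then $xt_\lambda=x\pi_P(t_\lambda)\,w_2'$ is the $\pi_P$-factorization of $xt_\lambda$ (since $x\pi_P(t_\lambda)\in(W^P)_\af$), so $\ell(xt_\lambda)=\ell(x\pi_P(t_\lambda))+\ell(w_2')$ and $\ell(t_\lambda)=\ell(\pi_P(t_\lambda))+\ell(w_2')$; subtracting and using $\ell(xt_\lambda)=\ell(x)+\ell(t_\lambda)$ from the previous paragraph yields the claim.

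Finally I would compute the product. By Theorem~\ref{thm:pet}, $\xi_x\,\xi_{\pi_P(t_\lambda)}=j(\xi_x)\cdot\xi_{\pi_P(t_\lambda)}$ (action as in \eqref{E:NilHeckeOnHom}), and $j(\xi_x)=A_x+a$ for some $a\in J=\sum_{u\in W\setminus\{\id\}}\A_\af A_u$. Given $u\in W\setminus\{\id\}$, pick a reduced word $u=r_{i_1}\cdots r_{i_m}$ with $i_j\in I$ and $m\ge1$; Lemma~\ref{L:nilHeckepiP} gives $A_{i_m}\cdot\xi_{\pi_P(t_\lambda)}\in J_P$, and since $J_P=\sum_\alpha K(\alpha)$ is a left $\A_\af$-submodule of $H_T(\Gr_G)$ (each $K(\alpha)$ carries a left $\A_\af$-action, as in the proof that $K(\alpha)$ is an ideal) it follows that $A_u\cdot\xi_{\pi_P(t_\lambda)}\in J_P$, whence $a\cdot\xi_{\pi_P(t_\lambda)}\in J_P$. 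Therefore $\xi_x\,\xi_{\pi_P(t_\lambda)}\equiv A_x\cdot\xi_{\pi_P(t_\lambda)}\pmod{J_P}$, and by \eqref{E:NilHeckeOnHom} together with the two preceding paragraphs (which supply $\pi_P(t_\lambda),\,x\pi_P(t_\lambda)\in W_\af^-$ and the length additivity) this equals $\xi_{x\pi_P(t_\lambda)}$. The one substantive point is the length additivity of $w=\pi_P(w)\,w_2$; everything else is bookkeeping with the nilHecke action and the ideal $J_P$.
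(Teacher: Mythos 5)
Your first and third paragraphs are correct and coincide with the paper's argument: Lemma~\ref{L:nilHeckepiP} together with the $\A_\af$-stability of $J_P$ gives $J\cdot\xi_{\pi_P(t_\la)}\equiv 0 \bmod J_P$, so by Theorem~\ref{thm:pet} everything reduces to the membership statement plus the length-additivity of the product $x\,\pi_P(t_\la)$. The gap is exactly at what you call the key point: the factorization $w=\pi_P(w)w_2$ of Lemma~\ref{L:funnyfact} is \emph{not} length-additive for the length function of $W_\af$, and it fails already in the instance where you use it, namely $w=t_\la$ with $\la\in\tQ$. Take $G$ of type $A_2$, $I_P=\{1\}$, $\la=-\theta^\vee$. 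Then $\pi_P(t_\la)=r_1t_{-\theta^\vee}=r_2r_1r_0$ (check via Lemma~\ref{L:WPaffchar}, using $\inner{-\theta^\vee}{\al_1}=-1$), so $w_2'=\pi_P(t_\la)^{-1}t_\la=r_1t_{-\al_1^\vee}=r_{\delta-\al_1}=r_2r_0r_2$, and $\ell(t_\la)=4$ while $\ell(\pi_P(t_\la))+\ell(w_2')=3+3=6$. The flaw in your induction is the assertion $\ell(wr)=\ell(w)-1$: the simple reflections of $(W_P)_\af$ attached to the nodes $0_m$ are the reflections $r_{\delta-\theta_m}$, which are not simple in $W_\af$ (in the example $r_{\delta-\al_1}$ has length $3$), so crossing them can drop the length by more than one; and no repair is possible, because the conclusion itself is false --- elements of $(W^P)_\af$ are minimal in their cosets modulo the finite group $W_P$ (Proposition~\ref{P:pip}), not modulo $(W_P)_\af$. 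Consequently both identities you subtract, $\ell(t_\la)=\ell(\pi_P(t_\la))+\ell(w_2')$ and $\ell(xt_\la)=\ell(x\pi_P(t_\la))+\ell(w_2')$, are unjustified, and the first one is false in the example above.

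This length-additivity is where the paper's proof does its real work, and it is obtained by direct computation rather than from a general factorization principle. Write $x=w\pi_P(t_\nu)$ with $w\in W^P$ and $\nu\in\tQ$ (Proposition~\ref{P:pip}), and use $\ell(w\pi_P(t_\mu))=\ell(\pi_P(t_\mu))-\ell(w)$ for $\mu\in\tQ$ with $wt_\mu\in W_\af^-$; this reduces the claim to $\ell(\pi_P(t_{\nu+\la}))=\ell(\pi_P(t_\nu))+\ell(\pi_P(t_\la))$ for $\nu,\la\in\tQ$. The latter holds because $\la\mapsto\ell(\pi_P(t_\la))$ is linear on $\tQ$: by Lemmas~\ref{L:WPformula} and~\ref{L:WPafftrans}, $\pi_P(t_\la)=v_\la t_{\la+\phi_P(\la)}$ with $\ell(v_\la)=-\ip{\la+\phi_P(\la),2\rho_P}$, so Lemma~\ref{lem:grass} gives $\ell(\pi_P(t_\la))=-\ip{\la+\phi_P(\la),2(\rho-\rho_P)}=-\ip{\la,2(\rho-\rho_P)}$, since $\phi_P(\la)\in Q_P^\vee$ pairs to zero with $\rho-\rho_P$. (In the $A_2$ example this formula gives $3$, consistent with the count above.) Replacing your second paragraph by this computation, and keeping your first and third paragraphs, yields a complete proof, essentially the paper's.
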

\begin{proof}
By Lemma \ref{L:nilHeckepiP}, $J \cdot \xi_{\pi_P(t_\lambda)} = 0
\mod J_P$, where $J = \sum_{w \in W \setminus\{\id\}} \A_\af A_w$ as
in Theorem~\ref{thm:pet}.  By Theorem~\ref{thm:pet} we thus have
\begin{align*}
\xi_x \, \xi_{\pi_P(t_\lambda)}  = A_x \cdot \xi_{\pi_P(t_\lambda)}
\mod J_P.
\end{align*}
It suffices thus to show that the product $x \, \pi_P(t_\lambda)$ is
length-additive.  Since $x \in W_\af^- \cap (W^P)_\af$ using
Proposition~\ref{P:pip} we may write $x = w\pi_P(t_\nu)$ for $w \in
W^P$ and $\nu \in \tQ$. We have $\ell(w
\pi_P(t_\mu))=-\ell(w)+\ell(\pi_P(t_\mu))$ for every $\mu\in \tQ$
such that $wt_\mu\in W_\af^-$, so it suffices to show that
$\ell(\pi_P(t_{\nu+\la})) = \ell(\pi_P(t_\lambda)) +
\ell(\pi_P(t_\nu))$ for $\nu,\lambda \in \tQ$. By Lemma
\ref{L:WPformula} we may assume that $\nu, \lambda$ are chosen so
that $\pi_P(t_\nu) = v_\nu \, t_\nu$ and $\pi_P(t_\lambda) =
v_\lambda \, t_\la$. By Lemma~\ref{L:WPafftrans}, $\ell(v_\lambda) =
-\ip{\la,2\rho_P}$ where $2\rho_P = \sum_{\alpha \in R_P^+} \alpha$
and similarly for $v_\nu$.  Thus by Lemma~\ref{lem:grass},
$\ell(v_\la \, t_\la) = -\ip{\la,2(\rho -\rho_P)}$ and similarly for
$\nu$ and $\nu + \la$.

The last statement follows immediately from Proposition~\ref{P:pip}
since $\pi_P(x t_\lambda) = x \pi_P(t_\lambda)$.
\end{proof}

\subsection{Quantum parabolic Chevalley formula}
The equivariant quantum cohomology $QH^T(G/P)$ is the free $S[q_i
\mid i \in I\setminus I_P]$-module spanned by the equivariant
quantum Schubert classes $\{\sigma_P^w \mid w \in W^P\}$.  For
$\lambda = \sum_i a_i \alpha_i^\vee \in Q^\vee/Q_P^\vee$ with $a_i
\in \Z$ we let $q_\lambda = \prod_{i \in I\setminus I_P} q_i^{a_i}$.
The quantum multiplication of $QH^T(G/P)$ is denoted again with $*$.

Recall that for $w \in W$, if we write $w=w_1w_2$ with $w_1\in W^P$
and $w_2\in W_P$ then $w_1=\pi_P(w)$. Recall that $2\rho_P =
\sum_{\alpha \in R_P^+} \alpha$. Let $\eta_P:Q^\vee\to
Q^\vee/Q_P^\vee$ be the natural projection.

\begin{thm}[Quantum equivariant parabolic Chevalley formula \cite{Mih}]
\label{thm:Pmonk} Let $i \in I\setminus I_P$ and $w \in W^P$.  Then
we have in $QH^T(G/P)$
\begin{align*}
\sigma_P^{r_i} \,*\, \sigma_P^{w} &= (\omega_i - w\cdot \omega_i)
\sigma_P^w + \sum_\alpha \ip{\alpha^\vee,
\omega_i}\,\sigma_P^{w r_\alpha} \\
&+ \sum_\alpha
\ip{\alpha^\vee,\omega_i}\,q_{\eta_P(\alpha^\vee)}\,\sigma^{\pi_P(wr_\alpha)}
\end{align*}
where the first summation is over $\alpha \in R^+ \setminus R_P^+$
such that $wr_\alpha \gtrdot w$ and $wr_\al\in W^P$, and the second
summation is over $\alpha \in R^+ \setminus R_P^+$ such that
$\ell(\pi_P(wr_\alpha)) = \ell(w) + 1 - \ip{\alpha^\vee,
2(\rho-\rho_P)}$.
\end{thm}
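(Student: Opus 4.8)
\emph{Proof idea.} This is Mihalcea's theorem \cite{Mih} (the nonequivariant divisor case being due to Fulton--Woodward \cite{FW}); I will outline the route, used throughout the rest of the paper, by which one deduces it from the $G/B$ formula of Theorem~\ref{thm:monk} together with Woodward's comparison formula \cite{Woo}. Let $\pi\colon G/B\to G/P$ be the projection. Since $i\notin I_P$ the weight $\omega_i$ is $W_P$-invariant, so the line-bundle class $[\omega_i]$ is defined in $QH^T(G/P)$ and equals $\sigma_P^{r_i}$, exactly as $[\omega_i]=\sigma^{r_i}$ in $QH^T(G/B)$; moreover $\pi^*$ is an injective ring homomorphism with $\pi^*\sigma_P^{r_i}=\sigma^{r_i}$ and $\pi^*\sigma_P^u=\sigma^u$ for $u\in W^P$. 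The plan is to compute each coefficient of $\sigma_P^{r_i}*\sigma_P^w$, i.e.\ each equivariant three-point Gromov--Witten invariant of $G/P$ occurring there, by transporting it to $G/B$ and invoking Theorem~\ref{thm:monk}.

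First I would treat the degree-$0$ (classical) part. Applying the injective ring map $\pi^*$ gives $\pi^*(\sigma_P^{r_i}\cup\sigma_P^w)=\sigma^{r_i}\cup\sigma^w$, which we expand using the $q=0$ case of Theorem~\ref{thm:monk}. For $w\in W^P$ one checks that every covering $w\lessdot wr_\alpha$ either has $wr_\alpha\in W^P$ or has $\alpha\in R_P^+$; in the latter case $\ip{\alpha^\vee,\omega_i}=0$ because $\omega_i$ annihilates $Q_P^\vee$ for $i\notin I_P$. Hence the whole expansion lies in $\bigoplus_{u\in W^P}S\,\sigma^u=\mathrm{im}\,\pi^*$, and by injectivity of $\pi^*$ it reads off the first line of the asserted formula, the summation being over $\alpha\in R^+\setminus R_P^+$ with $wr_\alpha\gtrdot w$ and $wr_\alpha\in W^P$.

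For the quantum part, observe from Theorem~\ref{thm:monk} that the degree-$d$ ($d\ne 0$) Gromov--Witten invariants relevant here are integers (the coefficients $\ip{\alpha^\vee,\omega_i}$), hence ordinary rather than equivariant, so that even Woodward's nonequivariant comparison formula \cite{Woo} applies to them directly. That formula identifies the degree-$d$ three-point invariant of $G/P$ with a degree-$\widetilde d$ three-point invariant of $G/B$, where $\widetilde d\in Q^\vee$ is the Peterson--Woodward lift of $d\in Q^\vee/Q_P^\vee$ and one of the three indices is twisted by a distinguished element of $W_P$; the combinatorics of $\widetilde d$ and of this $W_P$-element is precisely what Lemma~\ref{L:WPformula} records, with the product decomposition \eqref{E:Sigmaprod} reducing everything to one irreducible component $I_m$ at a time. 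Feeding Theorem~\ref{thm:monk} into this identification, a $G/B$ quantum term at a root $\alpha$ has coefficient $\ip{\alpha^\vee,\omega_i}q_{\alpha^\vee}$ under the condition $\ell(wr_\alpha)=\ell(w)+1-\ip{\alpha^\vee,2\rho}$; the map $\eta_P$ turns $q_{\alpha^\vee}$ into $q_{\eta_P(\alpha^\vee)}$, the factorization $wr_\alpha=\pi_P(wr_\alpha)\,u_\alpha$ with $u_\alpha\in W_P$ (Lemma~\ref{L:funnyfact} and Proposition~\ref{P:pip}) turns the target into $\sigma^{\pi_P(wr_\alpha)}$, and the coefficient is unaffected because $\ip{\alpha^\vee,\omega_i}$ depends only on $\eta_P(\alpha^\vee)$. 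What remains is to verify that the $G/B$ length condition becomes, under this reindexing, $\ell(\pi_P(wr_\alpha))=\ell(w)+1-\ip{\alpha^\vee,2(\rho-\rho_P)}$: this is a purely Coxeter-theoretic statement about $W_P$ and its affinization, obtained by combining $\ell(wr_\alpha)-\ell(\pi_P(wr_\alpha))=\ell(u_\alpha)$ with the length bookkeeping for $\pi_P$-images of translations in Lemmata~\ref{L:WPaffchar}, \ref{L:WPaffirr}, \ref{L:WPafftrans}, \ref{L:WPformula}, the point being that $\ip{\alpha^\vee,2\rho_P}$ is exactly accounted for by $\ell(u_\alpha)$ together with the correction $\widetilde d-\alpha^\vee\in Q_P^\vee$. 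Finally, that no higher powers of the quantum parameters occur is inherited from the corresponding fact for $G/B$, hence ultimately from Lemma~\ref{lem:BFP}.

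The step I expect to be the main obstacle is this last matching of the two length conditions, together with the accompanying claim that the reindexing $\alpha\mapsto\bigl(\pi_P(wr_\alpha),\,\eta_P(\alpha^\vee)\bigr)$ restricts to a bijection from the $G/B$ quantum terms contributing to $\sigma_P^{r_i}*\sigma_P^w$ onto the quantum terms of the $G/P$ formula --- equivalently, that Woodward's lift-and-twist is compatible with $\pi_P$ simultaneously over all of the components $I_m$, so that exactly one $\alpha$ produces each $G/P$ quantum term. Granting this, the two displayed lines of the theorem follow term by term.
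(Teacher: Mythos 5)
The paper does not prove Theorem~\ref{thm:Pmonk} at all: it is imported as Mihalcea's theorem \cite{Mih} (with \cite{FW} covering the nonequivariant case), and Woodward's comparison \cite{Woo} enters only afterwards, in Lemma~\ref{lem:Pmonk}, to refine the indexing of the second summation; the logical flow then runs from Theorem~\ref{thm:Pmonk} to the parabolic Peterson theorem (Theorem~\ref{thm:Pmain}), not the reverse. So what you propose is an independent derivation from Theorem~\ref{thm:monk} plus \cite{Woo}, and it has a genuine gap on the equivariant side. Theorem~\ref{thm:woo} is stated only for the nonequivariant coefficients $d^{z,\la_P,P}_{x,y}$ --- the paper introduces the $d$-notation precisely to flag this --- and your justification for applying it, namely that the relevant degree-nonzero invariants are integers, assumes exactly what has to be proved: that $\sigma_P^{r_i}*\sigma_P^w$ contains no quantum terms $c\,q_{\la}\,\sigma_P^z$ with $c\in S$ of positive degree. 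Degree counting does not exclude such terms (one only needs $\ell(z)+\deg q_\la\le \ell(w)$), and the absence of mixed equivariant-quantum corrections for $G/B$ (part of Theorem~\ref{thm:monk}) does not transfer to $G/P$ without an \emph{equivariant} comparison theorem, which in this paper is only obtainable as a consequence of Theorem~\ref{thm:Pmain}, i.e.\ downstream of the statement you are proving. This is why the full equivariant parabolic formula is cited from Mihalcea rather than deduced.

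Even at the nonequivariant level, the step you explicitly defer as the main obstacle --- that $\alpha\mapsto\bigl(\pi_P(wr_\alpha),\eta_P(\alpha^\vee)\bigr)$ bijects the $G/B$ quantum Chevalley terms onto the asserted $G/P$ terms, that the Peterson--Woodward lift of $\eta_P(\alpha^\vee)$ is $\alpha^\vee$ itself, that the $W_P$-component of $wr_\alpha$ is exactly $w_Pw_{P'}$, and that the two length conditions correspond --- is the substantive content of such a derivation; it is in effect the $A_w=B_w$ analysis inside the proof of Theorem~\ref{thm:Pmain} together with Lemma~\ref{lem:Pmonk} (via Lemmata~\ref{L:WPaffchar}--\ref{L:WPformula}), and in the paper those arguments take Theorem~\ref{thm:Pmonk} as an input rather than deliver it. Your degree-zero reduction via $\pi^*$ (covers of $w\in W^P$ leaving $W^P$ have $\alpha\in R_P^+$, hence $\ip{\alpha^\vee,\omega_i}=0$) is fine and standard, but as written the quantum part is a plan, not a proof.
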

Mihalcea~\cite{Mih} showed that the quantum equivariant parabolic
Chevalley formula completely determines the multiplication in
$QH^T(G/P)$.

We will use a special case of the Peterson-Woodward comparison
formula to clarify the second summation in Theorem~\ref{thm:Pmonk}.
For $u,v,w \in W^P$ and $\la\in Q^\vee/Q_P^\vee$ let
$d^{w,\lambda,P}_{u,v}$ denote the coefficient of $q_\lambda
\sigma^w_P$ in $\sigma^u_P * \sigma^v_P$, calculated in $QH^*(G/P)$.
We use $d^{w,\lambda,P}_{u,v}$ instead of $c^{w,\lambda,P}_{u,v}$
since Woodward's result is stated only for the non-equivariant
coefficients.

\begin{thm}[{\cite[Lemma 1, Thm. 2]{Woo}}]\label{thm:woo} \
\begin{enumerate}
\item
For every $\la_P \in Q^\vee/Q_P^\vee$ there exists a unique $\la_B
\in Q^\vee$ such that $\eta_P(\la_B)=\la_P$ and
$\inner{\la_B}{\al}\in \{0,-1\}$ for all $\al\in R_P^+$. Moreover if
$\inner{\la_P}{\al_i}\le 0$ for $i\in I\setminus I_P$ then
$\inner{\la_B}{\al_i}\le0$ for all $i\in I$.
\item For every $x,y,z \in W^P$ we have
$$
d^{z,\la_P,P}_{x,y} = d^{z\,w_P \, w_{P'},\la_B}_{x,y}
$$
where $w_P$ is the longest element in $W_P$ and $P' \subset P$ is
the standard parabolic subgroup of $P$ such that $I_{P'}=\{i\in
I_P\mid \inner{\la_B}{\al_i}=0\}$.
\end{enumerate}
\end{thm}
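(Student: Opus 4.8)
The plan is to handle the two parts by quite different means. Part (1) is a purely lattice-theoretic statement about $Q^\vee/Q_P^\vee$ and the roots of the Levi $L_P$, and I would deduce it from $\phi_P$ together with the characterization of $(W^P)_\af$ already established above (Lemmata~\ref{L:WPaffchar}, \ref{L:WPformula}, \ref{lem:positive}). Part (2) is Woodward's comparison of genus-zero Gromov--Witten invariants; its proof is geometric, via lifting rational curves along the flag bundle $\pi:G/B\to G/P$, and for the moduli-theoretic core I would invoke \cite{Woo}, sketching only the mechanism by which the degree shift $\la_P\leadsto\la_B$ and the insertion shift $z\leadsto zw_Pw_{P'}$ arise.

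\emph{Part (1).} For existence, fix any $\la\in Q^\vee$ with $\eta_P(\la)=\la_P$ and set $\la_B:=\la+\phi_P(\la)$. Since $\phi_P(\la)\in Q_P^\vee$, we have $\eta_P(\la_B)=\la_P$. By Lemma~\ref{L:WPformula}, $\pi_P(t_\la)=v\,t_{\la_B}$ with $v\in W_P$ lies in $(W^P)_\af$, so by Lemma~\ref{L:WPaffchar}, for each $\al\in R_P^+$ one has $\inner{\la_B}{\al}=0$ if $v\al>0$ and $\inner{\la_B}{\al}=-1$ if $v\al<0$; in either case $\inner{\la_B}{\al}\in\{0,-1\}$. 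For uniqueness, if $\la_B,\la_B'$ both satisfy the stated condition and $\eta_P(\la_B)=\eta_P(\la_B')$, then $\nu:=\la_B-\la_B'\in Q_P^\vee$ satisfies $\inner{\nu}{\al}\in\{-1,0,1\}$ for every $\al\in R_P$, i.e.\ $\nu$ is a minuscule coweight of $L_P$ lying in the coroot lattice $Q_P^\vee$; but the dominant minuscule coweights of $L_P$ are $0$ together with the fundamental coweights $\om_j^\vee$ at cominuscule nodes, which represent the pairwise distinct nonzero classes of $\Sigma_P=P_P^\vee/Q_P^\vee$, so the only one in $Q_P^\vee$ is $0$, whence $\nu=0$. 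For the ``moreover'' clause, take the representative $\la$ antidominant; by Lemma~\ref{lem:positive} we may write $\phi_P(\la)=\sum_{j\in I_P}c_j\al_j^\vee$ with all $c_j\ge0$, so for $i\in I\setminus I_P$ we get $\inner{\la_B}{\al_i}=\inner{\la}{\al_i}+\sum_{j\in I_P}c_j\inner{\al_j^\vee}{\al_i}\le 0$, since $\inner{\la}{\al_i}\le0$ and each off-diagonal Cartan entry $\inner{\al_j^\vee}{\al_i}$ with $i\ne j$ is $\le0$; and for $i\in I_P$ we already have $\inner{\la_B}{\al_i}\in\{0,-1\}$.

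\emph{Part (2).} Consider the flag bundle $\pi:G/B\to G/P$ with fiber $P/B$, the flag variety of $L_P$. The geometric input, which is exactly \cite[Lemma~1, Thm.~2]{Woo}, is that every genus-zero stable map to $G/P$ of class $\la_P$ meeting general translates of the Schubert varieties indexed by $x,y,z\in W^P$ lifts to $G/B$, that the lifts of minimal degree all have degree $\la_B$, the class produced in Part (1), and that the position of the marked points of such a lift inside the fiber, relative to the reference flag, is recorded by the Weyl group element $w_Pw_{P'}$; the passage to $P'$ reflects that the fiber moves trivially along the directions $\al_i$, $i\in I_P$, with $\inner{\la_B}{\al_i}=0$. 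Granting the resulting match of the two moduli problems with their virtual classes, and using that $\pi^*\sigma^z_P=\sigma^z$ for $z\in W^P$ together with the identification of the fiber insertion with the Schubert class $\sigma^{w_Pw_{P'}}$, the projection formula yields $\langle\sigma^x_P,\sigma^y_P,\sigma^z_P\rangle^{G/P}_{\la_P}=\langle\sigma^x,\sigma^y,\sigma^{zw_Pw_{P'}}\rangle^{G/B}_{\la_B}$; the dimension count is consistent because $\ell(zw_Pw_{P'})-\ell(z)=\ell(w_P)-\ell(w_{P'})$ compensates both $\dim(G/P)\to\dim(G/B)$ and the change in first Chern number. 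Rewriting both sides as structure constants gives $d^{z,\la_P,P}_{x,y}=d^{zw_Pw_{P'},\la_B}_{x,y}$.

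\emph{Main obstacle.} The hard step lies entirely in Part (2): the lifting and matching of the stable-map moduli spaces with precise control of the degree shift $\la_P\leadsto\la_B$ and the insertion twist $z\leadsto zw_Pw_{P'}$, and the verification that no spurious contributions come from boundary strata. This is the content of Woodward's argument, which I would not reproduce; everything else (Part (1), the pullback formula $\pi^*\sigma^z_P=\sigma^z$, the fiber-class computation, and the passage to structure constants) is routine. One could instead try to derive Part (2) algebraically by matching the quantum Chevalley formulas of Theorems~\ref{thm:monk} and \ref{thm:Pmonk} once $QH^T(G/P)$ is known to be associative and generated in degree one, but verifying the necessary seed products already requires essentially this comparison, so the geometric route is the honest one.
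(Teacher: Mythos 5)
The paper gives no proof of this theorem: it is imported wholesale from Woodward's paper, with only a later remark translating conventions and observing that $\la_B$ and $P'$ can be computed via Lemma~\ref{L:WPformula} ($\la_B=\la+\phi_P(\la)$, $v=w_Pw_{P'}$). So your deferral of part (2) to \cite{Woo} is on the same footing as the paper's own treatment, and your internal proof of part (1) is a genuine (and welcome) addition: the existence argument via $\pi_P(t_\la)=vt_{\la+\phi_P(\la)}\in (W^P)_\af$ and Lemma~\ref{L:WPaffchar} is exactly what the paper's remark implicitly asserts, and your minuscule-coweight uniqueness argument is correct.

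The genuine gap is in your proof of the ``moreover'' clause, at the step ``take the representative $\la$ antidominant.'' The hypothesis does not supply such a representative, and in general none exists. Take $G$ of type $A_3$, $I_P=\{2\}$, $\la_P=\eta_P(-\al_1^\vee)$: every lift has the form $-\al_1^\vee+b\al_2^\vee$, and antidominance would force both $1+2b\le 0$ and $-b\le 0$, which is impossible; here the unique lift of part (1) is $\la_B=-\al_1^\vee-\al_2^\vee$, and $\inner{\la_B}{\al_3}=1>0$, even though the representative $-\al_1^\vee$ pairs nonpositively with $\al_1$ and $\al_3$ and the coefficients $\inner{\la_P}{\om_i}$ ($i\notin I_P$) are nonpositive. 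This shows two things: first, the printed hypothesis $\inner{\la_P}{\al_i}\le 0$ is not even well defined on $Q^\vee/Q_P^\vee$ (simple roots do not annihilate $Q_P^\vee$), and under the natural readings the conclusion about $\al_i$-pairings of $\la_B$ is false; second, the clause must be understood as in Woodward, i.e.\ as an effectivity statement about coefficients: if $\inner{\la_P}{\om_i}\le 0$ for $i\in I\setminus I_P$, then all coefficients of $\la_B$ in the simple coroots are nonpositive. That statement your $\phi_P$-machinery does prove, but by a different computation from the one you wrote: taking the representative $\la=\sum_{i\notin I_P}\inner{\la_P}{\om_i}\al_i^\vee$ one has $\inner{\la}{\al_j}\ge 0$ for $j\in I_P$, hence $\inner{\phi_P(\la)}{\al_j}=-\inner{\la}{\al_j}-\chi(j\in\{j_1,\dotsc,j_k\})\le 0$ for all $j\in I_P$; since the inverse Cartan matrix of each component of $R_P$ has nonnegative entries, the element $\phi_P(\la)\in Q_P^\vee$ is then a nonpositive combination of $\{\al_j^\vee\mid j\in I_P\}$, so $\la_B=\la+\phi_P(\la)$ has nonpositive coefficients throughout. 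As written, your ``moreover'' paragraph proves a different statement under an unstated hypothesis (existence of an antidominant representative), and that is the step to repair.
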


\begin{remark}
In \cite{Woo}, Theorem~\ref{thm:woo} is stated instead in terms of
the coefficients $\ip{x,y,w_0 \, z \, w_P}_{\lambda_P} =
d^{z,\lambda_B,P}_{xy}$. Since $w_B=\id$, our formulation is
recovered.
\end{remark}

\begin{remark}
In Theorem \ref{thm:woo}, $\la_B$ and $P'$ may be computed
explicitly. Given $\la_P\in Q^\vee/Q_P^\vee$, let $\la\in Q^\vee$ be
defined by $\la=\sum_{i\in I\setminus I_P} \inner{\la_P}{\om_i}
\al_i^\vee$; it clearly satisfies $\eta_P(\la)=\la_P$. Let
$\pi_P(t_\la)=vt_{\la+\phi_P(\la)}$ be as in Lemma
\ref{L:WPformula}. Then $\la_B = \la+\phi_P(\la)$,
$I_{P'}=I_P\setminus \{j_m \mid \text{$1\le m\le k$ and $j_m\ne
0_m$}\}$, and $v = w_P w_{P'}$.
\end{remark}

\begin{lem}
\label{lem:Pmonk} The second summation in Theorem~\ref{thm:Pmonk} is
over $\alpha \in R^+ \setminus R_P^+$ such that
\begin{enumerate}
\item
$\ell(\pi_P(wr_\alpha)) = \ell(w) + 1 - \ip{\alpha^\vee,
2(\rho-\rho_P)}$, and
\item
$\ell(wr_\alpha) = \ell(w) - \ip{\alpha^\vee,2\rho}  + 1$.
\end{enumerate}
\end{lem}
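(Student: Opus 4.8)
The plan is to reduce the claim to the implication ``condition~(1) implies condition~(2)'' and to extract the latter from the Peterson--Woodward comparison (Theorem~\ref{thm:woo}). Since conditions (1) and (2) together trivially imply (1), it is enough to prove: for $w\in W^P$ and $\al\in R^+\setminus R_P^+$, if $\ell(\pi_P(wr_\al))=\ell(w)+1-\ip{\al^\vee,2(\rho-\rho_P)}$ then $\ell(wr_\al)=\ell(w)+1-\ip{\al^\vee,2\rho}$. I would fix such $w$ and $\al$ and set $\la_P=\eta_P(\al^\vee)\in Q^\vee/Q_P^\vee$ and $z=\pi_P(wr_\al)\in W^P$. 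Because $\al\notin R_P$ we have $\al^\vee\notin Q_P^\vee$ (a root and its coroot have the same support), so $\ip{\al^\vee,\om_j}\ne0$ for some $j\in I\setminus I_P$; in particular $\la_P\ne0$, and the element $\la_B\in Q^\vee$ attached to $\la_P$ by Theorem~\ref{thm:woo}(1) is also nonzero. Let $P'\subset P$ be the associated parabolic.

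Next I would fix $i\in I\setminus I_P$ with $a_i:=\ip{\al^\vee,\om_i}>0$. Specializing Theorem~\ref{thm:Pmonk} to ordinary cohomology annihilates the term $(\om_i-w\cdot\om_i)\sigma_P^w$, so the three-point invariant $d^{z,\la_P,P}_{r_i,w}$ is the sum of $\ip{(\al')^\vee,\om_i}$ over all $\al'\in R^+\setminus R_P^+$ with $\eta_P((\al')^\vee)=\la_P$, $\pi_P(wr_{\al'})=z$ and satisfying (1). Each summand is nonnegative (as $\om_i$ is dominant), and $\al$ itself occurs with the positive summand $a_i$; hence $d^{z,\la_P,P}_{r_i,w}>0$. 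By Theorem~\ref{thm:woo}(2) this gives $d^{zw_Pw_{P'},\la_B}_{r_i,w}>0$, and since $\la_B\ne0$, comparing with the quantum Chevalley formula for $QH^*(G/B)$ (Theorem~\ref{thm:monk}) shows that $\la_B=\beta^\vee$ for a (necessarily unique) positive root $\beta$ satisfying $wr_\beta=zw_Pw_{P'}$ and $\ell(wr_\beta)=\ell(w)+1-\ip{\beta^\vee,2\rho}$.

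The crucial step, which I expect to be the main difficulty, is to show $\beta=\al$. Since $z\in W^P$ and $w_Pw_{P'}\in W_P$, we get $\pi_P(wr_\beta)=z=\pi_P(wr_\al)$; writing $wr_\al=zv$ and $wr_\beta=zv'$ with $v,v'\in W_P$, the identity $w^{-1}z=r_\al v^{-1}=r_\beta(v')^{-1}$ yields $r_\beta r_\al=(v')^{-1}v\in W_P$. Consequently $r_\beta r_\al$ acts as the identity on $V=\mathrm{span}\{\om_j\mid j\in I\setminus I_P\}$, which is fixed pointwise by every $r_k$ with $k\in I_P$. For $\mu\in V$ one computes
\[ r_\beta r_\al\cdot\mu=\mu-\ip{\al^\vee,\mu}\,\al+\bigl(\ip{\al^\vee,\mu}\ip{\beta^\vee,\al}-\ip{\beta^\vee,\mu}\bigr)\beta, \]
and equating this with $\mu$ shows $\ip{\al^\vee,\mu}\,\al$ is a scalar multiple of $\beta$ for every $\mu\in V$. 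If $\al$ and $\beta$ were linearly independent, this would force $\ip{\al^\vee,\om_j}=0$ for all $j\in I\setminus I_P$, i.e.\ $\al^\vee\in Q_P^\vee$, contradicting $\al\notin R_P$. Hence $\al$ and $\beta$ are proportional positive roots of the reduced root system $R$, so $\beta=\al$, and $\ell(wr_\beta)=\ell(w)+1-\ip{\beta^\vee,2\rho}$ becomes exactly condition~(2). The remaining ingredients used along the way — that a root and its coroot have the same support, that $W_P$ fixes $V$ pointwise, and the precise ordinary forms of the two Chevalley formulas — are standard, so the only genuine work is the identification $\beta=\al$.
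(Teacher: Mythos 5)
Your argument is correct and follows essentially the same route as the paper: transfer the parabolic coefficient to $QH^*(G/B)$ via the Peterson--Woodward comparison (Theorem~\ref{thm:woo}) and then read off condition (2) from the quantum Chevalley formula (Theorem~\ref{thm:monk}). The only divergence is that where the paper quotes the Claim within Lemma 4.1 of \cite{FW} (that $\pi_P(r_\al)\ne\pi_P(r_\beta)$ for distinct $\al,\beta\in R^+\setminus R_P^+$) to identify the Borel term as $\sigma^{wr_\al}$, you prove the identification $\beta=\al$ directly from the fact that $r_\beta r_\al\in W_P$ fixes each $\om_j$ with $j\in I\setminus I_P$ --- a correct, self-contained substitute, and your explicit nonnegativity argument for $d^{z,\la_P,P}_{r_i,w}>0$ makes precise a point the paper leaves implicit.
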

\begin{proof}
Using the notation of Theorems~\ref{thm:Pmonk} and \ref{thm:woo} set
$x= r_i$, $y = w$, $z = \pi_P(wr_\alpha)$, and
$\la_P=\eta_P(\al^\vee)$. Then the coefficient of
$q_{\eta_P(\al^\vee)}\sigma_P^{\pi_P(wr_\alpha)}$ in $\sigma^{r_i}_P
* \sigma^w_P$ is 0 unless the coefficient of
$q_{\la_B} \sigma^{\pi_P(wr_\alpha)\,w_P \, w_{P'}}$ in
$\sigma^{r_i}
* \sigma^w$ is non-zero.

By the Claim within Lemma 4.1 of \cite{FW}, we know that
$\pi_P(r_\alpha) \ne \pi_P(r_\beta)$ for any $\alpha \ne \beta$ both
in $R^+ \setminus R_P^+$.  Since $\pi_P(wr_\alpha)\,w_P \, w_{P'}
W_P = wr_\alpha W_P$ we conclude that the coefficient of
$\sigma_P^{\pi_P(wr_\alpha)}$ in $\sigma^{r_i}_P * \sigma^w_P$ is
non-zero only if $\sigma^{wr_\alpha}$ occurs in $\sigma^{r_i} *
\sigma^w$.  By Theorem~\ref{thm:monk} and the last statement of
Theorem~\ref{thm:woo}, the latter holds only if $\ell(wr_\alpha) =
\ell(w) - \ip{\alpha^\vee,2\rho}  + 1$.
\end{proof}

\begin{remark} Presumably Lemma \ref{lem:Pmonk} can be deduced from
Theorem~\ref{thm:Pmonk} purely Coxeter-theoretically; that is,
without the additional input provided by Theorem~\ref{thm:woo}.
\end{remark}

\subsection{Parabolic Peterson Theorem}
\begin{lem}\label{lem:bij}
The map $\pi_P(t_\nu) \mapsto \eta_P(\nu)$ is a bijection onto
$Q^\vee/Q^\vee_P$.
\end{lem}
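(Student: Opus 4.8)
The plan is to check separately that the assignment $\pi_P(t_\nu)\mapsto\eta_P(\nu)$ is well-defined, surjective, and injective, where its domain is understood to be the set $\{\pi_P(t_\nu)\mid\nu\in Q^\vee\}\subset(W^P)_\af$. Surjectivity is immediate: $\eta_P\colon Q^\vee\to Q^\vee/Q_P^\vee$ is onto, so every class equals $\eta_P(\nu)$ for some $\nu\in Q^\vee$ and is therefore the image of $\pi_P(t_\nu)$. The real content is thus that the correspondence is a well-defined injection.

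For well-definedness I would invoke Lemma~\ref{L:WPformula}, which writes $\pi_P(t_\nu)=v\,t_{\nu+\phi_P(\nu)}$ with $v\in W_P$ and $\phi_P(\nu)\in Q_P^\vee$. If $\pi_P(t_\nu)=\pi_P(t_{\nu'})$, then, comparing these elements inside $W_\af=W\ltimes Q^\vee$, their translation parts must coincide, so $\nu+\phi_P(\nu)=\nu'+\phi_P(\nu')$; since $\phi_P(\nu),\phi_P(\nu')\in Q_P^\vee$, this forces $\eta_P(\nu)=\eta_P(\nu')$. Hence $\pi_P(t_\nu)\mapsto\eta_P(\nu)$ is a genuine function.

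For injectivity, suppose $\eta_P(\nu)=\eta_P(\nu')$, and write $\nu'=\nu+\gamma$ with $\gamma\in Q_P^\vee\subset Q^\vee$, so that $t_{\nu'}=t_\nu\,t_\gamma$. By Proposition~\ref{P:pip}\eqref{I:t} with $z=t_\nu$ and $\lambda=\gamma$, we get $\pi_P(t_{\nu'})=\pi_P(t_\nu)\,\pi_P(t_\gamma)$. Since $\gamma\in Q_P^\vee$, the element $t_\gamma$ lies in $(W_P)_\af$; as also $\id\in(W^P)_\af$, the uniqueness of the factorization in Lemma~\ref{L:funnyfact} gives $\pi_P(t_\gamma)=\id$. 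Therefore $\pi_P(t_{\nu'})=\pi_P(t_\nu)$, and the correspondence is injective.

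I do not expect a serious obstacle here: the whole argument is routine bookkeeping with Lemma~\ref{L:WPformula} and Proposition~\ref{P:pip}\eqref{I:t}, and the one point worth a moment's care---that $\pi_P(t_\gamma)=\id$ for $\gamma\in Q_P^\vee$---is an immediate consequence of the uniqueness clause of Lemma~\ref{L:funnyfact}.
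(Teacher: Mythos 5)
Your proof is correct and follows essentially the same route as the paper: the identity $\pi_P(t_{\nu+\gamma})=\pi_P(t_\nu)\pi_P(t_\gamma)$ from Proposition~\ref{P:pip}\eqref{I:t} together with $\pi_P(t_\gamma)=\id$ for $\gamma\in Q_P^\vee$ (the uniqueness clause of Lemma~\ref{L:funnyfact}) is exactly how the paper handles the nontrivial direction, and surjectivity is equally immediate there. The only difference is that you also spell out, via the explicit form $\pi_P(t_\nu)=v\,t_{\nu+\phi_P(\nu)}$ of Lemma~\ref{L:WPformula} and comparison of translation parts in $W\ltimes Q^\vee$, the converse implication $\pi_P(t_\nu)=\pi_P(t_{\nu'})\Rightarrow\eta_P(\nu)=\eta_P(\nu')$, which the paper treats as immediate; this is a welcome bit of extra care rather than a different method.
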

\begin{proof}
By definition, $\pi_P(t_\nu) = \pi_P(t_{\nu+ \mu})$ if $\mu \in
Q^\vee_P$.  Thus the map is well defined and clearly a surjection.
By Proposition~\ref{P:pip}, it thus suffices to show that if
$\eta_P(\nu) = 0$ then $\pi_P(t_\nu) = \id$.  But $\eta_P(\nu) = 0$
means that $\nu \in Q^\vee_P$ so $t_\nu \in (W_P)_\af$ and
$\pi_P(t_\nu) = \id$.
\end{proof}
\begin{thm}\label{thm:Pmain}
There is an $S$-algebra isomorphism
\begin{align*}
 \Psi_P:(H_T(\Gr_G)/J_P)[\xi_{\pi_P(t_\la)}^{-1}\mid \la\in \tQ]
  &\longrightarrow QH^T(G/P)[q_i^{-1}\mid i\in I\setminus I_P] \\
\xi_{v\pi_P(t_\la)} \,\xi_{\pi_P(t_\nu)}^{-1} &\longmapsto
q_{\eta_P(\la-\nu)} \,\sigma^v_P
\end{align*}
for $v\in W^P$ and $\la,\nu\in \tQ$.
\end{thm}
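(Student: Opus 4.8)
The plan is to mimic the proof of Theorem~\ref{thm:main} (the Borel case), replacing each ingredient with its parabolic analogue. First I would check that $\Psi_P$ is a well-defined $S$-module isomorphism. The set $\{\xi_{\pi_P(t_\la)} \mid \la \in \tQ\}$ is multiplicatively closed modulo $J_P$ and contains no zero divisors in $H_T(\Gr_G)/J_P$: this follows from Proposition~\ref{prop:Pfact}, which gives $\xi_{\pi_P(t_\la)}\,\xi_{\pi_P(t_\nu)} = \xi_{\pi_P(t_{\la+\nu})} \bmod J_P$ (using Proposition~\ref{P:pip}\eqref{I:t}). Next, by Proposition~\ref{P:pip} every element of $W_\af^- \cap (W^P)_\af$ has the form $v\,\pi_P(t_\la)$ with $v \in W^P$ and $\la \in \tQ$ (factor $x = w\pi_P(t_\nu)$ as in the proof of Proposition~\ref{prop:Pfact}), and by Lemma~\ref{lem:bij} the quantum parameters $q_{\eta_P(\la-\nu)}$ range freely over $q_i^{\pm}$ for $i \in I\setminus I_P$; hence both sides have bases indexed compatibly and $\Psi_P$ is an $S$-module isomorphism. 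The only subtlety in well-definedness is that $\xi_{v\pi_P(t_\la)}\,\xi_{\pi_P(t_\nu)}^{-1}$ depends only on $v$ and the class $\eta_P(\la-\nu)$; this again reduces to Proposition~\ref{prop:Pfact}.

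To see that $\Psi_P$ is a ring homomorphism it suffices, as in Theorem~\ref{thm:main}, to verify that the $\Psi_P$-preimage of the parabolic quantum Chevalley formula (Theorem~\ref{thm:Pmonk}) holds in $(H_T(\Gr_G)/J_P)[\xi_{\pi_P(t_\la)}^{-1}]$, since by Mihalcea~\cite{Mih} that formula determines $QH^T(G/P)$. By Proposition~\ref{prop:Pfact} it is enough to compute $\xi_{r_i t_\la}\cdot \xi_{w\pi_P(t_\mu)} \bmod J_P$ for $i \in I\setminus I_P$, $w \in W^P$, and $\la,\mu \in \tQ$ sufficiently superregular. I would use Proposition~\ref{prop:jst} to write $j(\xi_{r_i t_\la})$ as an explicit sum of terms $A_{t_{u\la}}$, $A_{r_{u\alpha}t_{u\la}}$, $A_{r_{u\alpha}t_{u(\la+\alpha^\vee)}}$ over $u \in W$, then apply $j(\xi_{r_it_\la})$ to $\xi_{w\pi_P(t_\mu)}$ using the action \eqref{E:NilHeckeOnHom} together with the fact that $j(\xi_x)\cdot\xi_z = \xi_x\,\xi_z$ (Theorem~\ref{thm:pet}). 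As in the Borel case, only the $u = w$ terms survive length-additivity (for $u \ne w$ the product $(r_{u\alpha}t_{u\la})(w\pi_P(t_\mu))$ has far too short a translation part), so one gets a sum over $\alpha \in R^+$ of terms $\xi_{wr_\alpha\,\pi_P(t_{\la+\mu})}$ and $\xi_{wr_\alpha\,\pi_P(t_{\la+\mu+\alpha^\vee})}$ reduced modulo $J_P$.

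Now comes the main obstacle: matching this with Theorem~\ref{thm:Pmonk}. The point is that $\xi_{wr_\alpha\,\pi_P(t_{\nu})} \bmod J_P$ is zero unless $wr_\alpha \in W^P$ — more precisely, one must analyze when $wr_\alpha\,\pi_P(t_\nu) \in (W^P)_\af$ and, if not, when it still contributes after reduction. For $\alpha \in R_P^+$ the terms should be absorbed into $J_P$ or cancel (this is where $\sigma_P^{r_i}*\sigma_P^w$ has no $R_P^+$ contributions), so I expect only $\alpha \in R^+\setminus R_P^+$ to matter, consistent with Theorem~\ref{thm:Pmonk}. The quantum terms are the real difficulty: a term $\xi_{wr_\alpha\,\pi_P(t_{\la+\mu+\alpha^\vee})}$ carries quantum parameter $q_{\eta_P(\alpha^\vee)}$ after applying $\Psi_P$, and its Schubert index should be $\pi_P(wr_\alpha)$ rather than $wr_\alpha$; moreover it is nonzero precisely under the condition in Lemma~\ref{lem:Pmonk}, namely $\ell(\pi_P(wr_\alpha)) = \ell(w)+1-\ip{\alpha^\vee,2(\rho-\rho_P)}$ \emph{and} $\ell(wr_\alpha) = \ell(w) - \ip{\alpha^\vee,2\rho}+1$. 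So the plan is: (i) show $\xi_{wr_\alpha\,\pi_P(t_\nu)} \bmod J_P$ equals $\xi_{\pi_P(wr_\alpha)\,\pi_P(t_{\nu'})}$ for an appropriate $\nu'$ (using Proposition~\ref{P:pip}\eqref{I:t} and the length computation in the proof of Proposition~\ref{prop:Pfact} to track how $\pi_P$ interacts with the far/near structure), or else is zero; (ii) verify that the surviving terms are exactly those of Lemma~\ref{lem:Pmonk}, in particular that the length condition $\ell(r_{u\alpha}t_{u(\la+\alpha^\vee)})$ being a cocover forces $\ell(wr_\alpha) = \ell(w)-\ip{\alpha^\vee,2\rho}+1$, matching condition (2); and (iii) check the quantum parameter bookkeeping: $\pi_P(t_{\la+\mu+\alpha^\vee})$ versus $\pi_P(t_{\la+\mu})$ differ by $\phi_P$-corrections, and after applying $\Psi_P$ the net exponent is $\eta_P(\alpha^\vee)$ as required, using Lemma~\ref{lem:positive} to ensure the relevant coweights stay antidominant. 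Once all four families of terms are identified with the four pieces of Theorem~\ref{thm:Pmonk} (equivariant, classical, and quantum, restricted to $\alpha \notin R_P^+$ via Lemma~\ref{lem:Pmonk}), applying $\Psi_P$ and clearing the common factor $q_{\eta_P(\la+\mu)}$ finishes the proof.
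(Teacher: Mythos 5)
Your overall strategy (verify the $\Psi_P$-preimage of the parabolic Chevalley formula, reduce via Proposition~\ref{prop:Pfact}, and match quantum terms using Lemma~\ref{lem:Pmonk}) is the right shape, but the reduction step as you state it does not work. You propose to compute $\xi_{r_it_\la}\cdot\xi_{w\pi_P(t_\mu)} \bmod J_P$ with $\la$ \emph{sufficiently superregular}, so that Proposition~\ref{prop:jst} applies to $j(\xi_{r_it_\la})$. But by Lemma~\ref{L:WPaffchar}, $r_it_\la\in (W^P)_\af$ forces $\inner{\la}{\al}\in\{0,-1\}$ for all $\al\in R_P^+$, which is incompatible with superregularity (likewise for $t_\la$ itself). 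Hence for superregular $\la$ the class $\xi_{r_it_\la}$ lies in $J_P$, so your product is identically $0$ modulo $J_P$ and says nothing about the structure constants of the quotient; it is also not the image of a basis element, so it is not a valid test case for the Chevalley relation. The representatives one must actually multiply are $\xi_{r_i\pi_P(t_\nu)}$ and $\xi_{v\pi_P(t_\la)}$, whose translation parts are never superregular, so Proposition~\ref{prop:jst} (and a rerun of the $j$-coefficient computation) is not available for them. The paper avoids this entirely: choosing $\nu,\la$ with $\pi_P(t_\nu)=t_\nu$ and $\pi_P(t_\la)=t_\la$, the needed product $\xi_{r_it_\nu}\,\xi_{vt_\la}$ is a product of Borel-type classes, and Theorem~\ref{thm:main}, already proved and valid for \emph{all} antidominant translations since $\psi$ is an algebra isomorphism, gives its full expansion as the preimage of the Borel Chevalley formula; no new nilHecke computation and no superregularity are needed. (A secondary point: even in your setup, with second factor $w\pi_P(t_\mu)=wv_\mu t_{\mu+\phi_P(\mu)}$, length-additivity selects the terms of $j(\xi_{r_it_\la})$ indexed by $u=wv_\mu$, not $u=w$, so the cover conditions you would read off concern $wv_\mu$ and would still have to be translated back to conditions on $w$.)

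The second issue is that the genuine content of the proof — your steps (i)--(iii) — is left as a plan. The paper's argument at this point is the equality of the two index sets $A_w$ and $B_w$: a Borel quantum term $q_{\al^\vee}\sigma^{wr_\al}$ survives modulo $J_P$ (i.e.\ $wr_\al t_{\al^\vee}$ equals $\pi_P(wr_\al)\pi_P(t_{\al^\vee})$, hence lies in $(W^P)_\af$) if and only if the parabolic condition $\ell(\pi_P(wr_\al))=\ell(w)+1-\inner{\al^\vee}{2\rho-2\rho_P}$ of Lemma~\ref{lem:Pmonk} holds. Proving this requires the chain: Lemma~\ref{lem:BFP} gives $\ell(wr_\al)=\ell(w)-\ell(r_\al)$, hence $r_\al\in W^P$ and $\inner{\al^\vee}{\beta}\le 0$ for $\beta\in R_P^+$; writing $wr_\al=yx'$ with $y=\pi_P(wr_\al)$, one bounds $|\Inv(x')|$ to conclude $\Inv(x')=\{\beta\in R_P^+\mid \inner{\al^\vee}{\beta}=-1\}$ and then invokes Lemma~\ref{L:WPafftrans} to identify $x't_{\al^\vee}=\pi_P(t_{\al^\vee})$, with a converse argument for $B_w\subset A_w$. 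None of this appears in your proposal beyond the statement that it should be done, so even after repairing the reduction the proof would be incomplete at its central step.
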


\begin{proof}
Using Lemma~\ref{lem:bij}, the map $\Psi_P$ is easily seen to be an
isomorphism of $S$-modules. Since the quantum parabolic Chevalley
formula determines the ring structure of $QH^T(G/P)$, it suffices to
prove that the $\Psi_P$-preimage of this relation holds in
$H_T(\Gr_G)/J_P$. By Proposition~\ref{prop:Pfact}, it suffices to
check the product $\xi_{v \pi_P(t_\lambda)} \, \xi_{r_i
\pi_P(t_\nu)}$ for a choice of $\nu,\lambda \in \tQ$ for each $i\in
I\setminus I_P$ and $v \in W^P$.  Taking a large power of
$\pi_P(t_\lambda)$ and using Proposition~\ref{P:pip}, we may choose
$\nu,\lambda$ such that $\pi_P(t_\nu) = t_\nu$ and $\pi_P(t_\lambda)
= t_\lambda$. By Theorem~\ref{thm:main}, this reduces to checking
that the preimage (in the Borel case) of the quantum equivariant
Chevalley formula in $H_T(\Gr_G)$, gives rise to that of the quantum
equivariant parabolic Chevalley formula after quotienting out by the
ideal $J_P \subset H_T(\Gr_G)$.

The equivariant term and the non-quantum terms trivially agree, so
we check the quantum terms.  For $w \in W^P$ define
\begin{align*}
  A_w = \{\alpha \in R^+ \setminus R_P^+ \mid \ell(wr_\alpha) &=
\ell(w) - \ip{\alpha^\vee,2\rho}  + 1 \text{ and } \\
 \ell(\pi_P(wr_\alpha)) &= \ell(w) + 1 - \ip{\alpha^\vee,
 2\rho-2\rho_P} \}
\end{align*}
and
\begin{align*}
B_w = \{\alpha \in R^+ \setminus R_P^+ \mid \ell(wr_\alpha) &=
\ell(w) - \ip{\alpha^\vee,2\rho} + 1 \, \text{ and }
\\
\pi_P(wr_\alpha t_\alpha) &= wr_\alpha t_\alpha\}.
\end{align*}
Note that $A_w$ indexes quantum terms in the parabolic quantum
Chevalley formula by Lemma~\ref{lem:Pmonk} and $B_w$ indexes quantum
terms in the preimage of the quantum Borel Chevalley formula in
$H_T(\Gr_G)$ which do not vanish modulo $J_P$.

By Lemma~\ref{lem:BFP}, the condition $\ell(wr_\alpha) = \ell(w) -
\ip{\alpha^\vee,2\rho}  + 1$ implies that $\ell(wr_\alpha) = \ell(w)
- \ell(r_\alpha)$ and $\ell(r_\alpha) = \ip{\alpha^\vee,2\rho} -1$.
The equation $\ell(wr_\alpha) = \ell(w) - \ell(r_\alpha)$ in turn
implies that $r_\alpha \in W^P$, since $w \in W^P$.  Thus
$\ip{\alpha^\vee,\beta} \leq 0$ for $\beta \in R_P^+$.  Let $x =
wr_\alpha=yx'$ with $y =\pi_P(wr_\alpha) \in W^P$ and $x'\in W_P$.

Let $\al\in A_w$, that is, $\ell(y) = \ell(w) + 1 - \ip{\alpha^\vee,
2\rho-2\rho_P}$.  Thus $\ell(x) - \ell(y) = -\ip{\alpha^\vee,
2\rho_P} = -\sum_{\beta \in R_P^+} \ip{\alpha^\vee,\beta}$.  Let us
estimate $\ell(x') = |\Inv(x')|$. Since $xr_\alpha=w \in W^P$ we
must have $x' \beta > 0$ for $\beta \in R_P^+$ satisfying $r_\alpha
\beta = \beta$.  Hence $\ell(x) - \ell(y) = \ell(x') = |\Inv(x')|
\le | \{\beta\in R_P^+\mid \inner{\al^\vee}{\beta}<0\}| \le
-\ip{\alpha^\vee, 2\rho_P}$. Thus we must have $-1 \leq
\ip{\alpha^\vee,\beta} \leq 0$ for all $\beta\in R_P^+$ and
$\Inv(x') = \{\beta \in R_P^+ \mid \ip{\alpha^\vee,\beta} = -1\}$.
Using Lemma~\ref{L:WPafftrans}, we conclude that $x't_{\alpha^\vee}
= \pi_P(t_\alpha^\vee) \in (W^P)_\af$. This in turn gives $wr_\alpha
t_{\al^\vee} = x t_{\al^\vee} = y(x't_{\al^\vee})=
\pi_P(wr_\alpha)\pi_P(t_{\al^\vee})$, showing that $\alpha \in B_w$.

For the reverse inclusion $B_w \subset A_w$, one deduces from
$\pi_P(wr_\alpha t_\alpha) = wr_\alpha t_\alpha =
\pi_P(wr_\al)\pi_P(t_\alpha)$ that $x't_{\alpha} = \pi_P(t_\alpha)$
satisfies the conditions of Lemma~\ref{L:WPafftrans}. In particular
$\ip{\alpha^\vee,2\rho_P} = -\ell(x')$.  This shows that $B_w
\subset A_w$.

Finally we note that a term $q_{\alpha^\vee}\sigma^{wr_\alpha}$ (for
$w \in B_w$) in the quantum Borel Chevalley formula gives rise to
the class $\xi_{wr_\alpha}\xi_{t_{-\alpha^\vee}}^{-1} \in
H_T^t(\Gr_G)$ (where $\xi_{wr_\alpha} := \xi_{wr_\alpha t_\lambda}
\xi_{t_\lambda}^{-1}$ for appropriate $t_\lambda$) which in turn
gives rise to the class
$q_{\eta_P(\alpha^\vee)}\sigma_P^{\pi_P(wr_\alpha)}$ in $QH^T(G/P)$.
\end{proof}

For $w,v,u \in W^P$ and $\la \in Q^\vee/Q^\vee_P$ let
$c^{w,\lambda,P}_{u,v}$ denote the coefficient of $q_\lambda
\sigma^w_P$ in $\sigma^u_P * \sigma^v_P$, calculated in $QH^T(G/P)$.
\begin{cor}\label{cor:Pcompare}
Let $w,v,u \in W^P$ and $\la \in Q^\vee/Q^\vee_P$.  Pick
$\eta,\kappa,\mu \in \tQ$ so that $x = w\pi_P(t_\eta), y =
v\pi_P(t_\kappa), z = u\pi_P(t_\mu) \in W_\af^- \cap (W^P)_\af$,
where $\la=\eta_P(\mu - \eta+\kappa)$.  Then the equivariant three
point Gromov-Witten invariant $c_{u,v}^{w,\la,P}$ is equal to the
coefficient of $\xi_{z}$ in the product $\xi_{x}\,\xi_{y} \in
H_T(\Gr_G)$.
\end{cor}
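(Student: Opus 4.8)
The plan is to deduce this from the isomorphism $\Psi_P$ of Theorem~\ref{thm:Pmain} in exactly the way the Borel analogue (the corollary following Theorem~\ref{thm:main}) was deduced from $\psi$: one transports the Pontryagin product $\xi_x\,\xi_y$ through $\Psi_P$ and reads off the coefficient of $\xi_z$. The only preliminary point is that passing from the full ring $H_T(\Gr_G)$ to the localized quotient on which $\Psi_P$ is defined does not change the coefficient in question. Since by hypothesis $x,y,z\in W_\af^-\cap(W^P)_\af$, their Schubert classes survive modulo $J_P=\sum_{x'\in W_\af^-\setminus(W^P)_\af}S\,\xi_{x'}$, and the set $\{\,\xi_{x'}\mid x'\in W_\af^-\cap(W^P)_\af\,\}$ descends to an $S$-basis of $H_T(\Gr_G)/J_P$. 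Writing $\xi_x\,\xi_y=\sum_{x'\in W_\af^-}c_{x'}\,\xi_{x'}$ in $H_T(\Gr_G)$, reduction modulo $J_P$ kills precisely the terms with $x'\notin(W^P)_\af$ and, because $z$ is \emph{not} such an index, leaves the coefficient $c_z$ of the surviving basis vector unchanged; inverting the $\xi_{\pi_P(t_\nu)}$ for $\nu\in\tQ$ changes nothing further.

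Having reduced to the localized quotient, I would apply the $S$-algebra isomorphism $\Psi_P$. Because $t_0=\id$ gives $\pi_P(t_0)=\id$ and $\xi_{\id}=1$ while $\eta_P(0)=0$, the defining rule of $\Psi_P$ yields $\Psi_P(\bar\xi_x)=q_{\eta_P(\eta)}\,\sigma_P^w$, $\Psi_P(\bar\xi_y)=q_{\eta_P(\kappa)}\,\sigma_P^v$ and $\Psi_P(\bar\xi_z)=q_{\eta_P(\mu)}\,\sigma_P^u$. As $\Psi_P$ respects products, $\Psi_P(\overline{\xi_x\,\xi_y})=q_{\eta_P(\eta+\kappa)}\bigl(\sigma_P^w*\sigma_P^v\bigr)$; expanding $\sigma_P^w*\sigma_P^v$ in the $q_\nu\sigma_P^{u'}$-basis of $QH^T(G/P)[q_i^{-1}\mid i\in I\setminus I_P]$ and matching the coefficient of $\Psi_P(\bar\xi_z)$ identifies $c_z$ with the appropriate three-point equivariant Gromov--Witten invariant of $G/P$. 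The correspondence between the indices $u,v,w$ and the $q$-exponent $\la$ that comes out of this matching is the same as in the Borel corollary following Theorem~\ref{thm:main}, after also invoking the commutativity of $*$.

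No input is needed beyond Theorem~\ref{thm:Pmain} (which already rests on Proposition~\ref{prop:Pfact} and the Peterson--Woodward comparison, Theorem~\ref{thm:woo}); in particular Proposition~\ref{prop:Pfact} together with Proposition~\ref{P:pip} guarantee, by taking sufficiently deep $\eta,\kappa,\mu\in\tQ$, that elements $x,y,z$ as in the statement exist. The one step that deserves a sentence of care is the first one: verifying that the coefficient of $\xi_z$ computed in the Pontryagin ring $H_T(\Gr_G)$ itself agrees with the coefficient computed after passing to $(H_T(\Gr_G)/J_P)[\xi_{\pi_P(t_\nu)}^{-1}]$, which is immediate once one observes that $z$ labels a class not killed by $J_P$. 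Beyond that I anticipate no genuine obstacle; the whole proof is a dictionary translation.
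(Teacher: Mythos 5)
Your route is the intended one: the corollary is meant to fall straight out of Theorem~\ref{thm:Pmain}, exactly as the Borel corollary falls out of Theorem~\ref{thm:main}, and your preliminary care is the right care to take. Reduction modulo $J_P$ kills only the basis classes $\xi_{x'}$ with $x'\in W_\af^-\setminus(W^P)_\af$, so the coefficient of $\xi_z$ is untouched; and the subsequent localization and application of $\Psi_P$ preserve coefficient extraction because every $x'\in W_\af^-\cap(W^P)_\af$ factors as $u'\pi_P(t_{\mu'})$ with $u'\in W^P$, $\mu'\in\tQ$, and $\Psi_P$ sends these classes to the pairwise distinct basis elements $q_{\eta_P(\mu')}\sigma_P^{u'}$ (this injectivity is exactly Lemma~\ref{lem:bij}, and is worth saying explicitly since it is what legitimizes comparing coefficients after inverting the $\xi_{\pi_P(t_\nu)}$).

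The one step you wave through — the final index matching — does not come out the way you assert. Carrying your own computation to the end: $\Psi_P(\bar\xi_x\,\bar\xi_y)=q_{\eta_P(\eta+\kappa)}\,\sigma_P^w*\sigma_P^v$ and $\Psi_P(\bar\xi_z)=q_{\eta_P(\mu)}\,\sigma_P^u$, so the coefficient of $\xi_z$ in $\xi_x\,\xi_y$ equals the coefficient of $q_{\eta_P(\mu-\eta-\kappa)}\sigma_P^u$ in $\sigma_P^w*\sigma_P^v$, i.e. $c_{w,v}^{u,\,\eta_P(\mu-\eta-\kappa),P}$ — the exact analogue of the Borel corollary. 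The statement as printed instead asserts $c_{u,v}^{w,\la,P}$ with $\la=\eta_P(\mu-\eta+\kappa)$. Commutativity of $*$ only interchanges the two lower indices; it cannot effect the swap of the upper index $w$ with the lower index $u$, nor change the sign of $\kappa$, and in general $c_{w,v}^{u,\eta_P(\mu-\eta-\kappa),P}$ and $c_{u,v}^{w,\eta_P(\mu-\eta+\kappa),P}$ are different quantities. So your argument establishes the evidently intended statement (indices and signs exactly as in the Borel corollary), not the literal printed one; you should state the identity your computation actually yields and note that the printed indexing must be read accordingly, rather than appealing to commutativity to bridge the difference.
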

Note that in Corollary~\ref{cor:Pcompare}, the element $z$ is
completely determined by $x, y$ and $\la$.


\begin{remark}
It would be interesting to compare Corollary~\ref{cor:Pcompare} with
the work of Buch, Kresch and Tamvakis \cite{BKT} who exhibit the
Gromov-Witten invariants of (classical, orthogonal and Lagrangian)
Grassmannians as classical Schubert structure constants.
\end{remark}

\section{Application to quantum cohomology}
For this section we will work in non-equivariant quantum cohomology
$QH^*(G/P)$ and homology $H_*(\Gr_G)$.
\subsection{Highest root}
We apply known formulae in $H_*(\Gr_G)$ to obtain new formulae in
$QH^*(G/P)$. Let $K=\sum_{i\in I_\af} a_i^\vee\al_i^\vee$ be the
canonical central element for the affine Lie algebra associated to
the Lie algebra of $G$. It satisfies $a_0^\vee=1$ and $\theta^\vee =
\sum_{i \in I} a_i^\vee \alpha_i^\vee$ where $\theta^\vee$ is the
coroot associated with the highest root $\theta$. Let $j_0$ denote
the composition of $j: H_*^T(\Gr_G) \to Z_\af(S)$ with the
evaluation $\phi$ at 0: $\phi(\sum_w a_w A_w) = \sum_w \phi_0(a_w)
A_w$, where $\phi_0: S \to \Z$ evaluates a polynomial at 0.

\begin{prop}[\cite{LS}] \label{prop:LS}
We have
$$
j_0(\xi_{r_0}) = \sum_{i \in I_\af} a_i^\vee A_i.
$$
Thus in $H_*(\Gr_G)$, for $x\in W_\af^-$ we have
$$
\xi_{r_0}\, \xi_{x} = \sum_{\substack{i \in I_\af
\\ r_ix>x \\ \ r_ix \in W_\af^-}} a_i^\vee \xi_{r_i
x}.
$$
\end{prop}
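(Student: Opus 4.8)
The plan is to establish the first displayed identity, $j_0(\xi_{r_0})=\sum_{i\in I_\af}a_i^\vee A_i$, and then obtain the product formula as a formal consequence. Granting the identity, the multiplication statement follows from Theorem~\ref{thm:pet} specialized at $0$: in $H_*(\Gr_G)$ we have $j_0(\xi_{r_0})\cdot\xi_x=\xi_{r_0}\,\xi_x$, so by $S$-linearity $\xi_{r_0}\,\xi_x=\sum_{i\in I_\af}a_i^\vee\,(A_i\cdot\xi_x)$, and \eqref{E:NilHeckeOnHom} says $A_i\cdot\xi_x=\xi_{r_ix}$ precisely when $\ell(r_ix)=\ell(x)+1$ (equivalently $r_ix>x$) and $r_ix\in W_\af^-$, and is $0$ otherwise; this is exactly the claimed sum.

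To prove $j_0(\xi_{r_0})=\sum_{i\in I_\af}a_i^\vee A_i$, write $j(\xi_{r_0})=\sum_y j^y_{r_0}A_y\in Z_{\A_\af}(S)$ as in Theorem~\ref{thm:pet}, where $j^y_{r_0}\in S$ is homogeneous of degree $\ell(y)-1$; in particular $j^{\id}_{r_0}=0$. Applying $\phi$ (evaluation of scalar coefficients at $0$) annihilates every term with $\ell(y)\ge 2$, leaving $j_0(\xi_{r_0})=\sum_{i\in I_\af}c_i\,A_i$ with $c_i:=j^{r_i}_{r_0}\in\Z$ (a degree-$0$ coefficient, hence a constant). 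Since $r_0\in W_\af^-$ and the classes $A_y\bmod J$ for $y\in W_\af^-$ form a free $S$-basis of $\A_\af/J$, the congruence $j(\xi_{r_0})\equiv A_{r_0}\pmod{J}$ forces $j^{r_0}_{r_0}=1$, i.e.\ $c_0=1=a_0^\vee$.

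The remaining constants $c_i$ ($i\in I$) are pinned down by centrality. Comparing the coefficient of $A_{\id}$ on the two sides of $j(\xi_{r_0})\,\lambda=\lambda\,j(\xi_{r_0})$ for $\lambda\in P$: the right side contributes $j^{\id}_{r_0}\lambda=0$, while, by Lemma~\ref{lem:commute}, the left side contributes $\sum_{j\in I_\af}j^{r_j}_{r_0}\ip{\alpha_j^\vee,\lambda}$ (a term $A_{yr_\beta}$ equals $A_{\id}$ only for $y=r_\beta$ with $\ell(r_\beta)=1$, i.e.\ $\beta=\alpha_j$, $j\in I_\af$). Hence $\sum_{j\in I_\af}c_j\ip{\alpha_j^\vee,\lambda}=0$ for all $\lambda\in P$. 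Under the normalization used for $\A_\af$ (translations act trivially on $P$, so $r_0$ acts as $r_\theta$), one has $\ip{\alpha_0^\vee,\lambda}=-\ip{\theta^\vee,\lambda}$ for $\lambda\in P$, since $\bar\alpha_0=-\theta$ in $Q$; combined with $c_0=1$ and the nondegeneracy of the pairing $Q^\vee\times P$ this gives $\sum_{i\in I}c_i\alpha_i^\vee=\theta^\vee=\sum_{i\in I}a_i^\vee\alpha_i^\vee$, so $c_i=a_i^\vee$ by independence of the simple coroots. This yields $j_0(\xi_{r_0})=\sum_{i\in I_\af}a_i^\vee A_i$.

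The delicate point is the bookkeeping around the affine node: one must use the specific level-zero normalization under which $\A_\af$ is defined (so that $r_0\leftrightarrow r_\theta$ and $\ip{\alpha_0^\vee,\lambda}=-\ip{\theta^\vee,\lambda}$ with the correct sign) and reconcile it with the degree and $\bmod J$ constraints that fix the shape of $j_0(\xi_{r_0})$; the rest is routine. Alternatively, one could observe that $\phi_0(j(\xi_x))$ always lies in the affine Fomin--Stanley subalgebra $\phi_0(Z_{\A_\af}(S))$ and identify its degree-$1$ part directly, but the computation above is self-contained given the results already quoted.
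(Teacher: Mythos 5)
Your argument is correct, and it is worth noting that the paper itself offers no proof of this proposition --- it is imported from \cite{LS} --- so your derivation is a self-contained substitute built entirely from facts quoted in this paper (Theorem~\ref{thm:pet}, Lemma~\ref{lem:commute}, and the degree statement for the $j^y_x$), which is very much in the spirit of how \cite{LS} obtains it. The skeleton is sound: $j^y_{r_0}$ is homogeneous of degree $\ell(y)-1$, so $\phi_0$ kills everything except the length-one terms, leaving $j_0(\xi_{r_0})=\sum_{i\in I_\af}c_iA_i$; the congruence $j(\xi_{r_0})\equiv A_{r_0}\bmod J$ pins down $c_0=1$ because $J=\bigoplus_{z\notin W_\af^-}S\,A_z$ (this decomposition deserves the one-line justification you implicitly use: $A_z=A_{z_1}A_{z_2}$ with $z_2\in W\setminus\{\id\}$ for $z\notin W_\af^-$, and conversely every $A_xA_w$ with $w\in W\setminus\{\id\}$ length-additive lands outside $W_\af^-$); and extracting the coefficient of $A_{\id}$ from $j(\xi_{r_0})\lambda=\lambda\,j(\xi_{r_0})$ via Lemma~\ref{lem:commute} gives $\sum_{j\in I_\af}c_j\ip{\alpha_j^\vee,\lambda}=0$ --- note you do not even need $j^{\id}_{r_0}=0$ here, since the terms $j^{\id}_{r_0}\lambda$ and $\lambda j^{\id}_{r_0}$ cancel by commutativity of $S$. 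Your sign bookkeeping at the affine node is the right one for this paper's level-zero convention ($r_0$ acts by $r_\theta$, so $\ip{\alpha_0^\vee,\lambda}=-\ip{\theta^\vee,\lambda}$), and pairing against the fundamental weights gives $c_i=\ip{\theta^\vee,\omega_i}=a_i^\vee$ directly. Two small polish points: (i) you rely on the $j^y_x$ being homogeneous of degree $\ell(y)-\ell(x)$ (not merely of that degree at most), which is what the grading of $H_T(\Gr_G)$ and the degree-preserving nature of $j$ guarantee and what the paper's phrasing intends; (ii) for the product formula it is cleaner to quote Theorem~\ref{thm:j} with $x=r_0$ and then apply $\phi_0$, using $H_*(\Gr_G)\cong\Z\otimes_S H_T(\Gr_G)$, rather than to speak of ``Theorem~\ref{thm:pet} specialized at $0$''; the content is the same, namely $\xi_{r_0}\xi_x=\sum_y\phi_0(j^y_{r_0})\,\xi_{yx}$ over length-additive products with $yx\in W_\af^-$, which is exactly the displayed sum.
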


Suppose $P \neq G$.  By Remark \ref{R:r0}, $r_0 = r_\theta
t_{-\theta^\vee} \in (W^P)_\af$.

\begin{prop}\label{P:high}
Let $w \in W^P$.  We have
$$
\sigma_P^{\pi_P(r_\theta)} * \sigma_P^w =
q_{\eta_P(\theta^\vee-w^{-1}\theta^\vee)}\sigma^{\pi_P(r_\theta
w)}_P + q_{\eta_P(\theta^\vee)}\sum_{\substack{i \in I \\
r_iw<w }} a_i^\vee \,\sigma^{r_i w}_P
$$
where the first term is present if and only if $w \cdot \alpha =
\theta$ for some $\alpha \in R^+ \setminus R_P^+$.
\end{prop}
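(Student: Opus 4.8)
The plan is to transport the computation to $H_*(\Gr_G)$ via the parabolic Peterson isomorphism and use the explicit multiplication formula of Proposition~\ref{prop:LS}. Write $\Psi_P$ for the non-equivariant specialization of the isomorphism of Theorem~\ref{thm:Pmain}. First I would identify $\sigma_P^{\pi_P(r_\theta)}$ as a $\xi$-class: since $P\neq G$, Remark~\ref{R:r0} gives $r_0=r_\theta t_{-\theta^\vee}\in(W^P)_\af$, and $-\theta^\vee\in\tQ$ because $\theta^\vee$ is a dominant coweight, so Lemma~\ref{lem:grass} yields $r_0\in W_\af^-\cap(W^P)_\af$. Writing $r_0=\pi_P(r_0)=\pi_P(r_\theta)\,\pi_P(t_{-\theta^\vee})$ by Proposition~\ref{P:pip}(4) and applying $\Psi_P$ gives $\Psi_P(\xi_{r_0})=q_{\eta_P(-\theta^\vee)}\,\sigma_P^{\pi_P(r_\theta)}$, that is, $\sigma_P^{\pi_P(r_\theta)}=q_{\eta_P(\theta^\vee)}\,\Psi_P(\xi_{r_0})$.

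Next I would fix $\lambda\in\tQ$ with $\inner{\lambda}{\alpha_j}=0$ for all $j\in I_P$ and $\inner{\lambda}{\alpha_i}$ sufficiently negative for $i\in I\setminus I_P$ (such $\lambda$ exists by a routine rank count), so that $\pi_P(t_\lambda)=t_\lambda$ by Lemma~\ref{L:WPformula}. For $w\in W^P$ the element $wt_\lambda$ is $\lambda$-minimal, hence $wt_\lambda\in W_\af^-\cap(W^P)_\af$ and $\Psi_P(\xi_{wt_\lambda}\xi_{t_\lambda}^{-1})=\sigma_P^w$. Thus $\sigma_P^{\pi_P(r_\theta)}*\sigma_P^w=q_{\eta_P(\theta^\vee)}\,\Psi_P\bigl(\xi_{r_0}\cdot\xi_{wt_\lambda}\cdot\xi_{t_\lambda}^{-1}\bigr)$, and Proposition~\ref{prop:LS} expands $\xi_{r_0}\cdot\xi_{wt_\lambda}=\sum a_i^\vee\,\xi_{r_i(wt_\lambda)}$, summed over $i\in I_\af$ with $r_i(wt_\lambda)>wt_\lambda$ and $r_i(wt_\lambda)\in W_\af^-$. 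I would then reduce modulo $J_P$, keeping only the terms with $r_i(wt_\lambda)\in(W^P)_\af$, and apply $\Psi_P$ term by term.

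For $i\in I$ this is routine: $r_i(wt_\lambda)=(r_iw)t_\lambda$ contributes exactly when $r_iw<w$, and then $w^{-1}\alpha_i<0$ forces $w\gamma\neq\alpha_i$ for all $\gamma\in R_P^+$, so $r_iw\in W^P$ and $\Psi_P$ sends the term to $a_i^\vee\,\sigma_P^{r_iw}$; collecting these with the prefactor $q_{\eta_P(\theta^\vee)}$ produces the second summand. The crux is the $i=0$ term. Using $r_0=r_\theta t_{-\theta^\vee}$ and the action \eqref{E:WaffQaff} one has $r_0\cdot wt_\lambda=r_\theta w\,t_{\lambda-w^{-1}\theta^\vee}$; computing $(wt_\lambda)^{-1}\cdot\alpha_0=-w^{-1}\theta+(1-\inner{\lambda}{w^{-1}\theta})\delta$, and using the choice of $\lambda$ together with the fact that $w\in W^P$ rules out $w^{-1}\theta\in R_P^-$, one sees that $r_0\cdot wt_\lambda>wt_\lambda$ iff $w^{-1}\theta>0$; and in that case $\lambda-w^{-1}\theta^\vee\in\tQ$ (here dominance of $\theta^\vee$ is used for the $I_P$-components), so $r_0\cdot wt_\lambda\in W_\af^-$.

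The one genuinely delicate point — the main obstacle — is to decide when $r_0\cdot wt_\lambda\in(W^P)_\af$. Evaluating on $\beta+n\delta\in(R_P)_\af^+$ (so $\bar\beta\in R_P$, hence $\inner{\lambda}{\bar\beta}=0$) gives $r_0\cdot wt_\lambda\cdot(\beta+n\delta)=r_\theta w\bar\beta+\bigl(n+\inner{\theta^\vee}{w\bar\beta}\bigr)\delta$. I would resolve this using the highest-root facts $0\le\inner{\theta^\vee}{\gamma}\le1$ for $\gamma\in R^+\setminus\{\theta\}$ and $\theta-\gamma\ge0$ for $\gamma\in R^+$, together with the fact that $w\in W^P$ preserves the signs of roots in $R_P$: if $w^{-1}\theta\in R^+\setminus R_P^+$ then $\pm w^{-1}\theta\notin R_P$, the $\delta$-coefficient is $\ge0$, and the boundary cases are positive because $r_\theta w\bar\beta$ equals $w\bar\beta$ or $w\bar\beta+\theta=\theta-(-w\bar\beta)>0$, so $r_0\cdot wt_\lambda\in(W^P)_\af$; while if $w^{-1}\theta\in R_P^+$, then taking $\bar\beta=-w^{-1}\theta$ and $n=1$ gives $\delta$-coefficient $1-2=-1<0$, so $r_0\cdot wt_\lambda\notin(W^P)_\af$ and the term dies modulo $J_P$. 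Hence the $i=0$ term survives precisely when $w^{-1}\theta\in R^+\setminus R_P^+$, equivalently $w\cdot\alpha=\theta$ for some $\alpha\in R^+\setminus R_P^+$; and then (since $a_0^\vee=1$) it equals $\xi_{r_\theta w\,t_{\lambda-w^{-1}\theta^\vee}}$, which lies in $(W^P)_\af$, so Proposition~\ref{P:pip}(4) writes it as $\pi_P(r_\theta w)\,\pi_P(t_{\lambda-w^{-1}\theta^\vee})$ and $\Psi_P$ sends $\xi_{r_\theta w\,t_{\lambda-w^{-1}\theta^\vee}}\xi_{t_\lambda}^{-1}$ to $q_{\eta_P(-w^{-1}\theta^\vee)}\,\sigma_P^{\pi_P(r_\theta w)}$. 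Multiplying by the overall $q_{\eta_P(\theta^\vee)}$ gives the first summand, completing the proof.
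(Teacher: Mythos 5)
Your proposal is correct and follows essentially the same route as the paper's own proof: it transports the product to homology via Theorem~\ref{thm:Pmain}, identifies $\sigma_P^{\pi_P(r_\theta)}$ with $q_{\eta_P(\theta^\vee)}$ times the class of $\xi_{r_0}=\pi_P(r_\theta)\pi_P(t_{-\theta^\vee})$, expands $\xi_{r_0}\,\xi_{wt_\lambda}$ by Proposition~\ref{prop:LS} with the same choice of $\lambda$, and sorts the $i\in I$ and $i=0$ terms modulo $J_P$ exactly as the paper does, including the factorization via Proposition~\ref{P:pip}. The only step stated too quickly is the inference that $\lambda-w^{-1}\theta^\vee\in\tQ$ already gives $r_0\,wt_\lambda\in W_\af^-$: Lemma~\ref{lem:grass} also requires $r_\theta w$ to be $(\lambda-w^{-1}\theta^\vee)$-minimal, which is immediate (if $\inner{\lambda-w^{-1}\theta^\vee}{\al_i}=0$ then $i\in I_P$ and $\inner{\theta^\vee}{w\al_i}=0$, so $r_\theta w\al_i=w\al_i>0$), or one can simply note, as the paper does, that $r_0\,wt_\lambda\in W_\af^-$ automatically because $wt_\lambda\in W_\af^-$.
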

\begin{proof}
Let $x = w t_\la \in W_\af^- \cap (W^P)_\af$ where we assume as in
the proof of Theorem~\ref{thm:Pmain} that $\pi_P(t_\lambda) =
t_\lambda$.  By Lemma \ref{L:WPaffirr}, we have
$\ip{\lambda,\alpha_i} = 0$ for $i \in I_P$.  Using Lemma
\ref{L:WPformula}, we may assume in addition that
$\ip{\lambda,\alpha_i} \neq 0$ for $i \in I\setminus I_P$.  Thus by
Lemma~\ref{lem:grass}, we have $\ell(r_ix)= \ell(x) + 1$ and $r_ix
\in W_\af^-$ if and only if $\ell(r_iw) = \ell(w) - 1$ (which
automatically implies that $r_iw \in W^P$).

Now let us consider $r_0x = r_0wt_\lambda$.  By our assumptions,
$t_\lambda \cdot \alpha = \alpha$ for $\alpha \in R_P^+$, and since
the only inversion of $r_0$ is $\al_0=\delta - \theta$, we deduce
that $r_0x \in (W^P)_\af$ if and only if $w\alpha \neq \theta$ for
$\alpha \in R_P^+$.  If $r_0x \in (W^P)_\af$ then $r_0x = r_\theta
t_{-\theta^\vee}w t_\lambda = (r_\theta
w)t_{-w^{-1}\theta^\vee+\lambda} = \pi_P(r_\theta w)
\pi_P(t_{-w^{-1}\theta^\vee})\pi_P(t_\lambda)$ by
Proposition~\ref{P:pip}.

Also note that in the above situation,
\begin{align*}
\ell(r_0x) = \ell(x) + 1 &\Leftrightarrow r_0x > x \\
&\Leftrightarrow x \cdot (n\delta - \alpha) = \delta - \theta
&\text{for some $n\delta - \alpha \in R^+_\af$} \\ &\Leftrightarrow
w \cdot \alpha = \theta &\text{for some $\alpha \in R^+ \setminus
R_P^+$.}
\end{align*}
Finally, we observe that in the above situation we automatically
have $r_0x \in W_\af^-$ since $x \in W_\af^-$.

Using Proposition~\ref{prop:LS}, Theorem~\ref{thm:Pmain} and these
observations we obtain in $QH^*(G/P)$
\begin{align*}
q_{\eta_P(-\theta^\vee)}\sigma_P^{\pi_P(r_\theta)} *
q_{\eta_P(\lambda)} \sigma_P^w &= \sum_{\substack{i \in I
\\ r_i w < w }} a_i^\vee q_{\eta_P(\lambda)}\sigma^{r_i w}_P  \\
&+ a_0^\vee
q_{\eta_P(\lambda-w^{-1}\theta^\vee)}\sigma^{\pi_P(r_\theta w)}_P
\end{align*}
where the last term is present if and only if $w \cdot \alpha =
\theta$ for some $\alpha \in R^+ \setminus R_P^+$.  Dividing both
sides by $q_{\eta_P(\lambda-\theta^\vee)}$ and using $a_0^\vee = 1$,
we obtain the required statement.
\end{proof}

In the case that $P$ is a maximal parabolic corresponding to a
cominuscule node (as in the following section),
Proposition~\ref{P:high} looks similar to a formula shown to us by
Nicolas Perrin (see \cite{CMP}).

\subsection{Cominuscule case} In this section we assume that $P$ is
a maximal parabolic such that $I\setminus I_P = \{j\}$ where $j$ is
a cominuscule Dynkin node.

The map $W\to W$ given by $w\mapsto w^*=w_0 w w_0$, is an involutive
isomorphism that sends simple reflections to simple reflections:
$r_i\mapsto (r_i)^* = r_{i^*}$ for some $i^*\in I$. The map
$i\mapsto i^*$ is an automorphism of the finite Dynkin diagram.
There is an associated automorphism of $Q$ given by $\alpha\mapsto
\alpha^* := -w_0\alpha$ which satisfies $(\al_i)^* = \al_{i^*}$ for
$i\in I$. For $w\in W$ and $\al\in Q$ we have $(w\al)^*=w^* \al^*$.
There is a similar involution on $P^\vee$ that stabilizes $Q^\vee$,
thereby defining an involutive automorphism of
$\Sigma=P^\vee/Q^\vee$. Since $-w_0 \om_i^\vee = \om_{i^*}^\vee$ and
$\om_i^\vee \equiv w_0 \om_i^\vee \mod Q^\vee$, the induced
automorphism of $P^\vee/Q^\vee$ is given by negation:
$\om_i^\vee+Q^\vee\mapsto -\om_i^\vee+Q^\vee$.

The finite Dynkin automorphism $I\to I$ given by $i\mapsto i^*$, may
be extended to an automorphism of the affine Dynkin diagram by
letting $0^*=0$. This induces an automorphism of $W_\af$ again
denoted $w\mapsto w^*$.



\begin{prop} \label{P:section} Define
$\vartheta: W^P\to W_\af$ by $\vartheta(y)=\tau_j(y)^*$. Then for
every $y\in W^P$, $\vartheta(y)\in (W^P)_\af\cap W_\af^-$, and
$\{\xi_{\vartheta(y)}\mid y\in W^P\}$ is a
$S[\xi_{\pi_P(t_\la)}^{\pm}\mid \la\in \tQ]$-basis of
$(H_T(\Gr_G)/J_P)[\xi_{\pi_P(t_\la)}^{-1}\mid \la\in \tQ]$. Moreover
if $\vartheta(y)=wt_\la$ then $\pi_P(w)=\pi_P(w_0^P y)$.
\end{prop}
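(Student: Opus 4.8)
The statement to prove is \textbf{Proposition~\ref{P:section}}: that $\vartheta(y) := \tau_j(y)^* \in (W^P)_\af \cap W_\af^-$ for all $y \in W^P$, that the classes $\xi_{\vartheta(y)}$ form a basis of the localized quotient, and that writing $\vartheta(y) = w t_\la$ one has $\pi_P(w) = \pi_P(w_0^P y)$. My approach is to analyze the two elementary operations $z \mapsto \tau_j z$ and $z \mapsto z^*$ separately on the level of inversion sets in $R_\af^\re$, since everything here is really a statement about which affine roots are sent negative.

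\textbf{Step 1: unwind $\tau_j$.} Since $j$ is cominuscule, $\tau_j = v_j t_{-\om_j^\vee}$ by \eqref{E:special}, and $\tau_j$ is a length-zero automorphism of the affine Dynkin diagram sending $j \mapsto 0$. For $y \in W^P \subset W \subset W_\af$, compute $\Inv(\tau_j y)$. Because $\tau_j$ permutes $R_\af^+$, we have $\beta \in \Inv(\tau_j y) \iff \tau_j^{-1}\beta \in \Inv(y) \cup \{\text{something involving } \delta\}$; more precisely one checks $\Inv(\tau_j y) = \tau_j(\Inv(y)) \triangle$ (the $\tau_j$-shift of the zero node data). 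The key point is that $y \in W^P$ means $\Inv(y) \subset R^+$ contains no root in $R_P^+$, equivalently $\ip{\om_j^\vee,\al} \in \{0,1\}$ governs how $t_{-\om_j^\vee}$ acts. I would show directly that $\tau_j y$ is a \emph{Grassmannian} element (lies in $W_\af^-$): since $\tau_j$ is a diagram automorphism, $\tau_j y \cdot \al_i > 0$ for $i \in I$ iff $y \cdot \al_{\tau_j^{-1}(i)} > 0$, and one traces through the cominuscule combinatorics — this is essentially the statement that left multiplication by $\tau_j$ sends $W^P$ (as minimal coset reps for $W/W_P$) bijectively onto a set of Grassmannian affine elements, a standard fact about cominuscule $G/P$.

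\textbf{Step 2: unwind $*$ and combine.} The automorphism $w \mapsto w^*$ of $W_\af$ fixes $W_\af^-$ setwise? No — rather, $*$ is induced by the diagram automorphism $0^* = 0$, $i \mapsto i^*$, so it permutes $I_\af$ and hence sends Grassmannian elements (minimal in $W_\af/W$) to Grassmannian elements, since $W$ is the standard parabolic on $I$ and $i^* \in I$ for $i \in I$. Thus $\vartheta(y) = \tau_j(y)^* \in W_\af^-$. To get $\vartheta(y) \in (W^P)_\af$, use Lemma~\ref{L:WPaffchar}: writing $\vartheta(y) = w t_\la$, I must check $\ip{\la,\al} \in \{0,-1\}$ for $\al \in R_P^+$ with the sign dictated by $\Inv(w) \cap R_P^+$. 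The translation part: $\tau_j y$ has translation part $y^{-1}(-\om_j^\vee)$-ish (from $\tau_j = v_j t_{-\om_j^\vee}$, so $\tau_j y = v_j y \, t_{-y^{-1}\om_j^\vee}$ after moving the translation right, i.e. $t_{-\om_j^\vee} y = y t_{-y^{-1}\om_j^\vee}$), and then $*$ applies $-w_0$ to weights. So $\la = -(y^{-1}\om_j^\vee)^* = -w_0 y^{-1} \om_j^\vee = \ldots$; I would match $\ip{\la,\al}$ for $\al \in R_P^+$ against $0/{-1}$ using that $\om_j^\vee$ is a cominuscule coweight (pairs to $0$ or $1$ with every positive root) and that $y \in W^P$ controls which. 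For the basis statement: $\pi_P(t_\nu)$ for $\nu \in \tQ$ spans the torsion-part of $\We/W_\af$ after localization, and the $|W^P|$ elements $\vartheta(y)$ give exactly one representative of each coset in $((W^P)_\af \cap W_\af^-)$ modulo the $\pi_P(t_\la)$-action — this follows from Step 1's bijection $W^P \to$ (certain Grassmannians) combined with Lemma~\ref{lem:bij}. The final identity $\pi_P(w) = \pi_P(w_0^P y)$: extract the $W$-part $w$ of $\vartheta(y) = w t_\la$, namely $w = (v_j y)^* = v_j^* y^* = v_{j^*} y^*$, and compute $\pi_P$ of it; since $v_j = w_P w_{P'}$-type element and $*$ conjugates by $w_0$, reduce to $\pi_P((v_j y)^*) = \pi_P(w_0^P y)$ by checking both lie in the same $W_P$-coset and both are minimal, using $w_0^P = \pi_P(w_0)$ and the identity $v_j^* = w_0 v_j w_0 = w_0 w_0^P$ (valid for cominuscule $j$, where $v_j = w_0 w_0^P$).

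\textbf{Main obstacle.} The hard part is Step 2's verification that $\vartheta(y) \in (W^P)_\af$ — precisely, nailing the sign in Lemma~\ref{L:WPaffchar}, i.e.\ showing $\ip{\la,\al} = -1$ exactly when $w\al < 0$ for $\al \in R_P^+$. This requires carefully tracking how the two involutions interact with the cominuscule structure: the pairing $\ip{\om_j^\vee,\al}$ for $\al \in R_P^+$ is always $0$ (since $j \notin I_P$ and $j$ is cominuscule, so $\al_j$ does not occur in any $\al \in R_P^+$)? That would force $\ip{\la,\al}$ to be computed purely from the $w_0$-twist and the $W$-part, so the genuine content is the interplay $w_0 y^{-1} w_0$ versus $\pi_P$. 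I expect that once the correct normalization $v_j = w_0 w_0^P$ is in hand (this is where cominuscularity is essential), the rest is bookkeeping; but getting that normalization and the inversion-set computation aligned is the delicate point, and I would lean on Lemmata~\ref{L:WPaffchar}, \ref{L:WPaffirr}, \ref{L:WPafftrans} and Proposition~\ref{P:pip} throughout.
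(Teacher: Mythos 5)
There is a genuine gap, and it is at the very start: you have misread what $\vartheta$ is. In the paper, $\tau_j(y)$ denotes the image of $y$ under the length-zero \emph{diagram automorphism} $\tau_j$ of $W_\af$ (equivalently, conjugation $\tau_j y\tau_j^{-1}$ inside $\We$, sending $r_i\mapsto r_{\tau_j(i)}$), not the product $\tau_j\cdot y$ in $\We$. Your Steps 1--2 compute throughout with the product: you form $\Inv(\tau_j y)$, move the translation across to get $\tau_j y=v_jy\,t_{-y^{-1}\om_j^\vee}$, and read off translation part $-w_0y^{-1}\om_j^\vee$ and $W$-part $(v_jy)^*$. But the product $\tau_jy$ has translation part in $P^\vee\setminus Q^\vee$, so your ``$\vartheta(y)$'' is not even an element of $W_\af$ (already for $G=SL_2$, $y=r_1$: $\tau_1r_1=t_{\om_1^\vee}$, whereas the correct value is $\vartheta(r_1)=r_0$), and the assertions $\vartheta(y)\in(W^P)_\af\cap W_\af^-$ and the use of Lemma~\ref{L:WPaffchar} cannot be run on it. The error propagates to the last claim: with $v_j=w_0w_0^P$ your $W$-part $(v_jy)^*$ equals $w_0^Pyw_0$, whereas the correct $W$-part is $w_0^Pyw_0^P$ (the paper gets it from $w_0xw_0=x^*t_\mu$ together with $v_j^{-1}\tau_j=t_{-\om_j^\vee}$), and $\pi_P(w_0^Pyw_0)\ne\pi_P(w_0^Py)$ in general since $w_0\notin W_P$; so the identity $\pi_P(w)=\pi_P(w_0^Py)$ would not come out of your computation. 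The auxiliary identity you invoke, $v_j^*=w_0v_jw_0=w_0w_0^P$, is also false as stated: it equals $w_0^Pw_0$.

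For comparison, the paper's argument never decomposes $\vartheta(y)$ root-by-root via Lemma~\ref{L:WPaffchar}. It observes that $i\mapsto\tau_j(i)^*$ is an involutive automorphism of the affine Dynkin diagram exchanging $0$ and $j$ and stabilizing $I_\af\setminus\{0,j\}$, hence the induced map on roots preserves $R_\af^+$ and $R_P^+$. Then $\vartheta(y)\cdot\al_i>0$ for $i\in I_\af\setminus\{0,j\}$ follows from $y\in W^P$, and $\vartheta(y)\cdot\al_j>0$ because $\vartheta(y)$ lies in the subgroup generated by $\{r_i\mid i\in I_\af\setminus\{j\}\}$, giving $\vartheta(y)\in W_\af^-$; membership in $(W^P)_\af$ uses cominuscularity of $j$ (the coefficient of $\al_j$, hence of $\al_0$ after the automorphism, is at most one), so that both $\vartheta(y)\cdot\al$ and $\vartheta(y)\cdot(\delta-\al)$ remain positive for $\al\in R_P^+$. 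Finally, the basis statement is not proved by counting coset representatives as you suggest: the paper first proves $\pi_P(w)=\pi_P(w_0^Py)$, notes that $y\mapsto\pi_P(w_0^Py)$ is an involution of $W^P$, and then transports the basis $\{\sigma_P^y\}$ of $QH^T(G/P)[q^{-1}]$ through the isomorphism $\Psi_P$ of Theorem~\ref{thm:Pmain}. Your outline of that step also presupposes the decomposition you mis-derived, so it too needs the corrected reading of $\vartheta$ before it can be carried out.
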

\begin{proof} Note that $i\mapsto \tau_j(i)^*=\tau_{j^*}(i^*)$ is
an involutive affine Dynkin automorphism that stabilizes
$I_\af\setminus\{0,j\}$ and exchanges $0$ and $j$. It follows that
$\al\mapsto \tau_j(\al)^*=\tau_{j^*}(\al^*)$ stabilizes $R_P^+$.
This map also permutes the affine simple roots and hence stabilizes
$R_\af^+$.

Let $y\in W^P$. Then $y\cdot \al_i >0$ for all $i\in
I_\af\setminus\{0,j\}$. Consequently $\vartheta(y)\cdot \al_i >0$
for all $i\in I_\af\setminus\{0,j\}$. Since $y\in W$, $\vartheta(y)$
is in the subgroup of $W_\af$ generated by $r_i$ for $i\in
I_\af\setminus \{j\}$, so that $\vartheta(y)\cdot \al_j>0$.
Therefore $\vartheta(y)\in W_\af^-$.

For all $\al\in R_P^+$ we have
\begin{align*}
  \vartheta(y)\cdot \al &= (\tau_j(y) \al^*)^* =(\tau_j(y
  \tau_{j^*}(\al^*)))^*.
\end{align*}
We have $\tau_{j^*}(\al^*)\in R_P^+$, so that
$\beta=y\cdot\tau_{j^*}(\al^*)\in R^+$. Since $j\in I$ is
cominuscule, $\al_j$ has multiplicity at most one in $\beta$.
Therefore $\tau_j(\beta)^*\in R_\af^+$, in which $\al_0$ occurs with
multiplicity at most one. It follows that $\tau_j(\beta)^*$ has the
form $\gamma$ or $\delta-\gamma$ for some $\gamma\in R^+$. Therefore
$\vartheta(y)\cdot\al\in R_\af^+$ and
$\vartheta(y)\cdot(\delta-\al)\in R_\af^+$, proving that
$\vartheta(y)\in (W^P)_\af$.

We have $w_0 r_0 w_0 = w_0 r_\theta t_{-\theta^\vee} w_0 = r_\theta
t_{\theta^\vee} = r_0 t_{2\theta^\vee}$. Therefore for every $x\in
W_\af$, there is a $\mu\in Q^\vee$ such that $w_0 x w_0 = x^*
t_\mu$. Using \eqref{E:special} and $w_0^P = (w_0^P)^{-1}$ we have
$$
w_0 \tau_j(y) w_0 = w_0 \tau_j y \tau_j^{-1} w_0 =
w_0^Pv^{-1}_j\tau_jy \tau_j^{-1}v_jw_0^P = w_0^P t_{-\omega_j^\vee}
y t_{\omega_j^\vee} w_0^P = w_0^P y w_0^P t_\mu
$$
for some $\mu \in Q^\vee$.  Thus $\vartheta(y)=  w_0^P y
w_0^Pt_\lambda$ for some $\lambda \in Q^\vee$.  But clearly we have
$\pi_P(w_0^P y w_0^P) = \pi_P(w_0^P y)$, giving us the last
statement of the proposition.

The map $y \mapsto w_0^P y$ induces an involution on $W^P$. Since
$\sigma_P^y$ is a $S[q,q^{-1}]$-basis of $QH^T(G/P)[q^{-1}]$ we
conclude by Theorem~\ref{thm:Pmain} that $\xi_{\vartheta(y)}$ is a
$S[\xi_{\pi_P(t_\la)}^{\pm 1}\mid \lambda \in \tQ]$-basis of
$(H_T(\Gr_G)/J_P)[\xi_{\pi_P(t_\la)}^{-1}\mid \la\in \tQ]$.
\end{proof}

\begin{remark}
Since it is defined using automorphisms of the affine Dynkin
diagram, the map $\vartheta$ induces an isomorphism of the Bruhat
order on $W^P$ with that on its image.
\end{remark}

\begin{ex} \label{X:strange}
Let $G=SL(7)$, $j=4$, and $y=r_4 r_5 r_2 r_3 r_4\in W^P$, which in
one-line notation (that is, the list $y(1),y(2),\dotsc,y(7)$,
viewing $y$ as a permutation of $\{1,2,\dotsc,7\}$) is $y=(1356\mid
247)$ and therefore corresponds to the partition
$(6,5,3,1)-(4,3,2,1)=(2,2,1,0)$ inside the $4\times 3$ rectangle.
The above reduced decomposition of $y$ is obtained by the columnwise
reading of simple reflections in the following picture of the French
diagram of the $(2,2,1,0)$, where the cell $(x_1,x_2)$ contains the
value $j+x_1-x_2$ where the lower left cell is indexed $(1,1)$ and
the cells are indexed by integer lattice points in the first
quadrant of the Cartesian plane. In general if $\mu$ is the
partition denote the corresponding element of $W^P$ by $w_\mu$.
\begin{align*}
\tableau{2 \\ 3&4 \\ 4& 5}	\qquad w_{(2,2,1,0)} = r_4 r_5 r_2 r_3 r_4.
\end{align*}
We have $\tau_j(y)=r_0r_1r_5r_6r_0$ and
$\vartheta(y)=r_0r_6r_2r_1r_0=wt_\la$ where
$\la=-\om_2^\vee-\om_5^\vee$ and $w=r_\theta r_6r_2r_1r_\theta$,
which in one-line notation is $w=(6724\mid 513)$. Then
$\pi_P(t_\la)=r_2r_3r_1r_2 r_6r_5 t_\la$ and $\pi_P(w)=(2467|135)$
which corresponds to the partition $(7,6,4,2)-(4,3,2,1)=(3,3,2,1)$.
\end{ex}

\subsection{Strange duality}
In~\cite{CMP}, Chaput, Manivel and Perrin study a {\it strange
duality} involution on $QH^*(G/P)[q,q^{-1}]$. The final statement of
Proposition~\ref{P:section} suggests a relationship between strange
duality and Theorem~\ref{thm:Pmain}.

\begin{thm}[{\cite[Theorem 4.1]{CMP}}] \label{thm:CMP}
Let $P \subset G$ be a cominuscule parabolic subgroup with
$I_P=I\setminus\{j\}$ and for $w\in W^P$ let $\delta(w)$ be the
number of times $r_j$ appears in some (and thus any) reduced
decomposition of $w$. Then there exists a function $\zeta:W \to \R$
such that
$$
q \mapsto q^{-1} \ \ \ \ \ \sigma_P^{w} \mapsto
\zeta(w)q^{-\delta(w)} \sigma_P^{\pi_P(w_0^P w)}
$$
defines an involutive ring automorphism of $QH^*(G/P)[q^{-1}]
\otimes_\Z \R$.
\end{thm}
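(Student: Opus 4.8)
The plan is to prove Theorem~\ref{thm:CMP} by transporting it, through the localized parabolic Peterson isomorphism $\Psi_P$ of Theorem~\ref{thm:Pmain}, into a statement inside $H_*(\Gr_G)$, where strange duality should become visible as a symmetry coming from the affine Dynkin diagram. Throughout one works non-equivariantly, i.e.\ after specializing $S\to\Z$ at $0$; this is essential. First I would record that the involutive affine Dynkin diagram automorphism $\sigma\colon i\mapsto\tau_j(i)^*$ used in the proof of Proposition~\ref{P:section} --- which exchanges the affine node $0$ with the cominuscule node $j$ and fixes $I_\af\setminus\{0,j\}$ setwise --- preserves the translation lattice $Q^\vee\subset W_\af$ (conjugation by $\tau_j$ acts on it through $v_j$, and $(\cdot)^*$ through $-w_0$), hence induces a length-preserving automorphism of $W_\af$ covering an automorphism of the finite quotient $W=W_\af/Q^\vee$ through which the level-zero action of $W_\af$ on $P$ factors. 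Consequently $\sigma$ lifts to a ring automorphism $\widehat\sigma$ of the affine nilHecke ring $\A_\af$, given by $A_i\mapsto A_{\sigma(i)}$ and by the induced action on the scalars $P$; the defining relations, in particular the commutation relation of Lemma~\ref{lem:commute}, are manifestly preserved. Since $\widehat\sigma$ carries the subring $S$ to itself it restricts to a ring automorphism of the Peterson subalgebra $Z_{\A_\af}(S)$, hence, via Theorem~\ref{thm:pet}, of $H_*(\Gr_G)$ itself --- though \emph{not} one fixing the Schubert basis, since it carries the left ideal $J=\sum_{w\in W\setminus\{\id\}}\A_\af A_w$ to the analogous ideal $J'$ built from $W'=\langle r_i\mid i\in I_\af\setminus\{j\}\rangle$.

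The mechanism is then as follows. Because $\widehat\sigma$ is a ring automorphism which, non-equivariantly, fixes the structure constants $j_x^y$, the ``twisted Schubert classes'' $\widehat\sigma(\xi_x)$, $x\in W_\af^-$, form a basis of $H_*(\Gr_G)$ with exactly the same Pontryagin products as the $\xi_x$. The key claim to establish is that, \emph{modulo} $J_P$ and after inverting the elements $\xi_{\pi_P(t_\la)}$ ($\la\in\tQ$), these twisted classes become unit multiples of the ordinary Schubert classes $\xi_{\vartheta(y)}$ of Proposition~\ref{P:section}: for $y\in W^P$ one should get $\widehat\sigma(\xi_x)\equiv(\text{unit})\cdot\xi_{\vartheta(y)}\pmod{J_P}$ once $x$ is matched to $y$ along the factorizations $x=v\,\pi_P(t_\nu)$ ($v\in W^P$, $\nu\in\tQ$). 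Here the hypotheses enter decisively: $j$ is cominuscule and $\sigma$ stabilizes $(R_P)_\af^+$ (already used in the proof of Proposition~\ref{P:section}), which is what makes $\widehat\sigma$ compatible with the ideal $J_P$; and the discrepancy between being a minimal coset representative for $W_\af/W$ versus $W_\af/W'$ is precisely what gets erased by the quotient by $J_P$, together with the parabolic factorization $\pi_P(zt_\la)=\pi_P(z)\pi_P(t_\la)$ of Proposition~\ref{P:pip} and the identity $\vartheta(y)=w_0^P y\,w_0^P t_\la$ from the proof of Proposition~\ref{P:section}. Granting the claim, $\widehat\sigma$ descends to a ring automorphism of $(H_*(\Gr_G)/J_P)[\xi_{\pi_P(t_\la)}^{-1}]$ carrying $\xi_{v\pi_P(t_\nu)}$, up to a localizing unit, to $\xi_{\vartheta(v)}$. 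Conjugating by $\Psi_P$ and invoking the last sentence of Proposition~\ref{P:section} --- namely $\pi_P(w)=\pi_P(w_0^P y)$ when $\vartheta(y)=wt_\la$ --- turns this into a ring automorphism of $QH^*(G/P)[q^{-1}]$ of the stated form $q\mapsto q^{-1}$, $\sigma_P^w\mapsto\zeta(w)\,q^{-\delta(w)}\,\sigma_P^{\pi_P(w_0^P w)}$: the exponent of $q$ equals $\eta_P$ of the translation part of $\vartheta(w)$, which a direct count (as in Example~\ref{X:strange}) identifies with $-\delta(w)$, and $\zeta(w)$ is the resulting unit; involutivity is inherited from $\sigma^2=\id$.

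I expect the main obstacle to be exactly the key claim of the second paragraph: controlling how $\widehat\sigma$, which genuinely swaps the affine node with the cominuscule one, interacts with the passage to $H_*(\Gr_G)/J_P$ and with the localization --- i.e.\ showing that a twisted Schubert class really collapses modulo $J_P$ to a single localizing-unit multiple of $\xi_{\vartheta(y)}$, rather than to a genuine combination. A more pedestrian alternative would bypass $\widehat\sigma$ altogether: define the candidate map directly on the $\Psi_P$-side by $\sigma_P^w\mapsto\Psi_P(\xi_{\vartheta(w)})$ up to normalization, and verify the ring-homomorphism property against the quantum equivariant parabolic Chevalley formula (Theorem~\ref{thm:Pmonk}), reusing the homology Chevalley computations from the proofs of Theorems~\ref{thm:main} and~\ref{thm:Pmain}; but this still reduces to showing that $\{\xi_{\vartheta(y)}\}$ multiplies ``Schubert-like'', which is the same obstacle in disguise. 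In any event the result itself may simply be quoted from \cite{CMP}; the point of the above is that the affine Grassmannian model explains its origin.
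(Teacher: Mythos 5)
First, a point of comparison: the paper does not prove Theorem~\ref{thm:CMP} at all --- it is imported verbatim from \cite[Theorem 4.1]{CMP} and used as a black box (the surrounding text only \emph{suggests}, via the last statement of Proposition~\ref{P:section}, that strange duality should be visible in the affine model). So your proposal is not an alternative to an argument in the paper; it is an attempt to supply a proof the paper deliberately omits. As such it must be judged on its own completeness, and it is not complete. The entire content of the theorem is concentrated in what you call the ``key claim'': that the twisted classes $\widehat\sigma(\xi_x)$, for $x=v\,\pi_P(t_\nu)$ with $v\in W^P$, collapse modulo $J_P$ and after inverting the $\xi_{\pi_P(t_\la)}$ to a single localizing-unit multiple of $\xi_{\vartheta(y)}$. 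You state this, flag it as the main obstacle, and do not prove it; the ``pedestrian alternative'' you offer (checking that $\{\xi_{\vartheta(y)}\}$ multiplies Schubert-like against Theorem~\ref{thm:Pmonk}) is, as you yourself note, the same unproved assertion in different clothing. Since $\widehat\sigma$ does not preserve $W_\af^-$ (it replaces the finite parabolic $\langle r_i\mid i\in I\rangle$ by $\langle r_i\mid i\in I_\af\setminus\{j\}\rangle$), $\widehat\sigma(\xi_x)$ is a priori a genuine $S$-combination of Schubert classes, and nothing in Propositions~\ref{P:pip} or~\ref{P:section} forces the asserted collapse; in particular your intermediate assertion that $\widehat\sigma$ ``non-equivariantly fixes the structure constants $j_x^y$'' is not meaningful as stated, precisely because the image basis is not the Schubert basis.

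There is also a quantitative warning sign that the naive version of the key claim cannot be the whole story. Your $\widehat\sigma$ is defined over $\Z$ (it permutes the $A_i$ and acts linearly on the scalars), so if the collapse held with the unit being a power of $q$ times an integer, the induced map on $QH^*(G/P)[q^{-1}]$ would have rational diagonal coefficients; but the paper records that the $\zeta(w)$ of \cite{CMP} are in general irrational algebraic numbers even in the cominuscule setting, so the automorphism you would construct could at best agree with strange duality up to a nontrivial diagonal rescaling, and identifying the two (or proving that some rescaled version of your map is the CMP involution, including the identification of the $q$-exponent with $-\delta(w)$, which you also only assert ``by a direct count'') is an additional step your sketch does not address. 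In short: the strategy is a reasonable reading of the hint in the paper, but as written it assumes the theorem's substance rather than proving it; if you want the statement for use in the paper, the correct move is the paper's own, namely to cite \cite[Theorem 4.1]{CMP}.
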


In general the values $\zeta(w)$ can be irrational algebraic
numbers, but for $G = SL(n)$, $\zeta(w) = 1$ for all $w \in W$.

One may check that Example \ref{X:strange} agrees with the explicit
description in \cite{CMP} of strange duality on the Grassmannian in
terms of partitions and their Durfee square.

\subsection{The homomorphism of Lapointe and Morse}
Suppose now that $G/P$ is the Grassmannian $\Gr(j,\C^n)=SL_n/P$.
Lapointe and Morse defined a map which, after various
identifications, can be interpreted as a surjective ring
homomorphism $H_*(\Gr_{SL_n})\to QH^*(\Gr(j,\C^n))$. We shall
explain their map in terms of strange duality and the parabolic
Peterson Theorem (Theorem \ref{thm:Pmain}).

For this section let $k=n-1$. In \cite{LLM}, motivated by Macdonald
theory, Lapointe, Lascoux, and Morse defined a family of symmetric
functions $s_\la^{(k)}$ called $k$-Schur functions. They form a
basis of the ring $\Z[h_1,\dotsc,h_k]$, where $h_i$ is the
homogeneous symmetric function. The $k$-Schur basis is indexed by
$k$-bounded partitions, that is, partitions $\la$ such that $\la_1
\le k$.

The homomorphism of Lapointe and Morse may be described as follows.

\begin{thm}\label{thm:LM} \cite{LM:QH} There is a surjective ring
homomorphism $\Z[h_1,\dotsc,h_{n-1}]\to QH^*(\Gr(j,\C^n))$ such that
for any $(n-1)$-bounded partition $\la$, the $(n-1)$-Schur function
$s_\la^{(n-1)}$ maps to $0$ or a power of $q$ times a single quantum
Schubert class. Moreover,
\begin{enumerate}
\item If $\la$ fits inside the
$(n-j)\times j$ rectangle then $s_\la^{(n-1)}\mapsto
\sigma_P^{w_{\la^t}}$ where $\la^t$ is the transpose of the
partition $\la$.
\item If $\la_1 >j$ then $s_\la^{(n-1)}\mapsto 0$.
\end{enumerate}
\end{thm}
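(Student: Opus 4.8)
The plan is to realise the Lapointe--Morse homomorphism as a composite of three maps already available to us, and then to extract (1), (2), and surjectivity from the core combinatorics of $SL_n$. By the main result of \cite{Lam}, for $G=SL_n$ there is a ring isomorphism $\Z[h_1,\dots,h_{n-1}]\cong H_*(\Gr_{SL_n})$ carrying the $(n-1)$-Schur function $s_\la^{(n-1)}$ to the affine Schubert class $\xi_{x_\la}$, where $x_\la\in W_\af^-$ is the Grassmannian element attached to the $(n-1)$-bounded partition $\la$ (equivalently, to its $n$-core). I would compose this with the quotient ring homomorphism $H_*(\Gr_{SL_n})\twoheadrightarrow H_*(\Gr_{SL_n})/J_P$, then with the non-equivariant case of the parabolic Peterson isomorphism $\Psi_P$ (Theorem~\ref{thm:Pmain} with $S=\Z$), which after inverting the classes $\xi_{\pi_P(t_\la)}$ identifies the target with $QH^*(\Gr(j,\C^n))[q^{-1}]$, and finally with the strange duality automorphism $\Xi$ of $QH^*(\Gr(j,\C^n))[q^{-1}]$ of Theorem~\ref{thm:CMP}, which is defined over $\Z$ since $\zeta\equiv 1$ for $SL_n$. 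Each factor is a ring homomorphism, so the composite $\Theta\colon\Z[h_1,\dots,h_{n-1}]\to QH^*(\Gr(j,\C^n))[q^{-1}]$ is one; this $\Theta$ is (the reinterpretation of) the Lapointe--Morse map, and it remains to establish (1), (2), that $\Theta$ factors through $QH^*(\Gr(j,\C^n))$, and surjectivity.

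For (2): if $\la_1>j$ then $x_\la\notin(W^P)_\af$, hence $\xi_{x_\la}\in J_P$ and $\Theta(s_\la^{(n-1)})=0$. To see $x_\la\notin(W^P)_\af$ I would write $x_\la$ explicitly from its $n$-core and apply Lemma~\ref{L:WPaffchar}, recalling that $(R_P)_\af^+$ consists of the positive affine roots whose finite part lies in $R_P$ (here $I_P=I\setminus\{j\}$): a first part of size exceeding $j$ forces one such root into $\Inv(x_\la)$.

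For (1): let $\la$ fit inside the $(n-j)\times j$ rectangle. Then $x_\la\in W_\af^-\cap(W^P)_\af$, and in fact $x_\la=\vartheta(y)$ for a unique $y\in W^P$, obtained by matching the $n$-core of $\la$ against the recipe $\vartheta(y)=\tau_j(y)^*$ of Proposition~\ref{P:section}. Writing $\vartheta(y)=wt_\mu$ with $\mu\in\tQ$ and $\pi_P(w)=\pi_P(w_0^P y)$ (Proposition~\ref{P:section}), the isomorphism $\Psi_P$ sends $\xi_{\vartheta(y)}$ to $q_{\eta_P(\mu)}\,\sigma_P^{\pi_P(w_0^P y)}$; applying $\Xi$ and using $\Xi(q)=q^{-1}$, $\zeta\equiv1$, and $\pi_P(w_0^P\,\pi_P(w_0^P y))=y$ (which holds because $(w_0^P)^2=\id$ and $y\in W^P$), one gets $\Theta(s_\la^{(n-1)})=q^{m}\,\sigma_P^{y}$ for a single integer $m$. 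The remaining task is the purely combinatorial identity $y=w_{\la^t}$ together with $m=0$, which I would prove by comparing the $n$-core of $\la$, the two $w_0^P$-twists occurring above, and the Durfee-square description of $\delta$ and $\zeta$ in \cite{CMP}. Specialising to the algebra generators $h_i=s_{(i)}^{(n-1)}$ then gives $\Theta(h_i)=\sigma_P^{w_{(1^i)}}$ for $i\le j$ and $\Theta(h_i)=0$ for $i>j$; since these lie in $QH^*(\Gr(j,\C^n))$ the map $\Theta$ factors through $QH^*(\Gr(j,\C^n))$, and since the special Schubert classes $\sigma_P^{w_{(1^i)}}$ ($i\le j$) generate $QH^*(\Gr(j,\C^n))$ as a $\Z$-algebra, $\Theta$ is surjective. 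Finally, as $\Theta$ now lands in $QH^*(\Gr(j,\C^n))$, for every $(n-1)$-bounded $\la$ the element $\Theta(s_\la^{(n-1)})$ is either $0$ (when $x_\la\notin(W^P)_\af$) or, by the $\Psi_P$/$\Xi$ computation above, a single non-negative power of $q$ times one quantum Schubert class.

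I expect the main obstacle to be the combinatorial step $y=w_{\la^t}$, $m=0$: one has to track how the two $w_0^P$-twists --- the one built into Peterson's parametrisation $\xi_{v\pi_P(t_\la)}\mapsto q_{\eta_P(\la)}\sigma_P^v$ through $\vartheta$, and the one coming from strange duality --- interact with the transpose inherent in the $k$-Schur/affine-Schubert dictionary of \cite{Lam} and with the Durfee-square formula for $\delta$ (and $\zeta$) in \cite{CMP}, so that the net effect is exactly $\sigma_P^{w_{\la^t}}$ with the power of $q$ collapsing to zero. By comparison, the dividing line $\la_1>j\iff x_\la\notin(W^P)_\af$ needed for (2) is a routine application of Lemma~\ref{L:WPaffchar}, and once $\Theta$ is known on the generators $h_i$ the factorisation through $QH^*(\Gr(j,\C^n))$ and surjectivity are immediate.
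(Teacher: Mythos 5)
First, a structural point: the paper does not prove this statement at all --- it is quoted from \cite{LM:QH}, and the paper's only related argument (the final Proposition, $\Psi=\Psi_{LM}$) runs in the opposite direction: it \emph{uses} Theorem~\ref{thm:LM} (specialized at $q=1$) to identify the Lapointe--Morse map with the composite of the parabolic Peterson isomorphism and strange duality, checking agreement only on the generators $\xi_{h_{[m]}}$. Your proposal reverses this logic and tries to \emph{derive} the theorem from the paper's machinery, keeping the $q$-grading rather than setting $q=1$. That is a legitimate and attractive route, and the formal skeleton is sound: each factor of $\Theta$ is a ring homomorphism; once the images of the generators $h_i$ are known to lie in $QH^*(\Gr(j,\C^n))$, the whole image does (the source is generated by the $h_i$), surjectivity follows because the special classes $\sigma_P^{w_{(1^i)}}$, $i\le j$, generate, and then nonnegativity of the $q$-exponent for an arbitrary class of the form $q^m\sigma_P^v$ is automatic from membership in $QH^*\subset QH^*[q^{-1}]$.

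The genuine gap is that the entire combinatorial core is deferred rather than proved, and it is exactly where the content of the theorem sits. Concretely: (i) the identification $x_\la=\vartheta(w_{\la^t})$ for $\la$ inside the $(n-j)\times j$ rectangle; (ii) the exponent computation --- writing $\vartheta(y)=\pi_P(w)\pi_P(t_\mu)$ with $\mu\in\tQ$, you get $\Theta(s_\la^{(n-1)})=q^{-\eta_P(\mu)-\delta(\pi_P(w_0^Py))}\sigma_P^{y}$, so $m=0$ amounts to the nontrivial identity $-\eta_P(\mu)=\delta(\pi_P(w_0^Py))$ relating the translation part of $\vartheta(y)$ to the number of $r_j$'s in $\pi_P(w_0^Py)$; and (iii) the vanishing criterion $\la_1>j\Rightarrow x_\la\notin(W^P)_\af$, for which you only gesture at Lemma~\ref{L:WPaffchar}. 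None of these follow from the formal framework; in particular a degree argument cannot rescue (ii), because strange duality is not graded (it sends $\sigma_P^w$ to $q^{-\delta(w)}\sigma_P^{\pi_P(w_0^Pw)}$), so $\Theta$ is not homogeneous and $m=0$ must be proved combinatorially. Nor can you lean on the paper here: the only instance it verifies is the generator case $\vartheta(c_{[m]})=h_{[m]}$, and its general assertion $\vartheta(w_{\la^t})=w_\la^\af$ is itself only stated as ``easy to check'' and, moreover, is used there in conjunction with Theorem~\ref{thm:LM}, which you are trying to prove. Until (i)--(iii) are carried out (e.g.\ by an explicit $n$-core/window computation for $SL_n$ and a count of occurrences of $r_j$ via the Durfee-type statistics of \cite{CMP}), the proposal is a plausible blueprint rather than a proof.
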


The above rules specify the map except when $\la$ consists of some
number of parts of size $j$ followed by a partition contained in the
$(n-j)\times j$ rectangle; in that case one must use a straightening
process to determine the image Schubert class explicitly; see
\cite{LM:QH}.

Bott \cite{Bott} gave an explicit realization of $H_*(\Gr_{SL_n})$
by the ring $\Z[h_1,\dotsc,h_{n-1}]$. Lam \cite{Lam} proved that the
$(n-1)$-Schur functions are the Schubert basis of $H_*(\Gr_{SL_n})$.
To make the identification explicit, we recall a bijection
\cite[Proposition 47]{LM:aff} denoted here by $\la\mapsto
w_\la^\af$, from $(n-1)$-bounded partitions $W_\af^-$, where $W_\af$
is the affine Weyl group for $G=SL_n$. See \cite{LLMS} for
alternative descriptions of this bijection.

Given the $(n-1)$-bounded partition $\la$, we place the value
$x_1-x_2 \mod n$ into the cell $(x_1,x_2)$ in the diagram of $\la$
in a manner similar to the definition of $w_\la$ in Example
\ref{X:strange}.

These entries are then used as indices for simple reflections in a
reduced decomposition of an element $w_\la^\af\in W_\af^-$, reading
the rows in order from the top row to the bottom row, reading within
each row from right to left.

\begin{ex} Let $n=7$ and $\la=(3,2)$. Then the filled diagram of
$\la$ is given by
\begin{align*}
  \tableau{6&0\\0&1&2}
\end{align*}
and $w_\la^\af=r_0 r_6 r_2 r_1 r_0$.
\end{ex}

\begin{thm} \label{thm:Lam} \cite{Lam} Under Bott's isomorphism
$H_*(\Gr_{SL_n})\cong \Z[h_1,\dotsc,h_{n-1}]$, the Schubert class
$\xi_{w_\la^\af}$ maps to the $(n-1)$-Schur function $s_\la^{(n-1)}$
for every $(n-1)$-bounded partition $\la$.
\end{thm}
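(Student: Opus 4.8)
The plan is to carry the whole picture over to the affine nilHecke ring $\A_\af$ and to match both the ring and its Schubert basis with explicit combinatorics. The non-equivariant specialisation of Peterson's isomorphism $j$ (Theorem~\ref{thm:pet}) is a ring isomorphism $j_0$ from $H_*(\Gr_{SL_n})$ onto the \emph{affine Fomin--Stanley subalgebra} $\mathbb B$, a commutative subring of the $\Z$-form $\mathbb A_0=\bigoplus_{x\in W_\af}\Z A_x$ of $\A_\af$; under $j_0$ the Schubert class $\xi_x$ with $x\in W_\af^-$ is sent to the unique $B_x\in\mathbb B$ that is congruent to $A_x$ modulo the left ideal $\sum_{w\in W\setminus\{\id\}}\mathbb A_0 A_w$, and $\{B_x\mid x\in W_\af^-\}$ is a $\Z$-basis of $\mathbb B$. (This is the non-equivariant shadow of Sections~6--7; see \cite{Lam}.)

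First I would exhibit generators of $\mathbb B$. For $1\le r\le n-1$ let $\mathbf h_r\in\mathbb A_0$ be the sum of the $A_w$ over the \emph{cyclically decreasing} elements $w$ of the affine symmetric group $W_\af=\widetilde{S}_n$ of length $r$ (these are indexed by the $r$-subsets of $\Z/n$). The key algebraic lemma is that each $\mathbf h_r$ lies in $\mathbb B$ and that the $\mathbf h_r$ commute pairwise; this is a finite direct computation in $\A_\af$ using the nilHecke relations and the combinatorics of reduced words of cyclically decreasing elements. Because $\mathbf h_r\equiv A_{x_r}$ modulo the ideal above, with $x_r=r_{r-1}\cdots r_1 r_0\in W_\af^-$, a triangularity argument then shows the $\mathbf h_r$ are algebraically independent and generate $\mathbb B$, so $\mathbb B=\Z[\mathbf h_1,\dots,\mathbf h_{n-1}]$. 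Since $x_r=w_{(r)}^\af$ under the filling rule, this identifies $\mathbf h_r=B_{x_r}=j_0(\xi_{w_{(r)}^\af})$; hence $j_0$ followed by $\mathbf h_r\mapsto h_r$ is a ring isomorphism $H_*(\Gr_{SL_n})\to\Z[h_1,\dots,h_{n-1}]$ carrying the generating Schubert classes $\xi_{w_{(r)}^\af}$ to $h_r$, which one checks is Bott's isomorphism; in particular the theorem holds for one-row partitions $\lambda=(r)$.

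With this dictionary in place, introduce for each $w\in W_\af$ the \emph{affine Stanley symmetric function} $\tilde F_w\in\La$, whose coefficient on the monomial $x_1^{r_1}\cdots x_\ell^{r_\ell}$ equals the coefficient of $A_w$ in the product $\mathbf h_{r_1}\cdots\mathbf h_{r_\ell}$. This is well defined, is symmetric exactly because the $\mathbf h_r$ commute, and lies in $\La_{(n)}:=\Z[h_1,\dots,h_{n-1}]\subset\La$. Transporting the Schubert basis of $\mathbb B$ to $\La_{(n)}$ gives a basis $\{G_y\mid y\in W_\af^-\}$ which is dual, under the Hall inner product, to $\{\tilde F_y\}$, and the theorem becomes the assertion $G_{w_\lambda^\af}=s^{(n-1)}_\lambda$ for every $(n-1)$-bounded $\lambda$; equivalently, by Hall duality, $\tilde F_{w_\lambda^\af}$ is the affine Schur function dual to the $k$-Schur function. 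To prove this I would expand $\tilde F_{w_\lambda^\af}$ in the monomial basis by enumerating length-additive factorisations of $w_\lambda^\af$ into cyclically decreasing pieces, translate these factorisations into weak $k$-tableaux on the $n$-core attached to $\lambda$ via the standard bijections of \cite{LLM,LM:aff,LLMS}, and match the outcome with a tableau- or Pieri-rule-based definition of $k$-Schur functions; since $\{G_y\}$ and $\{s^{(n-1)}_\lambda\}$ are both $\Z$-bases of $\La_{(n)}$ and the comparison is triangular, equality follows.

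The main obstacle is precisely this last combinatorial identification. The algebraic scaffolding --- the isomorphism $H_*(\Gr_{SL_n})\cong\mathbb B$ and the construction of the commuting free generators $\mathbf h_1,\dots,\mathbf h_{n-1}$ --- is essentially formal once Peterson's theorem is available, and the one-row case is a finite check; but reconciling the factorisation expansion of $\tilde F_{w_\lambda^\af}$ with a chosen definition of $k$-Schur functions requires the full weak/strong $k$-tableau combinatorics of Lapointe--Lascoux--Morse together with the bijections among $(n-1)$-bounded partitions, $n$-cores, and $W_\af^-$. A related subtlety is that several a priori inequivalent definitions of $k$-Schur functions circulate in the literature; the cleanest route is to fix the one for which a $k$-Pieri rule is available, so that multiplication by $\mathbf h_r$ in $\mathbb B$ and multiplication by $h_r=s^{(n-1)}_{(r)}$ on the $k$-Schur basis are governed by the same combinatorial rule, and then to deduce equality of the two bases from equality of these structure constants --- which is where the commutativity of $\mathbb B$, and hence of $H_*(\Gr_{SL_n})$ (ultimately, of quantum cohomology), is used.
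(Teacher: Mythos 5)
The paper itself contains no proof of Theorem~\ref{thm:Lam}: it is imported verbatim from \cite{Lam}, so there is no internal argument to compare against. Your outline is essentially a reconstruction of the proof in that source: the non-equivariant Peterson isomorphism onto the affine Fomin--Stanley subalgebra $\mathbb{B}$, the commuting elements $\mathbf{h}_r$ given by sums over cyclically decreasing elements (whose unique Grassmannian term is indeed $A_{r_{r-1}\cdots r_1r_0}$), the identification $\mathbb{B}\cong\Z[h_1,\dots,h_{n-1}]$ pinned to Bott's generators via the one-row classes $\xi_{h_{[m]}}$, and the reduction of the theorem to identifying the transported Schubert basis with the $k$-Schur basis through affine Stanley symmetric functions. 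Two caveats. First, the step you call the ``main obstacle'' is exactly where the content of \cite{Lam} lies: one must show that the coefficient of $A_{w_\la^\af}$ in $\mathbf{h}_{\mu_1}\cdots\mathbf{h}_{\mu_\ell}$, i.e.\ the number of length-additive cyclically decreasing factorizations of $w_\la^\af$ of type $\mu$, equals the $k$-Kostka number of Lapointe--Morse; this rests on the core/affine-permutation combinatorics of \cite{LM:aff} and is cited rather than proved in your sketch. Second, your remark that ``multiplication by $\mathbf{h}_r$ in $\mathbb{B}$ and multiplication by $h_r$ on the $k$-Schur basis are governed by the same combinatorial rule'' is itself an affine Pieri-type assertion that does not follow from commutativity alone; the efficient way to close it, as in \cite{Lam}, is to invoke the uniqueness characterization of $j(\xi_x)$ in Theorem~\ref{thm:pet} (non-equivariantly), which reduces both of your routes (a) and (b) to the same factorization/weak-tableau identity. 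With those inputs made explicit, your plan matches the known proof.
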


Combining Theorem \ref{thm:LM} specialized at $q=1$ and Theorem
\ref{thm:Lam} one obtains the Lapointe-Morse ring homomorphism
$\Psi_{LM}:H_*(\Gr_{SL_n})\to QH^*(\Gr(j,\C^n))|_{q=1}$.

On the other hand, combining strange duality and the parabolic
Peterson Theorem we have the following result.

\begin{prop}
Let $G = SL_n$ and $P \subset G$ be a maximal parabolic subgroup
with $I_P=I\setminus\{j\}$. Then there is a surjective ring
homomorphism $\Psi:H_*(\Gr_G)\to QH^*(G/P)|_{q = 1}$ defined by
$$
\xi_{x} \mapsto \begin{cases} \sigma_P^y & \mbox{if $x = \vartheta
(y)\pi_P(t_\lambda)$ for some $y \in W^P$ and $\lambda \in Q^\vee$,} \\
0 & \mbox{otherwise.}
\end{cases}
$$
Moreover $\Psi=\Psi_{LM}$.
\end{prop}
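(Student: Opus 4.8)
The plan is to realise $\Psi$ as a composition of ring homomorphisms, read off its action on the Schubert basis, and then identify it with $\Psi_{LM}$ by checking that the two maps agree on algebra generators of $H_*(\Gr_{SL_n})$. I would take $\Psi$ to be the composite of: the quotient $H_*(\Gr_G)\to H_*(\Gr_G)/J_P$; the localisation map $H_*(\Gr_G)/J_P\to (H_*(\Gr_G)/J_P)[\xi_{\pi_P(t_\la)}^{-1}\mid \la\in\tQ]$, which is legitimate since $\{\xi_{\pi_P(t_\la)}\mid\la\in\tQ\}$ is multiplicatively closed modulo $J_P$ and contains $1$ by Propositions~\ref{prop:Pfact} and~\ref{P:pip}; the non-equivariant specialisation $\Psi_P^0$ of the isomorphism $\Psi_P$ of Theorem~\ref{thm:Pmain} (obtained by applying $\phi_0\colon S\to\Z$); the strange-duality automorphism $\mathrm{SD}$ of $QH^*(G/P)[q^{-1}]$ from Theorem~\ref{thm:CMP}, which is defined over $\Z$ here because $\zeta\equiv1$ for $G=SL_n$; and the specialisation $QH^*(G/P)[q^{-1}]\to QH^*(G/P)|_{q=1}$ sending $q_i\mapsto 1$. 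Each arrow is a ring homomorphism, hence so is $\Psi$.

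Next I would compute $\Psi$ on Schubert classes. If $x\in W_\af^-\setminus(W^P)_\af$ then $\xi_x\in J_P$ and $\Psi(\xi_x)=0$. If $x\in W_\af^-\cap(W^P)_\af$, write $x=w\,\pi_P(t_\nu)$ with $w\in W^P$ and $\nu\in\tQ$ (Proposition~\ref{P:pip}); then $\Psi_P^0(\xi_x)=q_{\eta_P(\nu)}\sigma_P^w$, applying $\mathrm{SD}$ (with $\zeta\equiv1$) gives a power of $q$ times $\sigma_P^{\pi_P(w_0^P w)}$, and setting $q=1$ gives $\Psi(\xi_x)=\sigma_P^{\pi_P(w_0^P w)}$. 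By the last statement of Proposition~\ref{P:section}, $\vartheta(y)$ has the form $w_0^P y\,w_0^P\,t_{\mu}$, so its ``$W^P$-part'' is $\pi_P(w_0^P y)$; a short bookkeeping argument with $\pi_P$ shows that, for $y\in W^P$, one has $y=\pi_P(w_0^P w)$ if and only if $w=\pi_P(w_0^P y)$, and then $x=w\,\pi_P(t_\nu)=\vartheta(y)\,\pi_P(t_\lambda)$ for a suitable $\lambda\in Q^\vee$ (again by Proposition~\ref{P:pip}). Thus every $x\in W_\af^-\cap(W^P)_\af$ is of the form $\vartheta(y)\pi_P(t_\lambda)$ with $y$ uniquely determined, $\Psi(\xi_x)=\sigma_P^y$ in that case and $\Psi(\xi_x)=0$ otherwise --- this is precisely the asserted formula. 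Surjectivity follows since $\sigma_P^y=\Psi(\xi_{\vartheta(y)})$ for all $y\in W^P$ and these classes $\Z$-span $QH^*(G/P)|_{q=1}$.

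Finally, to prove $\Psi=\Psi_{LM}$ I would use that both are ring homomorphisms $H_*(\Gr_{SL_n})\to QH^*(G/P)|_{q=1}$, so it is enough to compare them on algebra generators. Under Bott's isomorphism $H_*(\Gr_{SL_n})\cong\Z[h_1,\dotsc,h_{n-1}]$ and Theorem~\ref{thm:Lam}, the Schubert classes $\xi_{w_{(r)}^\af}$ attached to the single-row $(n-1)$-bounded partitions $(r)$, $1\le r\le n-1$, correspond to the single-row $(n-1)$-Schur functions $s_{(r)}^{(n-1)}=h_r$ and hence generate the ring. The filling rule produces the reduced word $w_{(r)}^\af=r_{r-1}r_{r-2}\dotsm r_1 r_0$. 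If $r\le j$, then $(r)$ fits inside the $(n-j)\times j$ rectangle and $(r)^t=(1^r)$, so $\Psi_{LM}(\xi_{w_{(r)}^\af})=\sigma_P^{w_{(1^r)}}$ by part~(1) of Theorem~\ref{thm:LM}; on the other hand the column-reading rule gives $w_{(1^r)}=r_{j-r+1}\dotsm r_{j-1} r_j$, and applying the affine Dynkin automorphism $\vartheta=\tau_j(\,\cdot\,)^*$ node-by-node one finds $\vartheta(w_{(1^r)})=r_{r-1}\dotsm r_1 r_0=w_{(r)}^\af$, so $\Psi(\xi_{w_{(r)}^\af})=\sigma_P^{w_{(1^r)}}$ by the formula established above. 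If $j<r\le n-1$, then the first part of $(r)$ exceeds $j$, so $\Psi_{LM}(\xi_{w_{(r)}^\af})=0$ by part~(2) of Theorem~\ref{thm:LM}; while a direct inversion computation --- an inversion of $w_{(r)}^\af$ of the form $r_0 r_1\dotsm r_{m-1}\alpha_m$ restricts to $-(\alpha_{m+1}+\dotsb+\alpha_{n-1})\in R_P$ whenever $j\le m\le r-1$ --- shows $w_{(r)}^\af\notin(W^P)_\af$, so $\Psi(\xi_{w_{(r)}^\af})=0$ as well. Therefore $\Psi=\Psi_{LM}$.

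The conceptual content is supplied by Theorems~\ref{thm:Pmain},~\ref{thm:CMP},~\ref{thm:LM},~\ref{thm:Lam} and Proposition~\ref{P:section}; what remains is bookkeeping. The step I expect to be the main obstacle is the combinatorial core of the last paragraph: establishing the reduced-word identity $w_{(r)}^\af=\vartheta(w_{(1^r)})$ for $r\le j$ and the inversion computation showing $w_{(r)}^\af\notin(W^P)_\af$ for $r>j$, which together pin down $\Psi$ on the generators. The only other thing requiring care is the routine verification that $\phi_0$, the localisation, and the substitution $q=1$ are mutually compatible, so that the composite $\Psi$ is well defined.
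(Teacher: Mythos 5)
Your proposal is correct and follows essentially the same route as the paper: $\Psi$ is defined as the composite of the nonequivariant parabolic Peterson isomorphism (Theorem~\ref{thm:Pmain}), the strange duality of Theorem~\ref{thm:CMP} (with $\zeta\equiv 1$ for $SL_n$), and the specialization $q=1$, and the identity $\Psi=\Psi_{LM}$ is checked on the Bott generators via $\vartheta(r_{j-m+1}\dotsm r_j)=r_{m-1}\dotsm r_1r_0$ for $m\le j$ and vanishing of both maps for $m>j$. The only difference is that you spell out details the paper leaves implicit (the bookkeeping identifying the composite's action on Schubert classes with the stated formula, and the inversion computation showing $w^\af_{(r)}\notin (W^P)_\af$ for $r>j$), which is fine.
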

\begin{proof}
$\Psi$ is the composition of the nonequivariant specialization of
the map of Theorem~\ref{thm:Pmain} and the map of Theorem
\ref{thm:CMP} specialized at $q=1$. Together with Proposition
\ref{P:section} it follows that $\Psi$ is a surjective ring
homomorphism.

To prove $\Psi=\Psi_{LM}$ it suffices to check agreement on algebra
generators. For $0\le m\le n-1$ let $h_{[m]} = r_{m-1}\dotsm
r_2r_1r_0\in W_\af^-$; $\xi_{h_{[m]}}$ is the Bott generator
corresponding to the symmetric function $h_m$ and to the
$(n-1)$-bounded partition having a single row of size $m$.

Let $y\in W^P$. Let $\la$ be the partition contained in the
$(n-j)\times j$ rectangle such that $y = w_{\la^t}$. It is easy to
check from the definitions that $\vartheta(y)=w_\la^\af$.
Consequently $\Psi$ and $\Psi_{LM}$ agree on $\xi_{\vartheta(y)}$
for $y\in W^P$. Since $W^P$ contains the elements
$c_{[m]}=r_{j-m+1}\dotsm r_{j-1}r_j$ for $0\le m\le j$ and
$\vartheta(c_{[m]})=h_{[m]}$ for $0\le m\le j$, $\Psi=\Psi_{LM}$ on
the generators $\xi_{h_{[m]}}$ for $0\le m\le j$. Finally, both
$\Psi$ and $\Psi_{LM}$ send $\xi_{h_{[m]}}$ to zero for $j+1\le m\le
n-1$.
\end{proof}

\begin{ex} Let $n=7$, $j=4$ and choose $\la=(3,2,0)$ in the
$3\times 4$ rectangle. Then $\la^t=(2,2,1,0)$ fits in the $4\times
3$ rectangle and $w_{\la^t}\in W^P$ is given by the element $y$ of
Example \ref{X:strange}. The element $\vartheta(y)$ is given by
$w_\la^\af$, which appears in the two previous examples.
\end{ex}

\begin{remark}
The ``Pieri formula'' for $H_*(\Gr_{SL_n})$ was given in
\cite{LLMS}, and agrees with the $k$-Pieri rule of Lapointe and
Morse \cite{LM:aff}.  The image of this Pieri rule under $\Psi$ is
exactly the quantum Pieri rule of $QH^*(\Gr(j,\C^n))$; see
\cite{Ber}.
\end{remark}

\end{document}